\definecolor{color}{HTML}{AF3235}
\newtheorem{lemma}{Lemma}[section]
\newtheorem{proposition}[lemma]{Proposition}
\newtheorem{theorem}[lemma]{Theorem}
\newtheorem{corollary}[lemma]{Corollary}
\newtheorem*{theoremA}{Theorem}
\theoremstyle{definition}
\newtheorem{definition}[lemma]{Definition}
\newtheorem{remark}[lemma]{Remark}
\newtheorem{convention}[lemma]{Convention}
\newcommand{\mfk}[1]{\mathfrak{#1}}
\newcommand{\mbb}[1]{\mathbb{#1}}
\newcommand{\mcl}[1]{\mathcal{#1}}
\newcommand{\mrm}[1]{\mathrm{#1}}
\newcommand{\msc}[1]{\mathscr{#1}}
\newcommand{\mbf}[1]{\mathbf{#1}}
\newcommand{\msf}[1]{\mathsf{#1}}
\newcommand{\opn}[1]{\operatorname{#1}}
\newcommand{\ot}{\otimes}
\newcommand{\uBord}{\mathfrak{Bord}}
\DeclareMathOperator{\Hom}{Hom}
\DeclareMathOperator{\End}{End}
\DeclareMathOperator{\Ext}{Ext}
\DeclareMathOperator{\Fun}{Fun}
\DeclareMathOperator{\rep}{rep}
\DeclareMathOperator{\Rep}{Rep}
\DeclareMathOperator{\Spec}{Spec}
\DeclareMathOperator{\Str}{Str}
\DeclareMathOperator{\Ch}{Ch}
\DeclareMathOperator{\QCoh}{QCoh}
\DeclareMathOperator{\Bord}{Bord}
\DeclareMathOperator{\Rib}{Rib}
\DeclareMathOperator{\red}{red}
\renewcommand{\1}{\mathbf{1}}
\renewcommand{\O}{\mathscr{O}}
\renewcommand{\tilde}{\widetilde}
\renewcommand{\hat}{\widehat}
\definecolor{page_color}{HTML}{000000}
\definecolor{text_color}{HTML}{F0EAD6}
\title[Cochain valued TQFT]{Cochain valued TQFTs from nonsemisimple modular tensor categories}
\date{\today}
\author{Agustina Czenky}
\thanks{The first author was partially supported by the Simons Collaboration Grant No. 999367. The second author was supported by NSF
CAREER Grant No. DMS-2239698, and Simons Collaboration Grant No. 999367.}
\address{Department of Mathematics, University of Southern California, Los Angeles, CA 90007}
\email{czenky@usc.edu}
\author{Cris Negron}
\address{Department of Mathematics, University of Southern California, Los Angeles, CA 90007}
\email{cnegron@usc.edu}
\begin{document}

\maketitle

\begin{abstract}
We show that a vector space valued TQFT constructed in work of De Renzi et al.\ \cite{derenzietal23} extends naturally to a topological field theory which takes values in the symmetric monoidal category of linear cochains.  Specifically, we consider a bordism category whose objects are surfaces with markings from the category of cochains $\opn{Ch}(\msc{A})$ over a given modular tensor category (such as the category of small quantum group representations), and whose morphisms are $3$-dimensional bordisms with embedded ribbon graphs traveling between such marked surfaces.  We construct a symmetric monoidal functor from the aforementioned ribbon bordism category to the category of linear cochains.  The values of this theory on surfaces are identified with Hom complexes for $\opn{Ch}(\msc{A})$, and the $3$-manifold invariants are alternating sums of the renormalized Lyubashenko invariant from \cite{derenzietal23}.
\par

We show that our cochain valued TQFT furthermore preserves homotopies, and hence localizes to a theory which takes values in the derived $\infty$-category of dg vector spaces.  The domain for this $\infty$-categorical theory is, up to some approximation, an $\infty$-category of ribbon bordism with labels in the homotopy $\infty$-category $\mcl{K}(\msc{A})$.  We suggest our localized theory as a starting point for the construction of a ``derived TQFT" for the $\infty$-category of derived quantum group representations.
\end{abstract}

\tableofcontents

\section{Introduction}

\subsection{Background}
\label{sect:background}

Consider a finite modular tensor category $\msc{A}$, aka a non-degenerate ribbon tensor category which is additionally finite.  In \cite{derenzietal23} the authors exhibit an (anomalous) $3$-dimensional topological field theory
\begin{equation}\label{eq:113}
Z_{\msc{A}}:\opn{Bord}^{\opn{adm}}_{\msc{A}}=\opn{Bord}_{3,2}(\msc{A})^{\opn{adm}}\to \opn{Vect}
\end{equation}
which, in the case where $\msc{A}$ is semisimple, recovers the usual Reshetikhin-Turaev TQFT.  Though this theory has its quirks, it provides one of the most functional topological theories to date which is constructed from a nonseimisimple modular category, and which submits to a direct analysis.\footnote{There are some other equally explicit nonsemisimple theories in dimensions $3$ and $4$ \cite{costantinoetalII,costantinoetal} (cf.\ \cite{brochieretal21}), but they should all be ``the same" after appropriate translation \cite{balsam,haioun25}.}
\par

The authors of the text \cite{derenzietal23} produce the field theory $Z_{\msc{A}}$ via a universal procedure which begins in dimension three with a normalization of Lyubashenko's invariant, and reproduces Lyubashenko's mapping class group actions in dimension two. In this way $Z_{\msc{A}}$ realizes Lyubashenko's various findings outside of the semisimple setting \cite{lyubashenko95,lyubashenko95II,kerlerlyubashenko01} as projections of a single uniform theory.
\par

In the present text we show that the functor \eqref{eq:113} extends naturally to a TQFT $Z_{\opn{Ch}(\msc{A})}$ which takes values not in vector spaces, but in the symmetric tensor category $\opn{Ch}(\opn{Vect})$ of linear cochains.  This cochain valued theory furthermore respects (certain classes of) homotopies, and hence localizes to provide a type of \emph{homotopical} TQFT, in the higher categorical sense of the term.  This final point is discussed in more detail in Sections \ref{sect:loc_intro} and \ref{sect:zloc} below.
\par

While this paper can certainly be enjoyed as a stand-alone project, our motivations come from the proposed existence of a ``derived" field theory for modular tensor categories.  This derived TQFT should take as input the bounded derived $\infty$-category $\mathcal{D}(\msc{A})$ and should take values in the derived $\infty$-category of linear cochains.  The state spaces in this theory should be infinite-dimensional, and furthermore identified with higher Hochschild cohomologies for the category $\msc{A}$. One also expects that the functor \eqref{eq:113} is recovered as the $0$-th cohomology of its derived counterpart, when interpreted appropriately.
\par

Though we won't provide a detailed accounting of this mythical derived theory, it can be understood physically, in the case where $\msc{A}$ is a category of quantum group representations $(\Rep_q G)_{\opn{small}}$, as a TQFT one obtains via Kapustin-Witten twisting of an associated supersymmetric QFT for the group $G$.  (See \cite{gaiotto19,costellogaiotto19,creutzigetal24} for further elaborations.)  The results of our text provide a vector which points in this derived direction.

\subsection{Results}

We fix a finite modular tensor category $\msc{A}$ over an algebraically closed base field $k$.  As standard examples, one might consider a category of small quantum group representations \cite{gainutdinovlentnerohrmann,negron26}, or of modules over a $C_2$-cofinite vertex operator algebra \cite{mcrae}.
\par

We consider an anomalous bordism category $\opn{Bord}_{\opn{Ch}(\msc{A})}=\Bord_{3,2}(\opn{Ch}(\msc{A}))$ whose objects are oriented surfaces $\Sigma$ with framed marked points, and over each marked point we furthermore specify an object in the category of bounded cochains $\opn{Ch}(\msc{A})$.  Morphisms in $\Bord_{\opn{Ch}(\msc{A})}$ are $3$-dimensional bordisms $M:\Sigma\to \Sigma'$ which come equipped with an embedded, $\msc{A}$-labeled, framed graph $\gamma:\Gamma\to M$ whose intersection with the boundary recovers the given markings on $\Sigma$ and $\Sigma'$.\footnote{There are additional geometric data here which have to do with the anomaly. See Section \ref{sect:bords} for details.}
\par

Inside of the full bordism category $\opn{Bord}_{\opn{Ch}(\msc{A})}$ is a non-full subcategory
\[
\Bord_{\opn{Ch}(\msc{A})}^{\opn{adm}}\ \subseteq\ \Bord_{\opn{Ch}(\msc{A})}
\]
of so-called \emph{admissible} bordisms \cite{derenzietal23}.  This is a symmetric monoidal subcategory which contains all marked surfaces, i.e.\ all objects, all $3$-manifolds with nonempty outgoing boundary, and all closed $3$-manifolds which satisfy a certain algebraic projectivity constraint (see Section \ref{sect:admis}).
\par

\begin{remark}
In terms of unmarked objects, the category $\Bord_{\opn{Ch}(\msc{A})}^{\opn{adm}}$ does contain the usual ``non-compact" bordism category of unmarked surfaces and unmarked $3$-manifolds with nonempty outgoing boundary \cite{lurie08}. The introduction of markings is both natural and necessary from the conformal perspective \cite{fuchsrunkelschweigert02}, for example, and also from a skein theoretic perspective in dimension $4$ \cite{haioun25}.
\end{remark}

As a primary result of our text, we prove the following.

\begin{theoremA}[\ref{thm: Z_Ch}]
Let $\msc{A}$ be a modular tensor category and $\opn{Ch}(\msc{A})$ be the associated braided tensor category of finite length cochains.  There is a symmetric monoidal functor
\begin{equation}\label{eq:150}
Z_{\opn{Ch}(\msc{A})}:\Bord^{\opn{adm}}_{\opn{Ch}(\msc{A})}\to \opn{Ch}(\opn{Vect})
\end{equation}
whose value on a genus $g$ marked surface $\Sigma_{\vec{x}}$ is naturally identified with the Hom complex
\[
Z_{\opn{Ch}(\msc{A})}(\Sigma_{\vec{x}})\cong \Hom^\ast_{\msc{A}}(\1,\opn{Ad}^{\ot g}\otimes(\otimes_{i\in I}x_i^{\pm})).
\]
\end{theoremA}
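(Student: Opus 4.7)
The strategy is to build $Z_{\opn{Ch}(\msc{A})}$ as a ``cochain lift'' of $Z_\msc{A}$: on objects, a $\opn{Ch}(\msc{A})$-marked surface decomposes, degree-wise, into a family of $\msc{A}$-marked surfaces, whose images under $Z_\msc{A}$ are assembled into a total complex via the boundary differentials; on morphisms, a bordism with a $\opn{Ch}(\msc{A})$-labeled graph decomposes, under compatible degree assignments to edges and coupons, into a sum of bordisms with $\msc{A}$-labeled graphs, which are evaluated under $Z_\msc{A}$ and combined with Koszul signs. The promised identification on surfaces will then follow from the corresponding identification for $Z_\msc{A}$.

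On objects, given a surface $\Sigma_{\vec{x}^\bullet}$ with cochain markings $x_1^\bullet,\dots,x_m^\bullet$, I would set
\[
Z_{\opn{Ch}(\msc{A})}(\Sigma_{\vec{x}^\bullet})^n\ =\ \bigoplus_{\vec{k}\in\mbb{Z}^m,\ \sum k_i = n} Z_\msc{A}\bigl(\Sigma_{(x_1^{k_1},\dots,x_m^{k_m})}\bigr),
\]
with differential $d = \sum_i (-1)^{k_1+\cdots+k_{i-1}} Z_\msc{A}(c_i)$, where $c_i$ is the trivial cylinder over $\Sigma$ carrying a single coupon on the $i$-th strand labeled by $d_{x_i}\colon x_i^{k_i}\to x_i^{k_i+1}$. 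The identity $d^2=0$ reduces, via functoriality of $Z_\msc{A}$ along isotopy, to $d_{x_i}^2=0$ together with the graphical commutation of disjoint coupons on distinct strands. Summand-by-summand application of the identification $Z_\msc{A}(\Sigma_{\vec{x}}) \cong \Hom_\msc{A}(\1,\opn{Ad}^{\ot g}\otimes(\otimes_i x_i^\pm))$ of \cite{derenzietal23}, together with the standard formula for Hom complexes, then recovers the asserted isomorphism.

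On morphisms, for an admissible bordism $(M,\Gamma^\bullet)\colon \Sigma_{\vec{x}^\bullet}\to \Sigma'_{\vec{y}^\bullet}$ whose ribbon graph has edges labeled by cochains and coupons by chain maps, I would define
\[
Z_{\opn{Ch}(\msc{A})}(M,\Gamma^\bullet)\ =\ \sum_{\vec{\delta}}\, \epsilon(\vec{\delta})\, Z_\msc{A}(M,\Gamma^{\vec{\delta}}),
\]
summed over degree assignments $\vec{\delta}$ to the edges of $\Gamma^\bullet$ which are compatible at each coupon; here $\Gamma^{\vec{\delta}}$ is the resulting $\msc{A}$-labeled graph, with each coupon relabeled by the appropriate graded component of the original chain map, and $\epsilon(\vec{\delta})$ is the Koszul sign dictated by the braiding of $\opn{Ch}(\msc{A})$. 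That this assignment is a chain map should be verified by sliding a boundary differential through the graph: because each coupon label is itself a chain map and each edge is an internal complex, a differential inserted at the source side can be propagated across coupons and along edges without error terms, and emerges on the target side. Thus $d\circ Z - Z\circ d$ vanishes strand-by-strand.

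The main technical obstacle is the consistent management of Koszul signs in the presence of braidings, duality caps/cups, and the degree-decomposition of coupons with tensor-product sources and targets. Once these signs are tracked uniformly, functoriality under gluing and symmetric monoidality reduce degree-by-degree to the corresponding properties of $Z_\msc{A}$, since composition and tensor product in $\opn{Ch}(\msc{A})$ and in $\opn{Ch}(\opn{Vect})$ are governed by precisely the same sign rules. Admissibility of each $(M,\Gamma^{\vec{\delta}})$ is inherited from admissibility of $(M,\Gamma^\bullet)$, since the relevant projectivity condition on the underlying algebraic data is preserved under passage to graded summands.
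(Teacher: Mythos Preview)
Your outline matches the paper's strategy: decompose a $\opn{Ch}(\msc{A})$-labeled bordism by degree, apply $Z_{\msc{A}}$ to each summand, assemble with signs, and recover the state-space identification from that of $Z_{\msc{A}}$. The paper implements this by first building a graded theory $\underline{Z}:\Bord^{\opn{adm}}_{\msc{A}^{\mbb{Z}}}\to\opn{Vect}^{\mbb{Z}}$ via a separation functor together with a tautological TQFT for $\opn{Vect}^{\mbb{Z}}$, then equipping the restriction to $\Bord_{\opn{Ch}(\msc{A})}$ with a differential; the verification that bordisms give chain maps proceeds, as you suggest, by sliding differentials through the graph, organized as a reduction to elementary pieces (permutations, single coupons, cups, caps).

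The genuine gap in your proposal is the definition of $\epsilon(\vec{\delta})$. Saying it is ``the Koszul sign dictated by the braiding'' is not a definition: for a ribbon graph embedded in an arbitrary $3$-manifold there is no canonical ordering of edges or coupons from which to extract a sign, and one must show that whatever recipe you use is invariant under isotopy and skein moves. The paper's answer is that the sign is the value of the tautological TQFT $Ev_{\opn{Vect}^{\mbb{Z}}}$ on the $\opn{Vect}^{\mbb{Z}}$-component of the separated bordism; because $\opn{Vect}^{\mbb{Z}}$ is symmetric with trivial twist, this evaluation factors through a category of strings that forgets the ambient $3$-manifold and retains only the graph combinatorics, yielding a manifestly well-defined $\pm 1$. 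Without this or an equivalent mechanism, your $Z_{\opn{Ch}(\msc{A})}(M,\Gamma^\bullet)$ is not yet defined on bordism classes. A smaller point: your sliding argument must also handle cups and caps, where the differential meets duality and one needs the identity $d_{x^\ast}|_{(x^l)^\ast}=-(-1)^l(d_x|_{x^l})^\ast$; the paper treats this case separately and it does not reduce to the ``coupons are chain maps'' argument alone.
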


In the above expression each $x_i^{\pm}$ is $x_i$ when the framing at the $i$-th point in $\Sigma$ reproduces the ambient orientation on $\Sigma$, and $x_i^\ast$ otherwise.  The object $\opn{Ad}$ is the canonical end.  In the case where $\msc{A}=\opn{rep}(A)$ for a Hopf algebra $A$, for example, this object is just the adjoint representation $\opn{Ad}=A$.

The most important aspect of the theory \eqref{eq:150} is, at least from our perspective, the fact that it respects certain notions of homotopy equivalence.  Let us take a moment to speak to this point clearly.
\par

Consider $\Sigma_{\vec{x}}$ a surface $\Sigma$ with $n$ marked points, and marking objects $x_1,\dots,x_n$. Let $\Sigma_{\vec{y}}$ be the same underlying marked surface but with the marking objects $x_i$ replaced by new objects $y_i$.  An algebraic homotopy equivalence $\Sigma_{\vec{x}}\to \Sigma_{\vec{y}}$ is a ribbon bordism whose underlying topology is the product $\Sigma\times I$, along with ``straight lines" traveling from the markings on $\Sigma\times\{0\}$ to the respective markings on $\Sigma\times\{1\}$, and in which these straight lines are labeled by homotopy equivalences between the $x_i$ and $y_i$ in $\opn{Ch}(\msc{A})$.  A generic example of such an equivalence might appear as follows:
\[
\scalebox{.7}{
\tikzset{every picture/.style={line width=0.75pt}} 
\begin{tikzpicture}[x=0.75pt,y=0.75pt,yscale=-1,xscale=1]
\draw    (73.5,216) .. controls (19.5,161) and (32.5,77) .. (94.5,64) ;
\draw    (93.5,64) .. controls (118.66,59.43) and (122.32,79.26) .. (116.5,107) .. controls (110.68,134.74) and (114.5,114) .. (105.5,148) .. controls (96.5,182) and (117.5,255) .. (72.5,216) ;
\draw    (70,105) .. controls (78.5,117) and (75.5,134) .. (67.5,144) ;
\draw    (71.5,137) .. controls (65.5,131) and (67.5,118) .. (74.5,115) ;
\draw  [dash pattern={on 0.84pt off 2.51pt}]  (248.5,216) .. controls (194.5,161) and (207.5,77) .. (269.5,64) ;
\draw    (268.5,64) .. controls (293.66,59.43) and (297.32,79.26) .. (291.5,107) .. controls (285.68,134.74) and (289.5,114) .. (280.5,148) .. controls (271.5,182) and (292.5,255) .. (247.5,216) ;
\draw  [dash pattern={on 0.84pt off 2.51pt}]  (245,105) .. controls (253.5,117) and (250.5,134) .. (242.5,144) ;
\draw  [dash pattern={on 0.84pt off 2.51pt}]  (246.5,137) .. controls (240.5,131) and (242.5,118) .. (249.5,115) ;
\draw    (94.5,64) -- (268.5,64) ;
\draw    (91.5,95) -- (264.5,95) ;
\draw    (95.5,147) -- (268.5,147) ;
\draw    (79.5,187) -- (252.5,187) ;
\draw    (93,228) -- (267.5,228) ;
\draw  [fill={rgb, 255:red, 245; green, 166; blue, 35 }  ,fill opacity=1 ] (174.75,95) .. controls (174.75,94.1) and (175.48,93.38) .. (176.38,93.38) .. controls (177.27,93.38) and (178,94.1) .. (178,95) .. controls (178,95.9) and (177.27,96.63) .. (176.38,96.63) .. controls (175.48,96.63) and (174.75,95.9) .. (174.75,95) -- cycle ;
\draw  [fill={rgb, 255:red, 245; green, 166; blue, 35 }  ,fill opacity=1 ] (182,147) .. controls (182,146.1) and (182.73,145.38) .. (183.63,145.38) .. controls (184.52,145.38) and (185.25,146.1) .. (185.25,147) .. controls (185.25,147.9) and (184.52,148.63) .. (183.63,148.63) .. controls (182.73,148.63) and (182,147.9) .. (182,147) -- cycle ;
\draw  [fill={rgb, 255:red, 245; green, 166; blue, 35 }  ,fill opacity=1 ] (166,187) .. controls (166,186.1) and (166.73,185.38) .. (167.63,185.38) .. controls (168.52,185.38) and (169.25,186.1) .. (169.25,187) .. controls (169.25,187.9) and (168.52,188.63) .. (167.63,188.63) .. controls (166.73,188.63) and (166,187.9) .. (166,187) -- cycle ;
\draw [color={rgb, 255:red, 208; green, 2; blue, 27 }  ,draw opacity=1 ] [dash pattern={on 4.5pt off 4.5pt}]  (240.5,37) .. controls (210.95,42.91) and (226.03,52.7) .. (194,75.93) ;
\draw [shift={(192.5,77)}, rotate = 324.78] [color={rgb, 255:red, 139; green, 87; blue, 42 }  ,draw opacity=1 ][line width=0.75]    (10.93,-3.29) .. controls (6.95,-1.4) and (3.31,-0.3) .. (0,0) .. controls (3.31,0.3) and (6.95,1.4) .. (10.93,3.29)   ;
\draw [color={rgb, 255:red, 208; green, 2; blue, 27 }  ,draw opacity=1 ] [dash pattern={on 4.5pt off 4.5pt}]  (242.5,43) .. controls (207.85,56.86) and (239.85,106) .. (197.8,127.36) ;
\draw [shift={(196.5,128)}, rotate = 334.49] [color={rgb, 255:red, 139; green, 87; blue, 42 }  ,draw opacity=1 ][line width=0.75]    (10.93,-3.29) .. controls (6.95,-1.4) and (3.31,-0.3) .. (0,0) .. controls (3.31,0.3) and (6.95,1.4) .. (10.93,3.29)   ;
\draw [color={rgb, 255:red, 208; green, 2; blue, 27 }  ,draw opacity=1 ] [dash pattern={on 4.5pt off 4.5pt}]  (246.5,49) .. controls (223.5,70) and (238.84,84.49) .. (235.5,113) .. controls (232.23,140.94) and (219.34,184.61) .. (195.95,197.26) ;
\draw [shift={(194.5,198)}, rotate = 334.49] [color={rgb, 255:red, 139; green, 87; blue, 42 }  ,draw opacity=1 ][line width=0.75]    (10.93,-3.29) .. controls (6.95,-1.4) and (3.31,-0.3) .. (0,0) .. controls (3.31,0.3) and (6.95,1.4) .. (10.93,3.29)   ;

\draw (77,183) node [anchor=north west][inner sep=0.75pt]  [align=left] {$\displaystyle \bullet $};
\draw (88,143) node [anchor=north west][inner sep=0.75pt]  [align=left] {$\displaystyle \bullet $};
\draw (84,91) node [anchor=north west][inner sep=0.75pt] [align=left] {$\displaystyle \bullet $};
\draw (249,183) node [anchor=north west][inner sep=0.75pt]   [align=left] {$\displaystyle \bullet $};
\draw (263,143) node [anchor=north west][inner sep=0.75pt] [align=left] {$\displaystyle \bullet $};
\draw (257,91) node [anchor=north west][inner sep=0.75pt]  [align=left] {$\displaystyle \bullet $};
\draw (70,82) node [anchor=north west][inner sep=0.75pt]   [align=left] {$\displaystyle x_{1}$};
\draw (72,138) node [anchor=north west][inner sep=0.75pt]   [align=left] {$\displaystyle x_{i}$};
\draw (60,179) node [anchor=north west][inner sep=0.75pt]   [align=left] {$\displaystyle x_{n}$};
\draw (147.04,105) node [anchor=north west][inner sep=0.75pt]  [rotate=-0.28] [align=left] {$\displaystyle \vdots $};
\draw (147.03,152) node [anchor=north west][inner sep=0.75pt]  [rotate=-0.16] [align=left] {$\displaystyle \vdots $};
\draw (268,82) node [anchor=north west][inner sep=0.75pt]   [align=left] {$\displaystyle y_{1}$};
\draw (282,138) node [anchor=north west][inner sep=0.75pt]   [align=left] {$\displaystyle y_{i}$};
\draw (260,180) node [anchor=north west][inner sep=0.75pt]   [align=left] {$\displaystyle y_{n}$};
\draw (169,72) node [anchor=north west][inner sep=0.75pt]   [align=left] {$\displaystyle f_{1}$};
\draw (175,125) node [anchor=north west][inner sep=0.75pt]   [align=left] {$\displaystyle f_{i}$};
\draw (169.63,191.63) node [anchor=north west][inner sep=0.75pt]   [align=left] {$\displaystyle f_{n}$};
\draw (252,30) node [anchor=north west][inner sep=0.75pt]  [color={rgb, 255:red, 208; green, 2; blue, 27 }  ,opacity=1 ] [align=left] {homotopy equiv in $\opn{Ch}(\msc{A})$};

\end{tikzpicture}}.
\]
We consider the collection $W_{\opn{GT};\msc{A}}$ of all such algebraic homotopies in $\Bord_{\msc{A}}$, and the usual collection $W_{\opn{Vect}}$ of homotopy equivalences in $\opn{Ch}(\opn{Vect})$.\footnote{The subscript GT stands for geometrically trivial.}

\begin{theoremA}[\ref{thm:zhtop}]
The field theory $Z_{\opn{Ch}(\msc{A})}$ preserves homotopy equivalence,
\[
Z_{\opn{Ch}(\msc{A})}(W_{\opn{GT};\msc{A}})\ \subseteq\ W_{\opn{Vect}}.
\]
\end{theoremA}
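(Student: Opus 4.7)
The plan is to reduce the claim to the elementary observation that $\Hom^{\ast}_{\msc{A}}(\1,-)$ sends homotopy equivalences in $\opn{Ch}(\msc{A})$ to homotopy equivalences of cochains.

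First I would factor an arbitrary element of $W_{\opn{GT};\msc{A}}$, namely a cylinder $\Sigma\times I$ with $n$ straight worldlines labeled by homotopy equivalences $f_1,\dots,f_n$, as a composition of $n$ elementary cylinders $C_i$, where $C_i$ carries $f_i$ on the $i$-th worldline and identities on all other worldlines. By functoriality of $Z_{\opn{Ch}(\msc{A})}$ and the fact that homotopy equivalences in $\opn{Ch}(\opn{Vect})$ are closed under composition, it suffices to prove that each elementary $Z_{\opn{Ch}(\msc{A})}(C_i)$ is a homotopy equivalence.

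Second, via Theorem \ref{thm: Z_Ch}, I would identify the map $Z_{\opn{Ch}(\msc{A})}(C_i)$ with post-composition by the evident morphism $\opn{id}_{\opn{Ad}^{\ot g}}\ot \opn{id}\ot\cdots\ot f_i^{\pm}\ot\cdots\ot\opn{id}$, that is, with a map
\[
\Hom^\ast_{\msc{A}}\!\left(\1,\opn{Ad}^{\ot g}\ot (\cdots \ot x_i^{\pm}\ot\cdots)\right)\ \longrightarrow\ \Hom^\ast_{\msc{A}}\!\left(\1,\opn{Ad}^{\ot g}\ot (\cdots \ot y_i^{\pm}\ot\cdots)\right).
\]
This identification should fall out of the construction of $Z_{\opn{Ch}(\msc{A})}$ as an enhancement of $Z_{\msc{A}}$: on the identity cylinder the underlying vector space valued theory produces the identity on state spaces, and the passage to the cochain enhancement translates labels on straight worldlines into the corresponding tensor factors of a post-composition on the Hom complex. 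I expect this to be the main obstacle, as it requires careful tracking of the labeling data through the definition of $Z_{\opn{Ch}(\msc{A})}$, possibly via a naturality statement for the identification in Theorem \ref{thm: Z_Ch} in the marking objects.

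Finally, since $f_i$ is a homotopy equivalence in $\opn{Ch}(\msc{A})$, so is $f_i^\ast$, and hence so is the tensor product $\opn{id}\ot\cdots\ot f_i^{\pm}\ot\cdots$, as both dualization and tensoring with a fixed object preserve homotopy equivalences in a rigid braided tensor category of cochains. Post-composition with such an equivalence in $\Hom^\ast_{\msc{A}}(\1,-)$ then yields a homotopy equivalence of cochain complexes: a homotopy inverse is post-composition with the homotopy inverse in $\opn{Ch}(\msc{A})$, and the required chain homotopies on the Hom complex are obtained by post-composition with the witnessing homotopies in $\opn{Ch}(\msc{A})$.
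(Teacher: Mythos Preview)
Your proposal is correct and takes a genuinely different route from the paper. Both arguments begin by factoring into elementary cylinders carrying a single homotopy equivalence, but then diverge. The paper works directly with the definition of the differential $d_{\Sigma_{\vec{x}}}$ from Section~\ref{sec: diff def}: it builds an explicit candidate homotopy $\tilde H$ in $(k\Str_{\opn{Vect}^{\mathbb Z}}^{\red}\boxtimes k\Bord_{\msc{A}}^{\red})^{\opn{add}}$ from the witnessing homotopy $H_j$ in $\opn{Ch}(\msc{A})$, and verifies the homotopy identity by a string-diagram computation using relations (V5)--(V6). You instead invoke the natural identification of state spaces with Hom complexes and reduce to the standard fact that $\Hom^\ast_{\msc{A}}(\1,-)$ preserves homotopy equivalences. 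The naturality you flag as the main obstacle is exactly what Theorems~\ref{thm:states} and~\ref{thm:ind_states} provide, lifted to cochains in Section~\ref{sect:zch_states}, so there is no circularity. Your argument is cleaner and avoids the diagrammatic bookkeeping, but it is specific to the Lyubashenko theory $Z_{\msc{A}}$ where the Hom description of state spaces is available; the paper's computation proves the stronger Theorem~\ref{thm:zhtop}, which applies to the cochain lift $Z^\ast$ of \emph{any} reasonable theory $Z:\Bord_{\msc{A}}\to\opn{Vect}$, where no such identification is assumed.
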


We describe some implications of this result below.

\subsection{Localization and proposed derivation}\label{sect:loc_intro}

Below $W$ denotes either $W_{\opn{GT};\msc{A}}$ in the topological setting, or $W_{\opn{Vect}}$ in the linear setting.  By Theorem \ref{thm:zhtop} we understand that the functor $Z_{\opn{Ch}(\msc{A})}$ preserves homotopy equivalence.  Hence we obtain an induced functor on the localizations
\begin{equation}\label{eq:182}
Z_{\opn{Ch}(\msc{A})}[W^{-1}]:\opn{Bord}^{\opn{adm}}_{\opn{Ch}(\msc{A})}[W^{-1}]\to \opn{Ch}(\opn{Vect})[W^{-1}],
\end{equation}
where both the domain and codomain are now symmetric monoidal \emph{$\infty$-categories} \cite{hinich16}.
\par

On the right-hand side of the above functor, the localization $\opn{Ch}(\opn{Vect})[W^{-1}]$ is the $\infty$-category $\mcl{V}ect$ of dg vector spaces, or homotopical vector spaces.  This category can be constructed explicitly as the dg nerve of the dg category of (bounded) linear cochains, or as the stabilization of the $\infty$-category of $k$-linear Kan complexes, and its homotopy category recovers the usual bounded derived category of vector spaces.
\par

On the left hand side, the forgetful functor $\opn{Bord}^{\opn{adm}}_{\opn{Ch}(\msc{A})}\to \Bord_{\ast}$ to the unlabeled $3$-dimensional ribbon bordism category factors through the localization to give a symmetric monoidal functor $\opn{Bord}^{\opn{adm}}_{\opn{Ch}(\msc{A})}[W^{-1}]\to \Bord_{\ast}$, so that the localization realizes \emph{some} symmetric monoidal $\infty$-category of decorated bordisms.  Furthermore, the fiber of this functor over any marked surface is an $\infty$-category which is isomorphic to a cartesian power of the (bounded) homotopy $\infty$-category $\mcl{K}(\msc{A})$.  So, up to some approximation, the localization is an $\infty$-category
\[
\Bord_{\mcl{K}(\msc{A})}^{\opn{adm}}\approx\opn{Bord}^{\opn{adm}}_{\opn{Ch}(\msc{A})}[W^{-1}]
\]
of bordisms with labels in the homotopy $\infty$-category for $\msc{A}$.  Hence, again up to an approximation, the functor \eqref{eq:182} provides a field theory
\begin{equation}\label{eq:200}
Z_{\mcl{K}(\msc{A})}:\Bord_{\mcl{K}(\msc{A})}^{\opn{adm}}\to \mcl{V}ect
\end{equation}
from an $\infty$-category of $\mcl{K}(\msc{A})$-labeled $3$-dimensional bordisms to the $\infty$-category of homotopical vector spaces.
\par

We would claim further that the theory \eqref{eq:200} can be ``derived" to obtain a topological theory
\begin{equation}\label{eq:208}
Z_{Anticipated}:\Bord^{\opn{nc}}_{\mcl{D}(\msc{A})}\to \mcl{V}ect
\end{equation}
from a category of bordisms with labels in the derived $\infty$-category for $\msc{A}$. An important point here is that, in terms of state spaces for example, the values of the derived theory on unmarked surfaces correspond to values of the non-derived theory on surfaces marked by resolutions of the unit.  So there is a material change in the state spaces.
\par

In the case of a quantum group $\msc{A}=(\Rep_q G)_{\opn{small}}$, one can see that this derivation behaves as expected when compared with the Kapustin-Witten twist discussed in Section \ref{sect:background} above, at least at some superficial level.\footnote{In the case $\msc{A}=(\Rep_q G)_{\opn{small}}$, the twisted theory should also have nontrivial constributions from $G^{\opn{Lan}}$-local systems over manifolds in all dimensions, where $G^{\opn{Lan}}$ is the Langlands dual group to $G$.   We expect the appearance of local systems to come directly from a canonical $G^{\opn{Lan}}$-crossed extension of the category $(\Rep_q G)_{\opn{small}}$ which has yet to appear in the literature, as in Turaev's HTQFTs \cite{turaev10} for example.  Also, the ``nc" superscript in \eqref{eq:208} indicates our use of the noncompact bordism category of $3$-manifolds with nonvanishing outgoing boundary.}  So, from Theorems \ref{thm: Z_Ch} and \ref{thm:zhtop}, one can pursue a clear path towards the kinds of homotopical TQFTs which one anticipates from the physical perspective.
\par

An explicit construction of the theory \eqref{eq:208}, following the outline described above, is work in progress \cite{negron}.

\subsection{Structure}

In Section \ref{sect:cats} we cover categorical backgrounds.  Sections \ref{sect:ribbon}--\ref{sect:ev} recall the construction of the ribbon bordism category $\Bord_{3,2}(\msc{A})=\Bord_{\msc{A}}$ and the introduction of skein relations.
\par

Section \ref{sect:separate} introduces a ``separation functor" which produces more-or-less a functor from $\Bord_{\msc{S}\ot\msc{A}}$ to the product $\Bord_{\msc{S}}\times\Bord_{\msc{A}}$ whenever $\msc{S}$ is semisimple.  Hence we ``separate" surfaces and bordisms labeled by the product category $\msc{S}\otimes\msc{A}$ into an $\msc{S}$-factor and an $\msc{A}$-factor.  We use this functor in Section \ref{sect:t_ext} to produce a base changed TQFT
\[
Z(\msc{S}):\Bord_{\msc{S}\ot\msc{A}}\to \msc{S}
\]
from any reasonably well-behaved TQFT $Z:\Bord_{\msc{A}}\to \opn{Vect}$. In Section \ref{sect:state_spaces} we consider the theory $Z_{\msc{A}}$ from De-Renzi, Gainutdinov, Geer, Patureau-Mirand, and Runkel, and calculate the state spaces for the base changed theory $Z_{\msc{A}}(\msc{S})$ via inner-Homs for the action of $\msc{S}$ on $\msc{S}\ot\msc{A}$.
\par

In Section \ref{sect:outline} we outline the construction of the cochain valued theory $Z_{\opn{Ch}(\msc{A})}$ from the base changed theory $Z_{\msc{A}}(\opn{Vect}^{\mathbb{Z}})$ at the category $\opn{Vect}^{\mathbb{Z}}$ of $\mbb{Z}$-graded vector spaces.  We follow up with an explicit construction of $Z_{\opn{Ch}(\msc{A})}$ in Sections \ref{sec: diff def}--\ref{sect:states_parts}. The state spaces and partition functions for the theory $Z_{\opn{Ch}(\msc{A})}$ are described in Section \ref{sect:states_parts} as well. Finally, in Section \ref{sect:htop} we show that the theory $Z_{\opn{Ch}(\msc{A})}$ respects algebraic homotopy equivalence, and we discuss the localization \eqref{eq:182} in relatively explicit terms.

\subsection{Acknowledgements}

Thanks to Ingo Runkel for suggesting \cite{runkelszegedywatts23} as a model for Theorem \ref{thm:Z_AS}, and as a precursor to Theorem \ref{thm:Z_AS}.  Thanks to Jennifer Brown and Nathan Geer for various conversations about low-dimensional TQFTs which aided in our general framing of the topic.  Czenky was partially supported by Simons Collaboration Grant 99367.  Negron was supported by NSF CAREER Grant DMS-2239698 and Simons Collaboration Grant 999367.

\section{Categories and monoidal structures}
\label{sect:cats}

Let $k$ be an algebraically closed field.  All linear structures are $k$-linear.  For example, vector spaces are $k$-vector spaces and linear categories are $k$-linear categories.

\subsection{Linearizations and additive closure}
\label{sect:add}

For a non-linear category $\mbb{B}$ we let $k\mbb{B}$ denote its free linearization, which has objects $\opn{ob}(\mbb{B})$ and morphisms $\Hom_{k\mbb{B}}(x,y)=k\Hom_{\mbb{B}}(x,y)$.  Composition is the unique bilinear map which recovers composition on $\mbb{B}$ when restricting to the basis $\Hom_{\mbb{B}}(x,y)\subseteq \Hom_{k\mbb{B}}(x,y)$.
\par

For a linear, but not necessarily additive category $\msc{B}$ we let $\msc{B}^{\opn{add}}$ denote the additive category whose objects are maps $v:\mbb{Z}_{>0}\to \opn{ob}(\msc{B})\amalg \{0\}$ which take value $0$ at all but finitely many integers. Given such $v$ we take $v_i=v(i)$.  Morphisms in $\msc{B}^{\opn{add}}$ are given by infinite matrices
\[
\Hom_{\msc{B}^{\opn{add}}}(v,w)=\oplus_{i,j>0}\Hom_{\msc{B}}(v_j,w_i)
\]
and composition is defined via matrix multiplication
\[
f'f=\sum_{ij} (\sum_l f'_{il}f_{lj}).
\]
For $v$ and $w$ in $\msc{B}^{\opn{add}}$ we have the biproduct $v\oplus w$ with
\[
(v\oplus w)_i=\left\{\begin{array}{ll} v_{i/2} & \text{if $i$ is even}\\
w_{(i+1)/2} & \text{if $i$ is odd}.
\end{array}
\right.
\]

By an abuse of notation we write $v=\sum_{i=1}^N v_i$ when $v$ is in $\msc{B}^{\opn{add}}$ with all $v_n=0$ at $n>N$.

\begin{remark}
If $\msc{B}$ is already additive then the inclusion $\msc{B}\to \msc{B}^{\opn{add}}$ is an equivalence.  In particular, each formal sum $\sum_{i=1}^Nv_i$ in $\msc{B}^{\opn{add}}$ is isomorphic to the internal sum $\oplus_{i=1}^N v_i$ in $\msc{B}\subseteq \msc{B}^{\opn{add}}$ via the apparent maps.
\end{remark}

\subsection{Ribbon monoidal categories}

In this text we consider both linear and nonlinear monoidal categories.  Almost all monoidal categories are braided and rigid.  For basics on monoidal categories we refer the reader to any standard reference, e.g.\ \cite{bakalovkirillov01,egno15}.
\par

A twist on a braided monoidal categroy $\msc{A}$, with braiding $c$, is a choice of an endomorphism $\theta$ of the identity functor $id_{\msc{A}}$ for which
\[
\theta_{x\ot y} = c_{y,x}c_{x,y}(\theta_x\ot \theta_y).
\]
In the event that $\msc{A}$ is additionally rigid, a ribbon structure on $\msc{A}$ is a choice of a twist $\theta$ for which $(\theta_x)^{\ast} = \theta_{x^{\ast}}$.  We note that any \emph{symmetric} tensor category becomes ribbon under the trivial twist $\theta_x=id_x$.

Any ribbon category is naturally pivotal as well, with pivotal structure $\rho:id_{\msc{A}}\to (-)^{\ast \ast}$ given by the composite $u\theta$, where $u$ is the Drinfeld isomorphism \cite[Sections 8.9, 8.10]{egno15}.  Via the pivotal structure, the left dual $x^\ast$ of any object is naturally a right dual as well
\[
{^\ast x}\overset{\rho_x}\to (({^{\ast}x})^{\ast})^{\ast}=x^{\ast}.
\]
Hence the duality structure on $\msc{A}$ can be expressed via a single anti-equivalence $-^{\ast}:\msc{A}^{\opn{op}}\to \msc{A}$.

\subsection{Tensor categories and modularity}

By a tensor category we mean a rigid, linear, abelian monoidal category which is furthermore locally finite and in which the unit object is simple \cite{egno15}. (A locally finite category is an essentially small, linear, Noetherian and Artinian abelian category in which all Hom spaces are finite-dimensional.) A tensor category $\msc{A}$ is called finite if it has only finitely many simples and enough projectives.  Equivalently, $\msc{A}$ is finite if it is identified, as a linear abelian category, with representations for a finite-dimensional algebra.  We let $\opn{Vect}$ denote the tensor category of finite-dimensional vector spaces.  We always consider $\opn{Vect}$ along with its (unique) symmetric monoidal structure.
\par

For any braided monoidal category $\msc{A}$ we have the M\"uger center $Z_2(\msc{A})$ in $\msc{A}$.  This is the full subcategory of objects $z$ in $\msc{A}$ at which the double braiding is trivial, $c_{-,z}c_{z,-}=id_{z\ot-}$.

\begin{definition}[\cite{shimizu19}]
A finite braided tensor category $\msc{A}$ is called non-degenerate if its M\"uger center is trivial, $\opn{Vect}\cong Z_2(\msc{A})$.  A (finite) modular tensor category is a finite, non-degenerate, ribbon tensor category.
\end{definition}

\subsection{Internal tensor products of categories}
\label{sect:int_prod}

We are interested in two kinds of ``tensor products" for categories. The first is a kind of internal tensor product for arbitrary linear categories.

\begin{definition}
For linear categories $\msc{A}$ and $\msc{B}$, take $\msc{A}\boxtimes\msc{B}$ to be the linear category whose objects are the same as those of $\msc{A}\times \msc{B}$ and whose morphisms are given by tensoring over $k$,
\[
\Hom_{\msc{A}\boxtimes\msc{B}}((a_0,b_0),(a_1,b_1)):=\Hom_{\msc{A}}(a_0,a_1)\otimes_k\Hom_{\msc{B}}(b_0,b_1).
\]
\end{definition}

The category $\msc{A}\boxtimes \msc{B}$ is the universal linear category with a functor $\omega:\msc{A}\times \msc{B}\to \msc{M}$ for which the maps on hom spaces
\[
\Hom_{\msc{A}}(a_0,a_1)\times \Hom_{\msc{B}}(b_0,b_1)\to \Hom_{\msc{M}}(\omega(a_0,b_0),\omega(a_1,b_1))
\]
are bilinear.  When $\msc{A}$ and $\msc{B}$ are linear (braided) monoidal, the product $\msc{A}\boxtimes \msc{B}$ inherits a unique linear (braided) monoidal structure for which the structure map $\msc{A}\times\msc{B}\to \msc{A}\boxtimes\msc{B}$ is a (braided) monoidal functor.  This monoidal structure appears in the expected way
\[
(a,b)\otimes(a',b'):=(a\ot a',b\ot b').
\]

\subsection{Kelley products}

Given locally finite linear abelian categories $\msc{A}$ and $\msc{B}$ we also consider the Kelly \cite{kelly82}, or Deligne tensor product.

\begin{definition}
For locally finite linear abelian categories $\msc{A}$ and $\msc{B}$, the Kelley tensor product $\msc{A}\ot\msc{B}=\msc{A}\ot_{\opn{Vect}}\msc{B}$ is the universal locally finite linear abelian category which receives a functor
\[
\opn{univ}:\msc{A}\times \msc{B}\to \msc{A}\ot \msc{B}
\]
which is right exact in each factor, and which comes equipped with an associative natural isomorphism
\[
\opn{univ}(a\ot_kv,b)\cong \opn{univ}(a,v\ot_kb)
\]
at each finite-dimensional vector space $v$.
\end{definition}

To see that the Kelly product exists, we note that such linear categories can be written as corepresentations $\msc{A}\cong \opn{corep}(A)$, $\msc{B}\cong \opn{corep}(B)$, for coalgebras $A$ and $B$ \cite[Theorem 5.1]{takeuchi77}.  Now for such corepresentation categories one can check that representations of the tensor product of coalgebras realizes the Kelley tensor product,
\[
\msc{A}\otimes\msc{B}=\opn{corep}(A\ot_kB).
\]
In this case the universal functor is just given by tensoring corepresentations $\opn{univ}(a,b)=a\ot b$.  See for example \cite[Lemma 3.2.3]{coulembierflake}, or \cite{douglasspsnyder19} in the case where one of $\msc{A}$ or $\msc{B}$ is finite.

\begin{remark}
Note that indization provides an equivalence between locally finite linear abelian categories and compactly generated presentable linear abelian categories whose subcategories of compact objects are locally finite. We use this equivalence implicitly in order to speak of the Kelly tensor product of locally finite categories (cf.\ \cite{kelly82,chirvasitujfreyd13}).
\end{remark}

Again, when $\msc{A}$ and $\msc{B}$ are (braided) tensor categories the Kelly product $\msc{A}\ot\msc{B}$ admits a unique (braided) tensor structure under which the universal map from $\msc{A}\times\msc{B}$ is (braided) monoidal.

\section{Ribbon graphs in manifolds}
\label{sect:ribbon}

The next two sections are dedicated to a relatively careful presentation of our ribbon bordism category.  When compared with other works on this topic, e.g.\ \cite{derenzietal23,brownhaioun,haioun25}, we are extraordinarily verbose.  So, by all indications, the familiar reader might only skim their contents.

\subsection{Embedding ribbon graphs}
\label{sect:M_rib}

Let $M$ be an oriented $3$-manifold with boundary.  By a (finite, topological) graph we mean a compact space $\Gamma$ with a finite collection of points $\opn{vert}(\Gamma)\subseteq \Gamma$ for which the complement $\Gamma-\opn{vert}(\Gamma)$ is a finite collection of open, oriented intervals with specified smooth structure.  The edges in $\Gamma$ are the intervals $\Gamma-\opn{vert}(\Gamma)$.  When we speak of the edges as a set, we mean the connected components $\opn{edge}(\Gamma)=\pi_0(\Gamma-\opn{vert}(\Gamma))$.
\par

A ribbon graph in $M$ is a continuous embedding $\gamma:\Gamma\to M$ from a directed graph which is smooth away from the vertices, and which comes equipped with a smooth framing along the edges and vertices independently.  (By a framing we mean a trivialization of the pullback of the tangent bundle for $M$.)  Equivalently, we choose a map to the third power of the tangent bundle $\gamma^{fr}:\Gamma\to TM\times_MTM\times_MTM$ with nonvanishing determinant, in which case the projections $\Gamma\to TM$ recover the $x$, $y$, and $z$-vectors for the given framing.  We require that the orientation provided by the framing along $\Gamma$ agrees with the ambient orientation on $M$, and that the $y$-vector for the framing is tangent to $\gamma$ and pointed in the positive direction of travel along $\Gamma$.
\par

We institute a certain smoothness condition at the vertices as well.  Namely, at a given vertex $v$, we require that the $x$ and $y$-vectors along each edge $e$ converge to vectors in the $(x,y)$-plane for the framing at $v$, that the $z$-vectors along each edge converge to the $z$-vector at $v$, and finally that the $y$-vectors along each edge converge to vectors $\vec{y}_e$ with nontivial $y$-component for the framing at $v$.  (This is to say, edges are not allowed to enter along lines which are tangent to the $x$-axis at $v$.)  We require furthermore that the entry vectors $\vec{y}_e$ all point in distinct directions in the $(x,y)$-plane, so that all edges enter at distinct angles.
\begin{equation*}	\scalebox{0.6}{
\tikzset{every picture/.style={line width=0.75pt}} 
\begin{tikzpicture}[x=0.75pt,y=0.75pt,yscale=-1,xscale=1]
\draw  [draw opacity=0][fill={rgb, 255:red, 126; green, 211; blue, 33 }  ,fill opacity=0.41 ] (103.68,171) -- (349.05,171) -- (560.87,211) -- (315.5,211) -- cycle ;
\draw [color={rgb, 255:red, 208; green, 2; blue, 27 }  ,draw opacity=1 ]   (332.28,191) -- (332.49,143) ;
\draw [shift={(332.5,141)}, rotate = 90.26] [color={rgb, 255:red, 208; green, 2; blue, 27 }  ,draw opacity=1 ][line width=0.75]    (10.93,-3.29) .. controls (6.95,-1.4) and (3.31,-0.3) .. (0,0) .. controls (3.31,0.3) and (6.95,1.4) .. (10.93,3.29)   ;
\draw  [color={rgb, 255:red, 208; green, 2; blue, 27 }  ,draw opacity=1 ][fill={rgb, 255:red, 208; green, 2; blue, 27 }  ,fill opacity=1 ] (329.03,191) .. controls (329.03,189.21) and (330.48,187.75) .. (332.28,187.75) .. controls (334.07,187.75) and (335.53,189.21) .. (335.53,191) .. controls (335.53,192.79) and (334.07,194.25) .. (332.28,194.25) .. controls (330.48,194.25) and (329.03,192.79) .. (329.03,191) -- cycle ;
\draw    (62.5,216) .. controls (104.5,187) and (251.5,190) .. (332.28,191) ;
\draw [shift={(201.77,191.41)}, rotate = 177.79] [fill={rgb, 255:red, 0; green, 0; blue, 0 }  ][line width=0.08]  [draw opacity=0] (10.72,-5.15) -- (0,0) -- (10.72,5.15) -- (7.12,0) -- cycle    ;
\draw    (99.5,239) .. controls (116.5,210) and (251.5,190) .. (332.28,191) ;
\draw [shift={(205.98,201.31)}, rotate = 350.95] [fill={rgb, 255:red, 0; green, 0; blue, 0 }  ][line width=0.08]  [draw opacity=0] (10.72,-5.15) -- (0,0) -- (10.72,5.15) -- (7.12,0) -- cycle    ;
\draw    (254.5,262) .. controls (220.5,246) and (239.5,223) .. (284.5,207) .. controls (329.5,191) and (285.5,206) .. (332.28,191) ;
\draw [shift={(249.41,224.62)}, rotate = 139.36] [fill={rgb, 255:red, 0; green, 0; blue, 0 }  ][line width=0.08]  [draw opacity=0] (10.72,-5.15) -- (0,0) -- (10.72,5.15) -- (7.12,0) -- cycle    ;
 (10.72,-5.15) -- (0,0) -- (10.72,5.15) -- (7.12,0) -- cycle    ;
\draw    (332.28,191) .. controls (378.37,185.44) and (444.5,149) .. (445.5,125) ;
\draw [shift={(401.7,167.2)}, rotate = 152.33] [fill={rgb, 255:red, 0; green, 0; blue, 0 }  ][line width=0.08]  [draw opacity=0] (10.72,-5.15) -- (0,0) -- (10.72,5.15) -- (7.12,0) -- cycle    ;
\draw    (332.28,191) .. controls (400.5,191) and (607.5,209) .. (596.5,232) ;
\draw [shift={(474.36,200.55)}, rotate = 186.24] [fill={rgb, 255:red, 0; green, 0; blue, 0 }  ][line width=0.08]  [draw opacity=0] (10.72,-5.15) -- (0,0) -- (10.72,5.15) -- (7.12,0) -- cycle    ;
\draw    (332.28,191) .. controls (372.93,185.49) and (435.08,188.01) .. (488.52,186.75) .. controls (541.85,185.49) and (586.51,180.48) .. (592.5,160) ; (10.72,-5.15) -- (0,0) -- (10.72,5.15) -- (7.12,0) -- cycle    ;
\draw [shift={(538.07,183.78)}, rotate = 352.84] [fill={rgb, 255:red, 0; green, 0; blue, 0 }  ][line width=0.08]  [draw opacity=0] (10.72,-5.15) -- (0,0) -- (10.72,5.15) -- (7.12,0) -- cycle    ;
\draw [color={rgb, 255:red, 65; green, 117; blue, 5 }  ,draw opacity=0.65 ][line width=1.5]  [dash pattern={on 1.69pt off 2.76pt}]  (226.36,171) -- (438.19,211) ;

\draw (327,202) node [anchor=north west][inner sep=0.75pt]  [color={rgb, 255:red, 208; green, 2; blue, 27 }  ,opacity=1 ] [align=left] {$\displaystyle v$};
\draw (418.5,220) node [anchor=north west][inner sep=0.75pt]  [color={rgb, 255:red, 74; green, 124; blue, 21 }  ,opacity=1 ] [align=left] [font=\Large] {$(x,z)$-plane};
\draw (188.86,217.53) node [anchor=north west][inner sep=0.75pt]  [rotate=-10.41] [align=left] {$\displaystyle \cdots $};
\draw (314.5,118) node [anchor=north west][inner sep=0.75pt]  [color={rgb, 255:red, 208; green, 2; blue, 27 }  ,opacity=1 ] [align=left] [font=\Large] {$z$-vec};
\draw (444.23,146.14) node [anchor=north west][inner sep=0.75pt]  [rotate=-12.19] [align=left] {$\displaystyle \cdots $};
\draw (58,186) node [anchor=north west][inner sep=0.75pt]   [align=left] {$\displaystyle e_{1}$};
\draw (85,211) node [anchor=north west][inner sep=0.75pt]   [align=left] {$\displaystyle e_{2}$};
\draw (216,241) node [anchor=north west][inner sep=0.75pt]   [align=left] {$\displaystyle e_{n}$};
\draw (418,115) node [anchor=north west][inner sep=0.75pt]   [align=left] {$\displaystyle e'_{1}$};
\draw (545,151) node [anchor=north west][inner sep=0.75pt]   [align=left] {$\displaystyle e'_{m-1}$};
\draw (587,198) node [anchor=north west][inner sep=0.75pt]   [align=left] {$\displaystyle e'_{m}$};

\end{tikzpicture}}
\end{equation*}

\begin{remark}
For loops at $v$, i.e.\ edges $e:(0,1)\to M$ which converge to $v$ at both $0$ and $1$, we really have two tangent vectors associated to $e$.  One is defined by moving towards $0$ and one by moving towards $1$.  Both of these vectors should again lie in the $(x,y)$-plane and point in distinct directions.
\end{remark}

Finally, we require a compatibility at the boundary of $M$ in that the intersection $\Gamma\times_{M}\partial M$ should consist only of $1$-valent vertices in $\Gamma$.  The framings at such vertices should be such that the $(x,z)$-plane is tangent to the boundary itself.  By an abuse of notation we refer to the vertices at the boundary in $M$ as boundary vertices in $\Gamma$, and we refer to all other vertices as internal vertices.

\subsection{Embedded ribbons with labels}

Consider an oriented $3$-manifold $M$ and ribbon graph $\gamma:\Gamma\to M$ as above.  We note first that the implicit framing along $\Gamma$ separates edges attached to a given vertex $v$ into incoming and outgoing edges, in a spatial rather than combinatorial sense, and also provides a linear ordering on these edges.  Let us elaborate.

We have the source and target functions on the edges
\[
s,t:\opn{edge}(\Gamma)\to \opn{vert}(\Gamma)
\]
from which we understand the attached edges at a vertex, with multiplicity, as the set $s^{-1}(v)\amalg t^{-1}(v)$.  For an edge $e_i$ attached to $v$, say of the form $e_i:(1-\epsilon,1]\to M$, the tangent vector as we move towards $1$ either has a positive $y$-component for the framing at $v$, or a negative $y$-component.  In the first case $e_i$ enters from the negative $y$ half-space, and in the latter case it enters from the positive half-space.  Similarly, as we travel towards $0$ in an attached edge of the form $e_j:[0,\epsilon)\to M$ we enter either from the negative or positive $y$ half-space at $v$.

We refer to those edges $e_i$ in the set $s^{-1}(v)\amalg t^{-1}(v)$ which enter from the negative half-space as \emph{spatially incoming} and those which enter from the positive half-space as \emph{spatially outgoing}.  Those edges which are spatially incoming are ordered linearly by moving counterclockwise from the negative $x$ quadrant to the positive $x$ quadrant in the negative $y$ half-space.  Those which are spatially outgoing are linearly ordered by moving clockwise, again from the negative $x$ quadrant to the positive $x$ quadrant in the positive $y$ half-space,
\[
\scalebox{.8}{
\tikzset{every picture/.style={line width=0.75pt}}        
\begin{tikzpicture}[x=0.75pt,y=0.75pt,yscale=-1,xscale=1]
\draw [color={rgb, 255:red, 155; green, 155; blue, 155 }  ,draw opacity=0.5 ]   (76,133) -- (254.5,133) ;
\draw  [color={rgb, 255:red, 0; green, 0; blue, 0 }  ,draw opacity=0 ][fill={rgb, 255:red, 184; green, 233; blue, 134 }  ,fill opacity=.5] (75.25,96.1) .. controls (75.25,82.51) and (86.26,71.5) .. (99.85,71.5) -- (230.65,71.5) .. controls (244.24,71.5) and (255.25,82.51) .. (255.25,96.1) -- (255.25,169.9) .. controls (255.25,183.49) and (244.24,194.5) .. (230.65,194.5) -- (99.85,194.5) .. controls (86.26,194.5) and (75.25,183.49) .. (75.25,169.9) -- cycle ;
\draw  [fill={rgb, 255:red, 0; green, 0; blue, 0 }  ,fill opacity=1 ] (161.88,133) .. controls (161.88,131.14) and (163.39,129.63) .. (165.25,129.63) .. controls (167.11,129.63) and (168.63,131.14) .. (168.63,133) .. controls (168.63,134.86) and (167.11,136.38) .. (165.25,136.38) .. controls (163.39,136.38) and (161.88,134.86) .. (161.88,133) -- cycle ;
\draw    (165.25,133) -- (70.5,205) ;
\draw    (165.25,133) -- (104.5,205) ;
\draw    (165.25,133) -- (254.5,205) ;
\draw    (80.5,65) -- (165.25,133) ;
\draw    (165.25,133) -- (235.5,65) ;
\draw    (116.5,65) -- (165.25,133) ;
\draw  [dash pattern={on 0.84pt off 2.51pt}]  (130.5,151) .. controls (153.16,182.52) and (178.24,182.99) .. (202.4,151.46) ;
\draw [shift={(203.5,150)}, rotate = 126.59] [color={rgb, 255:red, 0; green, 0; blue, 0 }  ][line width=0.75]    (10.93,-3.29) .. controls (6.95,-1.4) and (3.31,-0.3) .. (0,0) .. controls (3.31,0.3) and (6.95,1.4) .. (10.93,3.29)   ;
\draw  [dash pattern={on 0.84pt off 2.51pt}]  (129.5,113) .. controls (151.17,86.4) and (178.66,86.01) .. (200.51,110.85) ;
\draw [shift={(201.5,112)}, rotate = 229.76] [color={rgb, 255:red, 0; green, 0; blue, 0 }  ][line width=0.75]    (10.93,-3.29) .. controls (6.95,-1.4) and (3.31,-0.3) .. (0,0) .. controls (3.31,0.3) and (6.95,1.4) .. (10.93,3.29)   ;

\draw (72,43) node [anchor=north west][inner sep=0.75pt]   [align=left] {$e'_1$};
\draw (108,43) node [anchor=north west][inner sep=0.75pt]   [align=left] {$e'_2$};
\draw (227,43) node [anchor=north west][inner sep=0.75pt]   [align=left] {$e'_m$};
\draw (61,210) node [anchor=north west][inner sep=0.75pt]   [align=left] {$e_1$};
\draw (95,210) node [anchor=north west][inner sep=0.75pt]   [align=left] {$e_2$};
\draw (248,210) node [anchor=north west][inner sep=0.75pt]   [align=left] {$e_n$};
\draw (158,218) node [anchor=north west][inner sep=0.75pt]   [align=left] {$\cdots$};
\draw (160,48) node [anchor=north west][inner sep=0.75pt]   [align=left] {$\cdots$};
\end{tikzpicture}}.
\]

\begin{remark}
Whether an edge is incoming or outgoing in terms of the source and target maps, i.e.\ in terms of the combinatorics of $\Gamma$, has nothing to do with its nature as a spatially incoming or outgoing edge for a given vertex.  As the name suggests, this spacial orientation has to do with the geometry of the given diagram $\gamma:\Gamma\to M$.
\end{remark}

\begin{definition}
Let $\msc{A}$ be a ribbon monoidal category.  An $\msc{A}$-labeling on a ribbon graph in a $3$-manifold $M$ is the following data:
\begin{itemize}
\item The choice of a framed ribbon graph $\gamma:\Gamma\to M$.\vspace{1mm}
\item The choice of an object $x_e$ in $\msc{A}$ over each edge $e$ in $\Gamma$.\vspace{1mm}
\item For each internal vertex $v$, with linearly ordered spatially incoming and outgoing edges $\{e_1,\dots, e_n\}$ and $\{e'_1,\dots, e'_m\}$ respectively, the choice of a map
\[
f_v:x_1^{\pm}\ot\dots\ot x_n^{\pm}\to y_{1}^{\pm}\ot\dots\ot y_{m}^{\pm}.
\]
Here each $x_i^{\pm}$ is $x_{e_i}$ if $e_i$ is combinatorially incoming and $x_{e_i}^{\ast}$ if $e_i$ is combinatorially outgoing.  Similarly, each $y_j^{\pm}$ is $y_{e'_j}$ if $e'_j$ is combinatorially outgoing and $y_{e'_j}^\ast$ if $e'_j$ is combinatorially incoming.
\end{itemize}
\end{definition}

So, the sign in the expressions $x^{\pm}_i$ and $y^{\pm}_j$ just measures the agreement, or disagreement, between the combinatorial and spacial orientations of the given edge at $v$.  Also, in the above expressions each $z_1\ot\cdots \ot z_{t-1}\ot z_t$ explicitly denotes the product with all parentheses arranged to the right $z_1\ot(\cdots (z_{t-1}\ot z_t)\cdots)$.

\begin{convention}
We generally denote a ribbon graph with labeling data
\begin{equation}
T=\{\ \gamma^{fr}:\Gamma\to TM\times_{M}TM\times_{M}TM,\ \msc{A}\text{-data}\ \}
\end{equation}
simply by an arrow $T:\Gamma\to M$.
\end{convention}

\subsection{Homotopies between ribbon graphs}

Consider two ribbon graphs in a fixed $3$-manifold $\gamma:\Gamma\to M$ and $\gamma':\Gamma'\to M$.  A homotopy from $\gamma$ to $\gamma'$ is a choice of an isomorphism $\phi:\Gamma\to \Gamma'$ and a continuous map
\[
h:I\times \Gamma\to M
\]
which is equipped with smooth framings along the vertices $I\times \opn{vert}(\Gamma)$ and edges $I\times (\Gamma-\opn{vert}(\Gamma))$, i.e.\ a trivialization of the pullback of the tangent bundle, for which the following hold:
\begin{enumerate}
\item[(a)] At each time $0\leq t\leq 1$, $h_t:\Gamma\to M$ is a ribbon graph in $M$.
\item[(b)] At times $0$ and $1$ we recover the ribbon graphs $\gamma$ and $\gamma'\phi$ respectively.
\item[(c)] $h$ is constant in some neighborhood of the boundary $\partial \Gamma$.
\end{enumerate}

For $\msc{A}$-labeled ribbon graphs $T$ and $T'$, a homotopy is simply a homotopy on the underlying unlabeled ribbon graphs for which the isomorphism $\phi:\Gamma\to \Gamma'$ preserves the given labels.

\begin{remark}
We note that $h$ needn't be smooth, though its evaluation at any given time $t$ should be smooth, at least away from the vertices.
\end{remark}

\begin{remark}
By considering straight line homotopies, one sees that the equivalence relations provided by homotopy equivalences and isotopy equivalences on ribbon graphs are the same.
\end{remark}

\begin{remark}
Up to homotopy, the framing on an embedded graph $\gamma:\Gamma\to M$ is only determined up to scaling by $\mbb{R}_{>0}$.  Hence the framing can be considered as a map to the third power $\Gamma\to PTM\times_MPTM\times_MPTM$ of the sphere bundle
\[
PTM=(TM-\text{zero section})/\mbb{R}_{>0}
\]
where the three constituent vectors are in generic position.
\end{remark}

\subsection{Comparison with literal ribbons}

One has the more traditional notion of a ``literal" ribbon graph in a $3$-manifold, as presented in Turaev's text \cite{turaev10} for example.  Consider a $3$-manifold $M$, and for the purposes of comparison let us choose a metric on $M$.
\par

Consider an unlabeled ribbon graph $\gamma:\Gamma\to M$ in the sense above.  Via the Riemannian structure, the $x$-vector in the framing along each edge $e$ in $\gamma$ projects onto the normal bundle along $e$ to provide a parallel curve traveling $\epsilon$-distance away from $e$, for some small positive function $\epsilon$.  The straight line homotopy between $e$ and the translated edge now defines a ribbon and the $z$-vector along $e$ orients the ribbon.  We can do this along all edges in $\Gamma$ to replace all edges with oriented ribbons, and by taking $\epsilon$ to $0$ towards the boundary of each edge these ribbons can be taken to be disjoint.
\par

One now replaces each vertex with a sufficiently small coupon to produce from $\gamma$ a corresponding literal ribbon graph, and we note that this ribbon graph is uniquely defined up to isotopy.  (One can only expand or shrink the parameter $\epsilon$, though one should be somewhat careful around the coupons.)  In this way one obtains a map from our ribbon graphs to literal ribbon graphs which is well-defined up to isotopy.  One can check furthermore that homotopy equivalent ribbon graphs, in our sense, map to isotopy equivalent literal ribbon graphs.  One also observes that any literal ribbon graph determines a ribbon graph in our sense, though we leave it to the interested reader to sketch the details.

\section{Marked surfaces and ribbon bordisms}
\label{sect:bords}

We now construct our category $\Bord_{3,2}(\msc{A})=\Bord_{\msc{A}}$ of $\msc{A}$-marked (anomalous) ribbon bordisms in dimension $3$.

\subsection{Aside: Smoothing curves}

The following smoothing result is used throughout the text.

\begin{proposition}[Smoothing]
Let $M$ be a smooth manifold with boundary.  Suppose $\gamma_0:[0,1]\to M$ is a continuous embedding which is smooth away from a finite collection of points $x_1,\dots,x_n$ in the interior.  For any choice of arbitrarily small neighborhoods $N_i\subseteq (0,1)$ around the $x_i$, there is a smooth map $\gamma:[0,1]\to M3$ which agrees with $\gamma_0$ on the complement of the $N_i$, and which is homotopic to $\gamma_0$.  Furthermore, universally across all choices of the $N_i$, the smoothing $\gamma$ is unique up to a sequence of homotopies which are constant away from a compact subset in $(0,1)$.
\end{proposition}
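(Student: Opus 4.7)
The plan is to perform a local smoothing near each singular point $x_i$ by mollification in a coordinate chart, use straight-line homotopy in the chart to produce the homotopy to $\gamma_0$, and then argue uniqueness by a contractibility argument on spaces of smooth embeddings with prescribed boundary behavior.

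First I would shrink each $N_i$ so that the $\overline{N_i}\subseteq(0,1)$ are pairwise disjoint and so that $\gamma_0(\overline{N_i})$ lies inside a coordinate chart $\phi_i\colon U_i\to \mathbb{R}^{\dim M}$. Since $\gamma_0\colon [0,1]\to M$ is a continuous embedding from a compact space, the images of the pairwise disjoint closed subintervals $\overline{N_i}$ and $[0,1]\setminus\bigcup \opn{int}(N_i)$ lie at a positive distance $d>0$ from one another. Inside each $N_i$ I choose a bump function $\rho_i$ which equals $1$ on a smaller subinterval $N_i'\Subset N_i$ and vanishes near $\partial N_i$, and I interpolate via $\rho_i$ between $\gamma_0$ (near $\partial N_i$) and the convolution $(\phi_i\circ\gamma_0)\ast\eta_\epsilon$ of $\phi_i\gamma_0$ with a smooth mollifier $\eta_\epsilon$ of support radius $\epsilon$ (on $N_i'$), then pull back to $M$ via $\phi_i^{-1}$. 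For $\epsilon$ sufficiently small the resulting curve $\gamma$ is smooth on all of $[0,1]$, agrees with $\gamma_0$ outside $\bigcup N_i$, and lies within $d/3$ of $\gamma_0$ in the sup norm. Injectivity of $\gamma$ is then automatic since images of different $N_i$'s and of their complement remain pairwise separated; the embedding condition on the derivative can be arranged by choosing the mollification fine enough that the tangent vectors of $\gamma$ remain close to those of $\gamma_0$ wherever the latter is smooth, and by a linear correction supported in $N_i'$ if the chord of $\gamma_0$ across $N_i$ forces a preferred direction.

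For the homotopy I take, in each chart, the straight line $h_t|_{N_i}=\phi_i^{-1}\bigl((1-t)\phi_i\gamma_0+t\phi_i\gamma\bigr)$, extended by $h_t=\gamma_0=\gamma$ outside $\bigcup N_i$. For sufficiently small $\epsilon$ and sufficiently small $N_i$ these line segments remain inside $U_i$ and remain disjoint across different $N_i$'s, so each $h_t$ is a continuous embedding of $[0,1]$ into $M$ and $h$ is the desired homotopy from $\gamma_0$ to $\gamma$, constant on the complement of $\bigcup N_i$.

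For uniqueness, suppose $\gamma$ and $\gamma'$ are two smoothings corresponding to neighborhood systems $\{N_i\}$ and $\{N_i'\}$. Set $K_i=\overline{N_i\cup N_i'}$; then both $\gamma$ and $\gamma'$ are smooth embeddings which agree with $\gamma_0$ on $[0,1]\setminus\bigcup K_i$. In each chart $U_i$ the two resulting arcs $\phi_i\gamma|_{K_i}$ and $\phi_i\gamma'|_{K_i}$ are smooth embeddings of a closed interval into $\mathbb{R}^{\dim M}$ which coincide near $\partial K_i$ and which can be joined by a path in the space of such smooth embeddings whenever $\dim M\geq 2$, since this space is nonempty and path connected (being an open subset of the convex set of smooth maps with fixed boundary jet, and pairwise agreement on a neighborhood of the endpoints). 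Pulling back yields a homotopy supported in the compact set $K_i\subseteq (0,1)$ connecting the two smoothings, which gives the required sequence of compactly supported homotopies.

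The main obstacle will be preserving the embedding property after smoothing; the two things one must control are (a) global injectivity, which I handle by shrinking $N_i$ first so that $\gamma_0(\overline{N_i})$ is separated from $\gamma_0$ on the complement, forcing any $C^0$-small perturbation to remain injective, and (b) transverse nonvanishing of the derivative of the smoothed curve, which I handle by choosing the mollifier scale small compared to the smooth behavior of $\gamma_0$ on $\partial N_i$ and by inserting an affine correction supported deep inside $N_i'$ when needed. Once these two points are secured, the rest of the argument is genuinely formal.
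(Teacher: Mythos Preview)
Your proposal is correct and follows the same strategy as the paper: work in a chart around each singular point, replace the bad segment by a smooth arc agreeing with $\gamma_0$ near the boundary, and connect to $\gamma_0$ by the straight-line homotopy in the chart. The paper's proof is considerably more terse—it simply says to pick any smooth curve in a small ball agreeing with $\gamma_0$ near the boundary, without your mollification apparatus or injectivity bookkeeping—and does not explicitly address the uniqueness claim at all, so your path-connectedness argument for uniqueness actually supplies more than the paper does.
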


\begin{proof}
This follows by a general theory of smoothing, however one can just produce $\gamma$ by hand.  Around each singular point choose a small ball $B$ in $M$ which contains the given singularity and has connected preimage in $(0,1)$.  Choose now any smooth curve $\gamma_B$ in $B$ which agrees with $\gamma_0$ in a neighborhood of the boundary, and take the straight line homotopy between $\gamma_0$ and $\gamma_B$ in $B$.  One performs this procedure at all singular points, then replaces the relevant segments of $\gamma_0$ with the $\gamma_B$, to produce the smoothed curve $\gamma$.
\end{proof}

\subsection{Marked surfaces and ribbon bordisms}

A marked surface is an oriented surface $\Sigma$ equipped with a collection of distinct framed points $\vec{p}:I\to \Sigma$, where $I$ is a finite linearly ordered set.  Here, again, by a framing we mean a trivialization of the tangent bundle at the given points.  For a ribbon monoidal category $\msc{A}$, an $\msc{A}$-labeled surface is an oriented surface $\Sigma$ along with framed labels by objects in $\msc{A}$,
\[
\vec{x}:I\to \Sigma\times \opn{obj}(\msc{A}).
\]
Note that, unlike in the $3$-dimensional setting, we do not require any compatibility between the framings along $I$ and the orientation on $\Sigma$. We generally let $\Sigma_{\vec{x}}$ denote an $\msc{A}$-marked surface as above.

\begin{remark}
The ordering on the marking set $I$ is formally irrelevant.  However, it is practically useful.
\end{remark}

A bordism between $\msc{A}$-marked surfaces $\Sigma_{\vec{x}}$ and $\Sigma'_{\vec{y}}$ is a $3$-manifold $M$ equipped with an oriented diffeomorphism
\[
\Sigma\amalg \bar{\Sigma}'\overset{\cong}\to \partial M
\]
and an embedded $\msc{A}$-labeled ribbon graph $T:\Gamma\to M$ whose intersection with the boundary recovers the given markings on $\Sigma$ and $\Sigma'$.  (Here $\bar{\Sigma}'$ is just $\Sigma'$ with the opposite orientation.)
\par

Given ribbon bordisms $(M,T):\Sigma_{\vec{x}}\to \Sigma'_{\vec{y}}$ and $(M',T'):\Sigma'_{\vec{y}}\to \Sigma''_{\vec{z}}$ we can glue along the boundary $\Sigma'$ to obtain a new ribbon bordism
\begin{equation}\label{eq:256}
(M\amalg_{\Sigma'}M',T''):\Sigma_{\vec{x}}\to \Sigma''_{\vec{z}}.
\end{equation}
This new bordism has smooth structure obtained by choosing collars along the boundaries in $M$ and $M'$, and the embedded ribbon graph $T''$ is obtained by gluing the two ribbons $T:\Gamma\to M$ and $T':\Gamma'\to M'$ along the boundary $\Sigma'$, then smoothing any singularities which are generated at this boundary.  We also place an identity labeled vertex on any contiguous loop which is generated in this process.  We note that the ribbon bordism $(M\amalg_{\Sigma'}M',T'')$ is well-defined up to a diffeomorphism and a homotopy on the underlying ribbon graph.

\subsection{The marked bordism category}

Fix a ribbon monoidal category $\msc{A}$. We say two ribbon bordisms $(M,T)$ and $(M',T'):\Sigma_{\vec{x}}\to \Sigma'_{\vec{y}}$ between $\msc{A}$-marked surfaces are \emph{equivalent} if there is a diffeomorphism $\psi:M\to M'$ on the underlying oriented manifolds which fixes the boundary and a homotopy between $\msc{A}$-labeled ribbon graphs $h:\psi\circ T\to T'$.  We define the non-anomalous $\msc{A}$-labeled bordism category
\[
\uBord_{\msc{A}}=\uBord_{3,2}(\msc{A})
\]
to be the category of $\msc{A}$-labeled surfaces whose morphisms are equivalence classes $[M,T]:\Sigma_{\vec{x}}\to \Sigma'_{\vec{y}}$ of ribbon bordisms between $\Sigma_{\vec{x}}$ and $\Sigma'_{\vec{y}}$.  Composition is given by gluing along the relevant boundary, as in \eqref{eq:256}.
\par

We consider the category $\uBord_{\msc{A}}$ along with its natural symmetric monoidal structure given by disjoint union.

\subsection{Introducing the anomaly}

Let $\Bord=\Bord_{3,2}$ be the category of unmarked, undecorated, anomalous oriented $3$-bordisms \cite[Section IV.9]{turaev10}.  Objects in this category are oriented surfaces $\Sigma$ with a specified choice of Lagrangian subspace $L_{\Sigma}\subseteq H_1(\Sigma,\mbb{R})$ relative to the intersection form, and morphisms are oriented bordisms $M:\Sigma\to \Sigma'$ equipped with a specified integer $n_M$ called the signature defect.  We consider such bordisms $(M,n_M)$ up to diffeomorphisms which preserve the defect, and this signature defect transforms under composition via the Maslov index
\[
[M',n_{M'}]\circ [M,n_M]=[M'\amalg_{\Sigma'} M,n_M+n_{M'}+\mu(M_\ast(L_{\Sigma}),L_{\Sigma'},(M')^\ast(L_{\Sigma'}))],
\]
where $M_\ast$ and $(M')^\ast$ are as in \cite[Section 4.3]{derenzietal23}.  We formally require $n_{M}=0$ whenever $M$ is the empty manifold as well.

\begin{remark}
The signature defect records the signature of the intersection form for an imagined bounding 4-manifold.  See \cite[Section IV.4.1]{turaev10}.
\end{remark}

We define the anomalous ribbon bordism category associated to a ribbon monoidal category $\msc{A}$ to be the category of ribbon bordisms with independent anomaly data.  So, the $\msc{A}$-decorations and anomaly data do not interact at all.  Formally, if we let $\uBord$ denote the nonanomalous undecorated bordism category, we have the two forgetful functors $\Bord\to \uBord$ and $\uBord_{\msc{A}}\to \uBord$ and define the anomalous ribbon bordism category to be the corresponding fiber product.

\begin{definition}
Given a ribbon monoidal category $\msc{A}$, the anomalous $3$-dimensional ribbon bordism category is the fiber product
\[
\Bord_{\msc{A}}:=\Bord\times_{\uBord}\uBord_{\msc{A}}.
\]
\end{definition}

Though the anomalous setting is the formally correct setting in which to work, for us at least, the anomaly data plays literally no role in our analysis.  So, from now on we make no further mention of the anomaly data and take for granted that every ``surface" comes equipped with a Lagrangian, every ``$3$-manifold" comes equipped with a signature defect, and all compositions of bordisms add signature defects appropriately.

\begin{remark}
Note that any non-anomalous TQFT $Z:\uBord_{\msc{A}}\to \msc{S}$, with symmetric target $\msc{S}$, defines an anomalous TQFT by restricting along the forgetful functor
\[
\Bord_{\msc{A}}\to \uBord_{\msc{A}}\to \msc{S}.
\]
Obviously this anomalous TQFT remains nonanomalous, in the sense that it is constant along variations in the anomaly data.
\end{remark}

From any ribbon tensor functor $F:\msc{A}\to \msc{A}'$ we have an induced symmetric monoidal functor $F_{\ast}:\Bord_{\msc{A}}\to \Bord_{\msc{A}'}$. Of special utility is the functor $\Bord_{\msc{A}}\to \Bord_{\ast}$ induced by the unique map $\msc{A}\to \ast$ to the terminal category.  The category $\Bord_{\ast}$ is, up to superficial issues of notation, the category of anomalous bordisms with unmarked ribbons and the functor $\Bord_{\msc{A}}\to \Bord_{\ast}$ just forgets $\msc{A}$-markings.

\begin{definition}
Given a ribbon bordism $[M,T]$ in $\Bord_{\msc{A}}$ the underlying geometry for $[M,T]$ is the image of this bordism under the forgetful functor to $\Bord_{\ast}$.
\end{definition}

\section{Evaluation functors and skein relations}
\label{sect:ev}

We recall the construction of evaluation functors for ribbon tensor categories. Such functors take ribbon bordisms between $\msc{A}$-marked disks and return values in the marking category $\msc{A}$ itself.  In the special case where $\msc{A}$ is symmetric, such evaluation functors don't distinguish between over-crossings and under-crossings, nor do they recognize ribbon twists.  Hence the corresponding evaluation functor factors through a $2$-dimensional projection of the aforementioned bordism category in this case. We explain the details below.

\subsection{RT evaluation}

Let $\msc{A}$ be a ribbon monoidal category and fix $D\subseteq \mbb{R}^2$ the unit disk.  Notationally, we denote the coordinates on this disk by $x$ and $z$-axes rather than $x$ and $y$-axes.
\par

Consider the category $\opn{Rib}_{\msc{A}}$ whose objects are disks $D\subseteq \mbb{R}^2$ with $\msc{A}$-labeled points lying only on the $x$-axis, and framings provided by either the ambient framing on $\mbb{R}^2$, or the negative $x$ and positive $z$-vectors. Morphisms are equivalence classes of $\msc{A}$-labeled ribbon bordisms $M:D_{\vec{x}}\to D_{\vec{y}}$ whose underlying $3$-manifold (with corners) is a cylinder $\opn{Cyl}_{ab}=D\times [a, b]$.  Here $a<b$, and we consider $M$ up to oriented diffeomorphism on the linear factor $[a,b]$ and homotopy on the embedded ribbon graph.  (So, functionally, we can consider all bordisms as occurring in the standard cylinder $\opn{Cyl}=\opn{Cyl}_{01}$.) We note that the markings on $D_{\vec{x}}$ are naturally ordered via the ordering on the real numbers, and composition is provided by stacking cylinders, i.e.\ gluing along the boundary.
\par

The category $\opn{Rib}_{\msc{A}}$ is rigid monoidal, with product provided by embedding the disjoint union $D\amalg D$ into $D$ as the disks
\[
\{(a,b):(a+{1/2})^2+b^2\leq {1/2}\}\ \ \text{and}\ \ \{(a,b):(a-1/2)^2+b^2\leq 1/2\},
\]
in order.  Rotating embedded disks clockwise endows $\opn{Rib}_{\msc{A}}$ with a braided structure, and a subsequent ribbon structure is given by applying a full clockwise rotation.

\begin{theorem}[\cite{turaev10}]\label{thm:eval_A}
Let $\msc{A}$ be any ribbon monoidal category.  There is a unique ribbon monoidal functor
\[
ev_{\msc{A}}:\opn{Rib}_{\msc{A}}\to \msc{A}
\]
which satisfies the following:
\begin{enumerate}
\item[(a)] The value on a marked disk $D_{\vec{x}}$ is the product $x^{\pm}_1\ot x^{\pm}_2\ot\cdots\ot x^{\pm}_n$, where $x^{\pm}_i$ is $x_i$ if the framing at $i$ reproduces the orientation on $D$ and $x^\ast_i$ otherwise.\vspace{1mm}
\item[(b)] The tensor compatibility $ev_{\msc{A}}(D_{\vec{x}})\ot ev_{\msc{A}}(D_{\vec{y}})\to ev_{\msc{A}}(D_{\vec{x}}\ot D_{\vec{y}})$ is provided by the associator on $\msc{A}$.
\item[(c)] The value on a  bordism of the form
\[
	\scalebox{0.7}{
\tikzset{every picture/.style={line width=0.75pt}} 
\begin{tikzpicture}[x=0.75pt,y=0.75pt,yscale=-1,xscale=1]
\draw  [color={rgb, 255:red, 0; green, 0; blue, 0 }  ,draw opacity=0.4 ] (210.51,153.96) -- (398,153.64) .. controls (409.19,153.62) and (418.31,183.82) .. (418.37,221.1) .. controls (418.44,258.38) and (409.42,288.62) .. (398.24,288.64) -- (210.75,288.96) .. controls (199.57,288.98) and (190.45,258.78) .. (190.38,221.5) .. controls (190.32,184.22) and (199.33,153.98) .. (210.51,153.96) .. controls (221.7,153.94) and (230.82,184.15) .. (230.88,221.43) .. controls (230.95,258.71) and (221.93,288.95) .. (210.75,288.96) ;
\draw  [dash pattern={on 0.84pt off 2.51pt}]  (398.24,288.64) .. controls (370.99,277.67) and (370.99,162.67) .. (398,153.64) ;
\draw  [fill={rgb, 255:red, 0; green, 0; blue, 0 }  ,fill opacity=1 ] (209,169) .. controls (209,167.9) and (209.9,167) .. (211,167) .. controls (212.1,167) and (213,167.9) .. (213,169) .. controls (213,170.1) and (212.1,171) .. (211,171) .. controls (209.9,171) and (209,170.1) .. (209,169) -- cycle ;
\draw  [fill={rgb, 255:red, 0; green, 0; blue, 0 }  ,fill opacity=1 ] (209,191) .. controls (209,189.9) and (209.9,189) .. (211,189) .. controls (212.1,189) and (213,189.9) .. (213,191) .. controls (213,192.1) and (212.1,193) .. (211,193) .. controls (209.9,193) and (209,192.1) .. (209,191) -- cycle ;
\draw  [fill={rgb, 255:red, 0; green, 0; blue, 0 }  ,fill opacity=1 ] (209,269) .. controls (209,267.9) and (209.9,267) .. (211,267) .. controls (212.1,267) and (213,267.9) .. (213,269) .. controls (213,270.1) and (212.1,271) .. (211,271) .. controls (209.9,271) and (209,270.1) .. (209,269) -- cycle ;
\draw  [fill={rgb, 255:red, 0; green, 0; blue, 0 }  ,fill opacity=1 ] (209,212) .. controls (209,210.9) and (209.9,210) .. (211,210) .. controls (212.1,210) and (213,210.9) .. (213,212) .. controls (213,213.1) and (212.1,214) .. (211,214) .. controls (209.9,214) and (209,213.1) .. (209,212) -- cycle ;
\draw  [fill={rgb, 255:red, 0; green, 0; blue, 0 }  ,fill opacity=1 ] (395,168) .. controls (395,166.9) and (395.9,166) .. (397,166) .. controls (398.1,166) and (399,166.9) .. (399,168) .. controls (399,169.1) and (398.1,170) .. (397,170) .. controls (395.9,170) and (395,169.1) .. (395,168) -- cycle ;
\draw  [fill={rgb, 255:red, 0; green, 0; blue, 0 }  ,fill opacity=1 ] (395,195) .. controls (395,193.9) and (395.9,193) .. (397,193) .. controls (398.1,193) and (399,193.9) .. (399,195) .. controls (399,196.1) and (398.1,197) .. (397,197) .. controls (395.9,197) and (395,196.1) .. (395,195) -- cycle ;
\draw  [fill={rgb, 255:red, 0; green, 0; blue, 0 }  ,fill opacity=1 ] (395,265) .. controls (395,263.9) and (395.9,263) .. (397,263) .. controls (398.1,263) and (399,263.9) .. (399,265) .. controls (399,266.1) and (398.1,267) .. (397,267) .. controls (395.9,267) and (395,266.1) .. (395,265) -- cycle ;
\draw    (211,169) .. controls (270.5,162) and (267.75,225.25) .. (305.25,220.25) ;
\draw    (213,191) .. controls (264.5,181) and (259.75,228.25) .. (305.25,220.25) ;
\filldraw (305.25,220.25) circle (1pt);
\draw    (211,212) .. controls (235.03,213.8) and (235.52,209.88) .. (247.5,211) .. controls (259.48,212.12) and (286.75,227.25) .. (305.25,220.25) ;
\draw    (211,269) .. controls (273.5,262) and (267.5,232) .. (305.25,220.25) ;
\draw    (305.25,220.25) .. controls (350.5,206) and (353.5,166) .. (397,168) ;
\draw    (305.25,220.25) .. controls (351.5,210) and (369.5,188) .. (397,195) ;
\draw    (305.25,220.25) .. controls (352.5,229) and (338.5,260) .. (397,265) ;

\draw (204,230) node [anchor=north west][inner sep=0.75pt]   [align=left] {$\displaystyle \vdots $};
\draw (390,219) node [anchor=north west][inner sep=0.75pt]   [align=left] {$\displaystyle \vdots $};
\draw (176,161) node [anchor=north west][inner sep=0.75pt]  [color={rgb, 255:red, 0; green, 0; blue, 0 }  ,opacity=0.4 ] [align=left] {$\displaystyle x_{1}$};
\draw (176,183) node [anchor=north west][inner sep=0.75pt]  [color={rgb, 255:red, 0; green, 0; blue, 0 }  ,opacity=0.4 ] [align=left] {$\displaystyle x_{2}$};
\draw (175,204) node [anchor=north west][inner sep=0.75pt]  [color={rgb, 255:red, 0; green, 0; blue, 0 }  ,opacity=0.4 ] [align=left] {$\displaystyle x_{3}$};
\draw (174,261) node [anchor=north west][inner sep=0.75pt]  [color={rgb, 255:red, 0; green, 0; blue, 0 }  ,opacity=0.4 ] [align=left] {$\displaystyle x_{n}$};
\draw (416,160) node [anchor=north west][inner sep=0.75pt]  [color={rgb, 255:red, 0; green, 0; blue, 0 }  ,opacity=0.4 ] [align=left] {$\displaystyle y_{1}$};
\draw (416,187) node [anchor=north west][inner sep=0.75pt]  [color={rgb, 255:red, 0; green, 0; blue, 0 }  ,opacity=0.4 ] [align=left] {$\displaystyle y_{2}$};
\draw (415,257) node [anchor=north west][inner sep=0.75pt]  [color={rgb, 255:red, 0; green, 0; blue, 0 }  ,opacity=0.4 ] [align=left] {$\displaystyle y_{m}$};
\draw (299,195) node [anchor=north west][inner sep=0.75pt]   [font=\Large] [align=left] {$\displaystyle f$};

\end{tikzpicture}},
\]
is the map $f:x^{\pm}_1\ot\dots\ot x^{\pm}_n\to y^{\pm}_1\ot\dots\ot y^{\pm}_m$.  (Here the $x$-vectors in the framings along edges are constant and the internal vertex inherits its framing from that of the cylinder.)
\item[(d)] The value on ribbon bordisms of the form
\[
\scalebox{0.75}{
\tikzset{every picture/.style={line width=0.75pt}} 
\begin{tikzpicture}[x=0.75pt,y=0.75pt,yscale=-1,xscale=1]
\draw  [color={rgb, 255:red, 0; green, 0; blue, 0 }  ,draw opacity=0.4 ] (161.86,171.19) -- (264.04,171.03) .. controls (273.63,171.01) and (281.45,196.92) .. (281.52,228.89) .. controls (281.58,260.87) and (273.85,286.8) .. (264.26,286.81) -- (162.09,286.97) .. controls (152.49,286.99) and (144.67,261.08) .. (144.61,229.11) .. controls (144.55,197.13) and (152.27,171.2) .. (161.86,171.19) .. controls (171.46,171.17) and (179.28,197.08) .. (179.34,229.05) .. controls (179.4,261.03) and (171.68,286.96) .. (162.09,286.97) ;
\draw  [dash pattern={on 0.84pt off 2.51pt}]  (262.84,286.81) .. controls (235.13,276.14) and (236.95,180.49) .. (262.62,171.03) ;
\draw  [fill={rgb, 255:red, 0; green, 0; blue, 0 }  ,fill opacity=1 ] (161.29,196.39) .. controls (161.29,195.46) and (162.1,194.71) .. (163.1,194.71) .. controls (164.1,194.71) and (164.92,195.46) .. (164.92,196.39) .. controls (164.92,197.32) and (164.1,198.07) .. (163.1,198.07) .. controls (162.1,198.07) and (161.29,197.32) .. (161.29,196.39) -- cycle ;
\draw  [fill={rgb, 255:red, 0; green, 0; blue, 0 }  ,fill opacity=1 ] (162.19,262.67) .. controls (162.19,261.74) and (163.01,260.99) .. (164.01,260.99) .. controls (165.01,260.99) and (165.82,261.74) .. (165.82,262.67) .. controls (165.82,263.6) and (165.01,264.35) .. (164.01,264.35) .. controls (163.01,264.35) and (162.19,263.6) .. (162.19,262.67) -- cycle ;
\draw    (163.1,196.39) .. controls (246.12,196.39) and (246.12,262.67) .. (164.01,262.67) ;
\draw  [color={rgb, 255:red, 0; green, 0; blue, 0 }  ,draw opacity=0.4 ] (410.02,170.21) -- (511.56,170.03) .. controls (521.24,170.01) and (529.13,196.14) .. (529.19,228.38) .. controls (529.25,260.62) and (521.46,286.78) .. (511.79,286.79) -- (410.24,286.97) .. controls (400.57,286.99) and (392.68,260.86) .. (392.61,228.62) .. controls (392.55,196.38) and (400.34,170.22) .. (410.02,170.21) .. controls (419.69,170.19) and (427.58,196.32) .. (427.64,228.56) .. controls (427.7,260.8) and (419.91,286.95) .. (410.24,286.97) ;
\draw  [dash pattern={on 0.84pt off 2.51pt}]  (510.56,286.81) .. controls (482.92,276.04) and (484.73,179.57) .. (510.34,170.03) ;
\draw  [fill={rgb, 255:red, 0; green, 0; blue, 0 }  ,fill opacity=1 ] (510.63,195.61) .. controls (510.63,194.67) and (511.44,193.92) .. (512.44,193.92) .. controls (513.44,193.92) and (514.25,194.67) .. (514.25,195.61) .. controls (514.25,196.54) and (513.44,197.3) .. (512.44,197.3) .. controls (511.44,197.3) and (510.63,196.54) .. (510.63,195.61) -- cycle ;
\draw  [fill={rgb, 255:red, 0; green, 0; blue, 0 }  ,fill opacity=1 ] (507.91,262.46) .. controls (507.91,261.52) and (508.72,260.77) .. (509.72,260.77) .. controls (510.72,260.77) and (511.53,261.52) .. (511.53,262.46) .. controls (511.53,263.39) and (510.72,264.15) .. (509.72,264.15) .. controls (508.72,264.15) and (507.91,263.39) .. (507.91,262.46) -- cycle ;
\draw    (510.63,195.61) .. controls (426.9,197.3) and (426.9,262.46) .. (511.53,262.46) ;

\draw (133,189) node [anchor=north west][inner sep=0.75pt]  [color={rgb, 255:red, 0; green, 0; blue, 0 }  ,opacity=0.4 ] [align=left] {$\displaystyle x$};
\draw (133,255) node [anchor=north west][inner sep=0.75pt]  [color={rgb, 255:red, 0; green, 0; blue, 0 }  ,opacity=0.4 ] [align=left] {$\displaystyle x$};
\draw (533.12,188.46) node [anchor=north west][inner sep=0.75pt]  [color={rgb, 255:red, 0; green, 0; blue, 0 }  ,opacity=0.4 ] [align=left] {$\displaystyle x$};
\draw (529.4,254.31) node [anchor=north west][inner sep=0.75pt]  [color={rgb, 255:red, 0; green, 0; blue, 0 }  ,opacity=0.4 ] [align=left] {$\displaystyle x$};
\draw (328,204) node [anchor=north west][inner sep=0.75pt]  [font=\Large] [align=left] {and};
\end{tikzpicture}
}
\]
are the evaluation and coevaluation maps $x^{\pm}\ot x^{\mp}\to \1$ and $\1\to x^{\pm}\ot x^{\mp}$, respectively.
\end{enumerate}
\end{theorem}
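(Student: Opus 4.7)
The plan is to follow the classical Reshetikhin-Turaev strategy: give a presentation of $\opn{Rib}_{\msc{A}}$ by elementary generators and relations, check that conditions (a)-(d) together with ribbon-monoidality force a consistent assignment on generators, and then verify that every relation is sent to an identity in $\msc{A}$.

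First I would invoke the standard presentation theorem for colored ribbon tangle categories (see \cite{turaev10}, Chapter I, Theorem 4.5, which applies after translating between our ribbon graphs and literal ribbon graphs as outlined in Section \ref{sect:ribbon}). It asserts that every morphism in $\opn{Rib}_{\msc{A}}$ is equivalent, up to the homotopy relation built into our bordism category, to a vertical composition of horizontal tensor products of the following elementary pieces: identity strands, positive and negative crossings $c^{\pm}_{x,y}$, positive and negative framing twists $\theta^{\pm}_x$, oriented evaluation and coevaluation cups and caps at each object, and coupons labeled by morphisms $f:x_1^{\pm}\ot\cdots\ot x_n^{\pm}\to y_1^{\pm}\ot\cdots\ot y_m^{\pm}$. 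The relations among these generators are precisely those of a ribbon monoidal category: bifunctoriality of $\ot$, the pentagon and triangle axioms, naturality of $c$ and $\theta$, the hexagons for the braiding, the twist identity $\theta_{x\ot y}=c_{y,x}c_{x,y}(\theta_x\ot \theta_y)$, the ribbon compatibility $\theta_{x^\ast}=\theta_x^\ast$, and the rigidity zigzag identities.

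With such a presentation in hand, I would define $ev_{\msc{A}}$ on generators by using the corresponding structure morphisms in $\msc{A}$: cups and caps go to the evaluation and coevaluation via (d), coupons go to their labels via (c), and the requirement that $ev_{\msc{A}}$ be a ribbon monoidal functor forces crossings and twists to go to the braiding $c^{\pm}$ and twist $\theta^{\pm}$ of $\msc{A}$. Condition (a) pins down the value on objects, and (b) pins down the monoidal structure morphism. Uniqueness of $ev_{\msc{A}}$ is then immediate, since any ribbon monoidal functor satisfying (a)-(d) agrees with $ev_{\msc{A}}$ on each elementary generator and hence on all of $\opn{Rib}_{\msc{A}}$.

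The main obstacle is well-definedness: verifying that each relation is mapped to an identity in $\msc{A}$. This translation is the heart of the classical theorem and is handled relation by relation. Reidemeister II corresponds to invertibility and naturality of the braiding; Reidemeister III corresponds to the hexagons; the framing/ribbon move corresponds to the twist axiom together with $\theta_{x^\ast}=\theta_x^\ast$; the straightening (zigzag) move corresponds to the rigidity triangle identities; naturality of coupons under sliding across crossings, twists, and duals follows from the naturality of the corresponding structure maps in $\msc{A}$; and the monoidal coherence relations for horizontal composition reduce to Mac Lane coherence. Each of these verifications is a diagrammatic manipulation whose algebraic image is immediate from the ribbon axioms, and the details can be imported essentially verbatim from \cite{turaev10}.
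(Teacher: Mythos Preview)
Your proposal is correct and follows the same underlying Reshetikhin--Turaev strategy as the paper: establish uniqueness via the generation of $\opn{Rib}_{\msc{A}}$ by elementary pieces, and existence by checking relations. The one genuine point of difference is how the non-strict case is handled. You propose to work directly with a possibly non-strict $\msc{A}$, verifying the pentagon, triangle, and remaining Mac Lane coherence relations by hand among the relation checks. The paper instead cites Turaev only for the strict case, and then reduces the general case to it by strictification: one passes to $\msc{E}=\opn{End}_{\text{mod-}\msc{A}}(\msc{A})$ via the equivalence $F(x)=x\ot-$, applies $ev_{\msc{E}}$ there, and transports the resulting functor back along $F$, with the associator supplying the natural isomorphism $ev_{\msc{E}}\circ\opn{Rib}_F\cong F\circ ev_{\msc{A}}$. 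Your route is more self-contained but requires carrying the associators through every relation check; the paper's strictification trick offloads all of that coherence bookkeeping to Mac Lane's theorem in one stroke, at the cost of the extra transport step.
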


As before, we arrange parenthesis on the right $z_1\ot\cdots\ot z_{t-1}\ot z_t= z_1\ot (\cdots(z_{t-1}\ot z_t)\cdots)$.  We also use the ribbon structure to identify left and right duals.

\begin{proof}
Supposing existence, uniqueness follows from the fact that the category $\opn{Rib}_{\msc{A}}$ is generated under composition by maps as in (i) and (ii) in conjunction with the ribbon twists and braidings \cite[Theorem 2.5]{turaev10}.  We are left to establish existence.
\par

In the case where $\msc{A}$ is strict one can define $ev_{\msc{A}}$ directly as in \cite{turaev10}.  In the case of general $\msc{A}$ take $\msc{E}=\opn{End}_{\text{mod-}\msc{A}}(\msc{A})$ and consider the strictification $F:\msc{A}\overset{\sim}\to\msc{E}$, $F(x)=x\ot-$. We obtain $ev_{\msc{A}}$ as the unique functor which takes the prescribed values on objects and completes the diagram
\[
\xymatrix{
\opn{Rib}_{\msc{A}}\ar@{..>}[rr]^{ev_{\msc{A}}}\ar[d]_{\opn{Rib}_F} & & \msc{A}\ar[d]^F\\
\opn{Rib}_{\msc{E}}\ar[rr]_{ev_{\msc{E}}} & & \msc{E},
}
\]
with implicit natural isomorphism $ev_{\msc{A}}\opn{Rib}_F\cong F ev_{\msc{A}}$ provided by the associator.
\end{proof}

\begin{definition}
Let $\msc{A}$ be ribbon monoidal.  The evaluation functor for $\msc{A}$ is the ribbon tensor functor $ev_{\msc{A}}:\opn{Rib}_{\msc{A}}\to \msc{A}$ from Theorem \ref{thm:eval_A}.
\end{definition}

\subsection{(Universal) Skein relations}
\label{sect:sk_rel}

Let $\msc{A}$ be a ribbon category.  Two $\msc{A}$-labeled ribbon graphs $T:\Gamma\to M$ and $T':\Gamma'\to M$ are said to be related by a skein relation if there is an oriented embedding $\opn{Cyl}\to M$ from the cylinder which satisfies the following:
\begin{enumerate}
\item[(a)] $T$ and $T'$ intersect $\opn{Cyl}$ transversely along the edges of the respective graphs and along the $x$-axes of the bounding disks for the cylinder.
\item[(b)] The intersections of these graphs with the boundary of $\opn{Cyl}$ agree as collections of $\msc{A}$-labeled framed points on the bounding disks.
\item[(b)] $ev_{\msc{A}}(\opn{Cyl}\times_M\Gamma)=ev_{\msc{A}}(\opn{Cyl}\times_M\Gamma')$.
\item[(c)] For $O_{\Gamma}$ and $O_{\Gamma'}$ the intersections of $\Gamma$ and $\Gamma'$ with the interior of the cylinder $\opn{Cyl}$ in $M$, respectively, there is an isomorphism of directed graphs
\[
\Gamma-O_{\Gamma}\cong \Gamma'-O_{\Gamma'}.
\]
\item[(d)] Under such an isomorphism, $T'|_{\Gamma'-O_{\Gamma'}}$ and $T|_{\Gamma-O_{\Gamma}}$ agree (up to homotopy and smoothing in $M-\opn{Cyl}$).
\end{enumerate}

In short, $T'$ and $T$ are related by a skein relation if $T'$ is obtained from $T$ by replacing some isolated segment of $T$ with a different segment which evaluates to the same morphism in $\msc{A}$ under $ev_{\msc{A}}$.
\par

Though the types of skein relations which are possible rely strongly on the underlying properties of the category $\msc{A}$, there are some universal skein relations which are of interest.

\begin{definition}
Let $D_n$ be the object in $\opn{Rib}_\ast$ provided by the disk with $n$ equidistant points on the axis, all of which are equipped with the framing inherited by the ambient surface.  (Here $\ast$ is the terminal category, with a single object and single morphism.)  The framed braid group $fr\opn{Br}_n$ is the subgroup of $\opn{Aut}_{\Rib_\ast}(D_n)$ generated by the braid group and the twists applied at the various markings.
\end{definition}

We have the normal subgroup $\mbb{Z}^n\subseteq fr\opn{Br}_n$ provided by the twist, and the quotient $fr\opn{Br}_n/\mbb{Z}^n$ recovers the usual braid group.  A framed braid in $\opn{Rib}_{\msc{A}}$ is a map $\xi:D_{\vec{x}}\to D_{\vec{x}}$ whose underlying ribbon diagram has no internal vertices, and whose image along the map $\opn{Rib}_{\msc{A}}\to \opn{Rib}_\ast$ is a framed braid in $\Rib_\ast$.
\par

We have three universal skein relations which are of special interest for us:
\begin{enumerate}
\item[U1](Flipping vertices) Replacing the framing $(x,y,z)$ and labeling morphism $f$, at a vertex $v$ in an $\msc{A}$-labeled ribbon graph $T:\Gamma\to M$, with the framing $(x,-y,z)$ and labeling morphism $f^\ast$,
\[
\scalebox{.8}{
\tikzset{every picture/.style={line width=0.75pt}} 
\begin{tikzpicture}[x=0.75pt,y=0.75pt,yscale=-1,xscale=1]
\draw    (60.15,50.84) -- (133.33,116.95) ;
\draw    (133.33,116.95) -- (206.5,50.84) ;
\draw    (133.33,116.95) -- (68.91,184.16) ;
\draw    (133.33,116.95) -- (195.5,184.16) ;
\filldraw (133.33,116.95) circle (1pt);
\draw    (83.5,50.84) -- (133.33,116.95) ;
\draw    (133.33,116.95) -- (95.5,185) ;
\draw [color={rgb, 255:red, 208; green, 2; blue, 27 }  ,draw opacity=1 ]   (133.33,116.95) -- (133.49,94.19) ;
\draw [shift={(133.5,92.19)}, rotate = 90.4] [color={rgb, 255:red, 208; green, 2; blue, 27 }  ,draw opacity=1 ][line width=0.75]    (10.93,-3.29) .. controls (6.95,-1.4) and (3.31,-0.3) .. (0,0) .. controls (3.31,0.3) and (6.95,1.4) .. (10.93,3.29)   ;
\draw    (284.15,50) -- (357.33,116.11) ;
\draw    (357.33,116.11) -- (430.5,50) ;
\draw    (357.33,116.11) -- (292.91,183.31) ;
\draw    (357.33,116.11) -- (419.5,183.31) ;
\filldraw (357.33,116.11) circle (1pt);
\draw    (307.5,50) -- (357.33,116.11) ;
\draw    (357.33,116.11) -- (319.5,184.16) ;
\draw [color={rgb, 255:red, 208; green, 2; blue, 27 }  ,draw opacity=1 ]   (357.33,116.11) -- (357.49,139.97) ;
\draw [shift={(357.5,141.97)}, rotate = 269.61] [color={rgb, 255:red, 208; green, 2; blue, 27 }  ,draw opacity=1 ][line width=0.75]    (10.93,-3.29) .. controls (6.95,-1.4) and (3.31,-0.3) .. (0,0) .. controls (3.31,0.3) and (6.95,1.4) .. (10.93,3.29)   ;
\draw [color={rgb, 255:red, 208; green, 2; blue, 27 }  ,draw opacity=1 ]   (133.33,116.95) -- (157.5,117.46) ;
\draw [shift={(159.5,117.5)}, rotate = 181.21] [color={rgb, 255:red, 208; green, 2; blue, 27 }  ,draw opacity=1 ][line width=0.75]    (10.93,-3.29) .. controls (6.95,-1.4) and (3.31,-0.3) .. (0,0) .. controls (3.31,0.3) and (6.95,1.4) .. (10.93,3.29)   ;
\draw [color={rgb, 255:red, 208; green, 2; blue, 27 }  ,draw opacity=1 ]   (357.33,116.11) -- (381.5,116.61) ;
\draw [shift={(383.5,116.66)}, rotate = 181.21] [color={rgb, 255:red, 208; green, 2; blue, 27 }  ,draw opacity=1 ][line width=0.75]    (10.93,-3.29) .. controls (6.95,-1.4) and (3.31,-0.3) .. (0,0) .. controls (3.31,0.3) and (6.95,1.4) .. (10.93,3.29)   ;

\draw (124.45,56.94) node [anchor=north west][inner sep=0.75pt]  [font=\scriptsize] [align=left] {$\dotsc $};
\draw (125.71,149.93) node [anchor=north west][inner sep=0.75pt] [font=\scriptsize]  [align=left] {$\dotsc $};
\draw (100.72,108.79) node [anchor=north west][inner sep=0.75pt]   [align=left] {$f$};
\draw (240,107.73) node [anchor=north west][inner sep=0.75pt]   [align=left] {=};
\draw (348.45,64.53) node [anchor=north west][inner sep=0.75pt]  [font=\scriptsize] [align=left] {$\dotsc$};
\draw (352,165) node [anchor=north west][inner sep=0.75pt] [font=\scriptsize]  [align=left] {$\dotsc$};
\draw (317.72,106.95) node [anchor=north west][inner sep=0.75pt]   [align=left] {$f^{\ast}$};
\draw (164,109.34) node [anchor=north west][inner sep=0.75pt]  [color={rgb, 255:red, 208; green, 2; blue, 27 }  ,opacity=1 ] [align=left] {$x$};
\draw (128,76) node [anchor=north west][inner sep=0.75pt]  [color={rgb, 255:red, 208; green, 2; blue, 27 }  ,opacity=1 ] [align=left] {$y$};
\draw (390,108.5) node [anchor=north west][inner sep=0.75pt]  [color={rgb, 255:red, 208; green, 2; blue, 27 }  ,opacity=1 ] [align=left] {$x$};
\draw (347,144) node [anchor=north west][inner sep=0.75pt]  [color={rgb, 255:red, 208; green, 2; blue, 27 }  ,opacity=1 ] [align=left] {$-y$};

\end{tikzpicture}}
\]
\item[U2](Internal composition) Composing nearby morphisms in an $\msc{A}$-labeled ribbon graph $T:\Gamma\to M$,
	\begin{equation*}
	\scalebox{0.8}{
		\tikzset{every picture/.style={line width=0.75pt}} 
		\begin{tikzpicture}[x=0.75pt,y=0.75pt,yscale=-1,xscale=1]
\draw    (5.12,313.87) .. controls (16.81,310.22) and (70.61,336.77) .. (106.99,330.44) ;
\draw    (13.43,336.28) .. controls (23.83,325.57) and (64.89,350.41) .. (106.99,330.44) ;
\draw    (45.66,355.28) .. controls (51.9,345.05) and (95.56,346.51) .. (106.99,330.44) ;
\draw    (78.41,378.67) .. controls (69.57,363.56) and (111.67,340.67) .. (106.99,330.44) ;
\draw    (195.88,339.16) .. controls (185.69,341.15) and (138.84,326.69) .. (107.16,330.14) ;
\draw    (188.64,326.96) .. controls (175.22,330.95) and (143.82,319.27) .. (107.16,330.14) ;
\draw    (160.57,316.62) .. controls (155.14,322.19) and (117.12,321.39) .. (107.16,330.14) ;
\draw    (132.05,303.89) .. controls (139.75,312.11) and (103.08,324.57) .. (107.16,330.14) ;
\draw    (106.99,330.44) .. controls (138.18,337.99) and (155.86,357.72) .. (191.72,361.62) ;
\draw    (188.64,326.96) .. controls (197.1,325.06) and (191.76,308.76) .. (236.58,292) ;
\draw    (188.64,326.96) .. controls (247.79,297.43) and (228.05,325.97) .. (269.66,305.13) ;
\draw    (188.64,326.96) .. controls (254.19,312.38) and (231.78,330.04) .. (280.33,311.47) ;
\draw    (335.78,305.21) .. controls (347.48,301.55) and (401.28,328.1) .. (437.66,321.77) ;
\draw    (344.1,327.62) .. controls (354.49,316.9) and (395.56,341.74) .. (437.66,321.77) ;
\draw    (376.33,346.62) .. controls (382.56,336.39) and (426.23,337.85) .. (437.66,321.77) ;
\draw    (409.07,370) .. controls (400.24,354.9) and (442.34,332) .. (437.66,321.77) ;
\draw    (435.97,322.29) .. controls (444.43,320.39) and (439.1,304.09) .. (483.92,287.33) ;
\draw    (435.97,322.29) .. controls (495.12,292.77) and (475.38,321.3) .. (517,300.47) ;
\draw    (435.97,322.29) .. controls (501.52,307.71) and (479.11,325.38) .. (527.67,306.81) ;
\draw    (456.02,274.81) .. controls (463.72,283.03) and (433.59,316.2) .. (437.66,321.77) ;
\draw    (437.66,321.77) .. controls (485.67,326.67) and (486.33,336) .. (529.67,334) ;

\draw (95.24,314.78) node [anchor=north west][inner sep=0.75pt]    {$g$};
\filldraw (107,330) circle (1pt);
\draw (180.08,310.73) node [anchor=north west][inner sep=0.75pt]    {$f$};
\filldraw (191,326) circle (1pt);
\draw (289.33,323) node [anchor=north west][inner sep=0.75pt]    {$=$};
\draw (320,290) node [anchor=north west][inner sep=0.75pt]  [font=\small]  {$\left( id^{\otimes t_{1}} \otimes \ f\otimes id^{\otimes t_{2}}\right) g$};
\filldraw (438,322) circle (1pt);
\draw (40,338) node [anchor=north west][inner sep=0.75pt]  [font=\scriptsize,rotate=-36.72]  {$\dotsc $};
\draw (138,308) node [anchor=north west][inner sep=0.75pt]  [font=\scriptsize,rotate=-36.72]  {$\dotsc $};
\draw (170.5,340) node [anchor=north west][inner sep=0.75pt]  [font=\scriptsize,rotate=-94.06]  {$\dotsc $};
\draw (230,295) node [anchor=north west][inner sep=0.75pt]  [font=\scriptsize,rotate=-36.72]  {$\dotsc $};
\draw (362.79,330) node [anchor=north west][inner sep=0.75pt]  [font=\scriptsize,rotate=-36.72]  {$\dotsc $};
\draw (480,290) node [anchor=north west][inner sep=0.75pt]  [font=\scriptsize,rotate=-36.72]  {$\dotsc $};

\draw (460,280) node [anchor=north west][inner sep=0.75pt]  [font=\scriptsize,rotate=-36.72]  {$\dotsc $};
\draw (520,315) node [anchor=north west][inner sep=0.75pt]  [font=\scriptsize,rotate=-95]  {$\dotsc $};
			
	\end{tikzpicture}}
\end{equation*}
	\begin{equation*}
	\scalebox{0.8}{
		\tikzset{every picture/.style={line width=0.75pt}} 
		\begin{tikzpicture}[x=0.75pt,y=0.75pt,yscale=-1,xscale=1]
\draw    (16.45,449.87) .. controls (28.14,446.22) and (81.94,472.77) .. (118.33,466.44) ;
\draw    (24.77,472.28) .. controls (35.16,461.57) and (76.22,486.41) .. (118.33,466.44) ;
\draw    (56.99,491.28) .. controls (63.23,481.05) and (106.89,482.51) .. (118.33,466.44) ;
\draw    (89.74,514.67) .. controls (80.9,499.56) and (123.01,476.67) .. (118.33,466.44) ;
\draw    (199.97,462.96) .. controls (186.56,466.95) and (155.16,455.27) .. (118.49,466.14) ;
\draw    (199.97,462.96) .. controls (208.43,461.06) and (203.1,444.76) .. (247.92,428) ;
\draw    (199.97,462.96) .. controls (259.12,433.43) and (239.38,461.97) .. (281,441.13) ;
\draw    (199.97,462.96) .. controls (265.52,448.38) and (243.11,466.04) .. (291.67,447.47) ;
\draw    (355.43,463.62) .. controls (365.83,452.9) and (406.89,477.74) .. (448.99,457.77) ;
\draw    (387.66,482.62) .. controls (393.9,472.39) and (437.56,473.85) .. (448.99,457.77) ;
\draw    (447.3,458.29) .. controls (455.77,456.39) and (450.43,440.09) .. (495.25,423.33) ;
\draw    (447.3,458.29) .. controls (506.45,428.77) and (486.71,457.3) .. (528.33,436.47) ;
\draw    (447.3,458.29) .. controls (512.86,443.71) and (490.45,461.38) .. (539,442.81) ;
\draw    (150.33,496.67) .. controls (195.67,482) and (184.33,468) .. (199.97,462.96) ;
\draw    (120.33,440.67) .. controls (130.73,429.95) and (147,462) .. (199.97,462.96) ;
\draw    (369.36,435.48) .. controls (379.75,424.76) and (396.02,456.81) .. (448.99,457.77) ;
\draw    (402.33,500.33) .. controls (453,484.33) and (439,470.33) .. (448.99,457.77) ;

\draw (106.58,450.78) node [anchor=north west][inner sep=0.75pt]    {$g$};
\draw (191.41,436.73) node [anchor=north west][inner sep=0.75pt]    {$f$};
\draw (300.67,459) node [anchor=north west][inner sep=0.75pt]    {$=$};
\draw (448.99,457.77) node [anchor=north west][inner sep=0.75pt]  [font=\small]  {$f\left( id^{\otimes s_{1}} \otimes \ g\otimes id^{\otimes s_{2}}\right)$};
\draw (53.45,472) node [anchor=north west][inner sep=0.75pt]  [font=\scriptsize,rotate=-36.72]  {$\dotsc $};
\draw (248.12,432) node [anchor=north west][inner sep=0.75pt]  [font=\scriptsize,rotate=-36.72]  {$\dotsc $};
\draw (378.79,465) node [anchor=north west][inner sep=0.75pt]  [font=\scriptsize,rotate=-36.72]  {$\dotsc $};
\draw (492.79,430) node [anchor=north west][inner sep=0.75pt]  [font=\scriptsize,rotate=-36.72]  {$\dotsc $};
\draw (160.49,468) node [anchor=north west][inner sep=0.75pt]  [font=\scriptsize,rotate=-57.67,xslant=0.18]  {$\dotsc $};
\draw (140,445) node [anchor=north west][inner sep=0.75pt]  [font=\scriptsize,rotate=-59.8]  {$\dotsc $};
\draw (380,440) node [anchor=north west][inner sep=0.75pt]  [font=\scriptsize,rotate=-95]  {$\dotsc $};
\draw (400,480) node [anchor=north west][inner sep=0.75pt]  [font=\scriptsize,rotate=-36]  {$\dotsc $};
\filldraw (118,467) circle (1pt);
\filldraw (200,463) circle (1pt);
\filldraw (450,458) circle (1pt);
\draw (580,480) node {.};
	\end{tikzpicture}}
\end{equation*}
We also allow for a a type of composition at the boundary, in which an identity labeled vertex can be absorbed into a boundary vertex
\[
\scalebox{.9}{\tikzset{every picture/.style={line width=0.75pt}} 

\begin{tikzpicture}[x=0.75pt,y=0.75pt,yscale=-1,xscale=1]

\draw  [dash pattern={on 0.84pt off 2.51pt}]  (38.21,21.73) .. controls (41.93,11.91) and (65.65,8.47) .. (76.44,24.85) .. controls (87.22,41.24) and (79.21,50.82) .. (80.5,65) .. controls (81.79,79.18) and (94.56,90.73) .. (90.5,97) ;
\draw  [dash pattern={on 0.84pt off 2.51pt}]  (38.21,21.73) .. controls (18.49,63.28) and (78.1,122.49) .. (90.5,97) ;
\draw    (59.11,47.71) .. controls (93.44,30.64) and (143.21,81.86) .. (177.53,64.79) ;
\draw    (259.11,47.71) .. controls (293.44,30.64) and (343.21,81.86) .. (377.53,64.79) ;
\draw  [dash pattern={on 0.84pt off 2.51pt}]  (238.21,21.73) .. controls (241.93,11.91) and (265.65,8.47) .. (276.44,24.85) .. controls (287.22,41.24) and (279.21,50.82) .. (280.5,65) .. controls (281.79,79.18) and (294.56,90.73) .. (290.5,97) ;
\draw  [dash pattern={on 0.84pt off 2.51pt}]  (238.21,21.73) .. controls (218.49,63.28) and (278.1,122.49) .. (290.5,97) ;

\draw (196.85,42.13) node [anchor=north west][inner sep=0.75pt]   [align=left] {$\displaystyle =$};
\draw (40.87,31.4) node [anchor=north west][inner sep=0.75pt]   [align=left] {$\displaystyle x$};
\draw (154.88,78.02) node [anchor=north west][inner sep=0.75pt]   [align=left] {$\displaystyle x$};
\draw (119.96,35.59) node [anchor=north west][inner sep=0.75pt]   [align=left] {$\displaystyle id_{x}$};
\draw (240.87,31.4) node [anchor=north west][inner sep=0.75pt]   [align=left] {$\displaystyle x$};

\filldraw (59.11,47.71) circle (1.5pt);
\filldraw (116,55.3) circle (1pt);
\filldraw (259.11,47.71) circle (1.5pt);
\draw (400,80) node {.};
\end{tikzpicture}}
\]
\item[U3](Absorbing framed braids) Absorbing framed braids into internal vertices,
	\begin{equation*}
			\scalebox{0.8}{
\tikzset{every picture/.style={line width=0.75pt}} 
\begin{tikzpicture}[x=0.75pt,y=0.75pt,yscale=-1,xscale=1]

\draw    (36,143.08) .. controls (46.58,157.45) and (51.11,171.82) .. (60.17,174.51) .. controls (69.24,177.2) and (74.25,173.2) .. (91.9,176.31) ;
\draw    (65.46,118.84) .. controls (68.48,143.98) and (92.47,147.28) .. (101.72,147.57) .. controls (110.97,147.87) and (122.11,143.08) .. (133.44,146.67) .. controls (144.77,150.27) and (138.73,162.84) .. (133.44,175.41) ;
\draw    (110.6,104) .. controls (105.31,134.53) and (110.03,122.43) .. (104.74,143.08) ;
\draw    (116.83,208.63) .. controls (109.36,208.63) and (101.3,218.82) .. (94.16,210.43) .. controls (87.03,202.03) and (93.84,186.5) .. (97.19,173.61) .. controls (100.53,160.72) and (100.12,159.65) .. (102.47,152.96) ;
\draw    (100.96,175.41) .. controls (122.52,175.91) and (145.48,180.2) .. (152.27,193.37) .. controls (159.07,206.54) and (150.48,216.71) .. (150.06,236.47) ;
\draw    (150.06,236.47) .. controls (138.73,223.9) and (128.21,233.16) .. (122.76,225.69) .. controls (117.3,218.22) and (120.6,198.76) .. (128.91,183.49) ;
\draw    (125.13,207.73) .. controls (147.8,205.94) and (125.89,216.71) .. (150.06,236.47) ;
\draw    (150.06,236.47) .. controls (160.64,246.35) and (152.33,249.04) .. (163.66,256.22) ;
\filldraw (150.06,236.47) circle (1pt);
\draw    (187.39,142.18) .. controls (201.75,166.43) and (224.92,151) .. (234.98,168.22) .. controls (245.05,185.45) and (248.58,181.69) .. (273.51,195.16) ;
\draw    (216.85,117.94) .. controls (218.11,128.39) and (229.76,143) .. (240.81,149.75) .. controls (251.86,156.5) and (249.37,167.46) .. (253.98,175) .. controls (258.59,182.54) and (265.3,184.48) .. (273.51,195.16) ;
\draw    (260.76,102) .. controls (259.82,107.45) and (252.19,118.13) .. (253.01,130) .. controls (253.83,141.87) and (265.61,147) .. (263.67,166) .. controls (261.73,185) and (268.77,183.31) .. (273.51,195.16) ;
\filldraw (273.51,195.16) circle (1pt);
\draw    (273.51,195.16) .. controls (279.89,201.13) and (281.59,224.28) .. (287.33,227.49) .. controls (293.06,230.7) and (301.62,248.89) .. (306.11,251.73) ;
\draw    (383.92,112.55) .. controls (384.82,124.22) and (392.87,124.22) .. (397.34,131.41) .. controls (401.81,138.59) and (399.13,139.49) .. (402.7,144.88) ;
\draw    (402.7,144.88) .. controls (430.43,152.06) and (439.37,146.67) .. (440.27,160.14) .. controls (441.16,173.61) and (434.9,173.61) .. (425.96,199.65) ;
\draw    (402.7,144.88) .. controls (406.63,159.87) and (378.01,160.59) .. (380.34,177.2) .. controls (382.68,193.82) and (403.6,178.1) .. (430.43,180.8) ;
\draw    (441.16,183.49) .. controls (453.49,186.33) and (465.41,190.65) .. (467.1,196.06) .. controls (468.79,201.47) and (451.9,210.43) .. (459.05,223) .. controls (466.21,235.57) and (453.68,236.47) .. (461.73,247.24) ;
\draw    (402.7,144.88) .. controls (410.75,166.43) and (410.75,166.43) .. (406.28,178.1) ;
\filldraw (402.7,144.88) circle (1pt);
\draw    (402.7,188.88) .. controls (389.89,205.1) and (408.55,200.7) .. (421.49,203.24) .. controls (434.42,205.79) and (438.29,216.75) .. (440.27,222.1) .. controls (442.25,227.45) and (428.64,231.08) .. (423.28,240.06) ;
\draw    (442.06,256.22) .. controls (434.01,239.16) and (407.61,247.91) .. (408.07,238.27) .. controls (408.53,228.62) and (422.38,226.59) .. (423.28,210.43) ;
\draw    (421.49,267) .. controls (417.91,261.61) and (417.01,257.12) .. (417.91,249.94) ;
\draw    (517.19,111.65) .. controls (518.08,123.33) and (526.13,123.33) .. (530.6,130.51) .. controls (535.08,137.69) and (544.02,150.27) .. (546.7,163.73) ;
\draw    (546.7,163.73) .. controls (559.03,166.57) and (564.07,197.92) .. (564.97,206) .. controls (565.86,214.08) and (585.16,209.53) .. (592.32,222.1) .. controls (599.47,234.67) and (586.95,235.57) .. (595,246.35) ;
\draw    (575.32,255.33) .. controls (571.35,246.9) and (576,237.92) .. (569.81,230) .. controls (563.62,222.08) and (546.33,214.88) .. (546.56,210) .. controls (546.79,205.12) and (545.81,179.9) .. (546.7,163.73) ;
\draw    (554.75,266.1) .. controls (553.56,264.3) and (554.22,256.83) .. (550.44,244) .. controls (546.65,231.17) and (532.85,235.52) .. (532.39,220.31) .. controls (531.93,205.1) and (535.97,189.78) .. (546.7,163.73) ;
\filldraw (546.7,163.73) circle (1pt);

\draw (159.29,221.18) node [anchor=north west][inner sep=0.75pt]   [align=left] {$\displaystyle f$};
\draw (180.01,179.93) node [anchor=north west][inner sep=0.75pt]   [align=left] {=};
\draw (283.15,183.47) node [anchor=north west][inner sep=0.75pt]   [align=left] {$\displaystyle f\beta $};
\draw (484.1,179.93) node [anchor=north west][inner sep=0.75pt]   [align=left] {=};
\draw (416.43,120.61) node [anchor=north west][inner sep=0.75pt]   [align=left] {$\displaystyle f$};
\draw (567.11,146.65) node [anchor=north west][inner sep=0.75pt]   [align=left] {$\displaystyle \beta f$};

\draw (615,230) node {.};
\end{tikzpicture}}
\end{equation*}
\end{enumerate}
We note that the relation (U2) allows us to delete any identity labeled vertex which occurs in the middle of an edge, provided doing so does not generate any vertexless loops in the given ribbon graph.
\par

Later we will consider a reduction of the $\msc{A}$-labeled bordism category by the above universal relations
\[
\opn{Bord}_{\msc{A}}/\langle \text{U1, U2, U3}\rangle.
\]
In the linear setting we also introduce a third relation which allows us to transport scalars across vertices as well (see Section \ref{sect:reduced}).

\subsection{RT evaluation for symmetric categories}

We consider evaluation for a symmetric tensor category $\msc{S}$ equipped with the trivial twist.  In this case the evaluation functor
\[
ev_{\msc{S}}:\opn{Rib}_{\msc{S}}\to \msc{S}
\]
does not distinguish between over-crossing and under-crossing, nor does it distinguish between twisted and untwisted ribbons.
\par

Define the symmetric monoidal category $\opn{Str}_{\msc{S}}$ whose objects are $\msc{S}$-labeled disks $D_{\vec{z}}$ and whose morphisms are $\msc{S}$-labeled immersions from a directed graph $T:\Gamma\to D\times [a,b]$, with framings (only) at the vertices and smoothness conditions as in Section \ref{sect:M_rib}.  Two morphisms are considered to be the same if they are related by an oriented diffeomorphism on the linear factor and homotopy on the underlying immersion.
\par

Such an $\msc{S}$-labeled graph $T:\Gamma\to D\times [a,b]$ has ``underlying combinatorics" consisting of the graph $\Gamma$, the specified incoming and outgoing boundary vertices, specifications of spatially incoming and spatially outgoing edges at each vertex, and the $\msc{S}$-labels along edges and vertices.  One sees, via applications of the straight line homotopy, that two morphisms $T$ and $T'$ are equivalent in $\opn{Str}_{\msc{S}}$ if and only if they have the same underlying combinatorics.

Note that we have the forgetful functor $\opn{Rib}_{\msc{S}}\to \opn{Str}_{\msc{S}}$ which is full, but not necessarily faithful.

\begin{lemma}
Let $\msc{S}$ be a symmetric tensor category with trivial twist. The evaluation functor $ev_{\msc{S}}:\Rib_{\msc{S}}\to \msc{S}$ factors through the projection to the category of $\msc{S}$-labeled strings,
\[
\xymatrix{
\Rib_{\msc{S}}\ar[dr]\ar[rr]^{ev_{\msc{S}}} & & \msc{S}\\
	& \opn{Str}_{\msc{S}}\ar@{..>}[ur]_{\exists !} & .
}
\]
\end{lemma}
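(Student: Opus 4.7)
The plan is to show that $ev_{\msc{S}}$ equates any two morphisms in $\Rib_{\msc{S}}$ that are identified in $\opn{Str}_{\msc{S}}$; since the forgetful functor $\Rib_{\msc{S}}\to \opn{Str}_{\msc{S}}$ is full, the factoring functor is then forced and unique.

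First I would identify the elementary moves in $\Rib_{\msc{S}}$ that become trivialized upon passing to $\opn{Str}_{\msc{S}}$. By the generation result of \cite[Theorem 2.5]{turaev10} used in Theorem \ref{thm:eval_A}, every morphism in $\Rib_{\msc{S}}$ is a composite of coupon morphisms (as in (c)), evaluation/coevaluation morphisms (as in (d)), crossings (both positive and negative), and ribbon twists, together with tensor products and identities. The forgetful functor to $\opn{Str}_{\msc{S}}$ identifies a morphism with its underlying combinatorial data (framings only at vertices, up to homotopy of immersions). In particular, a positive and a negative crossing of the same pair of strings become equal in $\opn{Str}_{\msc{S}}$, as do a twist and the identity on a single labeled string. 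These moves generate the kernel of the forgetful functor in the sense of which equivalence classes of $\Rib_{\msc{S}}$-morphisms are collapsed.

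Next I would check that $ev_{\msc{S}}$ identifies each of these pairs. By parts (a)--(d) of Theorem \ref{thm:eval_A} together with the strictification diagram in its proof, a positive crossing of $x$ over $y$ is sent to the braiding $c_{x,y}$ and the reverse crossing to $c_{y,x}^{-1}$. Since $\msc{S}$ is symmetric these two morphisms coincide. Similarly a ribbon twist on a string labeled $x$ is sent to $\theta_x = id_x$ by hypothesis. Hence $ev_{\msc{S}}$ is constant on each equivalence class of morphisms in $\opn{Str}_{\msc{S}}$.

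Finally I would assemble these observations into the factorization. Define $\overline{ev_{\msc{S}}}:\opn{Str}_{\msc{S}}\to \msc{S}$ on objects as $ev_{\msc{S}}$ and on a morphism $T:D_{\vec{x}}\to D_{\vec{y}}$ by picking any lift $\tilde T\in\Rib_{\msc{S}}$ and setting $\overline{ev_{\msc{S}}}(T):=ev_{\msc{S}}(\tilde T)$; the preceding step guarantees well-definedness, and functoriality and symmetric monoidality are inherited from $ev_{\msc{S}}$ using that the forgetful functor is a full symmetric monoidal surjection. Uniqueness of $\overline{ev_{\msc{S}}}$ is automatic from fullness. The only genuinely delicate point here is checking that the identifications induced by passing to $\opn{Str}_{\msc{S}}$ really are generated by the two local moves listed above (crossing swap and twist deletion); this is a standard fact about graphical calculus but deserves a brief explicit verification via the surface-immersion-to-ribbon translation sketched in Section \ref{sect:M_rib}.
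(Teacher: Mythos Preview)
Your proposal is correct and follows essentially the same approach as the paper: the paper defines the intermediate quotient $\opn{R}_{\msc{S}}$ of $\Rib_{\msc{S}}$ by exactly your two local moves (over/under crossing swap and twist deletion), observes via straight-line homotopies that $\opn{R}_{\msc{S}}\to \opn{Str}_{\msc{S}}$ is an isomorphism, and then notes that $ev_{\msc{S}}$ is stable under these skein relations. Your ``delicate point'' is precisely the step the paper handles with the phrase ``essentially by applying straight line homotopies again.''
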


\begin{proof}
Let $\opn{R}_{\msc{S}}$ be the quotient category obtained from $\opn{Rib}_{\msc{S}}$ by instituting the skein relations which identify over-crossings with under-crossings, and clockwise twists on ribbons with the untwisted ribbons.  One then sees, essentially by applying straight line homotopies again, that two morphisms $T:\Gamma\to D\times [a,b]$ and $T':\Gamma'\to D\times [a,b]$ between labeled disks are identified in $\Hom_{\opn{R}_{\msc{S}}}(D_{\vec{x}},D_{\vec{y}})$ if and only if their underlying combinatorics agree.  Hence the projection $\opn{Rib}_{\msc{S}}\to \opn{Str}_{\msc{S}}$ reduces to an isomorphism $\opn{R}_{\msc{S}}\to \opn{Str}_{\msc{S}}$.  To conclude we note that the evaluation functor is stable under the given skein relations, and hence factors through the projection $\opn{Rib}_{\msc{S}}\to \opn{R}_{\msc{S}}$.
\end{proof}

By an abuse of notation we let
\[
ev_{\msc{S}}:\opn{Str}_{\msc{S}}\to \msc{S}
\]
denote the induced map from $\opn{Str}_{\msc{S}}$, and simply call it the evaluation functor.

\subsection{Tautologial TQFTs for symmetric categories}
\label{sect:s_taut}

For a symmetric monoidal category $\msc{S}$ with trivial twist we have the symmetric monoidal functor
\[
p_{\msc{S}}:\Bord_{\msc{S}}\to \Str_{\msc{S}}
\]
which sends a surface $\Sigma_{\vec{z}}$, with ordered set of labels $\vec{z}:I\to \Sigma\times \opn{obj}(\msc{C})$, to the disk with the same labels $D_{\vec{z}}$ listed in the same order along the $x$-axis.  On morphisms, the functor $p_{\msc{S}}$ sends any bordism $(M,T):\Sigma_{\vec{z}}\to \Sigma'_{\vec{w}}$ to the unique cylindrical bordism $p_{\msc{S}}(M,T):D_{\vec{z}}\to D_{\vec{w}}$ whose underlying combinatorics agrees with that of $T$.  In words, $p_{\msc{S}}$ simply forgets the ambient surfaces on objects, forgets the ambient $3$-manifold on morphisms, and only remembers orderings and labels on edges and vertices in the embedded ribbon graph.

\begin{definition}[\cite{runkelszegedywatts23}]\label{def:ts}
Given any symmetric tensor category $\msc{S}$, the tautological TQFT for $\msc{S}$ is defined as the composite
\[
Ev_{\msc{S}}:=ev_{\msc{S}}\circ p_{\msc{S}}:\Bord_{\msc{S}}\to \Str_{\msc{S}}\to \msc{S}.
\]
\end{definition}

The following fundamental property for the tautological TQFT is employed later in the the text.

\begin{lemma}\label{lem:zs_reason}
Let $\msc{S}$ be a symmetric tensor category, and suppose that two $\msc{S}$-labeled ribbon bordisms
\[
[M,T],\ [M,T']:\Sigma_{\vec{z}}\to \Sigma'_{\vec{w}}
\]
are related by a sequence of skein relations.  Then $Ev_{\msc{S}}(M,T)=Ev_{\msc{S}}(M,T')$.
\end{lemma}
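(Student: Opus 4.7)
The strategy is to isolate the contribution of the embedded sub-cylinder $\opn{Cyl} \hookrightarrow M$ to the tautological evaluation $Ev_{\msc{S}}(M,T)$, using the symmetric monoidal structure of $\Str_{\msc{S}}$, and then invoke the skein hypothesis directly. Without loss of generality we assume $(M,T)$ and $(M,T')$ differ by a single skein relation, with interior sub-graphs $O_{\Gamma}$ and $O_{\Gamma'}$. Viewing $(\opn{Cyl}, \opn{Cyl} \times_M \Gamma)$ as a ribbon bordism $D_{\vec{u}} \to D_{\vec{v}}$ between the marked top and bottom faces of the cylinder, condition (b) of the skein relation reads
\[
Ev_{\msc{S}}(\opn{Cyl}, \opn{Cyl} \times_M \Gamma) = Ev_{\msc{S}}(\opn{Cyl}, \opn{Cyl} \times_M \Gamma').
\]

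The key step is to produce a factorization in $\Str_{\msc{S}}$ of the form
\[
p_{\msc{S}}(M,T) = \alpha \circ \bigl( \mathrm{id}_X \otimes p_{\msc{S}}(\opn{Cyl}, \opn{Cyl}\times_M\Gamma) \otimes \mathrm{id}_Y \bigr) \circ \beta,
\]
in which $\alpha, \beta, X, Y$ depend only on the combinatorics of $T$ outside the cylinder, together with auxiliary symmetries needed to bring the interfacing strands into a common horizontal time slice. Such a factorization exists because morphisms in $\Str_{\msc{S}}$ are determined by their underlying combinatorial data alone, and the symmetric monoidal structure allows us to freely permute and reorganize strings; in particular one can arrange for the sub-diagram associated with the cylinder piece to occupy a contiguous time block in the ambient standard cylinder where $p_{\msc{S}}(M,T)$ is presented. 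By condition (d) of the skein relation, the outside combinatorics of $T$ and $T'$ coincide, so the same $\alpha, \beta, X, Y$ give an analogous factorization of $p_{\msc{S}}(M,T')$, with $p_{\msc{S}}(\opn{Cyl}, \opn{Cyl}\times_M\Gamma)$ replaced by $p_{\msc{S}}(\opn{Cyl}, \opn{Cyl}\times_M\Gamma')$. Applying the symmetric monoidal functor $ev_{\msc{S}}$ to both factorizations and using the displayed identity above yields $Ev_{\msc{S}}(M,T) = Ev_{\msc{S}}(M,T')$.

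The main obstacle is establishing the factorization rigorously, since the embedded sub-cylinder need not correspond to a horizontal slice of $M$, and the arcs of $\Gamma$ crossing $\partial \opn{Cyl}$ enter and exit at spatially heterogeneous positions. The argument is that because $p_{\msc{S}}$ discards all ambient geometry beyond the combinatorial graph data, and because the symmetry of $\msc{S}$ affords full freedom to reorganize strings via braidings, the interfacing strands from $O_\Gamma$ can always be arranged into a common time interval via a suitable composition of symmetry isomorphisms absorbed into $\alpha$ and $\beta$. This is a routine manipulation of string diagrams in a symmetric monoidal category, rather than a deep topological argument.
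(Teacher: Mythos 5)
Your proposal lands on the same core identity as the paper -- factor the tautological evaluation through the cylinder piece and then invoke condition (b) of the skein relation plus monoidality of $ev_{\msc{S}}$ -- but the route to the factorization is different. The paper's proof first performs a geometric normalization of the bordism: it shrinks the isolating cylinder into a small ball, isotopes it into a collar around $\partial M$, and then cuts $M$ into a composite so that the remaining piece is literally $\Sigma\times[0,1]$ with the cylinder sitting alongside straight identity strands. After this reduction the tensor factorization $Ev_{\msc{S}}(M,T)=\mathrm{id}\ot ev_{\msc{S}}(\opn{Cyl}_T)\ot\mathrm{id}$ can simply be read off the picture. You instead bypass all of the geometric manipulation and assert the factorization directly in $\Str_{\msc{S}}$, leaning on the observation that $p_{\msc{S}}$ forgets the ambient topology and that morphisms in $\Str_{\msc{S}}$ are determined by combinatorics alone, so that the sub-graph $O_\Gamma$ can be slid into a contiguous time slice with symmetries absorbed into $\alpha,\beta$. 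This is conceptually cleaner, and it does buy you something: you never have to argue that a homotopy carrying $\opn{Cyl}$ into a collar exists or verify that the cut pieces glue back correctly. The cost is that the claim ``such a factorization exists,'' which is the heart of your proof, is asserted rather than established; making it fully rigorous would require a short combinatorial lemma (any subgraph of a string diagram with specified in/out boundary can be isolated in a middle block by placing the complement's vertices above or below and routing connecting edges through the block as identity strands, possibly doubling back). That lemma is true and elementary, and your appeal to condition (d) to reuse $\alpha,\beta$ for $T'$ and to condition (b) for the equality of the cylinder evaluations is exactly right, so the argument is sound -- it would just benefit from stating the factorization lemma explicitly instead of declaring it routine.
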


\begin{proof}
It suffices to address the case where $[M,T]$ and $[M,T']$ are related by a single skein relation. In this case we have the implicit cylinder $\xi:\opn{Cyl}\to M$ which isolates the portion of the ribbon graph $T$ to be replaced in the given relation.  Let us note that by shrinking the portion of the graph which lies in the image of the cylinder, then shrinking the image of the cylinder itself, we may assume that $\opn{Cyl}$ has image in an arbitrarily small ball in $M$.
\par

After shrinking if necessary, we can apply a homotopy to move the given cylinder into a collar around the boundary $\Sigma\overset{\sim}\to \partial M$.  Now by cutting $M$, i.e.\ decomposing into a composite, we can reduce to the case where $M$ is the product $\Sigma\times [0,1]$ and the embedded ribbon graph (with isolating cylinder) appears as
\[
\scalebox{0.5}{
\tikzset{every picture/.style={line width=0.75pt}} 
\begin{tikzpicture}[x=0.75pt,y=0.75pt,yscale=-1,xscale=1]
\draw    (213.5,351) .. controls (186.63,350.7) and (160.37,332.05) .. (156.43,298.02) .. controls (152.5,264) and (166.75,252.43) .. (166.5,227) .. controls (166.25,201.57) and (151.53,184.14) .. (152.51,161.57) .. controls (153.5,139) and (181.84,100.32) .. (210.5,100) ;
\draw    (213.5,351) .. controls (230.72,350.75) and (247.21,336.99) .. (251.86,312.5) .. controls (256.5,288) and (238.43,266.27) .. (239.5,241) .. controls (240.57,215.73) and (259.5,185) .. (254.33,152.34) .. controls (249.16,119.67) and (230.83,100) .. (210.5,100) ;
\draw  [dash pattern={on 0.84pt off 2.51pt}]  (426.5,352) .. controls (399.63,351.7) and (373.37,333.05) .. (369.43,299.02) .. controls (365.5,265) and (379.75,253.43) .. (379.5,228) .. controls (379.25,202.57) and (364.53,185.14) .. (365.51,162.57) .. controls (366.5,140) and (394.84,101.32) .. (423.5,101) ;
\draw    (426.5,352) .. controls (443.72,351.75) and (460.21,337.99) .. (464.86,313.5) .. controls (469.5,289) and (451.43,267.27) .. (452.5,242) .. controls (453.57,216.73) and (472.5,186) .. (467.33,153.34) .. controls (462.16,120.67) and (443.83,101) .. (423.5,101) ;
\draw [color={rgb, 255:red, 0; green, 0; blue, 0 }  ,draw opacity=0.6 ]   (210.5,100) -- (423.5,101) ;
\draw [color={rgb, 255:red, 0; green, 0; blue, 0 }  ,draw opacity=0.6 ]   (213.5,351) -- (426.5,352) ;
\draw    (196.5,191) .. controls (213.5,206) and (214.5,250) .. (199.5,264) ;
\draw    (203.5,257) .. controls (196.5,243) and (196.5,217) .. (204,204) ;
\draw  [dash pattern={on 0.84pt off 2.51pt}]  (405.5,191) .. controls (422.5,206) and (423.5,250) .. (408.5,264) ;
\draw  [dash pattern={on 0.84pt off 2.51pt}]  (412.5,257) .. controls (405.5,243) and (405.5,217) .. (413,204) ;
\draw    (198,146) -- (410.5,145) ;
\filldraw (198,146) circle (1.5pt);
\filldraw (410.5,145) circle (1.5pt);
\draw    (199,322) -- (409.5,322) ;
\filldraw (199,322) circle (1.5pt);
\filldraw (409.5,322) circle (1.5pt);
\draw  (281.9,200.97) -- (365.87,201.72) .. controls (370.21,201.76) and (373.63,213.52) .. (373.5,228) .. controls (373.37,242.48) and (369.75,254.18) .. (365.4,254.14) -- (281.43,253.39) .. controls (277.09,253.36) and (273.67,241.59) .. (273.8,227.11) .. controls (273.93,212.64) and (277.56,200.93) .. (281.9,200.97) .. controls (286.24,201.01) and (289.66,212.78) .. (289.53,227.25) .. controls (289.4,241.73) and (285.77,253.43) .. (281.43,253.39) ;
\draw    (226.5,213) -- (281.5,213) ;
\filldraw (226.5,213) circle (1.5pt);
\filldraw (281.5,213) circle (1.5pt);
\draw    (225.5,244) -- (281.5,244) ;
\filldraw (225.5,244) circle (1.5pt);
\filldraw (281.5,244) circle (1.5pt);
\draw    (373,218) -- (430.5,218) ;
\filldraw (430.5,218) circle (1.5pt);
\draw    (372,241) -- (429.5,241) ;
\filldraw (429.5,241) circle (1.5pt);
\draw    (188,182) -- (422.5,182) ;
\filldraw (188,182) circle (1.5pt);
\filldraw (422.5,182) circle (1.5pt);
\draw    (211,279) -- (417.5,279) ;
\filldraw (211,279) circle (1.5pt);
\filldraw (417.5,279) circle (1.5pt);

\draw (297,75) node [anchor=north west][inner sep=0.75pt]   [align=left] {$\displaystyle \Sigma \times [0,1]$};
\draw (176,136) node [anchor=north west][inner sep=0.75pt]  [color={rgb, 255:red, 0; green, 0; blue, 0 }  ,opacity=0.4 ] [align=left] {$\displaystyle x_{1}$};
\draw (209,204) node [anchor=north west][inner sep=0.75pt]  [color={rgb, 255:red, 0; green, 0; blue, 0 }  ,opacity=0.4 ] [align=left] {$\displaystyle x_{q}$};
\draw (209,235) node [anchor=north west][inner sep=0.75pt]  [color={rgb, 255:red, 0; green, 0; blue, 0 }  ,opacity=0.4 ] [align=left] {$\displaystyle x_{r}$};
\draw (177,312) node [anchor=north west][inner sep=0.75pt]  [color={rgb, 255:red, 0; green, 0; blue, 0 }  ,opacity=0.4 ] [align=left] {$\displaystyle x_{n}$};
\draw (416,312) node [anchor=north west][inner sep=0.75pt]  [color={rgb, 255:red, 0; green, 0; blue, 0 }  ,opacity=0.4 ] [align=left] {$\displaystyle x_{n}$};
\draw (411,134) node [anchor=north west][inner sep=0.75pt]  [color={rgb, 255:red, 0; green, 0; blue, 0 }  ,opacity=0.4 ] [align=left] {$\displaystyle x_{1}$};
\draw (311,156) node [anchor=north west][inner sep=0.75pt]  [color={rgb, 255:red, 0; green, 0; blue, 0 }  ,opacity=0.4 ] [align=left] {$\displaystyle \vdots $};
\draw (388,216) node [anchor=north west][inner sep=0.75pt]  [color={rgb, 255:red, 0; green, 0; blue, 0 }  ,opacity=0.4 ] [align=left] {$\displaystyle \vdots $};
\draw (245,216) node [anchor=north west][inner sep=0.75pt]  [color={rgb, 255:red, 0; green, 0; blue, 0 }  ,opacity=0.4 ] [align=left] {$\displaystyle \vdots $};
\draw (311,291) node [anchor=north west][inner sep=0.75pt]  [color={rgb, 255:red, 0; green, 0; blue, 0 }  ,opacity=0.4 ] [align=left] {$\displaystyle \vdots $};
\draw (158,175) node [anchor=north west][inner sep=0.75pt]  [color={rgb, 255:red, 0; green, 0; blue, 0 }  ,opacity=0.4 ] [align=left] {$\displaystyle x_{q-1}$};
\draw (426,175) node [anchor=north west][inner sep=0.75pt]  [color={rgb, 255:red, 0; green, 0; blue, 0 }  ,opacity=0.4 ] [align=left] {$\displaystyle x_{q-1}$};
\draw (432,209) node [anchor=north west][inner sep=0.75pt]  [color={rgb, 255:red, 0; green, 0; blue, 0 }  ,opacity=0.4 ] [align=left] {$\displaystyle y_{1}$};
\draw (433,231) node [anchor=north west][inner sep=0.75pt]  [color={rgb, 255:red, 0; green, 0; blue, 0 }  ,opacity=0.4 ] [align=left] {$\displaystyle y_{t}$};
\draw (420,269) node [anchor=north west][inner sep=0.75pt]  [color={rgb, 255:red, 0; green, 0; blue, 0 }  ,opacity=0.4 ] [align=left] {$\displaystyle x_{r+1}$};
\draw (179,272) node [anchor=north west][inner sep=0.75pt]  [color={rgb, 255:red, 0; green, 0; blue, 0 }  ,opacity=0.4 ] [align=left] {$\displaystyle x_{r+1}$};
\draw (312,220) node [anchor=north west][inner sep=0.75pt]   [align=left] {Cyl};

\end{tikzpicture}}.
\]

Let $\opn{Cyl}_{T}$ and $\opn{Cyl}_{T'}$ be the cylinders with embedded ribbon graph given by intersecting $\xi$ with $T$ and $T'$ respectively.  By assumption we have
\[
ev_{\msc{S}}(\opn{Cyl}_{T})=ev_{\msc{S}}(\opn{Cyl}_{T'}).
\]
By the definition of the tautological theory, and monoidality of RT evaluation, we have in this case
\[
\begin{array}{rl}
Ev_{\msc{S}}(M,T) & = id_{x_1\ot\cdots \ot x_{t-1}}\ot ev_{\msc{S}}(\opn{Cyl}_{T})\ot id_{x_m\ot\cdots \ot x_n}\vspace{1mm}\\
& = id_{x_1\ot\cdots \ot x_{t-1}}\ot ev_{\msc{S}}(\opn{Cyl}_{T'})\ot id_{x_m\ot\cdots  \ot x_n} = Ev_{\msc{S}}(M,T').
\end{array}
\]
\end{proof}

\section{The separation lemma}
\label{sect:separate}

Let $\msc{S}$ be a semisimple symmetric tensor category with trivial twist and $\msc{A}$ be any ribbon tensor category.  Following methods of Runkel, Szegedy, and Watts \cite{runkelszegedywatts23}, we claim that any TQFT $Z:\Bord_{\msc{A}}\to \opn{Vect}$ admits a ``tautological extension" to the base change $\msc{S}\ot\msc{A}$.  This tautological extension is a TQFT which is now valued in $\msc{S}$,
\[
``Ev_{\msc{S}}\ot Z":\Bord_{\msc{S}\ot\msc{A}}\to \msc{S}.
\]
We deal with the details for this construction in Section \ref{sect:t_ext} below.
\par

For now, we establish a lemma which proposes that one can separate bordisms in $\Bord_{\msc{S}\otimes \msc{A}}$ into an $\msc{S}$-factor and an $\msc{A}$-factor, at least after applying an additive completion.  Specifically, we construct a symmetric monoidal functor
\begin{equation}\label{eq:492}
\Delta:\Bord_{\msc{S}\otimes\msc{A}}\to (k\Bord_{\msc{S}}^{\red}\boxtimes k\Bord_{\msc{A}}^{\red})^{\opn{add}}
\end{equation}
where each $k\Bord_{\square}^{\red}$ is a category of linearly reduced bordisms (see Section \ref{sect:reduced} below).  We then obtain the claimed extension by applying the tautological TQFT $Ev_{\msc{S}}$ in the $\msc{S}$-factor and $Z$ in the $\msc{A}$-factor.

\subsection{Simultaneous bordisms}

We have the rigid (non-linear) braided monoidal category $\msc{S}\times\msc{A}$ and structure map $\msc{S}\times \msc{A}\to \msc{S}\otimes\msc{A}$ which is furthermore braided monoidal.  We have the category of labeled bordisms $\Bord_{\msc{S}\times \msc{A}}$, which one can think of as marked surfaces and ribbon bordisms with pairs of independent labels from $\msc{S}$ and $\msc{A}$.  Indeed, via the projections $\msc{S}\times \msc{A}\to \msc{S}$ and $\msc{S}\times \msc{A}\to \msc{A}$ we have the two maps
\[
\xymatrixrowsep{2mm}
\xymatrix{
&\Bord_{\msc{S}\times \msc{A}}\ar[dl]\ar[dr]\\
\Bord_{\msc{S}} & & \Bord_{\msc{A}}
}
\]
which provide a faithful, symmetric monoidal embedding into the product
\[
\Bord_{\msc{S}\times \msc{A}}\ \subseteq\ \Bord_{\msc{S}}\times \Bord_{\msc{A}}.
\]
The image of this embedding consists of simultaneous pairs of surfaces, and bordisms, whose underlying geometries in the $\msc{S}$ and $\msc{A}$ factors agree.
\par

Via the structure map $\msc{S}\times\msc{A}\to \msc{S}\otimes\msc{A}$ we also obtain a symmetric monoidal functor
\[
\opn{ccat}:\Bord_{\msc{S}\times \msc{A}}\to \Bord_{\msc{S}\otimes\msc{A}},
\]
which we refer to as the concatenation map.  We claim that, after applying the appropriate linearization and relations, the concatenation map becomes a symmetric monoidal equivalence.

\subsection{The reduced concatenation functor}
\label{sect:reduced}

Let $\msc{C}$ be a linear ribbon monoidal category and consider the symmetric monoidal category obtained via the free linearization $k\Bord_{\msc{C}}$.  We produce a new linear symmetric monoidal category $k\Bord_{\msc{C}}^{\red}$ by imposing the universal skein relations (U1)--(U3) from Section \ref{sect:sk_rel} and the following universal linear relation:
\begin{enumerate}
\item[U4] (Linear expansion) Given a $\msc{C}$-labeled ribbon bordism $T:\Gamma\to M$ and specified vertex $v$ which is labeled by a map of the form $f=c_1f_{1}-c_2f_{2}$, and supposing $(M,T_i)$ is obtained from $(M,T)$ by replacing the label $f$ by $f_{i}$ at $v$, we impose an equality
\[
(M,T) = c_1(M,T_1)+c_2(M,T_2).
\]
\end{enumerate}

\begin{definition}\label{def:bord_red}
For a linear ribbon category $\msc{C}$, the category of linearly reduced bordisms
\[
k\Bord_{\msc{C}}^{\red}=(k\Bord_{\msc{C}})^{\opn{red}}=k\Bord_{\msc{C}}/\langle\text{U1--U4}\rangle
\]
is the category whose objects are $\msc{C}$-labeled surfaces and whose morphisms are the quotients
\[
\Hom_{k\Bord_{\msc{C}}^{\red}}(\Sigma_{\vec{x}},\Sigma'_{\vec{y}})=\frac{k\Hom_{\Bord_{\msc{C}}}(\Sigma_{\vec{x}},\Sigma'_{\vec{y}})}{\opn{Span}_k\left\{\ 
\text{relations U1--U4}\ \right\}}.
\]
\end{definition}

Since the relations (U1)--(U4) are all preserved under disjoint union the symmetric monoidal structure on $k\Bord_{\msc{C}}$ induces a unique symmetric monoidal structure on the reduced category under which the canonical projection $k\Bord_{\msc{C}}\to k\Bord_{\msc{C}}^{\red}$ is a symmetric monoidal functor.
\par

Let us consider the possible additional relation:
\begin{enumerate}
\item[U5] (Expansion over coproducts) For an edge $e$ in a $\msc{C}$-labeled ribbon bordism $T:\Gamma\to M$ which is labeled by a sum $x_e=y_e\oplus y'_e$, and $(M,Q)$ and $(M,Q')$ obtained from $(M,T)$ by replacing $x_e$ with $y_e$ and $y'_e$ respectively, we impose an equality
\[
(M,T)=(M,Q)+(M,Q').
\]
\end{enumerate}
To be clear, when we replace $x_e$ with $y_e$ we also replace the label $f_v$ at each vertex attached to $e$ with the appropriate composite $p f_v$ or $f_vi$, where
\[
\begin{array}{c}
p:x^{\pm}_{e_1}\cdots \ot x_e^{\pm}\ot \cdots x_{e_n}^{\pm}\to x^{\pm}_{e_1}\cdots \ot y_e^{\pm}\ot\cdots x_{e_n}^{\pm}\vspace{2mm}\\
\text{and}\ \ i:x^{\pm}_{e_1}\cdots \ot y_e^{\pm}\ot\cdots x_{e_n}^{\pm}\to x^{\pm}_{e_1}\cdots \ot x_e^{\pm}\ot\cdots x_{e_n}^{\pm}
\end{array}
\]
are the projection and inclusion induced by the implicit projection $x_e\to y_e$ and inclusion $y_e\to x_e$.  One performs the same manipulations when replacing $x_e$ with $y'_e$ as well.  We leave the proof of the following to the interested reader.

\begin{lemma}\label{lem:redundant}
The relation {\rm(U5)} holds in $k\Bord_{\msc{C}}^{\red}$.
\end{lemma}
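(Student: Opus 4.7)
The plan is to reduce (U5) to the universal relations (U1)--(U4) by introducing an auxiliary identity vertex on the edge $e$, decomposing the identity along the given direct sum, and then absorbing the pieces back into the endpoints of $e$. Write the identity as $\opn{id}_{x_e} = i\circ p + i'\circ p'$, where $p:x_e\to y_e$, $i:y_e\to x_e$ and $p':x_e\to y'_e$, $i':y'_e\to x_e$ are the projections and inclusions associated to the decomposition $x_e = y_e\oplus y'_e$.

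First, apply (U2) in reverse to insert an identity-labeled vertex $w$ in the interior of $e$; this splits $e$ into two consecutive edges, both still labeled $x_e$, and does not change the class of $(M,T)$ in $k\Bord_{\msc{C}}^{\red}$. Then apply the linear expansion (U4) at $w$ using the decomposition of $\opn{id}_{x_e}$ above to obtain
\[
(M,T) \;=\; (M,T_1) + (M,T_2),
\]
where $T_j$ is the bordism obtained by relabeling $w$ by $i\circ p$ (respectively $i'\circ p'$). For $T_1$, apply (U2) in reverse a second time to split the vertex $w$ into two consecutive one-in/one-out vertices $w'$ and $w''$, labeled by $p:x_e\to y_e$ and $i:y_e\to x_e$ respectively and joined by a new edge labeled $y_e$. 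Finally, absorb $w'$ into the endpoint $v_1$ of $e$ via (U2), and similarly absorb $w''$ into $v_2$. After these absorptions the edge $e$ is labeled $y_e$ throughout, and the labels at $v_1$ and $v_2$ have been pre- or post-composed with the appropriate component of $p$ or $i$ (or their duals), which is precisely the modification described in (U5). Thus $(M,T_1) = (M,Q)$, and by the same argument applied to $T_2$ with $(y'_e, p', i')$ in place of $(y_e, p, i)$, one gets $(M,T_2)=(M,Q')$.

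The only subtlety, and the main piece of bookkeeping, lies in the absorption step: depending on whether $e$ is combinatorially incoming or outgoing at a given endpoint $v$, the component $x_e^{\pm}$ appearing in the source or target of $f_v$ is either $x_e$ or $x_e^\ast$, and accordingly absorption of the $p$-labeled (resp.\ $i$-labeled) vertex rewrites that tensor factor via $p$ or $p^\ast$ (resp.\ $i$ or $i^\ast$), matching exactly the recipe in the statement of (U5). Boundary endpoints are handled by the second clause of (U2), which absorbs an identity-labeled vertex into a boundary vertex, and the case of a loop $v_1 = v_2$ is handled by performing the two absorptions sequentially into the common endpoint. Since the definition of ribbon graphs in Section~\ref{sect:M_rib} excludes vertexless loops, no further degenerate case needs to be considered.
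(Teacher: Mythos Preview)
Your proof is correct and is precisely the intended argument; the paper leaves this lemma to the reader, and the insert--expand--split--reabsorb maneuver via (U2) and (U4) is the natural way to do it.

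One small remark on your treatment of boundary endpoints: the relation (U5) only makes sense for edges $e$ that do not meet the boundary, since replacing the label $x_e$ by $y_e$ on an edge touching $\partial M$ would change the marking on the surface and hence the source or target of the morphism. So that case is vacuous, and your appeal to the boundary clause of (U2) is unnecessary (and would not work as stated anyway, since that clause only absorbs \emph{identity}-labeled vertices, not $p$- or $i$-labeled ones). For internal edges your argument goes through exactly as written.
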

 
For a pair of ribbon tensor categories $\msc{S}$ and $\msc{A}$ we also take
\[
k\Bord_{\msc{S}\times\msc{A}}^{\red}=k\Bord_{\msc{S}\times\msc{A}}/\langle\text{U1--U3, U$'$4}\rangle,
\]
where U$'$4 institutes \emph{bi}linear expansion at the vertices.  We refer to this quotient category as the category of bilinearly reduced bordisms for $\msc{S}\times\msc{A}$.
\par

As a consequence of bilinearity for the structure map $\msc{S}\times \msc{A}\to \msc{S}\otimes \msc{A}$, the linearized concatenation functor
\[
k\Bord_{\msc{S}\times \msc{A}}\to k\Bord_{\msc{S}\otimes\msc{A}}
\]
sends relations of the form (U$'$4) to relations of the form (U4) in $\Bord_{\msc{S}\otimes \msc{A}}$.  Hence we obtain an induced functor from bilinearly reduced bordisms to its linearly reduced counterpart,
\begin{equation}\label{eq:conc}
\opn{ccat}^{\opn{red}}:k\Bord_{\msc{S}\times\msc{A}}^{\red}\to k\Bord_{\msc{S}\otimes\msc{A}}^{\red}.
\end{equation}
One can prove the following by a relatively involved, but direct analysis.  The details are covered in Appendix \ref{sect:proofs}.

\begin{proposition}\label{prop:conc}
The reduced concatenation functor \eqref{eq:conc} is fully faithful.
\end{proposition}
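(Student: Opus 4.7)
The plan is to construct an inverse to $\mathrm{ccat}^{\red}$ on Hom spaces via an explicit ``separation" procedure, exploiting the semisimplicity of $\msc{S}$. Two structural inputs from Section~\ref{sect:cats} drive the argument. First, because $\msc{S}$ is semisimple and $\msc{S}\ot\msc{A}$ is locally finite, every object $X\in\msc{S}\ot\msc{A}$ admits a finite decomposition $X\cong\bigoplus_i s_i\otimes X_i$ with each $s_i$ simple in $\msc{S}$ and $X_i\in\msc{A}$. Second, for pure-tensor objects the Hom space factors canonically as
\[
\Hom_{\msc{S}\ot\msc{A}}(s\otimes a,\, s'\otimes a')\ \cong\ \Hom_{\msc{S}}(s,s')\otimes_k\Hom_{\msc{A}}(a,a').
\]
Throughout the argument I use Lemma~\ref{lem:redundant} to apply the coproduct expansion (U5) inside $k\Bord^{\red}$ freely.

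For fullness, I start with an arbitrary $[M,T]$ in $k\Bord_{\msc{S}\ot\msc{A}}^{\red}$ whose boundary surfaces already carry pure-tensor labels. Iteratively applying (U5) along each interior edge of $T$, with a chosen decomposition of that edge's label into pure-tensor summands $s_i\otimes y_i$, rewrites $[M,T]$ as a linear combination of bordisms in which every edge label is a pure tensor. At each internal vertex the attached edges now carry pure-tensor labels, so the vertex morphism lies in a space of the form $\Hom_{\msc{S}}(-,-)\otimes_k\Hom_{\msc{A}}(-,-)$; choosing a presentation as a sum of elementary tensors $f^{\msc{S}}_v\otimes f^{\msc{A}}_v$ and invoking (U4) further expands the sum into bordisms whose every label is a pure tensor. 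Each summand lifts tautologically to $k\Bord_{\msc{S}\times\msc{A}}^{\red}$ and maps back to the original class, giving surjectivity on Hom spaces.

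For faithfulness I promote the procedure above to a $k$-linear map
\[
\mathrm{sep}\colon \Hom_{k\Bord_{\msc{S}\ot\msc{A}}^{\red}}(\Sigma_{\vec{s\otimes a}},\Sigma'_{\vec{s'\otimes a'}})\to \Hom_{k\Bord_{\msc{S}\times\msc{A}}^{\red}}(\Sigma_{(\vec{s},\vec{a})},\Sigma'_{(\vec{s'},\vec{a'})})
\]
by fixing, once and for all, auxiliary choices of simple decompositions and of pure-tensor presentations of vertex labels, and extending $k$-linearly. The verification splits into two tasks. \emph{(a)~Independence of auxiliary choices}: two decompositions of an edge label differ by an automorphism in $\msc{S}\ot\msc{A}$ which, after restricting to pure-tensor summands, can be absorbed into the neighboring vertex labels using the (U2) composition relation; the same mechanism handles alternative presentations of a vertex label as a sum of elementary tensors. \emph{(b)~Compatibility with the universal relations}: each of (U1)--(U4) in the source must be sent into the span of (U1)--(U3) and (U$'$4) in the target. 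Relations (U1)--(U3) descend directly because the separation procedure is local away from the modified region. Once $\mathrm{sep}$ is shown to be well-defined, the composites $\mathrm{ccat}^{\red}\circ\mathrm{sep}$ and $\mathrm{sep}\circ\mathrm{ccat}^{\red}$ are identities by construction, which proves the proposition.

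The main obstacle is point (b) for the linear expansion relation (U4): a single vertex label in $\msc{S}\ot\msc{A}$ can admit many genuinely distinct linear presentations, none of whose summands need be pure tensors. The saving observation is that after reducing along edges, every Hom space one encounters is literally $\Hom_{\msc{S}}(-,-)\otimes_k\Hom_{\msc{A}}(-,-)$, so any $k$-linear relation among such morphisms is a combination of the defining bilinearity relations on elementary tensors --- which is exactly what (U$'$4) enforces in $k\Bord_{\msc{S}\times\msc{A}}^{\red}$. A careful induction on the number of non-pure-tensor labels, treating each application of (U4) locally at the vertex where the expansion occurs and invoking (a) to absorb the auxiliary choices, completes the proof.
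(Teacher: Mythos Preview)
Your fullness argument matches the paper's (Lemma~\ref{lem:837}), but your faithfulness argument takes a genuinely different route. You build an explicit inverse $\mathrm{sep}$ and check well-definedness against the relations; the paper instead passes to an intermediate quotient $k\Bord'_{\msc{S}\ot\msc{A}}$ (imposing only (U4), a restricted formal-sum expansion (R5), and a zero-edge relation (R0)) and proves full faithfulness there by a \emph{dimension count}. In $k\Bord'$ the Hom spaces decompose as a direct sum over partially labeled bordisms $d=(M,\gamma,u)$ recording the underlying geometry together with nonzero pure-tensor edge labels; on the $d$-th piece the dimension is $\sum_{v}\dim\Hom_{\msc{S}\ot\msc{A}}(u_{\mathrm{in}(v)},u_{\mathrm{out}(v)})$, which by Lemma~\ref{lem:708}(ii) equals the corresponding dimension on the $\msc{S}\times\msc{A}$ side. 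Fullness plus equal finite dimensions gives an isomorphism on each piece. Finally the paper observes that the remaining relations (U1)--(U3) can be generated by instances involving only monomial bordisms with pure-tensor vertex labels, so full faithfulness survives passage to $k\Bord^{\red}$.

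The paper's method entirely sidesteps what you correctly flag as the main obstacle: verifying that $\mathrm{sep}$ respects (U4). Your ``saving observation'' --- that after edge-reduction every vertex Hom space is literally $\Hom_{\msc{S}}\otimes_k\Hom_{\msc{A}}$, so linear relations there are bilinearity relations --- is exactly the content of Lemma~\ref{lem:708}(ii), which is also what drives the dimension count; but your sketch ends at ``a careful induction completes the proof'' without carrying it out, and you also need to verify invariance of $\mathrm{sep}$ under homotopy of the ribbon graph and under the interaction of your auxiliary choices with (U2) and (U3), which is more bookkeeping than you indicate. The paper's argument needs none of this. If you want to push your approach through, the cleanest fix is to imitate the paper's two-step structure: first work in an intermediate category with only the linear relations, where well-definedness of $\mathrm{sep}$ is immediate from the tensor-product description of the vertex Hom spaces, and only afterward impose (U1)--(U3).
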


Having accepted Proposition \ref{prop:conc} for the moment, the functor $\opn{ccat}^{\opn{red}}$ \emph{would} be an equivalence if it were essentially surjective.  This is, however, clearly not the case.  Indeed, an object $\Sigma_{\vec{z}}$ in $k\Bord_{\msc{S}\ot\msc{A}}^{\red}$ is in the essential image of $\opn{ccat}^{\opn{red}}$ if and only if all of its labels $z_i$ are in the image of the structure map $\msc{S}\times\msc{A}\to \msc{S}\ot\msc{A}$.  We remedy this situation by imposing an additive closure.

\begin{corollary}\label{cor:conc}
Suppose that $\msc{S}$ is semisimple.  After applying additive closures, the concatenation functor induces a symmetric monoidal equivalence
\[
(\opn{ccat}^{\opn{red}})^{\opn{add}}:(k\Bord_{\msc{S}\times\msc{A}}^{\red})^{\opn{add}}\overset{\sim}\to (k\Bord_{\msc{S}\otimes\msc{A}}^{\red})^{\opn{add}}.
\]
\end{corollary}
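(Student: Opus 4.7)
The plan is to combine Proposition \ref{prop:conc}, which gives full faithfulness before additive closure, with an essential surjectivity argument which uses only the semisimplicity of $\msc{S}$ and the relation (U5) from Lemma \ref{lem:redundant}. Along the way we also need to verify that full faithfulness is preserved under the passage $(-)^{\opn{add}}$, but this is automatic: Hom spaces in the additive closure are direct sums of Hom spaces between the constituent summands, so if $\opn{ccat}^{\opn{red}}$ induces isomorphisms on the latter it induces isomorphisms on the former. Monoidality of the additive closure of a symmetric monoidal functor is also standard.

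For essential surjectivity, I would first reduce to showing that every marked surface $\Sigma_{\vec{z}}$ with labels $z_i$ in $\msc{S}\otimes \msc{A}$ lies in the essential image of $(\opn{ccat}^{\opn{red}})^{\opn{add}}$. Using semisimplicity of $\msc{S}$, with simples $\{s_\alpha\}$, one can decompose each label as a finite direct sum
\[
z_i\ \cong\ \bigoplus_{\alpha} s_\alpha\otimes a_{i,\alpha}
\]
in $\msc{S}\otimes \msc{A}$, where the $a_{i,\alpha}$ are objects in $\msc{A}$. This decomposition uses the concrete description of the Kelly product as corepresentations of a tensor product of coalgebras together with the fact that $\msc{S}\cong \oplus_\alpha s_\alpha\otimes \opn{Vect}$ as an $\msc{S}$-module category when $\msc{S}$ is semisimple.

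Next I would apply the relation (U5) at each marked point, one at a time. Since (U5) is valid in $k\Bord^{\opn{red}}_{\msc{S}\otimes\msc{A}}$ by Lemma \ref{lem:redundant}, iteratively expanding all $n$ markings rewrites $\Sigma_{\vec{z}}$ as a formal sum
\[
\Sigma_{\vec{z}}\ =\ \sum_{\vec{\alpha}}\Sigma_{\vec{z}(\vec{\alpha})},
\]
where each summand carries pure-tensor labels $z_i(\vec{\alpha})=s_{\alpha_i}\otimes a_{i,\alpha_i}$. This identity takes place not as an isomorphism in the non-additive reduced category, but as an equality in the additive closure once one identifies the two sides via the biproduct structure. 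Each summand $\Sigma_{\vec{z}(\vec{\alpha})}$ now has labels in the image of the canonical map $\msc{S}\times\msc{A}\to \msc{S}\otimes\msc{A}$, and so lifts to an object of $k\Bord_{\msc{S}\times\msc{A}}^{\opn{red}}$ (carrying the same underlying surface, with Lagrangian from the anomaly data unchanged). Thus $\Sigma_{\vec{z}}$ is isomorphic in the additive closure to an object in the image of $(\opn{ccat}^{\opn{red}})^{\opn{add}}$.

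The main obstacle is the justification of the expansion step: one has to verify that (U5) really does give an equality in the additive closure rather than merely an isomorphism, and that performing the expansion simultaneously at all markings produces the multi-indexed sum above without introducing hidden compatibility issues between the projection/inclusion morphisms inserted at the endpoints of each edge. This is a careful but routine induction on the number of marked points, with the base case handled by a single application of (U5). Once these points are settled, Proposition \ref{prop:conc} and essential surjectivity combine to give the desired equivalence.
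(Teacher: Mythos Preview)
Your overall strategy is the same as the paper's: full faithfulness from Proposition \ref{prop:conc} is preserved under $(-)^{\opn{add}}$, and essential surjectivity comes from decomposing each label into simple $\msc{S}$-pieces. The paper argues this way, invoking Lemma \ref{lem:753} for the decomposition of objects in $\msc{S}\otimes\msc{A}$ and Lemma \ref{lem:sigma_sum} for the splitting of marked surfaces in the additive closure.

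There is one point where your framing is off. Relation (U5) is a relation between \emph{bordisms}, i.e.\ morphisms: it says $(M,T)=(M,Q)+(M,Q')$ when an edge label splits. It does not directly produce an isomorphism of \emph{objects} $\Sigma_{\vec{z}}\cong \bigoplus_{\vec{\alpha}}\Sigma_{\vec{z}(\vec{\alpha})}$ in $(k\Bord_{\msc{S}\otimes\msc{A}}^{\red})^{\opn{add}}$, which is what essential surjectivity requires. The paper handles this via Lemma \ref{lem:sigma_sum}: one writes down the explicit projections $\Sigma_{p_\lambda}$ and inclusions $\Sigma_{i_\lambda}$ (bordisms on $\Sigma\times[0,1]$ with straight ribbons labeled by the split maps) and checks, using only (U2) and (U4), that they exhibit $\Sigma_{\vec{x}}$ as a biproduct. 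Once you have this lemma, a straightforward induction on the number of marked points gives the multi-indexed splitting you wrote. So the ``main obstacle'' you identify is resolved not by (U5) but by Lemma \ref{lem:sigma_sum}; (U5) is in fact redundant for this step.
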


We prove Corollary \ref{cor:conc} after establishing a splitting lemma for surfaces in the additively closed, reduced bordism category.

\subsection{Proof of Corollary \ref{cor:conc}}

Let $\msc{C}$ be a linear ribbon monoidal category and consider a marked surface $\Sigma_{\vec{x}}$, with marking objects $(x_k:k\in K)$.  Suppose at a given index $j$ that the object $x_j$ decomposes into a sum $x_{j1}\oplus x_{j2}$ via projections and inclusions
\[
p_{j\lambda}:x_j\to x_{j\lambda}\ \ \text{and}\ \ i_{j\lambda}:x_{j\lambda}\to x_j.
\]
Let $\Sigma_{\vec{x}_{\lambda}}$ be the new marked surface obtained by replacing the marking object $x_j$ with the respective object $x_{j\lambda}$.  By applying the identity on all objects $x_k$ with $k\neq j$, and $p_{j\lambda}$ or $i_{j\lambda}$ at the $j$-th marking object (depending on orientations) we obtain morphisms
\[
\Sigma_{p_{\lambda}}:\Sigma_{\vec{x}}\to \Sigma_{\vec{x}_{\lambda}}\ \ \text{and}\ \ \Sigma_{i_{\lambda}}:\Sigma_{\vec{x}_{\lambda}}\to \Sigma_{\vec{x}}
\]
in the $\msc{C}$-labeled bordism category.  The underlying $3$-manifold for these bordisms is $\Sigma\times [0,1]$, and the given morphisms label straight lines traveling from the markings on $\Sigma\times \{0\}$ to their respective markings on $\Sigma\times\{1\}$.  (See Section \ref{sect:a_sigma}.)

\begin{lemma}\label{lem:sigma_sum}
Consider a marked surface $\Sigma_{\vec{x}}$ in $\Bord_{\msc{C}}^{\opn{red}}$ and a marking object $x_j$ which decomposes as a sum $x_j\cong x_{j1}\oplus x_{j2}$ as above.  In this case the associated projections $\Sigma_{p_{\lambda}}$ determine an isomorphism
\begin{equation}\label{eq:2156}
[\Sigma_{p_{1}}\ \Sigma_{p_{2}}]^t:\Sigma_{\vec{x}}\to \Sigma_{\vec{x}_1}\oplus \Sigma_{\vec{x}_2}
\end{equation}
in the additive closure $(k\Bord_{\msc{C}}^{\opn{red}})^{\opn{add}}$.
\end{lemma}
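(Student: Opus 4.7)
The plan is to exhibit the obvious candidate
$\phi := [\Sigma_{i_1}\ \Sigma_{i_2}]:\Sigma_{\vec{x}_1}\oplus\Sigma_{\vec{x}_2}\to\Sigma_{\vec{x}}$
as a two-sided inverse of $[\Sigma_{p_1}\ \Sigma_{p_2}]^t$ in $(k\Bord_{\msc{C}}^{\red})^{\opn{add}}$. After matrix multiplication both composites collapse to morphisms in $k\Bord_{\msc{C}}^{\red}$, and the verification that these morphisms are the expected identities will use only the universal skein relations (U2) (internal composition, including the freedom to insert or delete an identity-labeled interior vertex) and (U4) (linear expansion at a vertex).

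For $[\Sigma_{p_1}\ \Sigma_{p_2}]^t\circ\phi$ I will compute the $2\times 2$ matrix whose $(\lambda,\mu)$-entry is $\Sigma_{p_\lambda}\circ\Sigma_{i_\mu}$. Since each of $\Sigma_{p_\lambda}$ and $\Sigma_{i_\mu}$ is a cylindrical bordism carrying a single nontrivial vertex on the $j$-th straight line, their composite $\Sigma_{\vec{x}_\mu}\to\Sigma_{\vec{x}_\lambda}$ is a cylinder $\Sigma\times[0,2]$ with two adjacent interior vertices on the $j$-th line, which (U2) absorbs into one vertex labeled $p_{j\lambda}\circ i_{j\mu}$. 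When $\lambda=\mu$ this label is $\opn{id}_{x_{j\lambda}}$, and a second application of (U2) deletes the vertex and produces $\opn{id}_{\Sigma_{\vec{x}_\lambda}}$. When $\lambda\neq\mu$ the label is the zero morphism; writing $0 = f - f$ and applying (U4) then forces the whole bordism to vanish in $k\Bord_{\msc{C}}^{\red}$. This yields the identity matrix on $\Sigma_{\vec{x}_1}\oplus\Sigma_{\vec{x}_2}$.

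For the reverse composite $\phi\circ[\Sigma_{p_1}\ \Sigma_{p_2}]^t = \Sigma_{i_1}\Sigma_{p_1}+\Sigma_{i_2}\Sigma_{p_2}$ I apply the same (U2)-absorption to each summand, rewriting it as the cylinder on $\Sigma_{\vec{x}}$ with a single interior vertex labeled by the idempotent $i_{j\lambda}\circ p_{j\lambda}$. The additive form of (U4), obtained from its stated difference form via $f = c_1 f_1 - (-c_2) f_2$, then recombines the two summands into one bordism whose vertex label is $i_{j1}p_{j1}+i_{j2}p_{j2}=\opn{id}_{x_j}$, and a final use of (U2) deletes this identity vertex to produce $\opn{id}_{\Sigma_{\vec{x}}}$.

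I anticipate no serious obstacle beyond orientation bookkeeping at the $j$-th marking: when the framing there reverses the ambient orientation of $\Sigma$, the straight line is labeled by $x_j^\ast$ and the vertex morphisms become the duals $p_{j\lambda}^\ast,i_{j\lambda}^\ast$, but these still split $x_j^\ast$ and satisfy the same orthogonality and completeness identities, so the argument above goes through verbatim.
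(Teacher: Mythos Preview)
Your proof is correct and follows exactly the paper's approach: exhibit $[\Sigma_{i_1}\ \Sigma_{i_2}]$ as a two-sided inverse and verify this using internal composition and the linear expansion relation. The paper's proof is a one-liner stating precisely this (its reference to ``(U1)'' for internal composition appears to be a typo for (U2)), so your version is simply a more detailed unpacking of the same argument, including the orientation bookkeeping that the paper leaves implicit.
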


\begin{proof}
One observes directly that the map
\[
[\Sigma_{i_{1}}\ \Sigma_{i_{2}}]:\Sigma_{\vec{x}_1}\oplus \Sigma_{\vec{x}_2}\to \Sigma_{\vec{x}}
\]
is inverse to the map \eqref{eq:2156}, via internal composition (U1) and the linear relation (U4).
\end{proof}

As a corollary we find that, given a linear ribbon category $\msc{C}$ and a ribbon monoidal functor $F:\msc{C}'\to \msc{C}$ for which $\msc{C}$ is generated by the image of $F$ under direct sums, the corresponding map on additive closures
\[
F_\ast^{\opn{add}}:(k\Bord_{\msc{C}'})^{\opn{add}}\to (k\Bord_{\msc{C}}^{\opn{red}})^{\opn{add}}
\]
is essentially surjective. We now record the proof of Corollary \ref{cor:conc}.

\begin{proof}[Proof of Corollary \ref{cor:conc}]
Since $\msc{S}$ is semisimple the product $\msc{S}\otimes \msc{A}$ is generated by the image of the structure map $\msc{S}\times \msc{A}\to \msc{S}\otimes\msc{A}$ under direct sums (see Lemma \ref{lem:753}).  Thus, by the above discussion, we have that the additive closure of the concatenation functor
\begin{equation}\label{eq:2253}
(\opn{ccat}^{\opn{red}})^{\opn{add}}:(k\Bord_{\msc{S}\times\msc{A}}^{\red})^{\opn{add}}\to (k\Bord_{\msc{S}\otimes\msc{A}}^{\opn{red}})^{\opn{add}}
\end{equation}
is essentially surjective. Since the additive closure of any fully faithful functor remains fully faithful, we also have from Proposition \ref{prop:conc} that the functor \eqref{eq:2253} is fully faithful. Hence it is an equivalence.
\end{proof}

\subsection{Constructing the separation map}

The construction of the separating map $\Delta$ from \eqref{eq:492} is now straightforward.
\par

We consider the map into the product
\[
k\Bord_{\msc{S}\times\msc{A}}\to k\Bord_{\msc{S}}\times k\Bord_{\msc{A}}
\]
provided by the two projections $\msc{S}\times\msc{A}\to \msc{S}$ and $\msc{S}\times \msc{A}\to \msc{A}$, which then composes to provide a map to the internal tensor product of the reduced bordism categories
\[
k\Bord_{\msc{S}\times\msc{A}}\to k\Bord_{\msc{S}}^{\red}\boxtimes k\Bord_{\msc{A}}^{\red}.
\]
One sees directly that this map vanishes on the universal relations (U1)--(U$'$4) so that we obtain an induced symmetric monoidal functor from the bilinear reduction
\[
k\Bord_{\msc{S}\times\msc{A}}^{\red}\to k\Bord_{\msc{S}}^{\red}\boxtimes k\Bord_{\msc{A}}^{\red}.
\]

\begin{lemma}\label{lem:sep}
There is a linear symmetric monoidal functor
\[
\Delta^{\opn{red}}:k\Bord_{\msc{S}\otimes \msc{A}}^{\red}\to (k\Bord_{\msc{S}}^{\red}\boxtimes k\Bord_{\msc{A}}^{\red})^{\opn{add}}
\]
which fits into a ($2$-)commutative diagram
\begin{equation}\label{eq:1411}
\xymatrix{
	& k\Bord_{\msc{S}\times\msc{A}}^{\red}\ar[dr]\ar[dl]_{\opn{ccat}^{\opn{red}}}\\
k\Bord_{\msc{S}\otimes \msc{A}}^{\red}\ar[rr]_{\Delta^{\opn{red}}}& & (k\Bord_{\msc{S}}^{\red}\boxtimes k\Bord_{\msc{A}}^{\red})^{\opn{add}}.
}
\end{equation}
\end{lemma}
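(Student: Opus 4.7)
The plan is to invoke the equivalence from Corollary~\ref{cor:conc} and combine it with the obvious projection. Concretely, I will proceed as follows.

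First, I would produce a linear symmetric monoidal functor from the bilinearly reduced product to the box product directly. Starting from the two projection functors $\msc{S}\times\msc{A}\to \msc{S}$ and $\msc{S}\times\msc{A}\to\msc{A}$ (which are ribbon monoidal), we obtain an induced symmetric monoidal functor
\[
\Pi: k\Bord_{\msc{S}\times\msc{A}}\to k\Bord_{\msc{S}}^{\red}\boxtimes k\Bord_{\msc{A}}^{\red}
\]
at the level of free linearizations (where one uses that $\boxtimes$ imposes only bilinearity on Hom spaces, which is already satisfied on the right side). An elementary check against the list of relations shows that each of (U1)--(U3) and (U$'$4) on the left is sent to either a linear combination of (U1)--(U4) relations in one of the two factors, or to an already-equal equation. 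Hence $\Pi$ descends to a symmetric monoidal functor
\[
\bar\Pi: k\Bord_{\msc{S}\times\msc{A}}^{\red}\to k\Bord_{\msc{S}}^{\red}\boxtimes k\Bord_{\msc{A}}^{\red},
\]
and after post-composing with the canonical embedding into the additive closure we obtain
\[
\bar\Pi^{\opn{add}}:(k\Bord_{\msc{S}\times\msc{A}}^{\red})^{\opn{add}}\to (k\Bord_{\msc{S}}^{\red}\boxtimes k\Bord_{\msc{A}}^{\red})^{\opn{add}}.
\]

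Second, by Corollary~\ref{cor:conc} the additive closure of the concatenation functor,
\[
(\opn{ccat}^{\opn{red}})^{\opn{add}}:(k\Bord_{\msc{S}\times\msc{A}}^{\red})^{\opn{add}}\overset{\sim}\to (k\Bord_{\msc{S}\otimes\msc{A}}^{\red})^{\opn{add}},
\]
is a symmetric monoidal equivalence, hence admits a symmetric monoidal quasi-inverse, call it $\Xi$. I define $\Delta^{\opn{red}}$ to be the composite
\[
k\Bord_{\msc{S}\otimes\msc{A}}^{\red}\ \hookrightarrow\ (k\Bord_{\msc{S}\otimes\msc{A}}^{\red})^{\opn{add}}\ \xrightarrow{\ \Xi\ }\ (k\Bord_{\msc{S}\times\msc{A}}^{\red})^{\opn{add}}\ \xrightarrow{\ \bar\Pi^{\opn{add}}\ }\ (k\Bord_{\msc{S}}^{\red}\boxtimes k\Bord_{\msc{A}}^{\red})^{\opn{add}}.
\]
As a composite of linear symmetric monoidal functors, $\Delta^{\opn{red}}$ is linear and symmetric monoidal.

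Finally, I verify the $2$-commutativity of \eqref{eq:1411}. Pre-composing $\Delta^{\opn{red}}$ with $\opn{ccat}^{\opn{red}}$ gives $\bar\Pi^{\opn{add}}\circ \Xi\circ (\opn{ccat}^{\opn{red}})$, and since $\Xi\circ (\opn{ccat}^{\opn{red}})^{\opn{add}}$ is naturally isomorphic to the identity via a monoidal natural isomorphism, this composite is naturally isomorphic (as a symmetric monoidal functor) to $\bar\Pi^{\opn{add}}$ restricted along the embedding $k\Bord_{\msc{S}\times\msc{A}}^{\red}\hookrightarrow (k\Bord_{\msc{S}\times\msc{A}}^{\red})^{\opn{add}}$, which is exactly the top-right composite in \eqref{eq:1411}. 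The one point requiring a little attention is that the quasi-inverse $\Xi$ can be promoted to a strong symmetric monoidal functor; this is standard since $(\opn{ccat}^{\opn{red}})^{\opn{add}}$ is a strong symmetric monoidal equivalence of symmetric monoidal categories. This is the only mildly delicate step, but it is a general categorical fact and poses no essential obstacle.
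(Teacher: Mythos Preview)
Your proof is correct and follows essentially the same approach as the paper: the paper constructs the projection functor $k\Bord_{\msc{S}\times\msc{A}}^{\red}\to k\Bord_{\msc{S}}^{\red}\boxtimes k\Bord_{\msc{A}}^{\red}$ in the paragraph preceding the lemma (exactly as your $\bar\Pi$), and then in the proof defines $\Delta^{\opn{red}}$ as the composite with a symmetric monoidal inverse $\vartheta$ to $(\opn{ccat}^{\opn{red}})^{\opn{add}}$, just as you do with $\Xi$. Your explicit verification of the $2$-commutativity is a slight elaboration on what the paper leaves implicit, but the argument is the same.
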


\begin{proof}
By Corollary \ref{cor:conc} the additive closure of the reduced concatenation functor is an equivalence.  Hence from any symmetric inverse
\[
\vartheta:(k\Bord_{\msc{S}\otimes\msc{A}}^{\opn{red}})^{\opn{add}}\to (k\Bord_{\msc{S}\times\msc{A}}^{\opn{red}})^{\opn{add}}
\]
we can define $\Delta^{\opn{red}}$ as the composite
\[
\begin{array}{l}
k\Bord_{\msc{S}\otimes \msc{A}}^{\red}\to (k\Bord_{\msc{S}\otimes\msc{A}}^{\opn{red}})^{\opn{add}}\overset{\vartheta}\to (k\Bord_{\msc{S}\times\msc{A}}^{\opn{red}})^{\opn{add}}\to (k\Bord_{\msc{S}}^{\red}\boxtimes k\Bord_{\msc{A}}^{\red})^{\opn{add}}.
\end{array}
\]
\end{proof}

\begin{remark}
One can check that the diagram \eqref{eq:1411} fixes the map $\Delta^{\opn{red}}$ up to a unique natural isomorphism.
\end{remark}

\begin{definition}\label{def:delta}
Given a symmetric tensor category $\msc{S}$ and a ribbon tensor category $\msc{A}$, with $\msc{S}$ semisimple, the separation functor
\[
\Delta:\Bord_{\msc{S}\ot\msc{A}}\to (k\Bord_{\msc{S}}^{\red}\boxtimes k\Bord_{\msc{A}}^{\red})^{\opn{add}}
\]
is the composite of the projection $\Bord_{\msc{S}\ot\msc{A}}\to k\Bord_{\msc{S}\ot\msc{A}}^{\opn{red}}$ with the map $\Delta^{\opn{red}}$ from Lemma \ref{lem:sep}.
\end{definition}

\section{Base change for TQFTs}
\label{sect:t_ext}

We explain how any reasonable field theory $Z:\Bord_{\msc{A}}\to \opn{Vect}$ from the $\msc{A}$-labeled bordism category extends naturally to an $\msc{S}$-valued theory
\[
Z(\msc{S}):\Bord_{\msc{S}\ot\msc{A}}\to \msc{S},
\]
whenever $\msc{S}$ is semisimple and symmetric.  We make a similar claim whenever $\Bord_{\square}$ is replaced with the so-called ``admissible" bordism category.

\begin{remark}
Our consideration of the admissible bordism category is purely practical, as nonsemisimple theories tend to require some constraints on the manifolds under consideration.
\end{remark}

We are especially interested in the case where our symmetric base $\msc{S}$ is the category $\msc{S}=\opn{Vect}^{\mathbb{Z}}$ of $\mathbb{Z}$-graded vector spaces, along with its signed symmetry. In this case the category of cochains $\opn{Ch}(\msc{A})$ is located as a distinguished subcategory in the Kelly product
\[
\opn{Ch}(\msc{A})\ \subseteq\ \msc{A}^{\mathbb{Z}}=\opn{Vect}^{\mathbb{Z}}\otimes \msc{A}.
\]
Ultimately, our cochain valued theory $Z_{\opn{Ch}(\msc{A})}$ discussed in the introduction will be cut out of the base changed theory $Z_{\msc{A}}(\opn{Vect}^{\mathbb{Z}})$ which we construct below.

\subsection{Main conclusion}

\begin{definition}
Let $\msc{A}$ be a ribbon tensor category.  We call a TQFT $Z:\Bord_{\msc{A}}\to \opn{Vect}$ reasonable if there are equalities of linear maps
\[
Z(M,T)=Z(M,T')
\]
whenever the pair of ribbon bordisms $(M,T)$ and $(M,T')$ are related by a sequence of skein relations, and a similar equality of linear maps
\[
Z(M,T)=c_1Z(M,T_1)+c_2Z(M,T_2)
\]
whenever the bordisms $(M,T)$ and $(M,T_i)$ are related by a linear relation as in (U4).
\end{definition}

Given a reasonable field theory $Z$ we let
\[
Z:k\Bord^{\opn{red}}_{\msc{A}}\to \opn{Vect}
\]
denote the induced symmetric monoidal functor from the linearly reduced bordism category, by an abuse of notation.  As the name suggests, all TQFTs which we consider in this text--in particular all Reshetikhin-Turaev style TQFTs--are reasonable in the above sense.  For example, for any symmetric tensor category $\msc{S}$ with trivial twist, the tautological TQFT $Ev_{\msc{S}}:\Bord_{\msc{S}}\to \msc{S}$ is reasonable. Stability under skein relations was covered in Lemma \ref{lem:zs_reason}, and stability under the universal linear relation U4 follows from the fact that ribbon evaluation (from the linearized ribbon category) respects such relations.
\par

The primary claim of the section is as follows.

\begin{theorem}[{cf.\ \cite[Section 4.3]{runkelszegedywatts23}}]\label{thm:Z_AS}
Let $\msc{S}$ be a semisimple symmetric tensor category and $\msc{A}$ be a ribbon tensor category.  For any reasonable TQFT $Z:\Bord_{\msc{A}}\to \opn{Vect}$, there is a canonically defined TQFT
\[
Z(\msc{S}):\Bord_{\msc{S}\otimes \msc{A}}\to \msc{S}
\]
whose restrictions to $\Bord_{\msc{A}}\subseteq \Bord_{\msc{S}\otimes \msc{A}}$ recovers the composition of $Z$ with the unit map $\opn{Vect}\to \msc{S}$.
\end{theorem}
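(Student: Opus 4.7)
The plan is to define $Z(\msc{S})$ as the composite
\[
\Bord_{\msc{S}\ot\msc{A}} \xrightarrow{\Delta} (k\Bord_{\msc{S}}^{\red}\boxtimes k\Bord_{\msc{A}}^{\red})^{\opn{add}} \xrightarrow{(Ev_{\msc{S}}\boxtimes Z)^{\opn{add}}} (\msc{S}\boxtimes \opn{Vect})^{\opn{add}} \xrightarrow{\opn{act}^{\opn{add}}} \msc{S},
\]
where $\Delta$ is the separation functor of Definition \ref{def:delta}, the middle arrow is induced by $Ev_{\msc{S}}$ on the $\msc{S}$-factor and $Z$ on the $\msc{A}$-factor via the universal property of $\boxtimes$, the map $\opn{act}$ is the canonical bilinear action $(s,v)\mapsto v\cdot s$ of $\opn{Vect}$ on $\msc{S}$, and the final identification uses that $\msc{S}$ is already additive. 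Every constituent is $k$-linear symmetric monoidal: the tautological functor $Ev_{\msc{S}}$ descends through the reduced bordism category by Lemma \ref{lem:zs_reason} together with multilinearity of ribbon evaluation, the analogous statement for $Z$ is exactly the reasonableness hypothesis, and $\Delta$ is symmetric monoidal by Lemma \ref{lem:sep}.

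The substantive check is the identification of the restriction of $Z(\msc{S})$ along the inclusion $\iota\colon \Bord_{\msc{A}}\hookrightarrow \Bord_{\msc{S}\ot\msc{A}}$ induced by $a\mapsto \1_{\msc{S}}\ot a$. Every object and morphism in the image of $\iota$ tautologically lifts to $\Bord_{\msc{S}\times\msc{A}}^{\red}$ with all $\msc{S}$-labels equal to $\1_{\msc{S}}$ and all $\msc{S}$-vertex morphisms equal to $id_{\1}$. By the $2$-commutative triangle \eqref{eq:1411} of Lemma \ref{lem:sep}, the composite $\Delta\circ \iota$ is therefore naturally isomorphic to
\[
(M,T)\ \longmapsto\ (M,T_{\1})\boxtimes (M,T)\ \in\ k\Bord_{\msc{S}}^{\red}\boxtimes k\Bord_{\msc{A}}^{\red},
\]
where $T_{\1}$ denotes the underlying geometry of $T$ labeled trivially with $\1_{\msc{S}}$ and identities. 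A direct unpacking of Theorem \ref{thm:eval_A} then gives $Ev_{\msc{S}}(M,T_{\1})=id_{\1_{\msc{S}}}$, and the composition collapses to $Z(M,T)\cdot \1_{\msc{S}}$, which is precisely the image of $Z(M,T)$ under the unit map $\opn{Vect}\to \msc{S}$.

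The main obstacle I anticipate is the evaluation $Ev_{\msc{S}}(M,T_{\1})=id_{\1_{\msc{S}}}$: although intuitively a trivially labeled ribbon graph must evaluate to the identity on the unit, a rigorous proof requires unpacking $Ev_{\msc{S}}=ev_{\msc{S}}\circ p_{\msc{S}}$ and verifying that the image of $T_{\1}$ in $\opn{Str}_{\msc{S}}$ is sent to $id_{\1}$ by $ev_{\msc{S}}$. One can address this either by a direct combinatorial inspection of the defining values (a)--(d) of Theorem \ref{thm:eval_A} together with the absorption relation (U2), or by observing more abstractly that $Ev_{\msc{S}}$ restricted to the trivially $\1$-labeled subcategory factors through the unlabeled bordism category $\Bord_{\ast}$ and therefore coincides with the unit component of a symmetric monoidal functor. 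Uniqueness and canonicity of $Z(\msc{S})$ are then built into the construction, as each intermediate arrow is determined up to unique natural isomorphism.
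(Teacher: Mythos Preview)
Your proposal is correct and follows essentially the same route as the paper: the construction of $Z(\msc{S})$ is exactly Definition \ref{def:Z_AS}, and your restriction argument via the triangle \eqref{eq:1411} together with the trivialization of $Ev_{\msc{S}}$ on $\1$-labeled bordisms mirrors the paper's proof, which handles the latter point by the abstract factorization through $\Bord_\ast$ that you yourself suggest as the cleaner option.
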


For a given semisimple symmetric tensor category $\msc{S}$ we refer to the theory $Z(\msc{S})$ informally as the \emph{base change} of the theory $Z$ along the unit $\opn{Vect}\to \msc{S}$. We construct the TQFT $Z(\msc{S})$ below, then verify that it has the claimed restriction to $\Bord_{\msc{A}}$.

\subsection{Constructing $Z(\msc{S})$}

Throughout this section we fix a semisimple symmetric tensor category $\msc{S}$ with trivial twist, a ribbon tensor category $\msc{A}$, and a reasonable TQFT $Z$.
\par

Pairing $Z$ with the tautological TQFT for $\msc{S}$, we obtain a linear symmetric monoidal functor
\[
Ev_{\msc{S}}\boxtimes Z:k\Bord_{\msc{S}}^{\red}\boxtimes k\Bord_{\msc{A}}^{\red}\to \msc{S}\boxtimes\opn{Vect}.
\]
We compose with the action map $\msc{S}\boxtimes \opn{Vect}\to \msc{S}$ to obtain a second linear symmetric monoidal functor
\[
k\Bord_{\msc{S}}^{\red}\boxtimes k\Bord_{\msc{A}}^{\red}\to \msc{S}.
\]
As the image here is additive, this functor extends uniquely to a third such functor
\[
(k\Bord_{\msc{S}}^{\red}\boxtimes k\Bord_{\msc{A}}^{\red})^{\opn{add}}\to \msc{S}.
\]
We denote this final map by $Ev_{\msc{S}}\cdot Z$.

\begin{definition}\label{def:Z_AS}
Given a reasonable field theory $Z:\Bord_{\msc{A}}\to \opn{Vect}$, the base changed theory $Z(\msc{S}):\Bord_{\msc{S}\otimes\msc{A}}\to \msc{S}$ is defined as the composite
\[
\begin{array}{l}
Z(\msc{S}):=\vspace{2mm}\\
\Bord_{\msc{S}\otimes \msc{A}}\overset{\Delta}\longrightarrow (k\Bord_{\msc{S}}^{\red}\boxtimes k\Bord_{\msc{A}}^{\red})^{\opn{add}}\overset{Ev_{\msc{S}}\cdot Z}\longrightarrow \msc{S}.
\end{array}
\]
\end{definition}

\subsection{Proof of Theorem \ref{thm:Z_AS}}

We have already produced the TQFT $Z(\msc{S})$.  We need only verify that it recovers $Z$ when restricted to the subcategory of $\msc{A}$-labeled ribbon bordisms in $\Bord_{\msc{S}\otimes \msc{A}}$.

\begin{proof}[Proof of Theorem \ref{thm:Z_AS}]
The inclusion $\msc{A}\to \msc{S}\otimes\msc{A}$ factors through the inclusion $\msc{A}=\{\ast\}\times \msc{A}\to \msc{S}\times \msc{A}$ so that we have the diagram
\[
\xymatrix{
\Bord_{\msc{A}}\ar[rr]\ar[dr]   & & \Bord_{\msc{S}\times \msc{A}}\ar[dl]^{\opn{ccat}}\\
 & \Bord_{\msc{S}\otimes \msc{A}} & .
}
\]
This diagram linearizes then reduces to provide a diagram
\[
\xymatrix{
k\Bord_{\msc{A}}\ar[rr]\ar[dr] & & (k\Bord_{\msc{S}\times \msc{A}}^{\opn{red}})^{\opn{add}}\ar[dl]^{(\opn{ccat}^{\opn{red}})^{\opn{add}}}\\
 & (k\Bord_{\msc{S}\otimes \msc{A}}^{\opn{red}})^{\opn{add}} & .
}
\]
Hence, for any inverse $\vartheta$ to $\opn{ccat}^{\opn{red},\opn{add}}$, the composite
\[
\Bord_{\msc{A}}\to (k\Bord_{\msc{S}\otimes \msc{A}}^{\opn{red}})^{\opn{add}}\overset{\vartheta}\to (k\Bord_{\msc{S}\times \msc{A}}^{\opn{red}})^{\opn{add}}
\]
is isomorphic to the inclusion composed with reduction
\[
\Bord_{\msc{A}}\to k\Bord_{\msc{S}\times \msc{A}}\to (k\Bord_{\msc{S}\times \msc{A}}^{\opn{red}})^{\opn{add}}.
\]
Thus the separation map $\Delta$, which is defined as the composite of $\vartheta|_{\Bord_{\msc{S}\otimes \msc{A}}}$ with the natural map
\[
(k\Bord_{\msc{S}\times \msc{A}}^{\opn{red}})^{\opn{add}}\to (k\Bord_{\msc{S}}^{\red}\boxtimes k\Bord_{\msc{A}}^{\red})^{\opn{add}},
\]
restricts to $\Bord_{\msc{A}}$ to recover the sequence
\[
\Bord_{\msc{A}}\to k\Bord_{\msc{S}\times \msc{A}}\to k\Bord_{\msc{S}}\boxtimes k\Bord_{\msc{A}}\to (k\Bord_{\msc{S}}^{\red}\boxtimes k\Bord_{\msc{A}}^{\red})^{\opn{add}}.
\]
Now applying $Ev_\msc{S}\cdot Z$ recovers the sequence
\begin{equation}\label{eq:1110}
\Bord_{\msc{A}}\to \Bord_{\msc{S}\times \msc{A}}\to \Bord_{\msc{S}}\times \Bord_{\msc{A}}\overset{Ev\times Z}\to \msc{S}\times \opn{Vect}\overset{\opn{act}}\to \msc{S}.
\end{equation}
So, in total, we find that the restriction $Z(\msc{S})|_{\Bord_{\msc{A}}}$ is isomorphic to the above composite \eqref{eq:1110}.
\par

Restricting along the unit map $\ast\to \msc{S}$, for $\msc{S}$ considered as a non-linear monoidal category, produces a (2-)diagram
\[
\xymatrix{
\Bord_{\ast}\ar[r]^{Ev_\ast}\ar[d] & \ast\ar[d]^{\opn{unit}}\\
\Bord_{\msc{S}}\ar[r]_{Ev_\msc{S}} & \msc{S},
}
\]
via naturality of evaluation on string diagrams.  In particular, the TQFT $Ev_{\msc{S}}$ is constant along surfaces and bordisms labeled by the unit object and identity morphisms.  Hence, via the (2-)diagram
\[
\xymatrix{
\Bord_{\msc{A}}\ar[r]\ar[d] & \Bord_{\ast\times \msc{A}}\ar[r] & \Bord_\ast\times \Bord_{\msc{A}}\ar[d]\ar[r]^{Ev_\ast\times Z} & \ast\times \opn{Vect}\ar[r]^{act\cong p_2} & \opn{Vect}\ar[d]^{\opn{unit}}\\
\Bord_{\msc{S}\times \msc{A}}\ar[rr] & & \Bord_{\msc{S}}\times \Bord_\msc{A}\ar[rr]_{Ev_{\msc{S}}\cdot Z}  & & \msc{S}
}
\]
we see, by following around the top, that the sequence \eqref{eq:1110} is isomorphic to the TQFT $Z$ composed with the unit map $\opn{Vect}\to \msc{S}$.  This gives the desired identification $Z(\msc{S})|_{\Bord_{\msc{A}}}\cong \opn{unit}\circ Z$.
\end{proof}

\subsection{Theorem \ref{thm:Z_AS} under admissibility}
\label{sect:admis}

The particular theory we are interested in is not defined on the full category of $\msc{A}$-labeled ribbon bordisms, but on a subcategory of so-called ``admissible" bordisms.  Let us introduce the relevant category of admissible bordisms, then translate Theorem \ref{thm:Z_AS} to the admissible setting.

\begin{lemma}
\begin{enumerate}
\item For arbitrary $s$ in $\msc{S}$ and projective $x$ in $\msc{A}$, the product $s\ot x$ is projective in $\msc{S}\otimes \msc{A}$.
\item An object $z$ in $\msc{S}\otimes \msc{A}$ is projective if and only if $z$ is isomorphic to a sum $\oplus_i s_i\ot x_i$ in which all the $x_i$ are projective in $\msc{A}$.
\end{enumerate}
\end{lemma}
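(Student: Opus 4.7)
The plan is to exploit semisimplicity of $\msc{S}$ to decompose every object of $\msc{S}\otimes\msc{A}$ into isotypic components along the $\msc{S}$-factor, and then to deduce projectivity statements from adjunctions arising from rigidity. Concretely, working in the coalgebra model $\msc{S}\otimes\msc{A}\cong\opn{corep}(S\otimes A)$, cosemisimplicity of $S$ yields a canonical splitting
\[z\ \cong\ \bigoplus_{j}\, s_j\otimes y_j\qquad\text{with}\qquad y_j=\Hom_{\msc{S}}(s_j,z)\in\msc{A},\]
for any $z\in\msc{S}\otimes\msc{A}$, where $s_j$ ranges over a set of representatives of the simples of $\msc{S}$ and the $\msc{A}$-structure on $y_j$ comes from the commuting $A$-coaction. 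In particular, the isotypic-component functor $\Pi_j:=\Hom_{\msc{S}}(s_j,-):\msc{S}\otimes\msc{A}\to\msc{A}$ is exact, since $s_j$ is projective in the semisimple category $\msc{S}$.

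For part (1), additivity reduces the claim to the case where $s$ is simple. Rigidity of $s$ inside $\msc{S}\otimes\msc{A}$, together with the computation $\Pi_s(s\otimes x)\cong x$ for $x\in\msc{A}$, yields the adjunction
\[\Hom_{\msc{S}\otimes\msc{A}}(s\otimes x,\, z)\ \cong\ \Hom_{\msc{A}}(x,\Pi_s(z)),\]
so $s\otimes-:\msc{A}\to\msc{S}\otimes\msc{A}$ is left adjoint to the exact functor $\Pi_s$. A left adjoint whose right adjoint is exact preserves projectives, and therefore $s\otimes x$ is projective whenever $x$ is projective in $\msc{A}$.

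For part (2), the ``if'' direction follows immediately from (1) and closure of the class of projectives under direct sums. For the ``only if'' direction, I would assume $z$ is projective and apply the canonical splitting to write $z\cong\bigoplus_j s_j\otimes y_j$. Each summand $s_j\otimes y_j$ is then a direct summand of a projective, and hence projective in $\msc{S}\otimes\msc{A}$. To upgrade this to projectivity of $y_j$ in $\msc{A}$, I would invoke the isomorphism
\[\Hom_{\msc{A}}(y_j,x)\ \cong\ \Hom_{\msc{S}\otimes\msc{A}}(s_j\otimes y_j,\, s_j\otimes x)\]
coming from the adjunction above, coupled with exactness of $s_j\otimes-$ (by rigidity of $s_j$): a surjection $x\twoheadrightarrow x'$ in $\msc{A}$ produces a surjection $s_j\otimes x\twoheadrightarrow s_j\otimes x'$ in $\msc{S}\otimes\msc{A}$, projectivity of $s_j\otimes y_j$ lifts the corresponding Hom map, and the displayed isomorphism transports this lifting back to the desired lifting in $\msc{A}$.

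The main technical point is establishing the isotypic decomposition and the exactness of $\Pi_j$ in the Kelly product. These are transparent via the coalgebra model, but require some care regarding the interaction of the commuting $S$- and $A$-coactions, so that $\Hom_{\msc{S}}(s_j,-)$ genuinely lands in $\msc{A}$ with the expected functoriality and exactness.
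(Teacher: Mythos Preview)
Your proposal is correct and rests on the same core idea as the paper: the isotypic decomposition $z\cong\bigoplus_j s_j\otimes y_j$ along the simples of $\msc{S}$. The paper packages this more efficiently as an additive equivalence $\msc{S}\otimes\msc{A}\simeq\bigoplus_{\lambda\in\opn{Irred}(\msc{S})}\msc{A}$ (via Lemma~\ref{lem:708}), from which both statements are immediate since projectives in a direct sum of abelian categories are exactly the tuples of projectives; your adjunction arguments unpack the same content more explicitly.
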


\begin{proof}
For a set $\Lambda$ and a $\Lambda$-indexed collection of additive categories $\msc{C}_\lambda$, take $\oplus_{\lambda\in \Lambda}\msc{C}_\lambda$ the full subcategory in the product $\prod_{\lambda\in \Lambda}\msc{C}_\lambda$ whose objects are all tuples $(x_\lambda:\lambda\in \Lambda)$ in which all but finitely many of the $x_\lambda$ are zero.  As an additive category $\msc{S}\otimes \msc{A}$ is equivalent to the sum $\oplus_{\lambda\in \opn{Irred}(\msc{S})}\msc{A}$ via the map
\[
[s_\lambda\ot-:\lambda\in \opn{Irred}(\msc{S})]:\oplus_{\lambda}\msc{A}\to \msc{S}\otimes \msc{A}.
\]
(This follows by Lemma \ref{lem:708} for example.)  Both statements (1) and (2) follow from the above equivalence.
\end{proof}

\begin{definition}[\cite{derenzietal23}]
A bordism $(M,T)$ in $\Bord_{\msc{S}\otimes\msc{A}}$ is called admissible if, for each component $M_c$ in $M$, $M_c$ contains an edge in $T$ which is labeled by a projective in $\msc{S}\otimes \msc{A}$, or $M_c$ has nonvanishing outgoing boundary.
\par

A bordism $(M',T'_{\msc{S}},T'_{\msc{A}})$ in $\Bord_{\msc{S}\times \msc{A}}$ is called admissible if every component $M'_c$ in $M'$ either contains an edge in $T'_{\msc{A}}$ which is labeled by a projective, or has nonvanishing outgoing boundary.
\end{definition}

\begin{remark}
One can consider the case $\msc{S}=\opn{Vect}$ to recover the original notion of admissible bordisms in $\Bord_{\msc{A}}$, as it appears in the text \cite{derenzietal23}.
\end{remark}

We note that admissible bordisms are stable under composition in their respective ambient categories.

\begin{definition}[\cite{derenzietal23}]
The subcategories
\[
\Bord^{\opn{adm}}_{\msc{S}\otimes\msc{A}}\ \subseteq\ \Bord_{\msc{S}\times \msc{A}}\ \ \text{and}\ \ \Bord^{\opn{adm}}_{\msc{S}\times \msc{A}}\ \subseteq\ \Bord_{\msc{S}\times\msc{A}}
\]
are the non-full symmetric monoidal subcategories which contain all objects and all admissible bordisms.
\end{definition}

We can define categories of reduced admissible bordisms $k\Bord^{\opn{adm};\opn{red}}_{\msc{C}}$, for $\msc{C}=\msc{S}\otimes \msc{A}$ and $\msc{C}=\msc{S}\times \msc{A}$, by introducing relations for internal composition and (bi)linear expansions at vertices, just as in Section \ref{sect:reduced}.

\begin{proposition}
There is a symmetric monoidal functor
\[
\Delta^{\opn{adm}}:\Bord_{\msc{S}\otimes \msc{A}}^{\opn{adm}}\to (k\Bord_{\msc{S}}^{\red}\boxtimes k\Bord^{\opn{adm};\red}_{\msc{A}})^{\opn{add}}
\]
which fits into a diagram
\[
\xymatrix{
\Bord_{\msc{S}\otimes \msc{A}}^{\opn{adm}}\ar[d]\ar[rr]^(.35){\Delta^{\opn{adm}}}& &  (k\Bord_{\msc{S}}^{\red}\boxtimes k\Bord^{\opn{adm};\red}_{\msc{A}})^{\opn{add}}\ar[d]\\
\Bord_{\msc{S}\otimes \msc{A}}\ar[rr]^(.35){\Delta}& &  (k\Bord_{\msc{S}}^{\red}\boxtimes k\Bord^{\red}_{\msc{A}})^{\opn{add}}.
}
\]
\end{proposition}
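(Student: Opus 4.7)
The strategy is to replay the construction of $\Delta$ given in Definition \ref{def:delta} and Lemma \ref{lem:sep}, tracking admissibility at every stage. The underlying observation is that all three intermediate constructions used to build $\Delta$, namely concatenation, reduction by the universal relations, and additive completion, are compatible with admissibility, so that the argument goes through mutatis mutandis.

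First I would check that the concatenation functor restricts to
\[
\opn{ccat}^{\opn{adm}}:\Bord^{\opn{adm}}_{\msc{S}\times\msc{A}}\to \Bord^{\opn{adm}}_{\msc{S}\otimes\msc{A}}.
\]
This uses part (1) of the projectivity lemma above: if an edge in $T'_{\msc{A}}$ is labeled by a projective $x$ and the parallel edge in $T'_{\msc{S}}$ carries a label $s$, the concatenated edge carries the projective label $s\otimes x$. Next I would upgrade Proposition \ref{prop:conc} to the admissible setting. Since admissibility is a property stable under each of the universal relations (U1)--(U4), the admissible reduced bordism categories $k\Bord^{\opn{adm};\red}_{\msc{S}\times\msc{A}}$ and $k\Bord^{\opn{adm};\red}_{\msc{S}\otimes\msc{A}}$ sit as non-full subcategories inside the full reduced categories in a way that is compatible with $\opn{ccat}^{\opn{red}}$. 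Fully faithfulness of $\opn{ccat}^{\opn{adm};\red}$ then follows from Proposition \ref{prop:conc}.

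Next I would prove the admissible analog of Corollary \ref{cor:conc}, namely that $(\opn{ccat}^{\opn{adm};\red})^{\opn{add}}$ is a symmetric monoidal equivalence. The nontrivial input is essential surjectivity. Given an admissible $(M,T)$ in the target, for each label $z_i$ on an edge or marked point write $z_i\cong \oplus_j s_{ij}\otimes x_{ij}$ and apply the expansion relation (U5) from Lemma \ref{lem:redundant} iteratively until every label has been replaced by a pure tensor. This expresses $(M,T)$ as a sum of bordisms each of whose labels lies in the image of the structure map $\msc{S}\times\msc{A}\to\msc{S}\otimes\msc{A}$. For admissibility of the summands, consider any component $M_c$: either $M_c$ has nonvanishing outgoing boundary (in which case every summand remains admissible), or $M_c$ contains an edge in $T$ labeled by a projective $z$. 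By part (2) of the projectivity lemma, the decomposition $z\cong\oplus_j s_j\otimes x_j$ can be chosen so that each $x_j$ is projective in $\msc{A}$; hence every summand of the expansion at that edge remains admissible in the $\times$-sense as well. The identical argument used in the proof of Corollary \ref{cor:conc} then yields the desired equivalence.

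Finally, choose a symmetric monoidal inverse $\vartheta^{\opn{adm}}$ and define
\[
\Delta^{\opn{adm}}:\Bord^{\opn{adm}}_{\msc{S}\otimes\msc{A}}\to (k\Bord^{\opn{adm};\red}_{\msc{S}\otimes\msc{A}})^{\opn{add}}\xrightarrow{\vartheta^{\opn{adm}}}(k\Bord^{\opn{adm};\red}_{\msc{S}\times\msc{A}})^{\opn{add}}\to (k\Bord^{\red}_{\msc{S}}\boxtimes k\Bord^{\opn{adm};\red}_{\msc{A}})^{\opn{add}},
\]
where the final arrow is induced by the two projections $\msc{S}\times\msc{A}\to\msc{S}$ and $\msc{S}\times\msc{A}\to\msc{A}$ exactly as in Lemma \ref{lem:sep}. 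Commutativity of the square with the non-admissible $\Delta$ follows from uniqueness of $\vartheta^{\opn{adm}}$ up to natural isomorphism: the inverse $\vartheta$ to the non-admissible concatenation equivalence restricts along the forgetful inclusions to an inverse of $(\opn{ccat}^{\opn{adm};\red})^{\opn{add}}$, and any two such inverses are identified by the coherent natural isomorphism coming from the diagram \eqref{eq:1411}. I expect the main technical obstacle to be the essential surjectivity step, because one must verify that the explicit (U5)-expansion applied to every edge label can be arranged so that the projectivity witness required by admissibility is preserved in each summand; once this is in hand, the rest of the argument is a straightforward transport of Corollary \ref{cor:conc} and Lemma \ref{lem:sep}.
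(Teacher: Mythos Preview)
Your overall strategy matches the paper's exactly: establish that the admissible concatenation functor becomes an equivalence after reduction and additive completion, then invert it to build $\Delta^{\opn{adm}}$ as in Lemma \ref{lem:sep}. The paper's proof is essentially a three-line pointer saying ``repeat Proposition \ref{prop:conc} word for word in the admissible setting; Lemma \ref{lem:sigma_sum} still holds; hence construct $\Delta^{\opn{adm}}$ as in Lemma \ref{lem:sep}.'' So you have the right architecture.

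Two places where your execution is less clean than the paper's. First, your deduction of full faithfulness of $\opn{ccat}^{\opn{adm};\red}$ from Proposition \ref{prop:conc} is not as immediate as you suggest: the admissible reduced categories are quotients of the admissible bordism categories, not a priori subcategories of the full reduced categories, so one must verify that the natural functor $k\Bord^{\opn{adm};\red}\to k\Bord^{\red}$ is faithful (i.e., that no new relations among admissible bordisms are forced by passing through non-admissible ones). The paper sidesteps this entirely by simply rerunning the proof of Proposition \ref{prop:conc} with $\Bord^{\opn{adm}}$ in place of $\Bord$, which works because that proof only manipulates morphisms already present.

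Second, your essential surjectivity argument conflates objects with morphisms. Essential surjectivity of $(\opn{ccat}^{\opn{adm};\red})^{\opn{add}}$ concerns marked surfaces, not bordisms $(M,T)$; it comes directly from Lemma \ref{lem:sigma_sum}, whose splitting maps $\Sigma_{p_\lambda}$, $\Sigma_{i_\lambda}$ are cylinders with nonempty outgoing boundary and are therefore admissible automatically. No projectivity tracking is needed there. Your (U5)-expansion-with-projectivity discussion would be relevant to fullness of the functor on morphisms, but fullness is already part of the full faithfulness claim handled in the previous step.
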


\begin{proof}
One repeats the proof of Proposition \ref{prop:conc} word for word, replacing $\Bord$ with $\Bord^{\opn{adm}}$, to see that the functor
\[
\opn{ccat}^{\opn{adm};\red}:k\Bord_{\msc{S}\times \msc{A}}^{\opn{adm};\red}\to k\Bord_{\msc{S}\otimes \msc{A}}^{\opn{adm};\red}
\]
is fully faithful.  The analog of Corollary \ref{cor:conc} follows, as the proof of Lemma \ref{lem:sigma_sum} is still valid in the admissible setting.  Hence we can construct $\Delta^{\opn{adm}}$ exactly as in the proof of Lemma \ref{lem:sep}.
\end{proof}

We can again define reasonable theories $Z:\Bord^{\opn{adm}}_{\msc{A}}\to \opn{Vect}$ in the admissible context as those theories which are stable under skein relations and respect the linear relation (U4). Given such a theory, we have the base change to $\Bord^{\opn{adm};\red}_{\msc{S}\otimes\msc{A}}$ at arbitrary semisimple symmetric $\msc{S}$.

\begin{theorem}\label{thm:adm_Z_AS}
Let $Z:\Bord^{\opn{adm}}_{\msc{A}}\to \opn{Vect}$ be a reasonable theory from the admissible bordism category, and $\msc{S}$ be semisimple and symmetric with trivial twist.  There is a symmetric monoidal functor
\[
Z(\msc{S}):\Bord^{\opn{adm}}_{\msc{S}\ot\msc{A}}\to \msc{S}
\]
which recovers $Z$ when restricted to the subcategory $\Bord^{\opn{adm}}_{\msc{A}}$.
\end{theorem}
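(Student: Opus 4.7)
The plan is to mirror the construction and proof of Theorem \ref{thm:Z_AS} verbatim, simply replacing each category of bordisms by its admissible counterpart. Concretely, given the admissible separation functor
\[
\Delta^{\opn{adm}}:\Bord_{\msc{S}\otimes \msc{A}}^{\opn{adm}}\to (k\Bord_{\msc{S}}^{\red}\boxtimes k\Bord^{\opn{adm};\red}_{\msc{A}})^{\opn{add}}
\]
produced in the preceding proposition, I would define
\[
Z(\msc{S}):=(Ev_{\msc{S}}\cdot Z)\circ \Delta^{\opn{adm}},
\]
where $Ev_{\msc{S}}\cdot Z$ denotes the unique linear symmetric monoidal functor
\[
(k\Bord_{\msc{S}}^{\red}\boxtimes k\Bord^{\opn{adm};\red}_{\msc{A}})^{\opn{add}}\to \msc{S}
\]
obtained by extending the external product $Ev_{\msc{S}}\boxtimes Z$ through the action map $\msc{S}\boxtimes \opn{Vect}\to \msc{S}$ and then passing to additive closure. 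Here $Z$ descends to the reduced admissible category $k\Bord^{\opn{adm};\red}_{\msc{A}}$ by virtue of being reasonable, and the tautological theory $Ev_{\msc{S}}$ requires no admissibility hypothesis since $\msc{S}$ is semisimple (so every object is projective).

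For the restriction claim, I would imitate the proof of Theorem \ref{thm:Z_AS} step by step. The factoring
\[
\Bord^{\opn{adm}}_{\msc{A}}\to \Bord^{\opn{adm}}_{\msc{S}\times\msc{A}}\overset{\opn{ccat}^{\opn{adm}}}\longrightarrow \Bord^{\opn{adm}}_{\msc{S}\otimes\msc{A}}
\]
through the embedding $\msc{A}=\{\ast\}\times \msc{A}\hookrightarrow \msc{S}\times \msc{A}$ remains valid in the admissible setting, since any $\msc{A}$-admissible bordism is automatically $(\msc{S}\times\msc{A})$-admissible with the $\msc{S}$-coordinate labeled trivially by $\1$. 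Hence, by the commuting square relating $\Delta^{\opn{adm}}$ to $\Delta$ and the identification of the separation map on objects in the image of $\opn{ccat}^{\opn{adm}}$, the restriction $Z(\msc{S})|_{\Bord^{\opn{adm}}_{\msc{A}}}$ is naturally isomorphic to the composite
\[
\Bord^{\opn{adm}}_{\msc{A}}\to \Bord^{\opn{adm}}_{\ast\times \msc{A}}\to \Bord_\ast\times \Bord^{\opn{adm}}_{\msc{A}}\overset{Ev_\ast\times Z}\longrightarrow \ast\times \opn{Vect}\overset{\opn{act}}\to\opn{Vect}\overset{\opn{unit}}\to \msc{S}.
\]
The naturality diagram for evaluation on string diagrams ensures that $Ev_\msc{S}$ sends any bordism labeled entirely by $\1$ (with identity-labeled vertices) to the identity on the unit, which closes the argument.

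The only technical points that need verification are the admissible analogues of Proposition \ref{prop:conc} and Corollary \ref{cor:conc}, which underwrite the existence of $\Delta^{\opn{adm}}$ in the first place. The proof of Proposition \ref{prop:conc} should go through unchanged after substituting $\Bord$ with $\Bord^{\opn{adm}}$, since the argument is local in nature and admissibility is preserved under the relevant manipulations. The splitting Lemma \ref{lem:sigma_sum} likewise transfers without issue: its proof is an application of internal composition and the linear relation, neither of which introduces or destroys edges labeled by projectives, so the resulting bordisms remain admissible. I expect the main obstacle to be this kind of bookkeeping around admissibility in the splitting and concatenation constructions, but once accepted, the entire argument of Theorem \ref{thm:Z_AS} transfers formally.
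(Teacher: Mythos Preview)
Your proposal is correct and matches the paper's approach exactly: the paper defines $Z(\msc{S})$ as the composite of $\Delta^{\opn{adm}}$ with $\opn{act}(Ev_{\msc{S}}\boxtimes Z)$, and leaves the restriction claim implicit in the phrase ``as in the general setting.'' You have actually filled in more detail than the paper does, correctly observing that the restriction argument from Theorem~\ref{thm:Z_AS} transfers verbatim once one notes that $\msc{A}$-admissible bordisms embed into $(\msc{S}\times\msc{A})$-admissible bordisms via the unit.
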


\begin{proof}
As in the general setting, we can define $Z(\msc{S})$ as the composite of $\Delta^{\opn{adm}}$ with the functor
\[
\opn{act}(Ev_{\msc{S}}\boxtimes Z):(k\Bord_{\msc{S}}^{\red}\boxtimes k\Bord^{\opn{adm};\red}_{\msc{A}})^{\opn{add}}\to (\msc{S}\boxtimes \opn{Vect})^{\opn{add}}\to \msc{S}.
\]
\end{proof}

\subsection{The universal Lyubashenko theory}

In \cite{derenzietal23} De-Renzi, Gainutdinov, Geer, Patureau-Mirand, and Runkel construct the primary target of our study.

\begin{theorem}[{\cite[Theorem 4.12]{derenzietal23}}]\label{thm:dggpr}
Given a finite modular tensor category $\msc{A}$ there exists a symmetric monoidal functor
\[
Z_{\msc{A}}:\Bord^{\opn{adm}}_{\msc{A}}\to \opn{Vect}.
\]
In the case where $\msc{A}$ is semisimple, so that $\Bord^{\opn{adm}}_{\msc{A}}=\Bord_{\msc{A}}$, the TQFT $Z_{\msc{A}}$ recovers the usual Reshetikhin-Turaev theory.
\end{theorem}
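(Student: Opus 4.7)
The plan is to construct $Z_{\msc{A}}$ via a universal (Blanchet--Habegger--Masbaum--Vogel style) construction built on top of a closed $3$-manifold invariant, with modifications to accommodate the nonsemisimple setting.

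The first step is to construct a renormalized Lyubashenko invariant $\mathscr{L}_{\msc{A}}$ for closed admissible ribbon bordisms $[M,T]: \emptyset \to \emptyset$ in $\Bord^{\opn{adm}}_{\msc{A}}$. Present $M$ by surgery on a framed link $L \subseteq S^3$ with signature defect recorded by the anomaly data. Color the components of $L$ by the canonical end $\opn{Ad}$ equipped with Lyubashenko's integral, leave the edges of $T$ colored by their prescribed objects in $\msc{A}$, and evaluate the resulting ribbon graph via $ev_{\msc{A}}$ to produce a morphism in $\Hom_{\msc{A}}(\1, x_e^{\pm})$ on each distinguished projective edge. The admissibility hypothesis guarantees either an output boundary to which this morphism may be pushed, or the presence of some projective edge along which one may apply a modified (partial) trace $\mrm{t}$ supported on the projective ideal. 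The resulting scalar is independent of the choice of surgery presentation and of the chosen projective edge, by Kirby calculus together with the cyclicity and non-degeneracy of the modified trace on $\opn{Proj}(\msc{A})$, and transforms correctly under signature defects via the framing anomaly.

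The second step is the universal construction. For an $\msc{A}$-marked surface $\Sigma_{\vec{x}}$, set
\[
V(\Sigma_{\vec{x}}) := k\bigl\{\,[B,T_B] : \emptyset \to \Sigma_{\vec{x}}\text{ admissible}\,\bigr\}\Big/\ker \langle -, -\rangle_{\Sigma_{\vec{x}}},
\]
where the pairing $\langle -, -\rangle_{\Sigma_{\vec{x}}} : V(\Sigma_{\vec{x}}) \otimes V(\bar\Sigma_{\vec{x}}) \to k$ is given by gluing along $\Sigma$ and applying $\mathscr{L}_{\msc{A}}$ to the resulting closed bordism. A morphism $[M,T] : \Sigma_{\vec{x}} \to \Sigma'_{\vec{y}}$ acts by precomposition of representatives, which descends to the quotient because the pairing is compatible with stacking. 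Monoidality under disjoint union is immediate, since the pairing factors through disjoint components by multiplicativity of $\mathscr{L}_{\msc{A}}$, and composition along vacuum boundary is trivial. Define $Z_{\msc{A}}(\Sigma_{\vec{x}}) := V(\Sigma_{\vec{x}})$, with the obvious action on morphisms.

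The third step is verifying finite-dimensionality and identifying the state spaces. Using a pair-of-pants / handle decomposition on $\Sigma$ together with the factorization homology / coend description of the Lyubashenko pairing, one shows $Z_{\msc{A}}(\Sigma_{\vec{x}}) \cong \Hom^\ast_{\msc{A}}\!\bigl(\1,\opn{Ad}^{\ot g}\ot \bigotimes_i x_i^{\pm}\bigr)$, which is finite-dimensional since $\msc{A}$ is a finite category and $\opn{Ad}$ is the end of the identity bifunctor. Finally, the restriction to the semisimple subcase collapses the modified trace to the ordinary categorical trace and the Kirby coloring to the usual sum over simples weighted by $\dim$, recovering Reshetikhin--Turaev on the nose.

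The principal obstacle is the \emph{non-degeneracy} of the pairing $\langle -,-\rangle_{\Sigma_{\vec{x}}}$, equivalently the well-definedness and finiteness of the state spaces. In the semisimple setting this reduces to invertibility of the $S$-matrix; in the nonsemisimple setting one must instead leverage Shimizu's characterization of modularity as nondegeneracy of the canonical Hopf pairing on the coend, combined with Lyubashenko's monodromy calculus, and carefully check that the admissibility constraint is respected under every Kirby move and every handle-slide needed to identify state spaces across different surgery presentations of the same bordism. Managing the interplay between the modified trace, projectivity, and the signature anomaly at each of these steps is the technical heart of the argument.
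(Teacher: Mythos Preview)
Your outline sketches the construction from the cited reference \cite{derenzietal23} rather than what the present paper actually does. The paper's proof is much shorter and treats \cite[Theorem 4.12]{derenzietal23} as a black box: that reference already provides a symmetric monoidal functor $V_{\msc{A}}:\Bord^{\opn{adm}'}_{\msc{A}}\to \opn{Vect}$, where $\opn{adm}'$ denotes the admissibility condition requiring nonvanishing \emph{incoming} (rather than outgoing) boundary or a projective edge. The paper then defines $Z_{\msc{A}}$ as the composite
\[
\Bord_{\msc{A}}^{\opn{adm}}\overset{-}\to (\Bord^{\opn{adm}'}_{\msc{A}})^{\opn{op}}\overset{V_{\msc{A}}}\to \opn{Vect}^{\opn{op}}\overset{\ast}\to \opn{Vect},
\]
using orientation reversal and linear duality to swap the admissibility convention. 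This dualization step is the only actual content of the paper's proof, and you do not mention it.

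Your sketch of the underlying DGGPR construction is broadly correct in outline (renormalized Lyubashenko invariant via modified trace, then the universal construction), but has some imprecisions. First, the state spaces here are ordinary Hom spaces $\Hom_{\msc{A}}(\1,E^{\ot g}\ot x_I)$, not Hom complexes $\Hom^\ast_{\msc{A}}$; the complex structure only appears later in the paper for the cochain-valued theory. Second, your description of the ``principal obstacle'' as nondegeneracy of the pairing is not quite right: in the universal construction one quotients by the radical, so nondegeneracy on the quotient is automatic. The genuine work in \cite{derenzietal23} lies in showing finite-dimensionality of the resulting state spaces and identifying them with the claimed Hom spaces via explicit surgery presentations, not via factorization homology. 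Finally, the surgery link is colored by the canonical coend rather than the end, though these are dual to one another.
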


\begin{proof}
In \cite{derenzietal23} the authors define a theory $V_{\msc{A}}:\Bord^{\opn{adm}'}_{\msc{A}}\to \opn{Vect}$, where $\Bord_{\msc{A}}^{\opn{adm}'}$ is the subcategory of marked surfaces and ribbon bordisms $(M,T)$ in which each component of $M$ either has nonvanishing \emph{incoming} boundary or a projectively marked edge.  We have the symmetric anti-equivalence
\[
\overline{\hspace{3mm}\!}:\Bord_{\msc{A}}^{\opn{adm}}\to (\opn{Bord}_{\msc{A}}^{\opn{adm}'})^{\opn{op}}
\]
given by reversing orientations and the anti-equivalence $\opn{Vect}\to \opn{Vect}^{\opn{op}}$ provided by linear duality.  We then obtain our theory $Z_{\msc{A}}$ as the composite
\[
\Bord_{\msc{A}}^{\opn{adm}}\overset{-}\to (\opn{Bord}^{\opn{adm}'}_{\msc{A}})^{\opn{op}}\overset{V_{\msc{A}}}\to \opn{Vect}^{\opn{op}}\overset{\ast}\to \opn{Vect}.
\]
\end{proof}

\begin{remark}
We apply a duality in our definition of $Z_{\msc{A}}$, relative to the original construction from \cite{derenzietal23}, in order to facilitate a calculation of the state spaces via morphisms in $\msc{A}$.  See Section \ref{sect:state_spaces} below.
\end{remark}

The theory $Z_{\msc{A}}$ is constructed via a modification of Lyubashenko's invariant for closed $3$-manifolds \cite[Theorem 3.8]{derenzietal23}, and a universal construction which employs this $3$-manifold invariant as a type of form on a free space of ribbon bordisms \cite[Section 4.3]{derenzietal23}.

\begin{definition}
The universal Lyubashenko theory (or DGGPR theory, or admissible Reshetikhin-Turaev theory) associated to a finite modular tensor category $\msc{A}$ is the theory $Z_{\msc{A}}$ of Theorem \ref{thm:dggpr}.
\end{definition}

By \cite[Proposition 4.11]{derenzietal23} the universal Lyubashenko theory respects all skein relations, and it is clear from construction that it respects the linear relation (U4) from Section \ref{sect:reduced} as well.

\begin{lemma}[\cite{derenzietal23}]
Given a finite modular tensor category $\msc{A}$, the TQFT $Z_{\msc{A}}:\Bord_{\msc{A}}^{\opn{adm}}\to \opn{Vect}$ is constant along all skein relations and all linear relations of the form {\rm (U4)}. This is to say, the theory $Z_{\msc{A}}$ is reasonable.
\end{lemma}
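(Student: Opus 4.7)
The plan is to inherit both invariance properties from the underlying Lyubashenko theory $V_{\msc{A}}$ of \cite{derenzietal23} and then transport them through the two anti-equivalences used to define $Z_{\msc{A}}$. Recall that $Z_{\msc{A}}$ is the composite of the orientation-reversal functor $\overline{\hspace{3mm}\!}:\Bord_{\msc{A}}^{\opn{adm}}\to(\Bord_{\msc{A}}^{\opn{adm}'})^{\opn{op}}$ with $V_{\msc{A}}$ and with the linear dual anti-equivalence $\ast:\opn{Vect}^{\opn{op}}\to\opn{Vect}$. Since $\ast$ sends an equality of linear maps to the equality of their transposes, and sends a linear combination to the corresponding linear combination, it suffices to verify that $V_{\msc{A}}$ is constant along skein relations and respects (U4) on $\Bord_{\msc{A}}^{\opn{adm}'}$, and that orientation reversal carries each type of relation to one of the same type.

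For skein invariance, the essential content is \cite[Proposition 4.11]{derenzietal23}, which states precisely that $V_{\msc{A}}$ takes the same value on two admissible bordisms related by a skein substitution on an isolating cylinder. To transport this to $Z_{\msc{A}}$, the key observation is that the orientation-reversal of an isolating cylinder $\opn{Cyl}\hookrightarrow M$ is again an isolating cylinder in the reversed bordism, and the evaluation agreement $ev_{\msc{A}}(\opn{Cyl}\times_{M}\Gamma)=ev_{\msc{A}}(\opn{Cyl}\times_{M}\Gamma')$ is equivalent to the corresponding agreement on the reversed cylinder via the standard ribbon anti-equivalence of $\opn{Rib}_{\msc{A}}$ with its opposite. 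Hence every skein relation in $\Bord_{\msc{A}}^{\opn{adm}}$ produces a matching skein relation in $\Bord_{\msc{A}}^{\opn{adm}'}$ on which $V_{\msc{A}}$ is constant.

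For the linear relation (U4), one unpacks the DGGPR construction: $V_{\msc{A}}(M,T)$ is built by applying the evaluation functor of Theorem \ref{thm:eval_A} to a surgery presentation of $(M,T)$ and projecting to an appropriate quotient of the admissible skein module. Both steps are $k$-linear in the morphisms attached to internal vertices of $T$. So expanding a single vertex label as $c_{1}f_{1}+c_{2}f_{2}$ yields the corresponding linear combination of values, and this equality transfers to $Z_{\msc{A}}$ after dualization. Orientation reversal does not interfere, since (U4) is a purely local, label-based relation.

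The only mild obstacle I anticipate is the bookkeeping around admissibility under orientation reversal: one must verify that if $(M,T)\in\Bord_{\msc{A}}^{\opn{adm}}$, then its reversal belongs to $\Bord_{\msc{A}}^{\opn{adm}'}$. This follows immediately because reversing orientations swaps incoming and outgoing boundary components while preserving projective labels on edges, swapping the two nonvanishing-boundary conditions defining $\opn{adm}$ and $\opn{adm}'$.
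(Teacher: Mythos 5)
Your proposal is correct and takes essentially the same approach as the paper, which also just invokes \cite[Proposition 4.11]{derenzietal23} for skein invariance and notes that (U4) is manifest from the linearity of the DGGPR construction in vertex labels. The paper leaves the transport through the orientation-reversal anti-equivalence and the dualization $\ast$ implicit; you fill in that routine bookkeeping (including admissibility under reversal) correctly.
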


As an immediate consequence, we apply Theorem \ref{thm:adm_Z_AS} to obtain a natural extension for the theory $Z_{\msc{A}}$ along any semisimple symmetric tensor category. 

\begin{proposition}
For any finite modular tensor category $\msc{A}$, and any semisimple symmetric tensor category $\msc{S}$, the universal Lyubaschenko theory $Z_{\msc{A}}$ admits an extension to an $\msc{S}$-valued TQFT
\[
Z_{\msc{A}}(\msc{S}):\Bord_{\msc{S}\ot\msc{A}}^{\opn{adm}}\to \msc{S}. 
\]
\end{proposition}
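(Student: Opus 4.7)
The proof should be an essentially immediate combination of the two preceding results. First I would note that any semisimple symmetric tensor category $\msc{S}$ can be equipped with the trivial twist $\theta_x=\opn{id}_x$, which (as remarked earlier in Section \ref{sect:cats}) gives it the structure of a ribbon tensor category of the type required by Theorem \ref{thm:adm_Z_AS}. So $\msc{S}$ satisfies the hypotheses of the base change machinery built in Section \ref{sect:t_ext}.

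Next I would invoke the lemma immediately preceding the proposition, which asserts that the universal Lyubashenko theory
\[
Z_{\msc{A}}:\Bord_{\msc{A}}^{\opn{adm}}\to \opn{Vect}
\]
is reasonable in the sense required for base change. Concretely, stability under arbitrary skein relations is \cite[Proposition 4.11]{derenzietal23}, while stability under the linear expansion relation (U4) is transparent from the universal construction of $Z_{\msc{A}}$ via linear forms on the free linearization of the admissible bordism category in \cite[Section 4.3]{derenzietal23}. With both inputs in hand, the conclusion is a direct application of Theorem \ref{thm:adm_Z_AS} to the reasonable theory $Z_{\msc{A}}$: we define
\[
Z_{\msc{A}}(\msc{S}) := (Z_{\msc{A}})(\msc{S})
\]
to be the $\msc{S}$-valued symmetric monoidal functor on $\Bord_{\msc{S}\otimes\msc{A}}^{\opn{adm}}$ produced there, which by the same theorem restricts on $\Bord_{\msc{A}}^{\opn{adm}}\subseteq\Bord_{\msc{S}\otimes\msc{A}}^{\opn{adm}}$ to the composite of $Z_{\msc{A}}$ with the unit inclusion $\opn{Vect}\to \msc{S}$.

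There is no real obstacle to overcome at this point: all of the substantive work has already been packaged into the construction of the separation functor $\Delta^{\opn{adm}}$, the tautological theory $Ev_{\msc{S}}$ for symmetric categories, and the resulting base change construction of Definition \ref{def:Z_AS} and Theorem \ref{thm:adm_Z_AS}. The proposition simply records the specialization of that general machinery to the particular case $Z=Z_{\msc{A}}$, where reasonableness has been verified by the preceding lemma.
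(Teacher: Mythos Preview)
Your proposal is correct and matches the paper's approach exactly: the paper states the proposition as an immediate consequence of applying Theorem \ref{thm:adm_Z_AS} to the reasonable theory $Z_{\msc{A}}$, with reasonableness supplied by the preceding lemma, and gives no further argument.
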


\section{State spaces in the base changed Lyubaschenko theory}
\label{sect:state_spaces}

We show that the state spaces in the base changed Lyubaschenko theory $Z_{\msc{A}}(\msc{S})$ are identified with inner-Homs for the action of $\msc{S}$ on the labeling category $\msc{S}\ot \msc{A}$.  We begin by recalling the analogous result from \cite{derenzietal23}, which identifies the state spaces in $Z_{\msc{A}}$ with linear Homs for $\msc{A}$.  Before that, however, we set some of the technical foundations for our discussion.

\subsection{Non-linear reductions}

For a not-necessarily-linear ribbon category $\msc{C}$ we consider the \emph{non}-linear reduction of the bordism category
\[
\Bord^{\red}_{\msc{C}}:=\Bord_{\msc{C}}/\langle \text{U1, U2, U3}\rangle,
\]
where we recall that (U1) accounts for dualization at vertices, (U2) accounts for internal composition, and (U3) accounts for the local actions of the framed braid group at vertices.  As the construction $\Bord^{\red}_{\square}$ accepts arbitrary ribbon tensor functors, any ribbon tensor functor $F:\msc{C}\to \msc{D}$ induces a symmetric tensor functor on reduced bordisms
\[
F_{\ast}:\Bord^{\red}_{\msc{C}}\to \Bord^{\red}_{\msc{D}}.
\]

\subsection{Categories at the boundary}
\label{sect:a_sigma}

Consider the forgetful functor $\Bord_\msc{C}^{\opn{red}}\to\Bord_\ast^{\opn{red}}$ on non-linear reduced bordisms.  Given a $\ast$-marked surface $\mbf{\Sigma}$, with framed markings $I\to \Sigma$, we have the fiber product
\[
\xymatrix{
\msc{C}_{\mbf{\Sigma}}\ar[r]\ar[d] & \Bord_{\msc{C}}^{\opn{red}}\ar[d]\\
\ast\ar[r]_(.35){\mbf{\Sigma}} & \Bord^{\opn{red}}_{\ast}.
}
\]
At each index $i\in I$ take $\opn{sgn}(i)=+$ if the framing at $i$ recovers the ambient orientation on $\Sigma$ and $\opn{sgn}(i)=-$ otherwise.
\par

For $\msc{C}^+=\msc{C}$ and $\msc{C}^-=\msc{C}^{\opn{op}}$, the above fiber product is isomorphic to the cartesian product
\[
\prod_{i\in I}\msc{C}^{\opn{sgn}(i)}\overset{\sim}\to \msc{C}_{\mbf{\Sigma}},
\]
where the above map sends a tuple of objects $\vec{x}$ in $\prod_{i\in I}\msc{C}^{\opn{sgn}(i)}$ to the associated marked surface $\Sigma_{\vec{x}}$ over $\mbf{\Sigma}$.  Each tuple of maps $\vec{f}=(f_i\in \Hom_{\msc{C}^{\pm}}(x_i,y_i):i\in I)$ in $\prod_i\msc{C}^{\opn{sgn}(i)}$ is sent to the map $\Sigma_{\vec{f}}:\Sigma_{\vec{x}}\to \Sigma_{\vec{y}}$ whose underlying $3$-manifold is the product $\Sigma\times [0,1]$, underlying ribbon graph is the product $I\times [0,1]\to \Sigma\times [0,1]$ of the given markings $I\to \Sigma$ with $id_{[0,1]}$, and whose labeling morphisms are given by the $f_i$.  To illustrate,
	\begin{equation*}
		\scalebox{0.5}{
\tikzset{every picture/.style={line width=0.75pt}} 
\begin{tikzpicture}[x=0.75pt,y=0.75pt,yscale=-1,xscale=1]

\draw    (213.5,351) .. controls (186.63,350.7) and (160.37,332.05) .. (156.43,298.02) .. controls (152.5,264) and (172.75,246.43) .. (172.5,221) .. controls (172.25,195.57) and (160.51,171.57) .. (161.5,149) .. controls (162.49,126.43) and (181.84,100.32) .. (210.5,100) ;
\draw    (213.5,351) .. controls (230.72,350.75) and (241.86,341.5) .. (246.5,317) .. controls (251.14,292.5) and (228.43,266.27) .. (229.5,241) .. controls (230.57,215.73) and (259.5,185) .. (254.33,152.34) .. controls (249.16,119.67) and (230.83,100) .. (210.5,100) ;
\draw [color={rgb, 255:red, 0; green, 0; blue, 0 }  ,draw opacity=0.4 ]   (210.5,100) -- (431.5,101) ;
\draw [color={rgb, 255:red, 0; green, 0; blue, 0 }  ,draw opacity=0.4 ]   (213.5,351) -- (434.5,352) ;
\draw    (197.5,179) .. controls (215.75,186.6) and (216.83,208.9) .. (200.72,216) ;
\draw    (206.09,213.12) .. controls (198.57,206.03) and (198.57,192.85) .. (206.63,186.26) ;
\draw    (212,140) -- (433.5,140) ;
\draw [shift={(322.75,140)}, rotate = 0] [color={rgb, 255:red, 0; green, 0; blue, 0 }  ][fill={rgb, 255:red, 245; green, 166; blue, 35 }  ,fill opacity=1  ][line width=0.75]   (0, 0) circle [x radius= 3, y radius= 3]   ;
\filldraw (212,140) circle (1.5pt);
\filldraw (433.5,140) circle (1.5pt);
\draw    (209,324) -- (430.5,324) ;
\draw [shift={(319.75,324)}, rotate = 0] [color={rgb, 255:red, 0; green, 0; blue, 0 }  ][fill={rgb, 255:red, 245; green, 166; blue, 35 }  ,fill opacity=1  ][line width=0.75]   (0, 0) circle [x radius= 3, y radius= 3]   ;
\filldraw (209,324) circle (1.5pt);
\filldraw (430.5,324) circle (1.5pt);
\draw    (193,304) -- (412.5,304) ;
\draw [shift={(302.75,304)}, rotate = 0] [color={rgb, 255:red, 0; green, 0; blue, 0 }  ][fill={rgb, 255:red, 245; green, 166; blue, 35 }  ,fill opacity=1  ][line width=0.75]   (0, 0) circle [x radius= 3, y radius= 3]  ;
\filldraw (193,304) circle (1.5pt);
\filldraw (412.5,304) circle (1.5pt);
\draw    (202.5,167) -- (423.5,167) ;
\draw [shift={(313,167)}, rotate = 0] [color={rgb, 255:red, 0; green, 0; blue, 0 }  ][fill={rgb, 255:red, 245; green, 166; blue, 35 }  ,fill opacity=1  ][line width=0.75]   (0, 0) circle [x radius= 3, y radius= 3] ;
\filldraw (202.5,167) circle (1.5pt);
\filldraw (423.5,167) circle (1.5pt);
\draw    (191.5,248) .. controls (208.5,257.25) and (209.5,284.37) .. (194.5,293) ;
\draw    (199.5,288.07) .. controls (192.5,279.44) and (192.5,263.41) .. (200,255.4) ;
\draw  [dash pattern={on 0.84pt off 2.51pt}]  (434.5,352) .. controls (407.63,351.7) and (381.37,333.05) .. (377.43,299.02) .. controls (373.5,265) and (393.75,247.43) .. (393.5,222) .. controls (393.25,196.57) and (381.51,172.57) .. (382.5,150) .. controls (383.49,127.43) and (402.84,101.32) .. (431.5,101) ;
\draw    (434.5,352) .. controls (451.72,351.75) and (462.86,342.5) .. (467.5,318) .. controls (472.14,293.5) and (449.43,267.27) .. (450.5,242) .. controls (451.57,216.73) and (480.5,186) .. (475.33,153.34) .. controls (470.16,120.67) and (451.83,101) .. (431.5,101) ;
\draw  [dash pattern={on 0.84pt off 2.51pt}]  (418.5,180) .. controls (436.75,187.6) and (437.83,209.9) .. (421.72,217) ;
\draw  [dash pattern={on 0.84pt off 2.51pt}]  (427.09,214.12) .. controls (419.57,207.03) and (419.57,193.85) .. (427.63,187.26) ;
\draw  [dash pattern={on 0.84pt off 2.51pt}]  (412.5,249) .. controls (429.5,258.25) and (430.5,285.37) .. (415.5,294) ;
\draw  [dash pattern={on 0.84pt off 2.51pt}]  (420.5,289.07) .. controls (413.5,280.44) and (413.5,264.41) .. (421,256.4) ;
\draw    (217.5,240) -- (440.5,240) ;
\draw [shift={(329,240)}, rotate = 0] [color={rgb, 255:red, 0; green, 0; blue, 0 }  ][fill={rgb, 255:red, 245; green, 166; blue, 35 }  ,fill opacity=1  ][line width=0.75]   (0, 0) circle [x radius= 3, y radius= 3]   ;
\filldraw (217.5,240) circle (1.5pt);
\filldraw (440.5,240) circle (1.5pt);

\draw (67,206) node [anchor=north west][inner sep=0.75pt]  [font=\Huge] [align=left] {$\displaystyle \Sigma_{\vec{f}} \ =$};
\draw (186,130) node [anchor=north west][inner sep=0.75pt]  [color={rgb, 255:red, 0; green, 0; blue, 0 }  ,opacity=0.4 ] [align=left] {$\displaystyle x_{1}$};
\draw (186,316) node [anchor=north west][inner sep=0.75pt]  [color={rgb, 255:red, 0; green, 0; blue, 0 }  ,opacity=0.4 ] [align=left] {$\displaystyle x_{n}$};
\draw (439,316) node [anchor=north west][inner sep=0.75pt]  [color={rgb, 255:red, 0; green, 0; blue, 0 }  ,opacity=0.4 ] [align=left] {$\displaystyle y_{n}$};
\draw (441,132) node [anchor=north west][inner sep=0.75pt]  [color={rgb, 255:red, 0; green, 0; blue, 0 }  ,opacity=0.4 ] [align=left] {$\displaystyle y_{1}$};
\draw (268,198) node [anchor=north west][inner sep=0.75pt]  [color={rgb, 255:red, 0; green, 0; blue, 0 }  ,opacity=0.4 ] [align=left] {$\displaystyle \vdots $};
\draw (333,262) node [anchor=north west][inner sep=0.75pt]  [color={rgb, 255:red, 0; green, 0; blue, 0 }  ,opacity=0.4 ] [align=left] {$\displaystyle \vdots $};
\draw (180,159) node [anchor=north west][inner sep=0.75pt]  [color={rgb, 255:red, 0; green, 0; blue, 0 }  ,opacity=0.4 ] [align=left] {$\displaystyle x_{2}$};
\draw (451,232) node [anchor=north west][inner sep=0.75pt]  [color={rgb, 255:red, 0; green, 0; blue, 0 }  ,opacity=0.4 ] [align=left] {$\displaystyle y_{i}$};
\draw (417.5,296) node [anchor=north west][inner sep=0.75pt]  [color={rgb, 255:red, 0; green, 0; blue, 0 }  ,opacity=0.4 ] [align=left] {$\displaystyle y_{n-1}$};
\draw (160,297) node [anchor=north west][inner sep=0.75pt]  [color={rgb, 255:red, 0; green, 0; blue, 0 }  ,opacity=0.4 ] [align=left] {$\displaystyle x_{n-1}$};
\draw (431,159) node [anchor=north west][inner sep=0.75pt]  [color={rgb, 255:red, 0; green, 0; blue, 0 }  ,opacity=0.4 ] [align=left] {$\displaystyle y_{2}$};
\draw (200,232) node [anchor=north west][inner sep=0.75pt]  [color={rgb, 255:red, 0; green, 0; blue, 0 }  ,opacity=0.4 ] [align=left] {$\displaystyle x_{i}$};
\draw (305,111) node [anchor=north west][inner sep=0.75pt]   [align=left] {$\displaystyle f_{1}$};
\draw (295,177) node [anchor=north west][inner sep=0.75pt]   [align=left] {$\displaystyle f_{2}$};
\draw (333,216) node [anchor=north west][inner sep=0.75pt]   [align=left] {$\displaystyle f_{i}$};
\draw (281,276) node [anchor=north west][inner sep=0.75pt]   [align=left] {$\displaystyle f_{n-1}$};
\draw (327.75,329) node [anchor=north west][inner sep=0.75pt]   [align=left] {$\displaystyle f_{n}$};

\end{tikzpicture}}
\end{equation*}
with each internal vertex inheriting its framing from the ambient ribbon.

\subsection{Ends and coends}

Since $\msc{A}$ is braided the product map $\msc{A}\times\msc{A}\to \msc{A}$ has a natural tensor structure.  In fact, a choice of tensor structure on this map is equivalent to the choice of a braiding on $\msc{A}$.  Since this map is exact in each coordinate, and in particular right exact, it induces a right exact map $m:\msc{A}\otimes\msc{A}\to \msc{A}$.  Furthermore any right exact monoidal functor between rigid tensor categories is also left exact.  Hence $m$ admits both a left and right adjoint.  For $\opn{Ind}$ the right adjoint to $m$ we can furthermore calculate the left adjoint via conjugation with duality $\opn{Ind}({^\ast-})^\ast$.

\begin{definition}
The canonical coend for $\msc{A}$ is the object $C=C_{\msc{A}}:=m\opn{Ind}(\1)$.  The canonical end is the dual $E=E_{\msc{A}}:=m\opn{Ind}(\1)^\ast$.
\end{definition}

For example, if we consider representations of a quasitriangular Hopf algebra $\msc{A}=\opn{rep}(A)$, with $R$-matrix $R$, the object $C$ is the cocycle twist of the dual $C=A^\ast_R$ via the $R$-matrix \cite[Section 3.3]{lyubashenko95}.  Here we consider $A^\ast$ along with its adjoint action.  The end is then simply the algebra $A$ itself, considered along with its adjoint action.  Both $C$ and $E$ are seen directly to be Hopf algebras in $\msc{A}$ in this case, and this Hopf structure extends to the case of general $\msc{A}$. See for example \cite[Section 3.1]{shimizu19}.

\subsection{State spaces in the universal Lyubashenko theory}

Let $\mbf{\Sigma}$ be a genus $g$ marked surface in $\Bord_{\ast}^{\red}$ and consider the boundary category $\msc{A}_{\mbf{\Sigma}}$ as above.  We have the functor
\begin{equation}\label{eq:state}
\msc{A}_{\mbf{\Sigma}}=\prod_{i\in I}\msc{A}^{\opn{sgn}(i)} \underset{\sim}{\overset{\opn{duality}}\longrightarrow} \prod_{i\in I}\msc{A}\overset{\opn{mult}}\to \msc{A}\overset{\Hom_{\msc{A}}(\1,E^{\otimes g}\ot-)}\longrightarrow \opn{Vect}.
\end{equation}
Let us denote this composite by $\Hom_{\msc{A}}(\1,E^{\ot g}\ot-):\msc{A}_{\mbf{\Sigma}}\to \opn{Vect}$, by an abuse of notation.  We also consider the initial portion of this sequence
\begin{equation}\label{eq:1303}
\msc{A}_{\mbf{\Sigma}}=\prod_{i\in I}\msc{A}^{\opn{sgn}(i)} \underset{\sim}{\overset{\opn{duality}}\longrightarrow} \prod_{i\in I}\msc{A}\overset{\opn{mult}}\to \msc{A}
\end{equation}
in isolation, and refer to this map as the \emph{oriented product} functor for $\msc{A}_{\mbf{\Sigma}}$.

\begin{theorem}[\cite{derenzietal23}]\label{thm:states}
Given a genus $g$ marked surface $\mbf{\Sigma}$ in $\Bord_{\ast}^{\opn{red}}$, there is a natural isomorphism of functors $\Hom_{\msc{A}}(\1,E^{\ot g}\ot-)\cong Z_{\msc{A}}|_{\msc{A}_{\mbf{\Sigma}}}$, i.e. a (2-)diagram
\[
\xymatrix{
\msc{A}_{\mbf{\Sigma}}\ar[rr]\ar[drr]|{\hole}_(.35){\Hom_{\msc{A}}(\1,E^{\ot g}\ot-)} & & \Bord_{\msc{A}}^{\opn{adm};\opn{red}}\ar[d]^{Z_{\msc{A}}}\\
	& & \opn{Vect}.
}
\]
\end{theorem}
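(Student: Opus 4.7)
The plan is to extract this identification from the analogous state space formula for $V_{\msc{A}}$ established in \cite{derenzietal23}, then transport it through the two duality operations used in our definition
\[
Z_{\msc{A}} \;=\; (-)^{\ast} \circ V_{\msc{A}} \circ \overline{(-)}.
\]

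First, I would invoke the DGGPR computation of state spaces for $V_{\msc{A}}$ on a marked surface of genus $g$. Their formula identifies $V_{\msc{A}}(\Sigma_{\vec{y}})$ naturally with a Hom functor on $\msc{A}$ built from the canonical coend $C$, with the marking objects entering as tensor factors whose placement (and dualization) is determined by the framing signs at the marked points.

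Second, I would analyze how this formula transforms under the two operations that yield $Z_{\msc{A}}$ from $V_{\msc{A}}$. Orientation reversal $\overline{(-)}$ flips each framing sign $\opn{sgn}(i)=\pm$ to $\mp$, so the Hom functor associated to $V_{\msc{A}}\circ \overline{(-)}$ is the same formula with these signs reversed. Subsequent linear dualization on $\opn{Vect}$ then interchanges Hom-into-the-unit with Hom-out-of-the-unit: the pivotal structure on $\msc{A}$ provides a natural isomorphism
\[
\Hom_{\msc{A}}(a,\1)^{\ast}\;\cong\;\Hom_{\msc{A}}(\1,a^{\ast}),
\]
which converts the factor $C^{\otimes g}$ into $(C^{\otimes g})^{\ast}\cong E^{\otimes g}$ and moves it to the opposite side of the product with the markings, yielding the asserted formula $\Hom_{\msc{A}}(\1,E^{\otimes g}\otimes(\otimes_i x_i^{\pm}))$.

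Third, I would verify naturality in the boundary category $\msc{A}_{\mbf{\Sigma}}$ using the generating morphisms $\Sigma_{\vec{f}}:\Sigma_{\vec{x}}\to \Sigma_{\vec{y}}$ described in Section \ref{sect:a_sigma}. Since these cylindrical bordisms apply the $f_i$ independently along straight lines, their image under $Z_{\msc{A}}$ acts on the Hom formula by tensoring with $\otimes_i f_i^{\pm}$, which is exactly the action of $\msc{A}_{\mbf{\Sigma}}$ on the functor \eqref{eq:state} through the oriented product \eqref{eq:1303}. This step reduces to the corresponding naturality for $V_{\msc{A}}$.

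The main technical obstacle, and the step requiring the most care, is matching sign, ordering, and orientation conventions consistently across $\overline{(-)}$, linear dualization, and the oriented product functor \eqref{eq:1303}. Specifically, one must reconcile the combinatorial orientation on edges of the embedded graph (incoming vs.\ outgoing at vertices) with the spatial framing signs at boundary points, and verify that the rearrangements of tensor factors needed to produce the precise formula are coherent with the braiding and ribbon structure of $\msc{A}$. Once this bookkeeping is complete the result follows essentially formally from the DGGPR computation and rigidity.
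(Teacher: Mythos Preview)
Your plan is workable in spirit but diverges structurally from the paper's argument, and your second step glosses over a genuine subtlety.

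The paper does not transport the DGGPR multi-marking formula through $\overline{(-)}$ and $(-)^{\ast}$ directly. Instead it first proves the single-marking case (Proposition~\ref{prop:1322}) and then reduces the general case to it by a geometric construction: a \emph{merge bordism} $\Sigma_{\opn{merge}}(\vec{x}):\Sigma_{\vec{x}}\to\Sigma_{x_I}$ (Lemma~\ref{lem:merge}) which gathers all marked points into one and which $Z_{\msc{A}}$ sends to an isomorphism. Composing $Z_{\msc{A}}(\Sigma_{\opn{merge}}(\vec{x}))$ with the single-marking identification $\psi$ yields the required natural isomorphism. This is why the paper remarks that the isomorphism of Theorem~\ref{thm:states}, while natural, is \emph{not canonical}: it depends on the choice of merge data. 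Your proposal hides this choice.

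On your second step: the DGGPR state space is not literally a Hom space but a quotient $\Hom_{\msc{A}}(P_1,E^{\otimes g}\otimes{}^{\ast}x)/\mathrm{rad}$ by the radical of a pairing, so abstract pivotal duality $\Hom_{\msc{A}}(a,\1)^{\ast}\cong\Hom_{\msc{A}}(\1,a^{\ast})$ alone does not give the result. The paper invokes \cite[Lemma~4.1]{derenzietal23} to show the pairing induces an isomorphism $\Hom_{\msc{A}}(C^{\otimes g}\otimes{}^{\ast}x,\1)\overset{\sim}{\to}\bigl(\Hom_{\msc{A}}(P_1,E^{\otimes g}\otimes{}^{\ast}x)/\mathrm{rad}\bigr)^{\ast}$, and only then does rigidity finish the job. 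If you want to follow your route you must either cite this lemma explicitly or else take as a black box the \emph{post-processed} DGGPR formula (already identified with a clean Hom), in which case you should say so.
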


To say things directly, if we have a genus $g$ marked surface $\Sigma_{\vec{x}}$ with labels $\{x_i:i\in I\}$ then we have an isomorphism
\[
Z_{\msc{A}}(\Sigma_{\vec{x}})\cong \Hom_{\msc{A}}\left(\1,E^{\ot g}\ot x_I\right),
\]
where $x_I=\otimes_{i\in I}x_i^{\pm}$ is the image of the tuple $\{x_i:i\in I\}$ under the oriented product map \eqref{eq:1303}.  We note here that the natural isomorphism of Theorem \ref{thm:states}, though it is natural, is not \emph{canonical}.  It depends on the choice of certain geometric input which ``merges" the vertices in $\mbf{\Sigma}$ to a single vertex.
\par

We provide a proof of Theorem \ref{thm:states} at the conclusion of the subsection, after we clarify some related points.  We begin with the case of a single marking, which is more-or-less covered in the original text.

\begin{proposition}[{\cite[Proposition 4.17]{derenzietal23}}]\label{prop:1322}
Let $\Sigma_x$ be a genus $g$ surface with a single positively oriented marking object $x$.  Then there is a natural isomorphism 
\[
\psi_{\Sigma_x}:Z_{\msc{A}}(\Sigma_x)\overset{\cong}\to \Hom_{\msc{A}}(\1,E^{\ot g}\ot x).
\]
\end{proposition}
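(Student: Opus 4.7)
The plan is to deduce this from the analogous state space computation for the DGGPR theory $V_{\msc{A}}$, namely \cite[Proposition 4.17]{derenzietal23}, by systematically applying the orientation reversal and linear duality used in the definition of $Z_{\msc{A}}$ from the proof of Theorem \ref{thm:dggpr}.

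First, I would construct $\psi_{\Sigma_x}$, or rather its transpose, directly from the universal construction of the theory. Fix a handlebody $H_g$ bounding $\overline{\Sigma_x}$, and equip it with the standard ribbon graph consisting of $g$ meridional loops together with a single radial strand labeled by $x$ terminating at the marked point, with a single internal coupon at which all strands meet. Given a morphism $\alpha \in \Hom_{\msc{A}}(\1, E^{\otimes g}\otimes x)$, we label this coupon by $\alpha$ (after applying the canonical end structure of $E$ to reorient the meridional loops), obtaining an admissible bordism $(H_g,T_\alpha):\Sigma_x\to\varnothing$ and hence a functional $Z_{\msc{A}}(H_g,T_\alpha):Z_{\msc{A}}(\Sigma_x)\to k$. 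This assignment defines a linear map
\[
\psi_{\Sigma_x}^{\vee}:\Hom_{\msc{A}}(\1, E^{\otimes g}\otimes x)\to Z_{\msc{A}}(\Sigma_x)^{\ast},
\]
which transposes, via finite-dimensionality of state spaces, to the desired map $\psi_{\Sigma_x}$.

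Next I would verify the three essential properties of $\psi_{\Sigma_x}^{\vee}$: well-definedness, surjectivity, and injectivity. Linearity and well-definedness follow from the reasonableness of $Z_{\msc{A}}$, i.e.\ its stability under the skein and linear relations (U1)--(U4), which ensure that equivalent choices of $\alpha$ produce the same functional. Surjectivity follows from the universal construction of $Z_{\msc{A}}(\Sigma_x)$ as a quotient of the free vector space on bordisms $\Sigma_x\to\varnothing$: any such bordism can be brought into standard handlebody form via surgery, and the surgery corrections are absorbed by the modified Lyubashenko invariant used to define $Z_{\msc{A}}$. Injectivity is the substantive part and is equivalent to the nondegeneracy of the renormalized Lyubashenko pairing at the level of bordisms.

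The main obstacle is the bookkeeping needed to translate between conventions. The theory $V_{\msc{A}}$ of \cite{derenzietal23} uses nonempty incoming boundary for admissibility and naturally produces the canonical coend $C$, while our $Z_{\msc{A}}$ uses nonempty outgoing boundary and consequently produces the canonical end $E=C^{\ast}$ as a result of the linear dualization step. Carefully tracking the orientation reversal $\Sigma_x\mapsto\overline{\Sigma_x}$, the resulting sign flip on the framed marking, and the identification $Z_{\msc{A}}(\Sigma_x)\cong V_{\msc{A}}(\overline{\Sigma_x})^{\ast}$ should match our $\Hom_{\msc{A}}(\1,E^{\otimes g}\otimes x)$ with the corresponding Hom space identified in \cite[Proposition 4.17]{derenzietal23}, thereby completing the argument.
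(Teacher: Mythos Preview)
Your explicit construction of $\psi_{\Sigma_x}^\vee$ has a gap: the bordism $(H_g, T_\alpha):\Sigma_x\to\varnothing$ has empty outgoing boundary, so by the definition of $\Bord^{\opn{adm}}_{\msc{A}}$ it must carry an edge labeled by a projective object. The edges in your $T_\alpha$ are labeled by $E$ and $x$, neither of which is projective in a general non-semisimple modular tensor category; already for $g=0$ and non-projective $x$ the bordism is inadmissible and $Z_{\msc{A}}(H_g,T_\alpha)$ is simply undefined. This obstruction is exactly why the state space in \cite[Proposition 4.17]{derenzietal23} is presented as a quotient $\Hom_{\msc{A}}(P_1,\dots)/\opn{rad}$ indexed by the projective cover $P_1\to\1$, rather than by $\Hom_{\msc{A}}(\1,\dots)$ directly.

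The paper's proof bypasses any reconstruction of the map and instead chains two citations. First, \cite[Proposition 4.17]{derenzietal23} together with the orientation reversal and linear duality defining $Z_{\msc{A}}$ yields $Z_{\msc{A}}(\Sigma_x)\cong\bigl(\Hom_{\msc{A}}(P_1,E^{\otimes g}\otimes{}^{\ast}x)/\opn{rad}\bigr)^{\ast}$. Second, \cite[Lemma 4.1]{derenzietal23}---which you do not invoke---identifies this dual quotient with $\Hom_{\msc{A}}(C^{\otimes g}\otimes{}^{\ast}x,\1)$ via nondegeneracy of the relevant pairing on that side, and a final duality adjunction gives $\Hom_{\msc{A}}(\1,E^{\otimes g}\otimes x)$. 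That lemma is precisely the bridge from the projective-cover description forced by admissibility to the clean Hom space appearing in the statement.
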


\begin{proof}
Note that $\Sigma_x$ is isomorphic to the negatively marked surface $\Sigma_{{^\ast x}}$ in $\Bord^{\opn{adm};\opn{red}}_{\msc{A}}$ via the map $\Sigma_{id_x}$.  We recall also that our state spaces are obtained from those of \cite{derenzietal23} by orientation reversal and dualizing.
\par

For $P_1\to \1$ the projective cover of the unit, \cite[Proposition 4.17]{derenzietal23} gives an isomorphism
\[
\left(\Hom_{\msc{A}}(P_1,E^{\ot g}\ot {^\ast x})/\opn{rad}\right)^\ast\cong Z_{\msc{A}}(\Sigma_x),
\]
where we mod out by the radical of the form
\[
(-,-)_{g,{^\ast x}}:\Hom_{\msc{A}}(C^{\ot g}\ot x^\ast,\1)\ot_k\Hom_{\msc{A}}(P_1,E^{\ot g}\ot x^\ast)\to k
\]
from \cite[Section 4.1]{derenzietal23}.  Naturality in $x$ is clear from the specific construction employed in \cite[Section 4.7]{derenzietal23}.
\par

Via \cite[Lemma 4.1]{derenzietal23} the form $(-,-)_{g,{^\ast x}}$ induces an isomorphism
\[
\Hom_{\msc{A}}(C^{\ot g}\ot {^\ast x},\1)\overset{\sim}\to \left(\Hom_{\msc{A}}(P_1,E^{\ot g}\ot {^\ast x})/\opn{rad}\right)^\ast
\]
which is natural in $x$.  From this natural isomorphism we obtain the claimed identification
\[
\Hom_{\msc{A}}(\1, E^{\ot g}\ot x)\cong \Hom_{\msc{A}}(C^{\ot g}\ot {^\ast x},\1)\cong Z_{\msc{A}}(\Sigma_x).
\]
\end{proof}

The following reduction to the case of a single marking is also implicit in \cite{derenzietal23}.  Compare for example \cite[Theorem 1.1]{derenzietal23} to \cite[Section 4.7]{derenzietal23}.

\begin{lemma}\label{lem:merge}
Let $\Sigma_{\vec{x}}$ be a nonempty, connected, marked surface with ordered labelling set $I$, and take $x_I$ the image of $\Sigma_{\vec{x}}$ under the oriented product map \eqref{eq:1303}.  Let $\Sigma_{x_I}$ be the same surface with a single positive marking of value $x_I$ at an arbitrary point $q:\ast\to \Sigma$.  There is a bordism $[M,T]:\Sigma_{\vec{x}}\to \Sigma_{x_I}$ which evaluates to an isomorphism
\[
Z_{\msc{A}}(M,T):Z_{\msc{A}}(\Sigma_{\vec{x}})\overset{\cong}\to Z_{\msc{A}}(\Sigma_{x_I}).
\]
\end{lemma}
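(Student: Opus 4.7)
The plan is to construct an explicit merging bordism $(M,T)$ together with a companion unmerging bordism $(M',T')$ and then show that the two composites evaluate to identities under $Z_\msc{A}$. Since $Z_\msc{A}$ is reasonable and hence stable under all skein relations, this will force $Z_\msc{A}(M,T)$ to be invertible.

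First, I would take $M := \Sigma \times [0,1]$ with its standard identification $\Sigma \amalg \bar\Sigma \cong \partial M$, fix an interior point $v$ lying above $q$, and embed a framed tree $T:\Gamma \to M$ with $v$ as its unique internal vertex, with $n$ boundary leaves at $p_i \times \{0\}$ and a single boundary leaf at $q \times \{1\}$. The top edge carries label $x_I$ and is combinatorially oriented outgoing at $v$; each bottom edge carries label $x_i$ and is combinatorially oriented so that its contribution at $v$ reads $x_i^\pm$, i.e., incoming at $v$ exactly when $\opn{sgn}(i)=+$ and outgoing otherwise. I frame $v$ so that the $n$ bottom edges enter as spatially incoming in the order induced by $I$, with the top edge spatially outgoing. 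The vertex morphism then has signature $x_1^\pm \otimes \cdots \otimes x_n^\pm \to x_I$, and I take it to be $\opn{id}_{x_I}$. Since $M$ is connected with nonvanishing outgoing boundary $\Sigma \times \{1\}$, $(M,T) \in \Bord^{\opn{adm}}_\msc{A}$. I would define $(M',T'):\Sigma_{x_I} \to \Sigma_{\vec{x}}$ as the vertical mirror of $(M,T)$, with a single internal vertex $v'$ labeled $\opn{id}_{x_I}$ branching one incoming $x_I$-edge into $n$ outgoing edges.

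Next I would analyze the two composites in $\Bord^{\opn{adm}}_\msc{A}$. The composite $[M,T] \circ [M',T']$ on $\Sigma_{x_I}$ places $v'$ in the lower half and $v$ in the upper half of $\Sigma \times [0,2]$, joined by $n$ parallel strands labeled $x_i^\pm$ and flanked by two single strands labeled $x_I$ reaching the two boundary markings at $q$. Applying internal composition (U2) along the $n$ joining strands collapses $v'$ and $v$ into a single identity-labeled vertex sitting on a single $x_I$-edge between the two boundary markings, and a further application of (U2) deletes this identity vertex to yield the identity bordism on $\Sigma_{x_I}$. For $[M',T'] \circ [M,T]$ on $\Sigma_{\vec{x}}$, applying (U2) along the single $x_I$-edge joining $v$ and $v'$ produces one vertex labeled $\opn{id}_{x_1^\pm \otimes \cdots \otimes x_n^\pm}$ with $n$ incoming and $n$ outgoing strands confined to a small ball in the cylinder. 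This local configuration and the configuration of $n$ disjoint parallel strands evaluate to the same morphism under $ev_\msc{A}$, hence are related by a single skein relation; the reasonableness of $Z_\msc{A}$ then forces this composite to evaluate to the identity on $Z_\msc{A}(\Sigma_{\vec{x}})$ as well.

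The main bookkeeping obstacle will be arranging the framing at $v$ and the combinatorial orientations of the incident edges so that the prescribed vertex morphism really does coincide with $\opn{id}_{x_I}$ for the oriented product $x_I = \bigotimes_{i\in I} x_i^\pm$ defined in \eqref{eq:1303}, and not with a composite involving dualities or a planar transposition induced by the cyclic order of the $p_i$ on $\Sigma$. Once this matching is settled, the (U2) reductions and the single skein relation invoked above deliver both inverse identities, giving the desired isomorphism $Z_\msc{A}(M,T):Z_\msc{A}(\Sigma_{\vec x}) \overset{\cong}\to Z_\msc{A}(\Sigma_{x_I})$.
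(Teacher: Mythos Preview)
Your proposal is correct and follows essentially the same approach as the paper: both construct the merging bordism on $\Sigma\times[0,1]$ with a single identity-labeled internal vertex, build the inverse by mirroring, and verify the two composites evaluate to identities via internal composition (U2) together with the skein relation that separates the single $x_I$-ribbon into $|I|$ parallel strands. Your treatment is in fact slightly more explicit than the paper's about the framing/orientation bookkeeping at $v$ and about checking both composites, while the paper only sketches one direction.
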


\begin{proof}[Construction]
Consider the ambient manifold $\Sigma\times [0,1]$.  Choose an arbitrary point $q$ in $\Sigma$ and consider the points $p_i:\ast\to \Sigma$ at which the $x_i$ sit.  We can take an embedding from an open disk $\mbb{D}^o\to \Sigma$ whose image contains all of the given points.  By following along non-intersecting paths $\bar{\gamma}_i:[0,1-\epsilon]\to \Sigma$ in the image of $\mbb{D}^o$ we attach the points $(p_i,0)$ in $\Sigma\times [0,1-\epsilon]$ to the point $(q,1-\epsilon)$.  We extend each $\bar{\gamma}_i$ to a path $\gamma_i:[0,1]\to \Sigma$ by letting $\gamma_i$ be of constant value $q$ on $[1-\epsilon,1]$.  These $\gamma_i$ together provide a ribbon graph
\[
\gamma:\Gamma=(I\times [0,1])\amalg_{I\times [1-\epsilon,1]}[1-\epsilon,1]\to \Sigma\times [0,1],
\]
and this defines a bordism $\Sigma_{\opn{merge}}:\mbf{\Sigma}=\Sigma_{\{p_i\}}\to \Sigma_q$ in $\Bord_\ast^{\opn{red}}$ from the surface $\Sigma$ with markings at the $p_i$ to $\Sigma$ with a single marking at $q$.  (The choice of the framing can be taken arbitrarily.)  We label the $p_i$ by our objects $x_i$, label $q$ by $x_I$, and label the unique internal vertex in $\Gamma$ by the identity map to obtain a ribbon bordism $\Sigma_{\opn{merge}}(\vec{x}):\Sigma_{\vec{x}}\to \Sigma_{x_I}$,
\[
\scalebox{.6}{
\tikzset{every picture/.style={line width=0.75pt}} 

\begin{tikzpicture}[x=0.75pt,y=0.75pt,yscale=-1,xscale=1]

\draw  [dash pattern={on 4.5pt off 4.5pt}]  (225.5,57) .. controls (250.5,102) and (213.37,98.82) .. (211.5,138) .. controls (209.63,177.18) and (251.5,189) .. (216.5,221) ;
\draw  [dash pattern={on 4.5pt off 4.5pt}]  (216.5,221) .. controls (181.52,239.95) and (151.39,191.4) .. (154.5,141) .. controls (157.61,90.6) and (196.19,32.32) .. (225.5,57) ;
\draw  [dash pattern={on 4.5pt off 4.5pt}]  (458.5,57) .. controls (483.5,102) and (446.37,98.82) .. (444.5,138) .. controls (442.63,177.18) and (484.5,189) .. (449.5,221) ;
\draw  [dash pattern={on 4.5pt off 4.5pt}]  (449.5,221) .. controls (414.52,239.95) and (384.39,191.4) .. (387.5,141) .. controls (390.61,90.6) and (429.19,32.32) .. (458.5,57) ;
\draw    (207,78) .. controls (227.84,75.79) and (231.01,103.47) .. (249.5,107) .. controls (267.99,110.53) and (266.25,127.77) .. (275.5,131) .. controls (284.75,134.23) and (290.5,138) .. (308.5,132) ;
\filldraw (207,78) circle (1.5pt);
\draw    (239.5,158) .. controls (252.5,147) and (251.5,130) .. (260.5,120) ;
\draw    (268.5,112) .. controls (290.62,95.41) and (305.45,121.63) .. (323.5,141) .. controls (341.55,160.37) and (351.5,163) .. (375.5,160) ;
\draw    (319.5,125) .. controls (335.09,118.35) and (349.63,118.39) .. (353.5,125) .. controls (357.37,131.61) and (347.5,151) .. (375.5,160) ;
\filldraw (375.5,160) circle (1.5pt);
\draw    (192,167) .. controls (214.5,155) and (239.5,160) .. (259.5,174) .. controls (279.5,188) and (280.38,159.89) .. (299.5,157) .. controls (318.62,154.11) and (342.5,173) .. (375.5,160) ;
\filldraw (192,167) circle (1.5pt);
\draw    (201.5,194) .. controls (215.5,191) and (218.5,180) .. (230,168) ;
\filldraw (201.5,194) circle (1.5pt);
\draw  [dash pattern={on 4.5pt off 4.5pt}]  (211,51) -- (445.5,51) ;
\draw  [dash pattern={on 4.5pt off 4.5pt}]  (204,225) -- (435.5,226) ;
\draw    (375.5,160) -- (416.5,160) ;
\filldraw (416.5,160) circle (1.5pt);

\draw (176,116) node [anchor=north west][inner sep=0.75pt]   [align=left] {$\displaystyle \vdots $};
\draw (186,65) node [anchor=north west][inner sep=0.75pt]   [align=left] {$\displaystyle x_{1}$};
\draw (165,170) node [anchor=north west][inner sep=0.75pt]   [align=left] {$\displaystyle x_{n-1}$};
\draw (180,189) node [anchor=north west][inner sep=0.75pt]   [align=left] {$\displaystyle x_{n}$};
\draw (422,155) node [anchor=north west][inner sep=0.75pt]   [align=left] {$\displaystyle x_I$};
\draw (489,124) node [anchor=north west][inner sep=0.75pt] [font=\huge]  [align=left] {$\displaystyle \subseteq \ \Sigma _{\opn{merge}}(\vec{x})$};

\end{tikzpicture}}.
\]

By following these same paths backwards $\gamma_i^{-1}:[0,1]\to \Sigma$, i.e.\ by applying the orientation reversing automorphisms $1-t:[0,1]\to [0,1]$ in all places, we produce another ribbon bordism $\Sigma_{\opn{merge}}'(\vec{x}):\Sigma_{x_I}\to \Sigma_{\vec{x}}$.  Since $Z_{\msc{A}}$ is constant along all skein relations, one sees that the two maps
\[
Z_{\msc{A}}(\Sigma_{\opn{merge}}(\vec{x}))\ \ \text{and}\ \ Z_{\msc{A}}(\Sigma'_{\opn{merge}}(\vec{x}))
\]
are mutually inverse isomorphisms.  (The point is that the composite
\[
\Sigma'_{\opn{merge}}(\vec{x})\circ\Sigma_{\opn{merge}}(\vec{x}),
\]
for example, is equivalent to the identity via a skein relation which separates the sole ribbon connecting the two internal vertices into $I$ parallel ribbons.)  As both of these bordisms are admissible, and $Z_{\msc{A}}$ is reasonable, $\Sigma_{\opn{merge}}(\vec{x})$ provides the desired isomorphism.
\end{proof}

We now provide the claimed identification of state spaces.

\begin{proof}[Proof of Theorem \ref{thm:states}]
For a fixed $\ast$-labeled bordism $\Sigma_{\opn{merge}}$ as in the proof of Lemma \ref{lem:merge}, and its labeled counterparts, we have diagrams
\[
\xymatrix{
Z_{\msc{A}}(\Sigma_{\vec{x}})\ar[d]_{Z_{\msc{A}}(\Sigma_{\opn{merge}}(\vec{x}))}\ar[rr]^{\Sigma_{\vec{f}}} & & Z_{\msc{A}}(\Sigma_{\vec{y}})\ar[d]^{Z_{\msc{A}}(\Sigma_{\opn{merge}}(\vec{y}))}\\
Z_{\msc{A}}(\Sigma_{x_I})\ar[rr]_{\Sigma_{f_I}} & & Z_{\msc{A}}(\Sigma_{y_I})
}
\]
at each tuple of maps $\vec{f}=(f_i:i\in I)$ in the fiber $\msc{A}_{\mbf{\Sigma}}$.  (Here $f_I$ is the image of $\Sigma_{\vec{f}}$ under the oriented product \eqref{eq:1303}.)  We compose with the isomorphism $\psi$ from Proposition \ref{prop:1322} to obtain isomorphisms for the state spaces which fit into diagrams
\[
\xymatrix{
Z_{\msc{A}}(\Sigma_{\vec{x}})\ar[d]_{\psi'}\ar[rr]^{\Sigma_{\vec{f}}} & & Z_{\msc{A}}(\Sigma_{\vec{y}})\ar[d]^{\psi'}\\
\Hom_{\msc{A}}(\1,E^{\ot g}\ot x_I)\ar[rr]_{f_\ast} & & \Hom_{\msc{A}}(\1,E^{\ot g}\ot y_I).
}
\]
The map $\psi'$ provides the claimed isomorphism.
\end{proof}

\subsection{State spaces in the base change}

Consider the base changed category $\msc{S}\ot\msc{A}$ at semisimple symmetric $\msc{S}$.  We have the inner Homs
\[
\underline{\Hom}:(\msc{S}\ot\msc{A})^{\opn{op}}\times (\msc{S}\ot\msc{A})\to \msc{S}
\]
for the tensor action of $\msc{S}$ on $\msc{S}\ot\msc{A}$, which appears on monomials as
\[
\underline{\Hom}(s\ot x,s'\ot x')=(s^\ast\ot s')\ot_k \Hom_{\msc{A}}(x,x'),
\]
via Lemma \ref{lem:708} for example. Given a genus $g$ marked surface $\mbf{\Sigma}$ in $\Bord_{\ast}^{\red}$ we define the functor
\[
\underline{\Hom}(\1,E^{\ot g}\ot-):(\msc{S}\ot \msc{A})_{\mbf{\Sigma}}\to \msc{S}
\]
by applying dualities, tensoring, then applying inner-Hom just as in \eqref{eq:state}.

\begin{theorem}\label{thm:ind_states}
For any genus $g$ surface $\mbf{\Sigma}$ in $\Bord_{\ast}^{\red}$, there is a (2-)diagram
\[
\xymatrix{
(\msc{S}\ot\msc{A})_{\mbf{\Sigma}}\ar[rr]\ar[drr]|{\hole}_(.35){\underline{\Hom}(\1,E^{\ot g}\ot-)} & & \Bord_{\msc{S}\ot\msc{A}}^{\opn{adm};\opn{red}}\ar[d]^{Z_{\msc{A}}(\msc{S})}\\
 & & \msc{S}.
}
\]
\end{theorem}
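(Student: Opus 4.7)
The plan is to unwind Definition \ref{def:Z_AS} of the base changed theory, combine it with the universal state-space identification of Theorem \ref{thm:states}, and then match the result with the inner-Hom on pure tensor labels. The key preliminary observation is that since $\msc{S}$ is semisimple, any marking object $z_i$ in $\msc{S}\ot\msc{A}$ decomposes as a direct sum of pure tensors $s_{i\alpha}\ot x_{i\alpha}$, and Lemma \ref{lem:sigma_sum} promotes this to an isomorphism in the additive closure of $k\Bord_{\msc{S}\ot\msc{A}}^{\opn{adm};\red}$ between $\Sigma_{\vec z}$ and a direct sum of surfaces with pure-tensor labels. Both functors in play--the composite $Z_{\msc{A}}(\msc{S})$ and the inner-Hom $\underline\Hom(\1, E^{\ot g}\ot-)$--send such decompositions to direct sums in $\msc{S}$ (the former by additivity of $Ev_{\msc{S}}\cdot Z_{\msc{A}}$, the latter by exactness of the Kelley product and inner-Hom), so it suffices to treat pure-tensor labels and then extend by naturality.

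For the pure-tensor case $\vec z = (s_i\ot x_i)$, the $2$-commutative triangle (\ref{eq:1411}) implies that $\Delta^{\opn{adm}}(\Sigma_{\vec z})$ is naturally isomorphic to the image of the simultaneous pair $(\Sigma_{\vec s}, \Sigma_{\vec x})$ under the map to $k\Bord_{\msc{S}}^{\red}\boxtimes k\Bord_{\msc{A}}^{\opn{adm};\red}$. Applying $Ev_{\msc{S}}\cdot Z_{\msc{A}}$ then yields
\[
Z_{\msc{A}}(\msc{S})(\Sigma_{\vec z})\;\cong\; Ev_{\msc{S}}(\Sigma_{\vec s})\otimes_k Z_{\msc{A}}(\Sigma_{\vec x})\;\cong\;\bigl(s_1^{\pm}\ot\cdots\ot s_n^{\pm}\bigr)\otimes_k \Hom_{\msc{A}}(\1,E^{\ot g}\ot x_I),
\]
where the first isomorphism is the definition of $Ev_{\msc{S}}\cdot Z_{\msc{A}}$, and the second uses the value of the tautological theory on a marked disk (Definition \ref{def:ts}, Theorem \ref{thm:eval_A}(a)) together with Theorem \ref{thm:states}. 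Since duals and tensor products in $\msc{S}\ot\msc{A}$ are componentwise on pure tensors, one has $(s_i\ot x_i)^\pm = s_i^\pm\ot x_i^\pm$, and the monomial formula $\underline\Hom(s\ot x, s'\ot x')=(s^\ast\ot s')\otimes_k\Hom_{\msc{A}}(x,x')$ recorded above Theorem \ref{thm:ind_states} gives directly
\[
\underline\Hom\bigl(\1,\,E^{\ot g}\ot\textstyle\bigotimes_i(s_i\ot x_i)^\pm\bigr)\;\cong\;(s_1^\pm\ot\cdots\ot s_n^\pm)\otimes_k\Hom_{\msc{A}}(\1,E^{\ot g}\ot x_I),
\]
which matches the previous expression.

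Finally, naturality in the fiber $(\msc{S}\ot\msc{A})_{\mbf\Sigma}$ follows by tracking morphisms of the form $\Sigma_{\vec f}$ (as in Section \ref{sect:a_sigma}) through the construction: such a morphism on pure-tensor labels lifts canonically to a pair $(\Sigma_{\vec f_{\msc{S}}}, \Sigma_{\vec f_{\msc{A}}})$ in $\Bord_{\msc{S}\times\msc{A}}^{\opn{adm};\red}$, and the isomorphism above is natural with respect to such pairs by the naturality statements in Theorem \ref{thm:states} and the tautological evaluation. Combined with the additive-decomposition reduction, this assembles into the claimed $2$-diagram. The main obstacle is bookkeeping the natural isomorphism built into $\Delta^{\opn{adm}}$--it arises from a noncanonical inverse to the concatenation equivalence--but the commuting triangle (\ref{eq:1411}) determines this isomorphism uniquely up to a coherent natural transformation, so there is no essential obstruction and the identifications compose cleanly.
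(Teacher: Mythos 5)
Your proposal is correct and follows essentially the same strategy as the paper's proof: decompose each marking object into pure tensors $s\ot x$ with $s$ simple using semisimplicity of $\msc{S}$ and Lemma \ref{lem:sigma_sum}, separate the $\msc{S}$- and $\msc{A}$-factors via the $2$-commuting triangle \eqref{eq:1411}, evaluate $Ev_{\msc{S}}$ and $Z_{\msc{A}}$ separately (invoking Theorem \ref{thm:states} for the $\msc{A}$-factor), and reassemble against the monomial formula for the inner Hom. The paper is somewhat more explicit than your proposal in writing down the natural isomorphism $\eta:\Sigma_{\vec z}\overset{\sim}\to\oplus_\lambda\Sigma_{\vec z;\lambda}$ and tracking it through the chain of $2$-diagrams to justify the "extend by naturality" step, but the content is the same.
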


More directly, given a genus $g$ surface $\Sigma_{\vec{z}}$ in $\Bord_{\msc{S}\ot\msc{A}}$ with markings from an ordered set $I$, we again have a natural isomorphism $Z_{\msc{A}}(\msc{S})(\Sigma_{\vec{z}})\overset{\sim}\to \underline{\Hom}(\1,E^{\ot g}\ot z_I)$ where $z_I$ is the oriented product for $\Sigma_{\vec{z}}$.

\begin{proof}
Take $\underline{Z}=Z(\msc{S})$ and let $I$ be the labeling set for $\mbf{\Sigma}$.  Throughout the proof we take $\Bord=\Bord^{\opn{adm}}$, to ease notation.
\par

Let $\{s_{\lambda_0}:\lambda_0\in\Lambda\}$ be a complete list of distinct simples in $\msc{S}$.  At each $\lambda_0$ consider the map $F_{\lambda_0}:\msc{A}\to \msc{S}\ot\msc{A}$, $x\mapsto s_{\lambda_0}\ot x$, and its right adjoint $-_{\lambda_0}:\msc{S}\ot\msc{A}\to \msc{A}$.  We recall that these maps assemble into an equivalence
\[
F:\oplus_{\lambda_0\in \Lambda}\msc{A}\to \msc{S}\ot\msc{A}
\]
(Lemma \ref{lem:753}), so that there is a natural isomorphism $z\overset{\sim}\to \oplus_{\lambda_0} s_{\lambda_0}\ot z_{\lambda_0}$ at each $z$ in $\msc{S}\ot \msc{A}$.
\par

For a tuple $\lambda:I\to \Lambda$ and $\Sigma_{\vec{z}}$ in $\msc{S}\ot\msc{A}_{\mbf{\Sigma}}$ we define the marked surface $\Sigma_{\vec{z};\lambda}$ in $\msc{S}\ot\msc{A}_{\mbf{\Sigma}}$ with underlying geometry $\mbf{\Sigma}$ and $i$-th labeling object $s_{\lambda(i)}\ot z_{i\lambda(i)}$, whenever all of the objects $z_{i\lambda(i)}$ are nonzero.  We have the natural isomorphism
\begin{equation}\label{eq:1430}
\eta:\Sigma_{\vec{z}}\overset{\sim}\to \oplus_{\lambda}\Sigma_{\vec{z};\lambda}
\end{equation}
in $(k\Bord_{\msc{S}\ot\msc{A}}^{\opn{red}})^{\opn{add}}$ which is induced by the various projections $\Sigma_{p;\lambda}:\Sigma_{\vec{z}}\to \Sigma_{\vec{z};\lambda}$ (see Lemma \ref{lem:sigma_sum}), where the sum here is over all functions $\lambda$ at which the labeling objects $s_{\lambda(i)}\ot z_{i\lambda(i)}$ are nonzero. (We note that a surface $\Sigma_{\vec{w}}$ in $k\Bord_{\msc{S}\ot\msc{A}}^{\red}$ is a zero object whenever one of the marking objects $w_i$ is zero.)  These maps $\eta$ provide, in particular, a natural isomorphism between the two functors
\[
\xymatrix{
\msc{S}\ot \msc{A}_{\mbf{\Sigma}} \ar@<.5ex>[rr]^(.4){\oplus_{\lambda}\Sigma_{-;\lambda}} \ar@<-.5ex>[rr] & & (k\Bord_{\msc{S}\ot\msc{A}}^{\opn{red}})^{\opn{add}},
}
\]
where the bottom map is the expected composite $\msc{S}\ot\msc{A}_{\mbf{\Sigma}}\to \Bord_{\msc{S}\ot\msc{A}}^{\opn{red}}\to (k\Bord_{\msc{S}\ot\msc{A}}^{\opn{red}})^{\opn{add}}$.

As each $\Sigma_{\vec{z};\lambda}$ is in the image of the concatenation functor, we have the apparent lift of the functor $\oplus_{\lambda}\Sigma_{-;\lambda}$ to the category of $\msc{S}\times \msc{A}$-labeled bordisms and a subsequent (2-)diagram
\[
\xymatrix{
\msc{S}\ot\msc{A}_{\mbf{\Sigma}}\ar[rr]^(.4){\oplus_{\lambda}\Sigma_{-;\lambda}}\ar[drr] & & (k\Bord_{\msc{S}\times \msc{A}}^{\opn{red}})^{\opn{add}}\ar[d]^{(\opn{ccat}^{\opn{red}})^{\opn{add}}}\\
	& & (k\Bord_{\msc{S}\ot\msc{A}}^{\opn{red}})^{\opn{add}}.
}
\]
This diagram then provides a natural isomorphism between the restriction $\underline{Z}|_{\msc{S}\ot\msc{A}_{\mbf{\Sigma}}}$ and the composite
\begin{equation}\label{eq:1446}
\xymatrix{
\msc{S}\ot\msc{A}_{\mbf{\Sigma}}\ar[r]^(.4){\oplus_{\lambda}\Sigma_{-;\lambda}}& (k\Bord_{\msc{S}\times \msc{A}}^{\opn{red}})^{\opn{add}}\ar[r] & (k\Bord_{\msc{S}}^{\opn{red}}\boxtimes k\Bord_{\msc{A}}^{\opn{red}})^{\opn{add}}\ar[r]^(.75){Ev\cdot Z} & \msc{S}.
}
\end{equation}
We can furthermore restrict codomains to the fibers over $\mbf{\Sigma}$ to equate this composite with a composite
\[
\xymatrix{
\msc{S}\ot \msc{A}_{\mbf{\Sigma}} \ar[r]^(.4){\oplus_{\lambda}\Sigma_{-;\lambda}}& (k(\msc{S}\times \msc{A})_{\mbf{\Sigma}})^{\opn{add}}\ar[r] & (k\msc{S}_{\mbf{\Sigma}}\boxtimes k\msc{A}_{\mbf{\Sigma}})^{\opn{add}}\ar[r]^(.7){Ev\cdot Z} & \msc{S}.
}
\]
Finally by Theorem \ref{thm:states} the above composite is isomorphic to the map
\begin{equation}\label{eq:1464}
\xymatrix{
\msc{S}\ot \msc{A}_{\mbf{\Sigma}}\ar[r]^(.4){\oplus_{\lambda}\Sigma_{-;\lambda}}& (k(\msc{S}\times \msc{A})_{\mbf{\Sigma}})^{\opn{add}}\ar[r] & (k\msc{S}_{\mbf{\Sigma}}\boxtimes k\msc{A}_{\mbf{\Sigma}})^{\opn{add}}\ar[rr]^(.65){Ev\cdot \Hom_{\msc{A}}} & & \msc{S},
}
\end{equation}
where the final functor is explicitly the composite
\begin{equation}\label{eq:1470}
\opn{act}\circ (Ev\boxtimes \Hom_{\msc{A}}(\1,E^{\ot g}\ot-)):(k\msc{S}_{\mbf{\Sigma}}\boxtimes k\msc{A}_{\mbf{\Sigma}})^{\opn{add}}\to (\msc{S}\boxtimes \opn{Vect})^{\opn{add}}\to \msc{S}.
\end{equation}

Now, the first two maps in \eqref{eq:1464} compose to a functor
\[
\msc{A}_{\mbf{\Sigma}}\to (k\msc{S}_{\mbf{\Sigma}}\boxtimes k\msc{A}_{\mbf{\Sigma}})^{\opn{add}}
\]
which takes a surface $\Sigma_{\vec{z}}$ to the sum $\oplus_{\lambda}(\Sigma_{\vec{z};\lambda}^{\msc{S}},\ \Sigma_{\vec{z};\lambda}^{\msc{A}})$, where $\Sigma_{\vec{z};\lambda}^{\msc{S}}$ and $\Sigma_{\vec{z};\lambda}^{\msc{A}}$ have underlying geometry $\mbf{\Sigma}$ and $i$-th labels $s_{\lambda(i)}$ and $z_{i\lambda(i)}$, respectively, and which takes each map $\Sigma_{\vec{f}}:\Sigma_{\vec{z}}\to \Sigma_{\vec{w}}$ to the sum $\oplus_{\lambda} (id,\  \Sigma_{\vec{f};\lambda}^{\msc{A}})$, where the $i$-th  ribbon in $\Sigma_{\vec{f};\lambda}^{\msc{A}}$ is labeled by the map $f_{i\lambda(i)}:z_{i\lambda(i)}\to w_{i\lambda(i)}$.  The map \eqref{eq:1470} subsequently sends each pair $(\Sigma^{\msc{S}}_{\vec{z};\lambda},\ \Sigma^{\msc{A}}_{\vec{z};\lambda})$ to $s_{\lambda}\ot \Hom_{\msc{A}}(\1,E^{\ot g}\ot z_{I;\lambda})$ and each map $(id,\  \Sigma_{\vec{f};\lambda})$ to $id_{s_{\lambda}}\ot \Hom_{\msc{A}}(\1, E^{\ot g}\ot f_{I;\lambda})$, where
\begin{equation}\label{eq:1482}
s_{\lambda}=\otimes_{i\in I} s_{\lambda(i)}^{\pm},\ \ z_{I;\lambda}=\otimes_{i\in I}z_{i\lambda(i)}^{\pm},\ \ \text{and}\ \ f_{I;\lambda}=\otimes_{i\in I} f_{i\lambda(i)}^{\pm}.
\end{equation}
(As before, the superscript $\pm$ indicates an application of the dual at negative markings.)
\par

In total we find that the sequence \eqref{eq:1464} is isomorphic to the functor given by
\[
\Sigma_{\vec{z}}\mapsto \oplus_{\lambda:I\to \Lambda}s_{\lambda}\ot\Hom(\1,E^{\ot g}\ot z_{I;\lambda})\ \ \text{and}\ \ \Sigma_{\vec{f}}\mapsto \oplus_{\lambda:I\to \Lambda}s_{\lambda}\ot \Hom(\1,E^{\ot g}\ot f_{I;\lambda}).
\]
But now the oriented product $z_I$ of $\vec{z}$ itself decomposes naturally as the sum $z_I\cong\oplus_{\lambda}s_{\lambda}\ot z_{I;\lambda}$, and under this decomposition $f_I=\oplus_{\lambda} id_{s_{\lambda}}\ot f_{I;\lambda}$.  Hence the above expression for \eqref{eq:1464} can be rewritten as
\[
\Sigma_{\vec{z}}\mapsto \underline{\Hom}(\1,E^{\ot g}\ot z_I)\ \ \text{and}\ \ \Sigma_{\vec{f}}\mapsto \underline{\Hom}(\1,E^{\ot g}\ot f_I).
\]
So we observe the claimed isomorphism
\[
\underline{Z}|_{\msc{A}_{\mbf{\Sigma}}}\cong \eqref{eq:1446}\cong \eqref{eq:1464}\cong \underline{\Hom}(\1,E^{\ot g}\ot-).
\]
\end{proof}

\subsection{Example: Graded objects in $\msc{A}$}

In the case $\msc{S}=\opn{Vect}^{\mbb{Z}}$, the category of $\mbb{Z}$-graded vector spaces, the product $\opn{Vect}^{\mbb{Z}}\ot\msc{A}$ is the tensor category $\msc{A}^{\mbb{Z}}$ of finite length graded objects in $\msc{A}$.  This is, equivalently, the full subcategory of functors $\opn{Fun}(\mbb{Z},\msc{A})$ which take zero value at all but finitely many integers, along with all natural transformations.  The structure map
\[
\opn{Vect}^{\mbb{Z}}\times \msc{A}\to \msc{A}^{\mbb{Z}}
\]
takes a pair consisting of a graded vector space $V=\oplus_{t\in \mbb{Z}}V^t$ and an object $x^0$ in $\msc{A}$ to the graded object $V\ot_k x^0$ which is $V^t\ot_k x^0$ in each degree $t$.
\par

In any case, Theorem \ref{thm:adm_Z_AS} provides a graded extension $\underline{Z}_{\msc{A}}=Z_{\msc{A}}(\opn{Vect}^{\mbb{Z}}):\Bord^{\opn{adm}}_{\msc{A}^{\mbb{Z}}}\to \opn{Vect}^{\mbb{Z}}$ of the universal Lyubaschenko theory whose state spaces are naturally identified with the graded Homs for objects in $\msc{A}^{\mbb{Z}}$,
\[
\underline{Z}_{\msc{A}}(\Sigma_{\vec{z}})\overset{\sim}\to\underline{\Hom}_{\msc{A}}(\1,E^{\ot g}\ot z_I),
\]
where $E$ is taken in degree $0$ and $z_I$ is the oriented product for $\Sigma_{\vec{z}}$.

\section{Outlining a cochain valued theory}
\label{sect:outline}

We explain how, in principle, one produces the cochain valued theory
\[
Z_{\opn{Ch}(\msc{A})}:\Bord_{\opn{Ch}(\msc{A})}^{\opn{adm}}\to \opn{Ch}(\opn{Vect})
\]
from the base changed theory $Z_{\msc{A}}(\opn{Vect}^{\mbb{Z}})$ introduced in Section \ref{sect:t_ext}. The specific details for the construction of $Z_{\opn{Ch}(\msc{A})}$ then appear in Sections \ref{sec: diff def}--\ref{sect:states_parts}.

\subsection{Recollections on cochains}

Let $\msc{A}$ be a locally finite abelian category and $\msc{A}^{\mbb{Z}}$ be the category of locally finite graded objects in $\msc{A}$, considered as a module category over $\opn{Vect}^{\mbb{Z}}$.  We write each graded object as $x=\oplus_{n\in \mbb{Z}} x^n$, where $x^n$ is the homogeneous degree $n$ portion of $x$.
\par

For any integer $r$ we take $k(r)$ the graded vector space which is just $k$ concentrated in degree $r$ and define the shift autofunctor as
\[
[r]:=k(-r)\ot_k-:\msc{A}^{\mbb{Z}}\to \msc{A}^{\mbb{Z}}.
\]
For unknown reasons we write $x[r]:=[r](x)$, and we note that the $m$-th degree component in the shift $(x[r])^m$ is the object $k(-r)\ot x^{m+r}\cong x^{m+r}$.

\begin{definition}
A (finite length) cochain complex over $\msc{A}$ is a graded object $x$ in $\msc{A}^{\mbb{Z}}$ which is equipped with a map $d_x:x\to x[1]$ in $\msc{A}^{\mbb{Z}}$ which satisfies $(d_x[1])d_x=0$.  A morphism between cochains is a graded map $f:x\to y$ which satisfies $d_yf=f[1]d_x$. We let $\opn{Ch}(\msc{A})$ denote the category of (finite length) cochains over $\msc{A}$.
\end{definition}

We can express a cochain $x$ in the usual pictorial manner
\[
\cdots\to x^{n-1}\overset{d_x^{n-1}}\to x^n\overset{d_x^n}\to x^{n+1}\to \cdots,
\]
at which point a morphism $f:x\to y$ in $\opn{Ch}(\msc{A})$ appears as a commuting diagram
\[
\xymatrix{
\cdots\ar[r] & x^{n-1}\ar[r]^{d_x^{n-1}}\ar[d]_{f^{n-1}}& x^n\ar[r]^{d_x^n}\ar[d]_{f^n}&  x^{n+1}\ar[r]\ar[d]_{f^{n+1}} & \cdots\\
\cdots\ar[r] & y^{n-1}\ar[r]_{d_y^{n-1}}& y^n\ar[r]_{d_y^n}&  y^{n+1}\ar[r] & \cdots.
}
\]

\subsection{Tensor structures on cochains}

When $\msc{A}$ is a tensor category the category $\opn{Ch}(\msc{A})$ inherits a tensor structure so that, for cochains $x$ and $y$, $x\ot y$ is the cochain which has degree $n$ component $\oplus_{n_1+n_2=n}x^{n_1}\ot y^{n_2}$ and differential
\[
d_{x\ot y}=d_x\ot 1_y+(\tau_{x k(-1)}\ot 1_y)(1_x\ot d_y):x\ot y\to (x\ot y)[1].
\]
Here we employ the signed symmetry
\[
\tau_{x V}:x\ot_k V\to V\ot_k x,\ \ \tau_{x V}|_{x^m\ot_k V^n}=(-1)^{m\cdot n}\tau^{\heartsuit}_{x^mV^n},
\]
where $\tau^{\heartsuit}$ is the unsigned symmetry for the $\opn{Vect}$-action on $\msc{A}$.  Note that this application of the symmetry $\tau$ is necessary since the maps $d_x\ot 1$ and $1\ot d_y$ take values in the distinct spaces $k(-1)\ot x\ot y$ and $x\ot k(-1)\ot y$, respectively.  So, up to issues of notation, we have
\[
d_{x\ot y}|_{x^n\ot y^m}=``d_x\ot 1+ (-1)^n1\ot d_y"|_{x^n\ot y^m}.
\]
\par

As in the graded setting, we have an action of $\opn{Ch}(\opn{Vect})$ on $\opn{Ch}(\msc{A})$ and define $x[r]$ to be the complex $k(-r)\ot_k x$. This operation extends to an autequivalence
\[
[r]:\opn{Ch}(\msc{A})\to \opn{Ch}(\msc{A}),
\]
where we note that applying $[r]$ alters the differential on $x$ by a negative sign, ``$d_{x[r]}=(-1)^rd_x$".
\par

When $\msc{A}$ is braided, the braiding $c^{\heartsuit}$ on $\msc{A}$ induces a braided structure $c$ on $\opn{Ch}(\msc{A})$ given by the signed opeation $c|_{x^m\ot y^n}=(-1)^{m\cdot n}c^{\heartsuit}|_{x^m\ot y^n}$, and when $\msc{A}$ is ribbon $\opn{Ch}(\msc{A})$ is ribbon with ribbon element $\theta_x=\sum_m\theta_{x^m}^{\heartsuit}$.

\subsection{An outline}\label{sect:outliner}  We omit in this section all admissible labels, as the construction for the admissible and non-admissible cases are analogous.

Let $\msc{A}$ be a finite ribbon tensor category. 
Given a reasonable field theory  $Z:\Bord_{\msc A} \to \opn{Vect}$, via the identification  $\opn{Vect}^{\mathbb Z} \otimes \msc{A}= \msc{A}^{\mathbb Z}$ we can obtain a symmetric monoidal functor
\[\underline{Z}:=Z(\opn{Vect}^{\mathbb Z}) :\Bord_{\msc{A}^{\mathbb Z}}\to \opn{Vect}^{\mathbb Z}\] 
as in Definition \ref{def:Z_AS}. Explicitly, this is
given by the composition 
\[
\Bord_{\msc{A}^{\mathbb{Z}}} \cong \Bord_{\opn{Vect}^{\mathbb Z}\otimes \msc{A}}\xrightarrow{\Delta} (k\Bord_{\opn{Vect}^{\mathbb Z}}^{\red}\boxtimes k\Bord^{\red}_{\msc{A}})^{\opn{add}} \xrightarrow{Ev_{\opn{Vect}^{\mathbb Z}}\cdot Z} \opn{Vect}^{\mathbb Z}.
\]

Recall that the category of cochains $\opn{Ch}(\msc{A})$ is a distinguished subcategory 
\[
\opn{Ch}(\msc{A})\ \subseteq\ \msc{A}^{\mathbb{Z}}=\opn{Vect}^{\mathbb{Z}}\otimes \msc{A},
\]
so objects in $\opn{Ch}(\msc{A})$ are objects in $\msc{A}^{{\mathbb Z}}$ equipped with a differential, and we have a faithful functor $\Bord_{\Ch(\msc{A})}\hookrightarrow \Bord_{\msc{A}^{\mathbb Z}}$.  We can thus consider the restriction
	\begin{equation}\label{eq: Z_Ch}
		\underline{Z}|_{\Bord_{\Ch(\msc{A})}}:  
		\Bord_{\Ch(\msc{A})} \hookrightarrow \Bord_{\msc{A}^{\mathbb{Z}}} \xrightarrow{\underline{Z}}  \opn{Vect}^{\mathbb Z}.
	\end{equation}
We include the commutative diagram below to illustrate the compositions involved in the definition of $\underline{Z}|_{\Bord_{\Ch(\msc{A})}}$.
We will use the commutativity of this diagram frequently in the rest of this paper. 
\[\begin{tikzcd}
	{\Bord_{\opn{Vect}^{\mathbb Z}\otimes \msc{A}}} & {(k\Bord_{\opn{Vect}^{\mathbb Z}}^{\red}\boxtimes k\Bord^{\red}_{\msc{A}})^{\opn{add}}} \\
	{\Bord_{\msc{A}^{\mathbb{Z}}} } &&
	\\
	{\Bord_{\Ch(\msc{A})} } & {\opn{Vect}^{\mathbb Z}}
	\arrow["{\Delta}", from=1-1, to=1-2]
	\arrow["{{\underline{Z}}}"{pos=0.4}, from=1-1, to=3-2]
	\arrow["Ev_{\opn{Vect}^{\mathbb Z}}\cdot Z", from=1-2, to=3-2]
	\arrow["{ \cong}"{marking, allow upside down}, draw=none, from=2-1, to=1-1]
	\arrow[hook, from=3-1, to=2-1]
	\arrow[""', from=3-1, to=3-2]
\end{tikzcd}
\]

In general, we understand that the separation map $\Delta$ is only defined up to a natural isomorphism.  However, in our case we can define the functor
\[
\Delta:\Bord_{\msc{A}^{\mbb{Z}}}\to (k\Bord_{\opn{Vect}^{\mathbb Z}}^{\red}\boxtimes k\Bord^{\red}_{\msc{A}})^{\opn{add}}
\]
explicitly by taking an object $\Sigma_{\vec{x}}$ with marked points $p:I\to \Sigma$ to the formal sum
\[
\oplus_{m:I\to \mathbb{Z}}\Sigma_{k(m)}\ot\Sigma_{\vec{x}^{m}},
\]
where $\Sigma_{k(m)}$ and $\Sigma_{\vec{x}^{m}}$ denote the surface $\Sigma$ with respective marking objects $k(m_i)$ and $x_i^{m_i}$ at the points $p_i$.  Morphisms are defined by lifting along the concatenation functor
\[
\oplus_m\Sigma_{k(m)}\ot\Sigma_{\vec{x}^{m}}\mapsto \oplus_m\Sigma_{k(m)\ot \vec{x}^m}\cong \Sigma_{\vec{x}},
\]
where $\Sigma_{k(m)\ot \vec{x}^m}$ is the expected surface with $i$-th marking object $k(m_i)\ot x^{m_i}_i$.  Hence $\underline{Z}$ is given by the sequence
\[
\Sigma_{\vec{x}}\overset{\Delta}\mapsto 
\oplus_{m:I\to \mathbb{Z}}\Sigma_{k(m)}\ot\Sigma_{\vec{x}^{m}}\overset{Ev\cdot Z}\mapsto \oplus_{m:I\to \mathbb{Z}}\left(\otimes_{i\in I}k(\pm m_i)\right)\ot Z(\Sigma_{\vec{x}^m}).
\]

Our goal in what follows is to show that the symmetric monoical functor $\underline{Z}|_{\Bord_{\Ch(\msc{A})}}$ lifts naturally to the category of finite length cochains $\Ch(\opn{Vect})$. That is, we will prove the following theorem.

\begin{theorem}\label{thm: Zast}
	Let $\msc{A}$ be a ribbon tensor category and $\opn{Ch}(\msc{A})$ be the associated ribbon tensor category of finite length cochains.  Given a reasonable theory $Z:\Bord_{\msc{A}}\to \opn{Vect}$ there is a symmetric monoidal functor
	\begin{equation}\label{eq:150}
		Z^*:\Bord_{\opn{Ch}(\msc{A})}\to \opn{Ch}(\opn{Vect})
	\end{equation}
	which fits into a diagram
	\begin{equation}\label{eq:2457}
	\xymatrix{
	\Bord_{\opn{Ch}(\msc{A})}\ar[r]^{Z^\ast}\ar[d]_{forget} & \opn{Ch}(\opn{Vect})\ar[d]^{forget}\\
	\Bord_{\msc{A}^{\mathbb{Z}}}\ar[r]^{\underline{Z}} & \opn{Vect}^{\mathbb{Z}},
	}
	\end{equation}
and whose restriction to $\Bord_{\msc{A}}\subseteq \Bord_{\Ch(\msc{A})}$ recovers the composition of $Z$ with the unit $\opn{Vect} \to \opn{Vect}^{\mathbb Z}$.
\end{theorem}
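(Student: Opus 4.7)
The key observation is that $\Bord_{\Ch(\msc{A})}$ is the same as $\Bord_{\msc{A}^{\mathbb Z}}$ restricted to those surfaces whose marking objects carry a chosen differential, and to those morphisms whose internal-vertex labels are chain maps. Thus on underlying graded vector spaces we simply take
\[
Z^\ast(\Sigma_{\vec x}):=\underline{Z}(\Sigma_{\vec x^{\,\mrm{gr}}}),\qquad Z^\ast([M,T]):=\underline{Z}([M,T^{\,\mrm{gr}}]),
\]
where $(-)^{\mrm{gr}}$ forgets the differentials. This automatically makes the square \eqref{eq:2457} commute and reduces the theorem to equipping each $Z^\ast(\Sigma_{\vec x})$ with a compatible differential.

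The plan is to build the differential marking-by-marking. For each index $i$ in the labelling set $I$ of $\Sigma_{\vec x}$, the chain differential $d_{x_i}\colon x_i\to x_i[1]$ is a morphism in $\msc A^{\mathbb Z}$, so via the construction of Section \ref{sect:a_sigma} it defines a ``vertical line'' bordism
\[
\Sigma_{d_i}\colon \Sigma_{\vec x^{\,\mrm{gr}}}\ \longrightarrow\ \Sigma_{\vec x^{\,\mrm{gr}}[e_i]},
\]
where $[e_i]$ denotes the shift in the $i$-th coordinate. Since $\underline{Z}$ is $\opn{Vect}^{\mathbb Z}$-linear, its target is naturally shifted, and we set
\[
d_{Z^\ast(\Sigma_{\vec x})}\ :=\ \sum_{i\in I}(-1)^{\sigma_i}\,\underline{Z}(\Sigma_{d_i}),
\]
where $\sigma_i$ is the Koszul sign determined by the degrees of $x_1,\dots,x_{i-1}$, matching the sign convention in the tensor differential on $\Ch(\msc A)$.

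I would then verify $d^2=0$ in two pieces. The diagonal terms $\underline{Z}(\Sigma_{d_i})^2$ vanish because the composite bordism $\Sigma_{d_i}\circ\Sigma_{d_i}$ is, by an internal-composition skein relation (U2), equivalent to the bordism labeled by $d_{x_i}[1]\circ d_{x_i}=0$, and reasonableness of $Z$ together with the expansion relation (U4) forces $\underline{Z}$ to vanish on it. For the cross terms, $\Sigma_{d_i}\circ\Sigma_{d_j}$ and $\Sigma_{d_j}\circ\Sigma_{d_i}$ differ only by a rearrangement of vertically stacked vertices on disjoint strands; by a straight-line homotopy and (U2) they are equal up to the Koszul sign coming from interchanging the two shift factors $k(-1)$ in $\opn{Vect}^{\mathbb Z}$, so the sum $(-1)^{\sigma_i+\sigma_j}+(-1)^{\sigma_j+\sigma_i+|x_j|\cdot|x_i|+\cdots}$ cancels, exactly as in the standard cochain differential.

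Next I would verify that $Z^\ast([M,T])$ is a chain map for any $[M,T]\colon\Sigma_{\vec x}\to\Sigma'_{\vec y}$ in $\Bord_{\Ch(\msc A)}$. The equality $d_{\Sigma'_{\vec y}}\circ Z^\ast([M,T])=Z^\ast([M,T])\circ d_{\Sigma_{\vec x}}$ unpacks as an equality between two linear combinations of bordisms of the form $\Sigma_{d_i}\circ[M,T^{\mrm{gr}}]$ and $[M,T^{\mrm{gr}}]\circ\Sigma_{d_j}$. Via the gluing description of composition together with (U2), these are equivalent to the bordism $[M,T^{\mrm{gr}}]$ with a single extra identity-labeled vertex absorbed into some edge or internal vertex. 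The assumption that internal vertices of $T$ are chain maps means precisely that for each internal vertex label $f_v$ one has $d\circ f_v=\pm f_v\circ d$ after distributing $d$ over tensor factors; running this exchange across each vertex of $T$ moves every boundary-applied differential on the source to the corresponding boundary-applied differential on the target, with matching Koszul signs. This is the step I expect to do most of the real work, and it is the main obstacle: the sign bookkeeping must line up exactly with the convention for $d_{x\otimes y}$ on $\Ch(\msc A)$, and one must check that any local rearrangement of $T$ via (U1)--(U3) preserves the identity.

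Finally, symmetric monoidality of $Z^\ast$ is inherited from that of $\underline Z$: the tensor differential on $Z^\ast(\Sigma_{\vec x}\amalg\Sigma'_{\vec y})$ is a sum over all markings of both surfaces, which under the monoidal isomorphism $\underline{Z}(\Sigma_{\vec x}\amalg\Sigma'_{\vec y})\cong\underline Z(\Sigma_{\vec x})\otimes\underline Z(\Sigma'_{\vec y})$ becomes exactly the tensor product differential $d\otimes 1+(\tau\otimes 1)(1\otimes d)$, using that the symmetry on $\opn{Vect}^{\mathbb Z}$ is already signed. Commutativity of \eqref{eq:2457} holds by construction, and the restriction to $\Bord_{\msc A}$ gives $Z$ concentrated in degree $0$ with trivial differential, which is the composition of $Z$ with $\opn{Vect}\to\Ch(\opn{Vect})$ as required.
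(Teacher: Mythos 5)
Your construction of the differential, proof of $d^2=0$, and handling of symmetric monoidality all track the paper's argument closely: the paper realizes your Koszul sign $(-1)^{\sigma_i}$ as the crossing of a $k(-1)$-labeled string past the labels $k(m_1),\ldots,k(m_{i-1})$ in a factored category $(k\Str_{\opn{Vect}^{\mathbb{Z}}}^{\red}\boxtimes k\Bord_{\msc{A}}^{\red})^{\opn{add}}$, and kills the off-diagonal terms in $d^2$ via the crossing-sign relation (V5), but this is the same underlying idea. One formula needs repair: writing ``$\sigma_i$ is determined by the degrees of $x_1,\ldots,x_{i-1}$'' is imprecise because each $x_j$ is a complex rather than a homogeneous object, so the sign must be $(-1)^{m_1+\cdots+m_{i-1}}$ on the $(m_1,\ldots,m_t)$-graded summand, and $d_{Z^\ast(\Sigma_{\vec{x}})}$ is only defined degree-by-degree.

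The genuine gap is in the chain-map step, and it is concealed by the phrase ``moves every boundary-applied differential on the source to the corresponding boundary-applied differential on the target.'' This transfer picture fails whenever $T$ contains a cup (a ribbon with both ends on the incoming surface $\Sigma$) or a cap (both ends on $\Sigma'$). A cup contributes two terms to $d_{\Sigma_{\vec{x}}}$, one for each of its endpoints, and there is no corresponding marking on the target for either of them to transfer to; they must instead cancel \emph{against each other}. That cancellation is a different flavor of argument: it uses the identity $d_{w^\ast}|_{(w^l)^\ast}=-(-1)^l\,(d_w|_{w^l})^\ast$ to convert one endpoint's differential into the negative of the other's, together with the crossing sign (V5). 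The paper isolates exactly this in Lemmas \ref{lemma 3} and \ref{lemma 4}, and then assembles the general case by cutting $M$ along boundary collars to reduce to elementary bordisms (permutations, cups, caps, single coupons) handled one at a time. You also have not addressed negatively-framed markings: the differential at such a point is $d_{x^\ast}$, not $d_x$, which introduces its own sign; the paper reduces to the positively-framed case by conjugating with the ``flip'' bordism in Lemma \ref{lem:d_pos}. Until these two points are handled, the plan does not actually establish that $\underline{Z}(M,T)$ is a cochain map.
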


We have the expected admissible variant of this result as well.

\begin{theorem}\label{thm: Z_Ch}
Given a ribbon tensor category $\msc{A}$ and reasonable theory $Z:\Bord_{\msc{A}}^{\opn{adm}}\to \opn{Vect}$ there is a symmetric monoidal functor
	\[
		Z^*:\Bord_{\opn{Ch}(\msc{A})}^{\opn{adm}}\to \opn{Ch}(\opn{Vect})
	\]
	which fits into a diagram as in \eqref{eq:2457}.  In the specific case where $\msc{A}$ is modular, and $Z$ is the universal Lyubashenko theory $Z_{\msc{A}}$, we furthermore have a natural identification of the state spaces
\begin{equation}\label{eq:zch_states}
Z^*_{\msc{A}}(\Sigma_{\vec{x}})\cong \Hom^\ast_{\msc{A}}(\1,E^{\ot g}\otimes x_I)
\end{equation}
at any marked genus $g$ surface $\Sigma_{\vec{x}}$.
\end{theorem}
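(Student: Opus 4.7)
The plan is to construct $Z^*$ by taking the graded theory $\underline{Z}|_{\Bord_{\Ch(\msc{A})}}$ and manually equipping each state space with a differential built from the differentials on the marking objects. Concretely, for a marked surface $\Sigma_{\vec{x}}$ with $x_i \in \Ch(\msc{A})$ at indices $i \in I$, each differential $d_{x_i}: x_i \to x_i[1]$ is a morphism in $\msc{A}^{\mbb{Z}}$, and so determines a straight-line bordism $\Sigma_{d_{x_i}}: \Sigma_{\vec{x}} \to \Sigma_{\vec{x}^{(i)}}$ in $\Bord_{\msc{A}^{\mbb{Z}}}$, where $\vec{x}^{(i)}$ is the same tuple with $x_i$ replaced by $x_i[1]$. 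Applying $\underline{Z}$ yields a graded degree-$1$ map $d_i := \underline{Z}(\Sigma_{d_{x_i}})$, and I would define the differential on $Z^*(\Sigma_{\vec{x}})$ to be the Koszul-signed sum $d = \sum_i \epsilon_i d_i$, where on each homogeneous piece of the decomposition $\underline{Z}(\Sigma_{\vec{x}}) \cong \oplus_m (\otimes_i k(\pm m_i)) \otimes Z(\Sigma_{\vec{x}^m})$ the sign $\epsilon_i$ is the usual $(-1)^{m_1+\cdots+m_{i-1}}$.

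Next I would verify that $d^2 = 0$ and that morphisms in $\Bord_{\Ch(\msc{A})}$ become chain maps. For the diagonal terms $d_i^2$, internal composition (U2) fuses two consecutive straight-line bordisms on the $i$-th strand into a single vertex labeled by $d_{x_i}[1]\circ d_{x_i} = 0$, whereupon the linear relation (U4) forces the contribution to vanish. For the cross-terms $d_i d_j$ with $i \neq j$, the two operations live on disjoint strands, so the associated bordism differs from the one computing $d_j d_i$ only by an exchange of strands; the signs propagated through the signed symmetry on $\opn{Vect}^{\mbb{Z}}$, through the separation functor $\Delta$, and through $Ev_{\opn{Vect}^{\mbb{Z}}}$ are precisely the Koszul signs needed to make the cross-terms pair up and cancel. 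The functoriality (chain-map) condition follows similarly: a morphism $[M,T]: \Sigma_{\vec{x}} \to \Sigma_{\vec{y}}$ in $\Bord_{\Ch(\msc{A})}$ has every internal vertex labeled by a chain map $f:x\to y$ with $d_y f = f[1] d_x$, and applying (U2) together with (U4) to the relation $d_y f - f[1]d_x = 0$ yields the desired commutativity $d_{Z^*(\vec{y})}\,Z^*(M,T) = Z^*(M,T)[1]\, d_{Z^*(\vec{x})}$.

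Symmetric monoidality of $Z^*$ is then inherited from $\underline{Z}$, once one checks that the Koszul signs introduced in $d$ are exactly what is needed to match the signed braiding on $\Ch(\opn{Vect})$ with that on $\opn{Vect}^{\mbb{Z}}$. Commutativity of the diagram \eqref{eq:2457} and the claimed restriction property over $\Bord_{\msc{A}}$ (where all markings have zero differential) are immediate from the construction. For the state space identification \eqref{eq:zch_states} in the Lyubashenko case, I would invoke Theorem \ref{thm:ind_states} at $\msc{S} = \opn{Vect}^{\mbb{Z}}$ to obtain a natural identification of graded vector spaces $\underline{Z}_{\msc{A}}(\Sigma_{\vec{x}}) \cong \underline{\Hom}(\1, E^{\otimes g}\otimes x_I)$, then verify that the differential $d$ constructed above corresponds under this isomorphism to the canonical Hom-complex differential induced by the tensor-product differential on $x_I$. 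The hardest steps will be the sign bookkeeping underlying the cancellation of the cross-terms $d_i d_j$ (which rests on a careful description of how the signed symmetry on $\opn{Vect}^{\mbb{Z}}$ is transported through the separation functor) and the compatibility of the differentials under Theorem \ref{thm:ind_states} (which will likely require refining the merging construction of Lemma \ref{lem:merge} to track the straight-line bordisms $\Sigma_{d_{x_i}}$ explicitly so as to confirm that the natural isomorphism intertwines $d$ with the Hom-complex differential).
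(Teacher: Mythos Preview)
Your overall plan matches the paper's approach closely: the construction of the differential via straight-line bordisms labeled by $d_{x_i}$ with Koszul signs, the $d^2=0$ argument, the symmetric monoidality inherited from $\underline{Z}$, and the state space identification via Theorem~\ref{thm:ind_states} are all essentially what the paper does.

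However, there is a genuine gap in your chain-map verification. Your argument that ``every internal vertex is labeled by a chain map $f$ with $d_y f = f[1]d_x$, and applying (U2) together with (U4) yields the desired commutativity'' only addresses the local situation at a single univalent coupon. A general ribbon bordism $(M,T)$ has arbitrary topology: ribbons may connect two markings on the \emph{same} boundary component (cups and caps), may pass through multivalent vertices where the chain-map condition involves the Leibniz rule on a tensor product, and may wind nontrivially through $M$. In each of these cases the relationship between $\sum_i d_{x_i}$ on the incoming side and $\sum_j d_{y_j}$ on the outgoing side is not a single local vertex relation but a global compatibility across the whole graph. For instance, for a cup connecting two incoming markings $x_j$ and $x_k$ with $x_k = x_j^\ast$, the two contributions $d_{x_j}$ and $d_{x_k}$ must cancel against each other (there being no corresponding outgoing strand), and this cancellation rests on the duality identity $d_{w^\ast}|_{(w^l)^\ast} = -(-1)^l (d_w|_{w^l})^\ast$ together with a sign coming from the evaluation morphism---neither of which is captured by your sketch.

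The paper devotes all of Section~\ref{sec: diff comm} to this point, proving Proposition~\ref{comm of differential} through a sequence of lemmas: first for permutation-type bordisms (Lemma~\ref{lemma 1}), then for bordisms with a single coupon of restricted shape (Lemma~\ref{lemma 2}), then for cups and caps (Lemmas~\ref{lemma 3}--\ref{lemma 4} and Corollary~\ref{cor:cups,caps and crossings}), then for a general single coupon (Corollary~\ref{corollary from lemma 2}), and finally for arbitrary $(M,T)$ by sliding coupons into a boundary collar, cutting, and inducting on the coupon count. You should expect this step---not the cross-term sign bookkeeping nor the state-space differential matching---to be the technical heart of the argument.
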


The theory $Z_{\msc{A}}^{\ast}$ which we construct, in the case of universal Lyubashenko, is the theory which we've denoted $Z_{\opn{Ch}(\msc{A})}$ in the introduction.  

Since there is no meaningful difference between the proofs of Theorems \ref{thm: Zast} and (the first part of) \ref{thm: Z_Ch}, we formally present the arguments for Theorem \ref{thm: Zast} and leave it to the interested reader to check that all statements apply word-for-word to the admissible setting as well.  The theory $Z^*$ from Theorem \ref{thm: Zast} is constructed in Section \ref{sec: diff def} below, and the fact that $Z^*$ is actually a TQFT is verified in Sections \ref{sec: diff comm} and \ref{sect:states_parts}.  See in particular  Definition \ref{def:zast}, Proposition \ref{comm of differential}, and Proposition \ref{prop:zast_sym}.  The calculation of state spaces \eqref{eq:zch_states} is covered in Section \ref{sect:zch_states}.

\section{The differential}\label{sec: diff def}

We read diagrams in this section from \emph{top to bottom}, and we use coupons instead of vertices for better legibility of labels in  the string diagrams. 

\subsection{Relations for the category of strings}\label{sect:red_strings}
Consider the category $k\Str_{\opn{Vect}^{\mathbb Z}}$ of strings over $\opn{Vect}^{\mathbb Z}$. 
Throughout any unlabeled coupons in a string diagram such as
\[
\scalebox{0.9}{\tikzset{every picture/.style={line width=0.5pt}}   
\begin{tikzpicture}[x=0.75pt,y=0.75pt,yscale=-1,xscale=1]
\draw   (340.43,114.13) -- (366.71,114.13) -- (366.71,121.21) -- (340.43,121.21) -- cycle ;
\draw    (283.09,77.82) .. controls (291,93) and (345.21,87.56) .. (346.4,113.24) ;
\draw    (414.5,78.71) .. controls (404,103) and (363.13,87.56) .. (360.74,113.24) ;
\draw    (312,80) .. controls (325,93) and (349.99,87.56) .. (352.38,114.13) ;
\draw    (415.69,157.52) .. controls (409,139) and (363.37,147.64) .. (360.72,122.02) ;
\draw    (285.48,159.29) .. controls (296,134) and (344.28,148.2) .. (345.21,122.47) ;
\draw    (308.17,158.4) .. controls (324,137) and (355.08,148.57) .. (351.18,122.1) ;

\draw (262.11,62.08) node [anchor=north west][inner sep=0.75pt]  [font=\scriptsize] [align=left] {$\displaystyle k( m_{1})$};
\draw (303.93,62.97) node [anchor=north west][inner sep=0.75pt]  [font=\scriptsize] [align=left] {$\displaystyle k( m_{2})$};
\draw (402.98,62.97) node [anchor=north west][inner sep=0.75pt]  [font=\scriptsize] [align=left] {$\displaystyle k( m_{s})$};
\draw (266.6,163.92) node [anchor=north west][inner sep=0.75pt]  [font=\scriptsize] [align=left] {$\displaystyle k( n_{1})$};
\draw (302.44,163.92) node [anchor=north west][inner sep=0.75pt]  [font=\scriptsize] [align=left] {$\displaystyle k( n_{2})$};
\draw (405.08,163.03) node [anchor=north west][inner sep=0.75pt]  [font=\scriptsize] [align=left] {$\displaystyle k( n_{t})$};
\draw (351.61,73.25) node [anchor=north west][inner sep=0.75pt]   [align=left] {$\displaystyle \cdots $};
\draw (348.03,148.52) node [anchor=north west][inner sep=0.75pt]   [align=left] {$\displaystyle \cdots $};

\end{tikzpicture}},
\]
with $\sum_i \pm m_i =\sum_j \pm n_j$, is implicitly labeled by the identification $k(\pm m_1)\ot\dots \ot k(\pm m_s)\overset{\sim}\to k(\pm n_1)\ot\dots\ot k(\pm n_t)$ which sends $1\ot\dots\ot 1$ to $1\ot\dots\ot 1$.  Here the $\pm$ signs account for the directedness of the ribbons, since we can take $k(m)^\ast=k(-m)$.
\par

We introduce a reduction of the category of $\opn{Vect}^{\mathbb{Z}}$-labeled strings by the following skein relations:
\begin{itemize}
	\item[U1-U4] These relations are the same as the ones defined in Sections \ref{sect:sk_rel} and \ref{sect:reduced}.
	\item[V5] (Crossing signs) Uncrossing of two strings labelled by $k(m),$ 
	\begin{equation}\label{rel:V5}
		\begin{tikzpicture}[x=0.75pt,y=0.75pt,yscale=-1,xscale=1]

			\draw    (147.79,399.75) .. controls (147.79,450.75) and (101.73,420.75) .. (101.73,469.75) ;
			\draw    (102.51,399.75) .. controls (102.51,450.75) and (148.58,420.25) .. (148.58,469.25) ;
			\draw    (240.4,399.5) -- (239.8,470) ;
			\draw    (270.4,399.9) -- (269.8,470.4) ;
			
			\draw (91,389.18) node [anchor=north west][inner sep=0.75pt]  [font=\tiny]  {$\ k( m) \ \ \ \ \ \  k( m)$};
			\draw (91,472) node [anchor=north west][inner sep=0.75pt]  [font=\tiny]  {$\ k( m) \ \ \ \ \ \ k( m)$};
			\draw (156.6,423) node [anchor=north west][inner sep=0.75pt]    {$=\ ( -1){^{m^2}} \ \ \ \ \ \ \ \ \ \ \  \ \ \ . $};
			\draw (224.07,389.18) node [anchor=north west][inner sep=0.75pt]  [font=\tiny]  {$\ k( m) \ \ \ \  k( m)$};
			\draw (220.87,472) node [anchor=north west][inner sep=0.75pt]  [font=\tiny]  {$\ k( m) \ \ \ \ \ k( m)$};
		\end{tikzpicture}
	\end{equation}
	\item[V6] (Absorbing evaluation and coevaluation)  Absorbing evaluation and coevaluation maps into a coupon,
	\begin{equation}\label{rel:V6}
		\tikzset{every picture/.style={line width=0.5pt}} 
\begin{tikzpicture}[x=0.75pt,y=0.75pt,yscale=-1,xscale=1]

\draw    (170.46,71.88) .. controls (171.35,93.84) and (160.75,82.86) .. (159.87,100.59) .. controls (158.99,118.33) and (171.43,128) .. (173.19,110.26) ;
\draw    (177.53,71.88) -- (177.53,104.81) ;
\draw    (177.43,110.26) -- (177.53,147.04) ;
\draw   (265.32,105.29) -- (279.87,105.29) -- (279.87,111.21) -- (265.32,111.21) -- cycle ;
\draw    (275.77,70.67) -- (275.77,105.29) ;
\draw    (272.6,111.21) -- (272.24,147.52) ;
\draw    (269.59,70.67) -- (269.57,105.01) ;
\draw    (368.22,145.35) .. controls (369.1,123.39) and (358.51,134.37) .. (357.63,116.64) .. controls (356.74,98.9) and (369.02,88.23) .. (370.79,105.97) ;
\draw    (375.28,145.35) -- (375.28,112.42) ;
\draw    (375.03,105.97) -- (375.28,70.19) ;
\draw    (454.74,147.04) -- (454.74,112.42) ;
\draw    (451.4,106.44) -- (451.21,70.19) ;
\draw    (448.56,147.04) -- (448.56,113.26) ;
\draw   (168.95,104.34) -- (183.49,104.34) -- (183.49,110.26) -- (168.95,110.26) -- cycle ;
\draw   (163.52,98.83) .. controls (161.5,101.05) and (160.27,103.27) .. (159.86,105.5) .. controls (159.46,103.27) and (158.26,101.05) .. (156.25,98.81) ;
\draw   (361.12,112.67) .. controls (359.09,114.89) and (357.87,117.11) .. (357.46,119.33) .. controls (357.06,117.11) and (355.86,114.88) .. (353.85,112.65) ;
\draw   (365.33,106.25) -- (379.88,106.25) -- (379.88,112.17) -- (365.33,112.17) -- cycle ;
\draw   (444.13,106.72) -- (458.68,106.72) -- (458.68,112.65) -- (444.13,112.65) -- cycle ;

\draw (196,101.33) node [anchor=north west][inner sep=0.75pt]   [align=left] {=};
\draw (147,60.04) node [anchor=north west][inner sep=0.75pt]  [font=\tiny] [align=left] {$\displaystyle k( m_{1})$};
\draw (176.24,60.04) node [anchor=north west][inner sep=0.75pt]  [font=\tiny] [align=left] {$\displaystyle k( m_{2})$};
\draw (170.41,149.88) node [anchor=north west][inner sep=0.75pt]  [font=\tiny] [align=left] {$\displaystyle k( n)$};
\draw (264.24,149.51) node [anchor=north west][inner sep=0.75pt]  [font=\tiny] [align=left] {$\displaystyle k( n)$};
\draw (244,60.4) node [anchor=north west][inner sep=0.75pt]  [font=\tiny] [align=left] {$\displaystyle k( m_{1})$};
\draw (272.72,60.4) node [anchor=north west][inner sep=0.75pt]  [font=\tiny] [align=left] {$\displaystyle k( m_{2})$};
\draw (303.96,99.27) node [anchor=north west][inner sep=0.75pt]   [align=left] {and};
\draw (415.65,101.33) node [anchor=north east][inner sep=0.75pt]  [rotate=-180,xscale=-1] [align=left] {=};
\draw (348.16,147.34) node [anchor=north west][inner sep=0.75pt]  [font=\tiny] [align=left] {$\displaystyle k( n_{1})$};
\draw (372.88,147.34) node [anchor=north west][inner sep=0.75pt]  [font=\tiny] [align=left] {$\displaystyle k( n_{2})$};
\draw (367.17,60.35) node [anchor=north west][inner sep=0.75pt]  [font=\tiny] [align=left] {$\displaystyle k( m)$};
\draw (443.09,60.35) node [anchor=north west][inner sep=0.75pt]  [font=\tiny] [align=left] {$\displaystyle k( m)$};
\draw (427.61,148.19) node [anchor=north west][inner sep=0.75pt]  [font=\tiny] [align=left] {$\displaystyle k( n_{1})$};
\draw (452.33,149.03) node [anchor=north west][inner sep=0.75pt]  [font=\tiny] [align=left] {$\displaystyle k( n_{2})$};
\draw (215.06,97.61) node [anchor=north west][inner sep=0.75pt]   [align=left] {$\displaystyle ( -1)^{m_{1}}$};

\end{tikzpicture}.
	\end{equation}
\end{itemize}
We note that the signs in (V5) and (V6) are implied by the symmetric structure $k(m)\otimes k(m)\xrightarrow{\sim} k(m)\otimes k(m)$ in  $\opn{Vect}^{\mathbb Z}$ and the evaluation morphisms $k(m_1)\ot k(m_1)^\ast\to \1$ provided by the choice of trivial ribbon structure.  Note also that internal composition in conjunction with (V6) imply a variant of (V6) where the left-most string is oriented upwards, instead of downwards. We define the category of \emph{linearly reduced strings}  $k\Str^{\opn{red}}_{\opn{Vect}^{\mathbb Z}}$ by 
\[
k\Str_{\opn{Vect}^{\mathbb Z}}^{\red}:=k\Str_{\opn{Vect}^{\mathbb Z}} / \left<
\opn{U1}-\opn{U4}, \opn{V5},\opn{V6} \right>.\]

We have that the (linearized) evaluation functor $ev:k\opn{Str}_{\opn{Vect}^{\mathbb{Z}}}\to \opn{Vect}^{\mathbb{Z}}$ factors through the reduction, that the (linearized) forgetful functor $p:k\opn{Bord}_{\opn{Vect}^{\mathbb{Z}}}\to k\opn{Str}_{\opn{Vect}^{\mathbb{Z}}}$ fits into a diagram
\[
\xymatrix{
k\opn{Bord}_{\opn{Vect}^{\mathbb{Z}}}\ar[rr]^{p}\ar[d] & & k\opn{Str}_{\opn{Vect}^{\mathbb{Z}}}\ar[d]\\
k\opn{Bord}_{\opn{Vect}^{\mathbb{Z}}}^{\red}\ar[rr]_{\exists !} & & k\opn{Str}_{\opn{Vect}^{\mathbb{Z}}}^{\red},
}
\]
and that the reduction of the tautological TQFT also factors through $k\opn{Str}^{\opn{red}}_{\opn{Vect}^{\mathbb{Z}}}$ (see Section \ref{sect:s_taut}).  We therefore obtain a diagram
\[
\xymatrix{
 & k\opn{Str}^{\opn{red}}_{\opn{Vect}^{\mathbb{Z}}}\ar[dr]^{ev_{\opn{Vect}^{\mathbb{Z}}}} \\
k\Bord_{\opn{Vect}^{\mathbb{Z}}}^{\opn{red}}\ar[rr]_{Ev_{\opn{Vect}^{\mathbb{Z}}}}\ar[ur]^{p} & & \opn{Vect}^{\mathbb{Z}}
}
\]
and a factoring of the base changed TQFT $\underline{Z}:\Bord_{\msc{A}^{\mbb{Z}}}\to \opn{Vect}^{\mathbb{Z}}$ as
\[
\begin{array}{l}
\underline{Z}=(ev\cdot Z)(p\boxtimes id)\Delta:\vspace{1mm}\\
\Bord_{\msc{A}^{\mbb{Z}}}\to (k\Bord_{\opn{Vect}^{\mathbb Z}}^{\red}\boxtimes k\Bord^{\red}_{\msc{A}})^{\opn{add}}\to (k\Str_{\opn{Vect}^{\mathbb Z}}^{\red}\boxtimes k\Bord^{\red}_{\msc{A}})^{\opn{add}}\to \opn{Vect}^{\mathbb{Z}},
\end{array}
\]
at any reasonable $Z:\Bord_{\msc{A}}\to \opn{Vect}$.  We therefore open up the possibility of using string diagrams in our analysis of the graded theory $\underline{Z}$.

\subsection{Defining a differential $d_{\Sigma_{\vec{x}}}$ for $\Sigma_{\vec{x}}$.}
\label{sect:diff_def}

Let $x_i=\oplus_{n \in \mathbb Z} x_i^n$ be complexes in $\Ch(\msc{A})$ with differentials  $d_{x_i}:x_i\to x_i[1]$, and let $\vec{x}:=(x_1, \dots, x_t)$. 
In our efforts to prove Theorem \ref{thm: Zast},  for every labelled surface $\Sigma_{\vec{x}}$ in $\Bord_{\Ch{(\msc{A})}}$ we will associate to $\underline{Z}(\Sigma_{\vec{x}})$ a map 
\[d_{\Sigma_{\vec{x}}}: \underline{Z}(\Sigma_{\vec{x}}) \to \underline{Z} (\Sigma_{\vec{x}})[1]\] satisfying $(d_{\Sigma_{\vec{x}}}[1])d_{\Sigma_{\vec{x}}}=0$. Below, for a tuple of integers $m:\{1,\dots,t\}\to \mathbb{Z}$, we let $k(m)$ denote the object in $k\opn{Str}^{\red}_{\opn{Vect}^{\mathbb{Z}}}$ provided by the disk $D$ with labels $k(m_i)$ along the $x$-axis.  We note that this object is identified with the product $k(m_1)\ot \dots\ot k(m_t)$ in $k\opn{Str}^{\red}_{\opn{Vect}^{\mathbb{Z}}}$.

Let us assume for the moment that all of the markings on $\Sigma_{\vec{x}}$ are \emph{positive}, i.e.\ that their framings reproduce the ambient orientation on $\Sigma$.  Following the discussions in Sections \ref{sect:outliner} and \ref{sect:red_strings}, we have that 
\[
\underline{Z}(\Sigma_{\vec{x}}) = \oplus_{m:\{1, \dots, t\}\to \mathbb Z} k(m_1)\ot\dots\ot k(m_t)\otimes Z(\Sigma_{\vec{x}^m})
\]
in $\opn{Vect}^{\mathbb Z}$, where $\Sigma_{\vec{x}^m}$ has marking object $x_i^{m_i}$ at the $i$-th marked point. 
We begin by defining an auxiliary differential
\[
\tilde d_{\Sigma_{\vec{x}}}:\oplus_{m:\{1, \dots, t\}\to \mathbb Z} k(m)\otimes \Sigma_{\vec{x}^m} \to \oplus_{m:\{1, \dots, t\}\to \mathbb Z} 
\left(  \oplus_{i=1}^t k(-1)\otimes k(m+\delta_i)\otimes \Sigma_{\vec{x}^{m+\delta_i}}     \right) 
\]
in $(k\Str_{\opn{Vect}^{\mathbb Z}}^{\red}\boxtimes k\Bord^{\red}_{\msc{A}})^{\opn{add}}$, where $\delta_i:\{1, \dots, t\}\to \mathbb Z$ is $\delta_i(j):=\delta_{i,j}$, in the following way. Each summand 
\[
\tilde d^m_{\Sigma_{\vec{x}}}:k(m)\otimes \Sigma_{\vec{x}^m} \to \oplus_{i=1}^t k(-1)\otimes k(m+\delta_i)\otimes \Sigma_{\vec{x}^{m+\delta_i}}     
\]
is given by the sum
\begin{equation}\label{eq: differential diagram}
	\tikzset{every picture/.style={line width=0.5pt}} 
	\begin{tikzpicture}[x=0.75pt,y=0.75pt,yscale=-1,xscale=1]
		
		\draw   (160.89,1211.48) -- (167.56,1211.48) -- (167.56,1216) -- (160.89,1216) -- cycle ;
		\draw    (162.8,1216.26) .. controls (162.8,1249.21) and (119.98,1218.15) .. (119.98,1249.81) ;
		\draw    (120.17,1202.54) .. controls (120.17,1195.11) and (118.77,1222.49) .. (122.75,1230.69) .. controls (126.72,1238.88) and (136.85,1235.86) .. (135.7,1249.81) ;
		\draw    (164.2,1200.95) -- (164.23,1211.47) ;
		\draw    (165.4,1216.2) -- (165.46,1250.19) ;
		\draw    (327.99,1202.57) -- (327.9,1229.55) -- (327.86,1245.09) ;
		\draw  [draw opacity=1 ][fill={rgb, 255:red, 255; green, 255; blue, 255 }  ,fill opacity=1 ] (269.54,1216.89) -- (295.93,1216.89) -- (295.93,1233.4) -- (269.54,1233.4) -- cycle ;
		\draw    (235.8,1202.8) .. controls (236.26,1190.43) and (328.27,1191) .. (327.99,1202.57) ;
		\draw    (235.8,1202.8) -- (235.7,1229.79) -- (235.67,1245.32) ;
		\draw    (200.65,1200.11) -- (200.6,1250.6) ;
		\draw    (235.67,1245.32) .. controls (235.6,1258.47) and (327.61,1257.9) .. (327.86,1245.09) ;
		\draw 
		(283.03,1207.81) -- (283.05,1217.52) ;
		\draw 
		(283.74,1234.28) -- (283.75,1240.2) ;
		\draw 
		  (253.94,1207.44) -- (253.96,1239.82) ;
		\draw 
		(249.39,1208.19) -- (249.4,1240.58) ;
		\draw 
		(313.88,1207.44) -- (313.89,1239.82) ;
		\draw 
		(309.32,1208.19) -- (309.34,1240.58) ;
		\draw [color={rgb, 255:red, 155; green, 155; blue, 155 }  ,draw opacity=1 ] [dotted]  (238,1247) .. controls (239,1246.5) and (262.5,1246) .. (280,1246) .. controls (297.5,1246) and (317.2,1245.76) .. (327,1247.5) ;
		\draw [color={rgb, 255:red, 155; green, 155; blue, 155 }  ,draw opacity=1 ] [dotted]  (238,1200.75) .. controls (240.5,1203.25) and (258.5,1203) .. (280.5,1203) .. controls (302.5,1203) and (322,1202) .. (324.5,1201.25) ;
		
		\draw (273,1219) node [anchor=north west][inner sep=0.75pt]  [font=\tiny]  {$d_{x_ i}^{m_{i}}$};
		\draw (112.2,1187.8) node [anchor=north west][inner sep=0.75pt]  [font=\tiny]  {$k( m_{1}) \ \ \ \ k( m_{i}) \ \ \ k( m_{t}) \ $};
		\draw (105,1250.8) node [anchor=north west][inner sep=0.75pt]  [font=\tiny]  {$k( -1) \ \ \ \ \ k( m+\delta _{i}) \ $};
		\draw (80,1210) node [anchor=north west][inner sep=0.75pt]  [font=\large]  {$  \sum\limits_{1\leq i\leq t} \ \ \ \ ... \ \ \ \ \ \ ...\ \ \ \ \ \otimes $};
		\draw (330,1195) node [anchor=north west][inner sep=0.75pt]  [color={rgb, 255:red, 0; green, 0; blue, 0 }  ,opacity=1 ]  {$\Sigma _{\vec{x}^{m}}$};
		\draw (330.32,1235.28) node [anchor=north west][inner sep=0.75pt]  [color={rgb, 255:red, 0; green, 0; blue, 0 }  ,opacity=1 ]  {$\Sigma _{\vec{x}^{m+\delta _{i}}}$};
		
	\end{tikzpicture}
\end{equation}
where the diagram on the left is the map 
\[
k(m)=k(m_1)\otimes \dots \otimes k(m_t) \to k(-1)\otimes k(m+\delta_i)
\]
in $k\Str_{\opn{Vect}^{\mathbb Z}}^{\red}$ with a single  coupon denoting the canonical isomorphism $k(m_i)\xrightarrow{\sim} k(-1)\otimes k(m_i),$ followed by a composition of braidings. On the right, the diagram represents the morphism $\Sigma_{d_{x_i}^{m_i}}:\Sigma_{\vec{x}^m}\to \Sigma_{\vec{x}^{m+\delta_i}}$ in $\Bord^{\red}_{\msc{A}}$  as defined in Section \ref{sect:a_sigma}, where $\Sigma_{d_{x_i}^{m_i}}$ is specifically associated to the tuple $(1, \dots, d_{x_i}^{m_i}, \dots, 1)$.

Since $Ev_{\opn{Vect}^{\mathbb Z}}$ is the composition  $k\Bord_{\opn{Vect}^{\mathbb Z}}^{\red} \xrightarrow{p_{\opn{Vect}^{\mathbb Z}}}k\Str_{\opn{Vect}^{\mathbb Z}}^{\red}\xrightarrow{ev_{\opn{Vect}^{\mathbb Z}}}\opn{Vect}^{\mathbb Z}$, then
\[
\underline{Z}(\Sigma_{\vec{x}})=\left( ev_{\opn{Vect}^{\mathbb Z}}\cdot Z \right) \left( \oplus_{m:\{1, \dots, t\}\to \mathbb Z} k(m)\otimes \Sigma_{\vec{x}^m}\right), \text{and}
\]
\[
\underline{Z}(\Sigma_{\vec{x}})[1]=\left( ev_{\opn{Vect}^{\mathbb Z}}\cdot Z\right) \left( \oplus_{m:\{1, \dots, t\}\to \mathbb Z} k(-1)\otimes k(m+\delta_i)\otimes \Sigma_{\vec{x}^{m+\delta_i}}\right).
\]
Hence evaluating the morphism $\tilde{d}_{\Sigma_{\vec{x}}}$ in the additive closure of $k\opn{Str}_{\opn{Vect}^{\mbb{Z}}}^{\opn{red}}\boxtimes\Bord_{\msc{A}}^{\opn{red}}$ produces a map of graded spaces
\begin{equation}\label{eq:476}
d_{\Sigma_{\vec{x}}}=ev_{\opn{Vect}^{\mathbb{Z}}}\cdot Z(\tilde{d}_{\Sigma_{\vec{x}}}):\underline{Z}(\Sigma_{\vec{x}})\to \underline{Z}(\Sigma_{\vec{x}})[1].
\end{equation}

We now consider the case where the marked points in $\Sigma_{\vec{x}}$ have generic framings.  Supposing we have underlying points $p:\{1,\dots,t\}\to \Sigma$, let $\Sigma_{\vec{x}}^+$ denote the same underlying surface with the same points $p$, but with all negative framings replaced by positive framings, by flipping the $x$-vector, and all negatively framed objects $x_i$ replaced by their dual $x_i^\ast$.  We have the apparent isomorphism
\[
\Sigma^{flip}=\Sigma^{flip}_{\vec{x}}:\Sigma_{\vec{x}}\to \Sigma_{\vec{x}}^+
\]
in $\Bord_{\opn{Ch}(\msc{A})}^{\opn{red}}$ provided by the bordism $\Sigma\times [0,1]$ equipped with straight lines traveling between the $i$-th markings, each of which is split by a single internal vertex/coupon which is labeled by the identity $id_{x^\ast}$.  (So, $\Sigma^{flip}$ looks like $\Sigma_{\vec{id}}$ but there is a flipping of ribbon orientations across internal vertices.)

For general $\Sigma_{\vec{x}}$ in $\Bord_{\opn{Ch}(\msc{A})}$ we now define the differential via the differential on $\Sigma_{\vec{x}}^{+}$ and this flip bordism.

\begin{definition}\label{def: differential}
Let $Z:\Bord_{\msc{A}}\to \opn{Vect}$ be a reasonable field theory and consider a marked surface $\Sigma_{\vec{x}}$ in $\Bord_{\opn{Ch}(\msc{A})}$ with corresponding positively marked surface $\Sigma^+_{\vec{x}}$. We define the differential on the graded state space $\underline{Z}(\Sigma_{\vec{x}})$ as the conjugate
\[
d_{\Sigma_{\vec{x}}}:=Z(\Sigma^{flip})^{-1}d_{\Sigma^+_{\vec{x}}}Z(\Sigma^{flip}):\underline{Z}(\Sigma_{\vec{x}})\to \underline{Z}(\Sigma_{\vec{x}})[1],
\]
where $d_{\Sigma^+_{\vec{x}}}$ is as in \eqref{eq:476}.
\end{definition}

After dealing with various annoyances with orientations, one sees that the differential $d_{\Sigma_{\vec{x}}}$ is always defined as in the positively marked case, except that each $\Sigma_{d_{x_i}^{m_i}}$ factor in \eqref{eq: differential diagram} is replaced with $-(-1)^{m_i}\Sigma_{d_{x_i}^{m_i-1}}$ at negative markings.  Or equivalently, each downwardly oriented coupon $d_{x_i}^{m_i}$ in \eqref{eq: differential diagram} is replaced with the downwardly oriented coupon $d_{x_i^\ast}^{-m_i}$.  In the strings factor, we travel down along an upwardly oriented string from $k(m_i)$ to $k(m_i-1)$ at such negative markings.

The following lemma allows us to reduce our analysis, generally, to the case of positively marked surfaces.

\begin{lemma}\label{lem:d_pos}
Let $(M,T):\Sigma_{\vec{x}}\to \Sigma_{\vec{y}}'$ be a bordism in $\Bord_{\opn{Ch}(\msc{A})}$ and 
\[
(N,Q)=\Sigma^{flip}(M,T)(\Sigma^{flip})^{-1}:\Sigma^+_{\vec{x}}\to {\Sigma'}^+_{\vec{y}}
\]
be the corresponding bordism between positively marked surfaces.  We have
\[
d_{\Sigma_{\vec{y}}}\ \underline{Z}(M,T)= \underline{Z}(M,T)[1]\ d_{\Sigma_{\vec{x}}}
\ \ \text{iff}\ \ d_{\Sigma_{\vec{y}}^+}\ \underline{Z}(N,Q)= \underline{Z}(N,Q)[1]\ d_{\Sigma_{\vec{x}}^+}.
\]
\end{lemma}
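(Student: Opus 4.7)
The plan is to recognize the statement as a purely algebraic equivalence obtained by conjugating both sides of the commutation relation with the flip isomorphisms.

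First, I would unpack Definition~\ref{def: differential}, writing
\[
d_{\Sigma_{\vec{x}}} \;=\; \underline{Z}(\Sigma^{flip}_{\vec{x}})^{-1}\,d_{\Sigma^+_{\vec{x}}}\,\underline{Z}(\Sigma^{flip}_{\vec{x}})
\quad\text{and}\quad
d_{\Sigma'_{\vec{y}}} \;=\; \underline{Z}(\Sigma^{flip}_{\vec{y}})^{-1}\,d_{{\Sigma'}^+_{\vec{y}}}\,\underline{Z}(\Sigma^{flip}_{\vec{y}}),
\]
and, by functoriality of $\underline{Z}$ on the (invertible) bordisms $\Sigma^{flip}$,
\[
\underline{Z}(N,Q)\;=\;\underline{Z}(\Sigma^{flip}_{\vec{y}})\,\underline{Z}(M,T)\,\underline{Z}(\Sigma^{flip}_{\vec{x}})^{-1}.
\]

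Next, I would substitute these expressions into the equation $d_{\Sigma'_{\vec{y}}}\,\underline{Z}(M,T) = \underline{Z}(M,T)[1]\,d_{\Sigma_{\vec{x}}}$ and multiply on the left by $\underline{Z}(\Sigma^{flip}_{\vec{y}})$ and on the right by $\underline{Z}(\Sigma^{flip}_{\vec{x}})^{-1}$. The left-hand side becomes $d_{{\Sigma'}^+_{\vec{y}}}\,\underline{Z}(N,Q)$ directly. For the right-hand side I would use the fact that the shift autoequivalence $[1]$ on $\opn{Vect}^{\mbb{Z}}$ acts trivially on morphisms of graded spaces (it merely relabels source and target by $k(-1)\ot-$), so that $f[1]\circ g[1] = (f\circ g)[1]$ and the right-hand side rewrites as $\underline{Z}(N,Q)[1]\,d_{\Sigma^+_{\vec{x}}}$. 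This yields the commutation relation for $(N,Q)$. The reverse implication is obtained by running the same manipulation backwards, multiplying by $\underline{Z}(\Sigma^{flip}_{\vec{y}})^{-1}$ on the left and $\underline{Z}(\Sigma^{flip}_{\vec{x}})$ on the right.

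There is no genuine obstacle here; the only point requiring care is verifying that the shift $[1]$ commutes with composition (so that $\underline{Z}(\Sigma^{flip}_{\vec{y}})\,\underline{Z}(M,T)[1]\,\underline{Z}(\Sigma^{flip}_{\vec{x}})^{-1} = (\underline{Z}(\Sigma^{flip}_{\vec{y}})\,\underline{Z}(M,T)\,\underline{Z}(\Sigma^{flip}_{\vec{x}})^{-1})[1]$), and that the flips $\Sigma^{flip}_{\vec{x}}$ and $\Sigma^{flip}_{\vec{y}}$ are indeed isomorphisms in $\Bord_{\opn{Ch}(\msc{A})}^{\opn{red}}$ whose inverses are sent to inverses by $\underline{Z}$. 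Both of these are immediate from the constructions in Section~\ref{sect:a_sigma} and the monoidality of the shift. The lemma therefore reduces to a one-line conjugation, and its role in the sequel is to let us restrict all verifications of $d$-compatibility for $\underline{Z}(M,T)$ to the positively marked setting, where the differential $d_{\Sigma^+_{\vec{x}}}$ is defined by the explicit diagram~\eqref{eq: differential diagram}.
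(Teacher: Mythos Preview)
Your proposal is correct and is exactly the conjugation argument the paper has in mind; the paper's own proof is the single word ``Apparent,'' and you have simply unpacked what makes it apparent.
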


\begin{proof}
Apparent.
\end{proof}

\subsection{The differential satisfies $d^2=0$}

To prove that $ d_{\Sigma_{\vec{x}}}[1]d_{\Sigma_{\vec{x}}}=0$,
\begin{lemma}
	We have that $(\tilde d_{\Sigma_{\vec{x}}}[1] )\tilde d_{\Sigma_{\vec{x}}} =0$ in $(k\Str_{\opn{Vect}^{\mathbb Z}}^{\red}\boxtimes k\Bord^{\red}_{\msc{A}})^{\opn{add}}$.
\end{lemma}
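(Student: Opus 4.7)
The plan is to expand $(\tilde d_{\Sigma_{\vec{x}}}[1])\tilde d_{\Sigma_{\vec{x}}}$ componentwise into a double sum indexed by ordered pairs $(i,j) \in \{1,\dots,t\}^{\times 2}$, then show that the diagonal terms $i=j$ vanish individually and the off-diagonal terms cancel in pairs. Concretely, for each $m$, applying $\tilde d_{\Sigma_{\vec{x}}}^m$ produces a sum over $i$ of summands landing in $k(-1)\otimes k(m+\delta_i)\otimes \Sigma_{\vec{x}^{m+\delta_i}}$; postcomposing with the $j$-th summand of $\tilde d_{\Sigma_{\vec{x}}}^{m+\delta_i}[1]$ inserts a second coupon $d_{x_j}^{m_j+\delta_{ij}}$ on the $j$-th ribbon together with another $k(-1)$ string placed to the left of the first.

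For the diagonal $i=j$ contributions, the $\msc{A}$-factor contains two consecutive internal coupons $d_{x_i}^{m_i+1}$ and $d_{x_i}^{m_i}$ inserted along the $i$-th ribbon of the cylinder $\Sigma_{\vec{x}^m}\to \Sigma_{\vec{x}^{m+2\delta_i}}$, with no other coupons between them. Internal composition (U2) identifies this configuration with the single coupon labelled by $d_{x_i}^{m_i+1}\circ d_{x_i}^{m_i}$, which equals $0$ in $\msc{A}$ since $x_i$ is a cochain. The linear expansion relation (U4) then zeroes out the full summand in $k\Bord^{\red}_{\msc{A}}$, and hence in the tensor product category.

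For the off-diagonal $i\neq j$ contributions, I would fix an unordered pair $\{i,j\}$ and compare the $(i,j)$ and $(j,i)$ summands. On the $\msc{A}$-factor, the two bordisms carry coupons $d_{x_i}^{m_i}$ and $d_{x_j}^{m_j}$ on distinct ribbons; moving one coupon past the other along parallel ribbons (combined with internal composition/homotopy in $\Bord_{\msc{A}}$) shows that the two bordisms are equal in $k\Bord^{\red}_{\msc{A}}$. On the strings factor, both summands land in a product of the form $k(-1)\otimes k(-1)\otimes k(m+\delta_i+\delta_j)\otimes\cdots$, but in the $(i,j)$ case the left-most $k(-1)$ is the one coming from applying $d_{x_j}$ (introduced second, placed on the far left), while in $(j,i)$ the roles are reversed. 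The two strings diagrams therefore differ by a single transposition of two parallel $k(-1)$ strings at the top of the picture. By the crossing-sign relation (V5) with $m=-1$, this transposition contributes the sign $(-1)^{(-1)^2}=-1$, so the two summands are negatives of one another and cancel.

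The main obstacle is the careful bookkeeping of signs and braidings in the strings factor: one must verify that the braidings used in~\eqref{eq: differential diagram} to pull each $k(-1)$ out to the far left combine, across the two orderings $(i,j)$ and $(j,i)$, into exactly one extra crossing of two $k(-1)$ strings (all other crossings involving $k(m_\ell)$ for $\ell\neq i,j$ appearing identically in both orderings, and those involving the newly incremented $k(m_i+1)$ or $k(m_j+1)$ matching up via U1/U2 applied at the coupons). Once this bookkeeping is settled, (V5) supplies the decisive sign, and the remaining verification is routine.
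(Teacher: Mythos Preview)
Your proposal is correct and follows essentially the same approach as the paper: expand the double composition as a sum over pairs $(i,j)$, kill the diagonal terms via $d_{x_i}^{m_i+1}d_{x_i}^{m_i}=0$ together with (U2) and (U4), and cancel each off-diagonal pair $\{i,j\}$ by observing that the $\msc{A}$-factors agree while the strings factors differ by a single crossing of two $k(-1)$ strands, which contributes $(-1)^{(-1)^2}=-1$ by (V5). The paper's argument is exactly this, carried out via explicit graphical calculus rather than prose, and your identification of the sign bookkeeping as the only delicate point matches where the paper spends its effort.
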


\begin{proof}
It suffices to consider the case of a positively marked surface, by the definition of $d_{\Sigma_{\vec{x}}}$ itself.  Recall that, at each $m$, $
	\tilde d^m_{\Sigma_{\vec{x}}}:k(m)\otimes \Sigma_{\vec{x}^m} \to \oplus_{i=1}^t k(-1)\otimes k(m+\delta_i)\otimes \Sigma_{\vec{x}^{m+\delta_i}}$ 
is as in \eqref{eq: differential diagram}. Hence, at each summand $i$, we have that 	
\begin{align*}
	\tilde d^{m+\delta_i}_{\Sigma_{\vec{x}}}[1]: k(-1)\otimes k(m+\delta_i)\otimes \Sigma_{\vec{x}^{m+\delta_i}} \to \oplus_{j=1}^t
	  k(-1)\otimes k(-1)\otimes k(m+\delta_i+\delta_j) \otimes \Sigma_{\vec{x}^{m+\delta_i+\delta_j}}
\end{align*}
is given by 
\begin{equation*}
	\scalebox{.9}{%
	\tikzset{every picture/.style={line width=0.5pt}} 
}
\end{equation*}
where in the second equality we are just undoing the cross between the first and second strands using relation (V5), see \eqref{rel:V5}. It follows then that \eqref{eq: (i,j) pair} is equal to zero, and so $\tilde d^m_{\Sigma_{\vec{x}}}[1]\tilde d^m_{\Sigma_{\vec{x}}}=0$ for all $m$, as desired. 
\end{proof}

\begin{corollary}
		The composition $(d_{\Sigma_{\vec{x}}}[1] ) d_{\Sigma_{\vec{x}}} =0$ in $\opn{Vect}^{\mathbb Z}$.
\end{corollary}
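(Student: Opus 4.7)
The plan is to deduce the corollary directly from the preceding lemma by functoriality, handling the mixed-framing case by transporting along the flip isomorphism.

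First consider a positively marked surface $\Sigma_{\vec{x}}$. By construction, $d_{\Sigma_{\vec{x}}}$ is the image $(ev_{\opn{Vect}^{\mathbb{Z}}}\cdot Z)(\tilde{d}_{\Sigma_{\vec{x}}})$ of the auxiliary differential under the symmetric monoidal functor
\[
ev_{\opn{Vect}^{\mathbb{Z}}}\cdot Z\ :\ (k\opn{Str}_{\opn{Vect}^{\mathbb{Z}}}^{\red}\boxtimes k\Bord_{\msc{A}}^{\red})^{\opn{add}}\to \opn{Vect}^{\mathbb{Z}}.
\]
This functor preserves composition and commutes with the shift $[1]$, since $[1]$ is given by tensoring with the object $k(-1)$ and symmetric monoidal functors respect such tensorings. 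Applying the functor to the identity $(\tilde{d}_{\Sigma_{\vec{x}}}[1])\tilde{d}_{\Sigma_{\vec{x}}}=0$ from the preceding lemma therefore yields $(d_{\Sigma_{\vec{x}}}[1])d_{\Sigma_{\vec{x}}}=0$.

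For a surface $\Sigma_{\vec{x}}$ with generic framings, recall from Definition \ref{def: differential} that
\[
d_{\Sigma_{\vec{x}}}=\underline{Z}(\Sigma^{flip})^{-1}[1]\circ d_{\Sigma^+_{\vec{x}}}\circ \underline{Z}(\Sigma^{flip}),
\]
where the shift on the left factor is necessary for type reasons. Computing the square and using that the shift $[1]$ is itself a functor, so that $\underline{Z}(\Sigma^{flip})[1]\circ \underline{Z}(\Sigma^{flip})^{-1}[1]=id_{\underline{Z}(\Sigma^+_{\vec{x}})[1]}$, we obtain
\[
(d_{\Sigma_{\vec{x}}}[1])d_{\Sigma_{\vec{x}}}=\underline{Z}(\Sigma^{flip})^{-1}[2]\circ (d_{\Sigma^+_{\vec{x}}}[1])d_{\Sigma^+_{\vec{x}}}\circ \underline{Z}(\Sigma^{flip}).
\]
The middle factor vanishes by the positively marked case already treated, so the conjugate vanishes as well. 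There is no real obstacle to overcome here; the argument is simply functoriality together with invertibility of the flip.
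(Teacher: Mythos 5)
Your proof is correct and takes the same approach as the paper: apply the symmetric monoidal functor $ev_{\opn{Vect}^{\mathbb{Z}}}\cdot Z$ to the identity $(\tilde d_{\Sigma_{\vec{x}}}[1])\tilde d_{\Sigma_{\vec{x}}}=0$ from the preceding lemma. Your explicit treatment of the mixed-framing case via conjugation by $\underline{Z}(\Sigma^{flip})$ spells out a reduction the paper folds into the opening sentence of the lemma's proof rather than the corollary, but the argument is the same.
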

\begin{proof}
	This follows from applying $ev_{\opn{Vect}^{\mathbb Z}}\cdot Z$ to $(\tilde d_{\Sigma_{\vec{x}}}[1] ) \tilde d_{\Sigma_{\vec{x}}} =0$.
\end{proof}

\begin{corollary}
	For any $\Sigma_{\vec{x}}$ in $\Bord_{\Ch(\msc{A})}$ the differential $d_{\Sigma_{\vec{x}}}:\underline{Z}(\Sigma_{\vec{x}})\to \underline{Z}(\Sigma_{\vec{x}})[1]$ endows the state space $\underline{Z}(\Sigma_{\vec{x}})$ with the structure of a linear cochain complex.
\end{corollary}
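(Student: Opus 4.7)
The plan is essentially to unpack the definition of a finite length cochain complex and observe that all the required data have already been produced. Recall that a cochain in $\opn{Ch}(\opn{Vect})$ is a graded vector space $V = \oplus_{n\in\mathbb{Z}} V^n$ together with a degree-one endomorphism $d:V\to V[1]$ satisfying $d[1]\circ d = 0$. I would verify each of these three pieces in turn.

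First, the graded structure: by construction, $\underline{Z}:\Bord_{\msc{A}^{\mathbb{Z}}}\to \opn{Vect}^{\mathbb{Z}}$ takes values in $\mathbb{Z}$-graded vector spaces, and since $\Bord_{\opn{Ch}(\msc{A})}\hookrightarrow \Bord_{\msc{A}^{\mathbb{Z}}}$ is the faithful inclusion from Section \ref{sect:outliner}, the image $\underline{Z}(\Sigma_{\vec{x}})$ is a well-defined graded vector space. Second, the differential $d_{\Sigma_{\vec{x}}}:\underline{Z}(\Sigma_{\vec{x}})\to \underline{Z}(\Sigma_{\vec{x}})[1]$ was explicitly produced in Definition \ref{def: differential}, as a morphism in $\opn{Vect}^{\mathbb{Z}}$, so it is a well-defined degree-one map. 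Third, the relation $(d_{\Sigma_{\vec{x}}}[1])\,d_{\Sigma_{\vec{x}}} = 0$ is precisely the content of the immediately preceding corollary, which was in turn obtained by applying the symmetric monoidal functor $ev_{\opn{Vect}^{\mathbb{Z}}}\cdot Z$ to the equality $(\tilde d_{\Sigma_{\vec{x}}}[1])\,\tilde d_{\Sigma_{\vec{x}}} = 0$ established via graphical calculus in the lemma of Section \ref{sect:diff_def}.

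Since these three ingredients are exactly what it means to be an object of $\opn{Ch}(\opn{Vect})$, the corollary follows with no further work. There is no main obstacle: this is a bookkeeping statement that packages together the graded decomposition, the construction of the differential, and the nilpotency $d^2=0$ that have all been produced in the preceding material. I would simply write a one-sentence proof citing Definition \ref{def: differential} and the preceding corollary.
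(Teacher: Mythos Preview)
Your proposal is correct and matches the paper's approach exactly: the paper states this corollary with no proof at all, treating it as an immediate consequence of Definition \ref{def: differential} together with the preceding corollary establishing $(d_{\Sigma_{\vec{x}}}[1])\,d_{\Sigma_{\vec{x}}}=0$. Your unpacking of the three ingredients is precisely the implicit reasoning the paper leaves to the reader.
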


\begin{definition}\label{def:zast}
Consider a ribbon tensor category $\msc{A}$ and a reasonable field theory $Z:\Bord_{\msc{A}}\to \opn{Vect}$.  For any labelled surface $\Sigma_{\vec{x}}$ in $\Bord_{\Ch(\msc{A})}$ we define $Z^{\ast}(\Sigma_{\vec{x}})$ as the complex \[Z^*(\Sigma_{\vec{x}}):=(\underline{Z}(\Sigma_{\vec{x}}), d_{\Sigma_{\vec{x}}}).\]
For any bordism $(M,T)$ in $\Bord_{\Ch(\msc{A})}$ we define $Z^*(M,T):=\underline{Z}(M,T)$.
\end{definition}

At this point we are not claiming that $Z^{\ast}(M,T)$ is a cochain map, though of course we argue this point below.

\section{Bordisms and the differential}\label{sec: diff comm}

In pursuing the proof of Theorem \ref{thm: Zast}, we need the following proposition.

\begin{proposition}\label{comm of differential}
	For each ribbon bordism $(M,T):\Sigma_{\vec{x}}\to \Sigma'_{\vec{y}}$ in $\Bord_{\opn{Ch}(\msc{A})}$, the map $d_{\Sigma_{\vec{x}}}$ satisfies 
	\[
	Z^*(M,T)[1] \ d_{\Sigma_{\vec{x}}} = d_{\Sigma'_{\vec{y}}}\  Z^*(M,T).
	\]
\end{proposition}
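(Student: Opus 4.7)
The strategy is to establish a sharper identity at the level of the separated bordism category, before applying $ev_{\opn{Vect}^{\mbb{Z}}}\cdot Z$. First, Lemma~\ref{lem:d_pos} reduces the problem to the case where $\Sigma_{\vec{x}}$ and $\Sigma'_{\vec{y}}$ are positively marked, since $Z^*$ and the flip bordisms are compatible with both sides of the desired identity. With this reduction in hand, one lifts $(M,T)$ via the separation functor $\Delta$ to a morphism in the additive closure $(k\opn{Str}^{\red}_{\opn{Vect}^{\mbb{Z}}}\boxtimes k\Bord^{\red}_{\msc{A}})^{\opn{add}}$, and aims to verify the sharper identity
\[
\Delta(M,T)[1]\circ\tilde d_{\Sigma_{\vec{x}}}\;=\;\tilde d_{\Sigma'_{\vec{y}}}\circ\Delta(M,T)
\]
in this additive closure. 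Applying $ev_{\opn{Vect}^{\mbb{Z}}}\cdot Z$ then yields the proposition via Definition~\ref{def:zast} and the factorization of $\underline{Z}$ through the separation.

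To prove the identity in the separated category, I would work component-by-component in the grading. Fixing tuples $m:I_{\text{in}}\to\mbb{Z}$ and $n:I_{\text{out}}\to\mbb{Z}$, the corresponding component of $\Delta(M,T)$ is a product of a $k(m)\to k(n)$ morphism in $k\opn{Str}^{\red}_{\opn{Vect}^{\mbb{Z}}}$ with a bordism $(M,T^{m,n})$ in $k\Bord^{\red}_{\msc{A}}$ whose edges and vertices are labeled by graded components of the original data. The geometric heart of the argument is a \emph{sliding} move: each differential coupon $d_{x_i}^{m_i}$ appearing in $\tilde d_{\Sigma_{\vec{x}}}$, together with its companion $k(-1)$-strand, is transported from the incoming boundary along the $i$-th ribbon of $T$, through the interior of $M$, and out across an outgoing ribbon. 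The basic move past a smooth segment of an edge is internal composition (U2). The essential move is at an internal vertex $v$, where the cochain identity
\[
d_{\text{out}}\,f_v \;=\; f_v[1]\,d_{\text{in}},
\]
valid because each $f_v$ is a morphism in $\opn{Ch}(\msc{A})$, combined with the linear expansion relation (U4), rewrites the sum of $d$-coupons inserted on the incoming edges at $v$ as the sum of $d$-coupons on the outgoing edges at $v$.

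The main obstacle is matching signs: the tensor differential on $\opn{Ch}(\msc{A})$ carries Koszul signs that must be reproduced geometrically by the sliding process. This is precisely what the reduced string relations (V5) and (V6) deliver. Whenever the $k(-1)$-strand is braided past a $k(m_i)$-strand in the $\opn{Vect}^{\mbb{Z}}$-factor, relation (V5) extracts the scalar $(-1)^{m_i}$, matching the Koszul sign attached to $d_{x_i}$ acting on the $i$-th tensor factor, while (V6) absorbs the signs generated by cap/cup passages and by dualizations around the flip isomorphisms implicit in the positive-marking reduction. Iterating the sliding across each internal vertex of $T$ thus converts $\Delta(M,T)[1]\circ\tilde d_{\Sigma_{\vec{x}}}$ summand-by-summand into $\tilde d_{\Sigma'_{\vec{y}}}\circ\Delta(M,T)$. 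The hard part is not the sliding itself, which is a finite sequence of local moves, but the explicit combinatorial verification that the sign totals accumulated from (V5)–(V6) at every vertex agree with the Leibniz signs in the target, analogous to the bookkeeping already carried out in the $d^2=0$ argument of Section~\ref{sec: diff def}.
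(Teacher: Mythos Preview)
Your overall strategy---reduce to positive markings via Lemma~\ref{lem:d_pos}, lift to the separated category, and slide differential coupons through the ribbon graph using internal composition and the cochain identity at vertices, with (V5)--(V6) governing signs---is precisely the mechanism the paper exploits in its lemmas. However, your execution has a genuine gap: the sliding description ``transported from the incoming boundary along the $i$-th ribbon \dots\ out across an outgoing ribbon'' presumes every incoming ribbon eventually reaches the outgoing boundary. This fails for cups (ribbons connecting two incoming markings) and, dually, caps do not receive any slid coupon from the incoming side at all. Your parenthetical that ``(V6) absorbs the signs generated by cap/cup passages'' does not address the actual phenomenon: for a cup connecting markings $j$ and $k$, the two terms $d_{x_j}$ and $d_{x_k}$ in $\tilde d_{\Sigma_{\vec x}}$ must \emph{cancel} after composition with $(M,T)$, not slide anywhere. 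The paper establishes this cancellation in Lemma~\ref{lemma 3} via the identity $d_{w^*}|_{(w^l)^*}=-(-1)^l(d_w|_{w^l})^*$ together with a delicate (V5)--(V6) computation; the cap case (Lemma~\ref{lemma 4}) is analogous on the outgoing side. You also do not treat closed components, where both differentials vanish and the statement is trivial but still needs to be noted.

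The paper's route avoids a single global sliding argument altogether: it factors an arbitrary bordism, via collar-and-cut, into a composite of elementary pieces---permutations (Lemma~\ref{lemma 1}), single-coupon cylinders (Lemma~\ref{lemma 2} and Corollary~\ref{corollary from lemma 2}), and couponless bordisms with cups and caps (Lemmas~\ref{lemma 3}--\ref{lemma 4}, Corollary~\ref{cor:cups,caps and crossings})---verifies commutation with $d$ for each piece, and then invokes functoriality of the underlying graded map $\underline Z$ to conclude. This modular approach localizes the sign bookkeeping to a handful of explicit cases rather than tracking a derivation through an arbitrary graph. Your direct approach could in principle be completed, but it would require separately treating the cup/cap cancellation and the closed case, at which point it converges to the paper's argument.
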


This section is dedicated to proving this result; we will do so via a series of lemmas.
\par

For $(M,T):\Sigma_{\vec{x}}\to\Sigma'_{\vec{y}}$ in  $\Bord_{\Ch(\msc{A})},$ we represent its image through $(p\boxtimes 1)\Delta$ in $(k\opn{Str}_{\opn{Vect}^{\mathbb Z}}^{\red}\boxtimes k\Bord^{\red}_{\msc{A}})^{\opn{add}}$
by the picture below,
\begin{equation*}
	\tikzset{every picture/.style={line width=0.5pt}} 
	\begin{tikzpicture}[x=0.75pt,y=0.75pt,yscale=-1,xscale=1]
		
		\draw    (789.2,77.27) .. controls (791.6,101.4) and (786,147) .. (795.55,160.06) ;
		\draw    (678.44,77.58) .. controls (676.4,103.8) and (673.2,151.4) .. (685.03,161.44) ;
		\draw  [color={rgb, 255:red, 128; green, 128; blue, 128 }  ,draw opacity=1 ] (685.21,90.37) -- (783.56,90.37) -- (783.56,147.75) -- (685.21,147.75) -- cycle ;
		\draw  [color={rgb, 255:red, 255; green, 255; blue, 255 }  ,draw opacity=1 ][line width=3] [line join = round][line cap = round] (748.65,150.84) .. controls (748.65,151.99) and (748.65,153.14) .. (748.65,154.3) ;
		\draw  [color={rgb, 255:red, 255; green, 255; blue, 255 }  ,draw opacity=1 ][line width=3] [line join = round][line cap = round] (752.25,168.12) .. controls (752.48,170.01) and (753.12,171.85) .. (753.79,173.65) ;
		\draw    (678.44,77.58) .. controls (678.9,65.21) and (789.48,65.71) .. (789.2,77.27) ;
		\draw    (685.03,161.44) .. controls (692.4,173) and (795.3,172.88) .. (795.55,160.06) ;
		\draw [color={rgb, 255:red, 155; green, 155; blue, 155 }  ,draw opacity=1 ] [dotted={on 0.84pt off 2.51pt}]  (685.03,161.44) .. controls (686.8,158.6) and (727.6,159.4) .. (740.4,159.4) .. controls (753.2,159.4) and (786,159.4) .. (795.8,161.14) ;
		\draw [color={rgb, 255:red, 155; green, 155; blue, 155 }  ,draw opacity=1 ] [dotted={on 0.84pt off 2.51pt}]  (678.44,77.58) .. controls (682.31,83.65) and (717.2,82.6) .. (738,82.6) .. controls (758.8,82.6) and (788.84,82.28) .. (788.96,76.19) ;
		\draw  [color={rgb, 255:red, 128; green, 128; blue, 128 }  ,draw opacity=1 ] (535.21,91.37) -- (633.56,91.37) -- (633.56,148.75) -- (535.21,148.75) -- cycle ;
		\draw    (540,69.5) -- (540,91.25) ;
		\draw    (549.5,69.5) -- (549.5,91.25) ;
		\draw    (621,70) -- (621,91.75) ;
		\draw    (630,70) -- (630,91.75) ;
		\draw    (540,148.5) -- (540,170.25) ;
		\draw    (549.5,148.5) -- (549.5,170.25) ;
		\draw    (621,149) -- (621,170.75) ;
		\draw    (630,149) -- (630,170.75) ;
		
		\draw (685.21,92) node [anchor=north west][inner sep=0.75pt]  [color={rgb, 255:red, 128; green, 128; blue, 128 }  ,opacity=1 ]  {$T_{\msc{A} ,m,n}$};
		\draw (796.17,147.65) node [anchor=north west][inner sep=0.75pt]  [color={rgb, 255:red, 0; green, 0; blue, 0 }  ,opacity=1 ]  {$\Sigma' _{\vec{y}^{n}}$};
		\draw (790,65) node [anchor=north west][inner sep=0.75pt]  [color={rgb, 255:red, 0; green, 0; blue, 0 }  ,opacity=1 ]  {$\Sigma _{\vec{x}^{m}}$};
		\draw (535.21,92) node [anchor=north west][inner sep=0.75pt]  [color={rgb, 255:red, 128; green, 128; blue, 128 }  ,opacity=1 ]  {$T_{\opn{Vect}^{\mathbb Z},m,n}$};
		\draw (456.95,105) node [anchor=north west][inner sep=0.75pt]    {$\sum\limits_{m,n:\{1, \dots, t\} \to \mathbb Z} \ \ \ \ \ \ \ \ \ \ \ \ \ \ \ \ \ \ \ \ \ \ \ \ \ \otimes $};
		\draw (520,59) node [anchor=north west][inner sep=0.75pt]  [font=\tiny]  {$k(m_1) \ \ \ \ \ \ \ \ \ \ \ \ \ \ \ \ \ \ \ \ \ \ \ \  k(m_t) $};
		\draw (520,170) node [anchor=north west][inner sep=0.75pt]  [font=\tiny]  {$k(n_1) \ \ \ \ \ \ \ \ \ \ \ \ \ \ \ \ \ \ \ \ \ \ \ \  k(n_l) $};
		\draw (575,75) node [anchor=north west][inner sep=0.75pt]    {$\cdots $};
		\draw (575,155) node [anchor=north west][inner sep=0.75pt]    {$\cdots $};
	\end{tikzpicture}
\end{equation*}
where each summand is a morphism $k(m)\otimes \Sigma_{\vec{x}^m} \to k(n)\otimes \Sigma'_{\vec{y}^n}$ in $k\opn{Str}_{\opn{Vect}^{\mathbb Z}}^{\red}\boxtimes k\Bord^{\red}_{\msc{A}}$, and $T$ factors into the appropriate formal
sum over all $m,n\in \mathbb{Z}^t$ of $(T_{\opn{Vect}^{\mathbb Z}, m,n}, T_{\msc{A},m,n}).$ To prove the lemmas, we will use graphical calculus in $(k\opn{Str}_{\opn{Vect}^{\mathbb Z}}^{\red}\boxtimes k\Bord^{\red}_{\msc{A}})^{\opn{add}}$, replacing the boxes for $T_{\opn{Vect}^{\mathbb Z}, m,n}$ and $T_{\msc{A},m,n}$ by the corresponding string/ribbon diagram in each case. 

\subsection{Commuting with permutations}
We show first that if $(M,T)$ is a bordism in $\Bord_{\Ch(\msc{A})}$ such that $T$ can be thought of as a ``permutation of ribbons", then $Z^*(M,T)\in \Ch(\opn{Vect})$.

\begin{lemma}\label{lemma 1}
	Let $(M,T):\Sigma_{\vec{x}}\to\Sigma'_{\vec{y}}$ be a connected bordism in  $\opn{Bord}_{\opn{Ch}(\msc{A})}$ such that the following hold:
	\begin{itemize}
	\item There is a set bijection between the labeling sets $\sigma:I\to J$.
	\item $T$ consists (only) of couponless ribbons traveling from the $i$-th marking on $\Sigma_{\vec{x}}$ to the $\sigma(i)$-th marking on $\Sigma'_{\vec{y}}$.
	\end{itemize}
	Then 
	$Z^*(M,T)[1] \ d_{\Sigma_{\vec{x}}} = d_{\Sigma'_{\vec{y}}}\  Z^*(M,T).$
\end{lemma}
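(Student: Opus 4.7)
My plan is to first apply Lemma \ref{lem:d_pos} to reduce to the case in which $\Sigma_{\vec{x}}$ and $\Sigma'_{\vec{y}}$ are both positively marked. This reduction is clean because conjugating $(M,T)$ by the flip bordisms only inserts identity coupons on the ribbons, which are absorbed via internal composition (U2), so the resulting bordism $(N,Q)$ between positively marked surfaces still consists only of couponless ribbons realizing the same permutation $\sigma$. I therefore assume from now on that all markings are positive.

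Since $Z^\ast = (ev_{\opn{Vect}^{\mbb{Z}}}\cdot Z)\circ (p\boxtimes 1)\circ \Delta$ and the differential $d$ is defined as the image of $\tilde{d}$ under the same functor, it will suffice to establish the pre-image identity
\begin{equation*}
\bigl((p\boxtimes 1)\Delta(M,T)\bigr)[1]\circ \tilde{d}_{\Sigma_{\vec{x}}} \;=\; \tilde{d}_{\Sigma'_{\vec{y}}}\circ (p\boxtimes 1)\Delta(M,T)
\end{equation*}
in $(k\Str_{\opn{Vect}^{\mbb{Z}}}^{\opn{red}}\boxtimes k\Bord_{\msc{A}}^{\opn{red}})^{\opn{add}}$, and then apply $ev_{\opn{Vect}^{\mbb{Z}}}\cdot Z$. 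The key observation is that, because every ribbon in $T$ is couponless, the homogeneous degree $m_i$ at position $i$ must propagate unchanged to position $\sigma(i)$, so the separation of $(M,T)$ decomposes as a sum over $m,n:I\to \mbb{Z}$ which vanishes unless $n=m\circ \sigma^{-1}$. On the non-vanishing summands the string factor is the permutation of labeled $k(m_i)$-strands by $\sigma$, and the $\msc{A}$ factor is the permutation bordism whose $i$-th ribbon is labeled by $x_i^{m_i}$.

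The heart of the argument will be a term-by-term graphical comparison, indexed by $i\in I$ and $m:I\to \mbb{Z}$. In the composition $((M,T)[1])\circ \tilde{d}_{\Sigma_{\vec{x}}}$ evaluated on the $i$-th summand of $\tilde{d}_{\Sigma_{\vec{x}}}^m$, one inserts a canonical coupon $k(m_i)\cong k(-1)\otimes k(m_i+1)$ on the string side and the ribbon coupon $d_{x_i}^{m_i}$ near the outgoing boundary of the $i$-th ribbon, and then permutes via $\sigma$; in the composition $\tilde{d}_{\Sigma'_{\vec{y}}}\circ (M,T)$ evaluated on the $j=\sigma(i)$ summand, one first permutes via $\sigma$ and then inserts the very same coupons at position $\sigma(i)$ (matching because $d_{y_j}^{(m\circ\sigma^{-1})_j}=d_{x_i}^{m_i}$). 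On the $\msc{A}$ side the two ribbon graphs differ only by the position of a single coupon along a smooth ribbon with matching labels above and below, so they are equal up to homotopy of ribbon graphs, and therefore equal in $k\Bord^{\opn{red}}_{\msc{A}}$. On the string side the two diagrams differ only by sliding the canonical coupon through a permutation of labeled identity strands, which is an equality in $k\Str^{\opn{red}}_{\opn{Vect}^{\mbb{Z}}}$ by naturality of the symmetry. Summing over $i$ delivers the pre-image identity.

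The main obstacle I anticipate is the bookkeeping on the string side: verifying that the newly introduced $k(-1)$ strand ends up in the same canonical position on both sides of the identity without picking up signs. This does not actually arise, because no $k(-1)/k(-1)$ crossing is ever produced and so relation V5 is never triggered; but carrying out the verification will require a careful normalization of the string diagrams, likely by inserting auxiliary identity coupons via U2 to keep the permutation of labeled strands cleanly separated from the motion of the newly introduced $k(-1)$ strand.
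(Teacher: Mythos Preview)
Your proposal is correct and rests on the same two core moves as the paper: sliding the $d_{x_i}^{m_i}$ coupon along its ribbon via homotopy on the $\msc{A}$ side, and invoking naturality of the symmetry on the string side. The difference is structural. The paper first treats the case where $\sigma$ is a single adjacent transposition $(j,j{+}1)$, explicitly breaking the sum over $i$ into the ranges $i<j$, $i=j$, $i=j{+}1$, $i>j$, and then inducts on the length of $\sigma$ as a word in adjacent transpositions; you instead handle a general permutation in one pass by matching the $i$-th summand of $\tilde d_{\Sigma_{\vec{x}}}$ directly against the $\sigma(i)$-th summand of $\tilde d_{\Sigma'_{\vec{y}}}$. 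Your route is shorter and avoids the induction entirely, at the cost of requiring the reader to accept the string-side identity for an arbitrary permutation rather than a single crossing. The paper's decomposition makes the sign bookkeeping more visibly explicit in the base case, but since two morphisms in $\Str_{\opn{Vect}^{\mbb{Z}}}$ with the same underlying combinatorics are already equal, your direct appeal to naturality is fully justified and the remark that no $k(-1)/k(-1)$ crossing arises (so V5 plays no role) is a correct, if slightly tangential, observation.
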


	\begin{proof}
By Lemma \ref{lem:d_pos} in conjunction with the internal composition relation (U2) we reduce to the case where $\Sigma_{\vec{x}}$ and $\Sigma'_{\vec{y}}$ are positively marked.  We are also free to take $I=J=\{1,\dots,t\}$, so that $\sigma$ is a permutation in $S_t$.
	
	We start by assuming $\sigma$ is given by a single transposition of consecutive labels, i.e., that  there exists  $1\leq j\leq t-1$ such that $\sigma(i)=i$ for all $i\ne j, j-1$, and $\sigma(j)=j-1$.
	We represent $(p\boxtimes 1)\Delta(M,T)$ by 
	\begin{equation*}\scalebox{0.9}{
			\tikzset{every picture/.style={line width=0.5pt}} 
			\begin{tikzpicture}[x=0.75pt,y=0.75pt,yscale=-1,xscale=1]
				
				\draw    (366.2,346.27) .. controls (368.6,370.4) and (363,416) .. (372.55,429.06) ;
				\draw    (255.44,346.58) .. controls (253.4,372.8) and (250.2,420.4) .. (262.03,430.44) ;
				\draw  [color={rgb, 255:red, 128; green, 128; blue, 128 }  ,draw opacity=1 ] (262.21,359.37) -- (360.56,359.37) -- (360.56,416.75) -- (262.21,416.75) -- cycle ;
				\draw  [color={rgb, 255:red, 255; green, 255; blue, 255 }  ,draw opacity=1 ][line width=3] [line join = round][line cap = round] (325.65,419.84) .. controls (325.65,420.99) and (325.65,422.14) .. (325.65,423.3) ;
				\draw  [color={rgb, 255:red, 255; green, 255; blue, 255 }  ,draw opacity=1 ][line width=3] [line join = round][line cap = round] (329.25,437.12) .. controls (329.48,439.01) and (330.12,440.85) .. (330.79,442.65) ;
				\draw    (65.19,348.1) -- (66.13,421.25) ;
				\draw    (190.12,348.1) -- (191.05,421.75) ;
				\draw    (152.79,349.75) .. controls (152.79,400.75) and (106.73,370.75) .. (106.73,419.75) ;
				\draw    (107.51,349.75) .. controls (107.51,400.75) and (153.58,370.25) .. (153.58,419.25) ;
				\draw    (255.44,346.58) .. controls (255.9,334.21) and (366.48,334.71) .. (366.2,346.27) ;
				\draw    (262.03,430.44) .. controls (269.4,442) and (372.3,441.88) .. (372.55,429.06) ;
				\draw [color={rgb, 255:red, 155; green, 155; blue, 155 }  ,draw opacity=1 ] [dotted={on 0.84pt off 2.51pt}]  (262.03,430.44) .. controls (263.8,427.6) and (304.6,428.4) .. (317.4,428.4) .. controls (330.2,428.4) and (363,428.4) .. (372.8,430.14) ;
				\draw [color={rgb, 255:red, 155; green, 155; blue, 155 }  ,draw opacity=1 ] [dotted={on 0.84pt off 2.51pt}]  (255.44,346.58) .. controls (259.31,352.65) and (294.2,351.6) .. (315,351.6) .. controls (335.8,351.6) and (365.84,351.28) .. (365.96,345.19) ;
				
				\draw (262.21,359.37) node [anchor=north west][inner sep=0.75pt]  [color={rgb, 255:red, 128; green, 128; blue, 128 }  ,opacity=1 ]  {$T_{\msc{A} ,m}$};
				\draw (373.17,416.65) node [anchor=north west][inner sep=0.75pt]  [color={rgb, 255:red, 0; green, 0; blue, 0 }  ,opacity=1 ]  {$\Sigma'_{\vec{y}^{n}}$};
				\draw (368.54,335) node [anchor=north west][inner sep=0.75pt]  [color={rgb, 255:red, 0; green, 0; blue, 0 }  ,opacity=1 ]  {$\Sigma _{\vec{x}^{m}}$};
				\draw (-7,370) node [anchor=north west][inner sep=0.75pt]    {$\sum\limits_{m:\{1, \dots, t\} \to \mathbb Z} \ \ \ \ \cdots \ \ \ \ \ \ \ \ \ \ \ \  \ \ \cdots \ \ \ \  \ \ \ \ \ \otimes \ \ \ \ \ \ \ \ \ \ \ \  \ \ \ \ \ \ \ \ \ \ \ \ \ \  \ \ \ \ \ \ \ \ \ \ , $};
				\draw (56.27,333.98) node [anchor=north west][inner sep=0.75pt]  [font=\tiny]  {$k( m_{1}) \ \ \ \ k( m_{j}) \ \ \  k( m_{j+1}) \ \ k( m_{t}) \ $};
				\draw (50,420.38) node [anchor=north west][inner sep=0.75pt]  [font=\tiny]  {$k( m_{1})  \ \ \ k( m_{j+1})   \ \ \ \ k( m_{j}) \ \ \ \ \ k( m_{t}) \ $};
			\end{tikzpicture}
		}
	\end{equation*}	
	When composing  $\tilde d^m_{\Sigma_{\vec{x}}}$ as given in \eqref{eq: differential diagram} with the sum above, we obtain for fixed $m$ the following sum
	\begin{equation*}
	\scalebox{.9}{%
		\tikzset{every picture/.style={line width=0.5pt}} 
		
		\begin{tikzpicture}[x=0.75pt,y=0.75pt,yscale=-1,xscale=1]
			
			\draw   (417.54,852.4) -- (424.7,852.4) -- (424.7,856.77) -- (417.54,856.77) -- cycle ;
			\draw    (422.39,856.63) -- (423.05,891.28) ;
			\draw    (419.59,857.01) .. controls (419.59,888.84) and (372.67,864.56) .. (372.67,895.13) ;
			\draw    (373.53,846.21) .. controls (373.53,839.04) and (372.03,865.48) .. (376.29,873.39) .. controls (380.55,881.31) and (391.41,878.39) .. (390.19,891.86) ;
			\draw    (421.12,844.28) -- (421.12,852.39) ;
			\draw    (473.31,844.61) -- (473.39,847.56) -- (473.04,889.7) ;
			\draw   (140.89,850.53) -- (147.56,850.53) -- (147.56,855.05) -- (140.89,855.05) -- cycle ;
			\draw    (142.8,855.31) .. controls (142.8,888.26) and (99.98,857.2) .. (99.98,888.86) ;
			\draw    (100.17,841.59) .. controls (100.17,834.16) and (98.77,861.54) .. (102.75,869.74) .. controls (106.72,877.93) and (116.85,874.91) .. (115.7,888.86) ;
			\draw    (144.2,840) -- (144.23,850.52) ;
			\draw    (372.67,895.13) -- (373.07,934.73) ;
			\draw    (445.37,890.78) .. controls (445.37,922.56) and (423.07,903.8) .. (423.07,934.33) ;
			\draw    (423.05,891.28) .. controls (423.05,923.06) and (445.87,904.6) .. (445.87,935.13) ;
			\draw    (473.04,889.7) -- (473.07,934.33) ;
			\draw    (390.19,891.86) -- (390.67,934.73) ;
			\draw    (145.41,854.72) -- (145.46,891.24) ;
			\draw    (191.85,889.37) .. controls (191.85,922.27) and (168.34,901.14) .. (168.34,932.76) ;
			\draw    (168.09,888.99) .. controls (168.09,921.9) and (191.83,900.89) .. (191.83,932.51) ;
			\draw    (99.98,888.86) -- (100.2,930.2) ;
			\draw    (115.7,888.86) -- (115.8,930.4) ;
			\draw    (145.46,891.24) -- (145.22,932.51) ;
			\draw    (167.29,842.32) -- (168.09,888.99) ;
			\draw    (191.05,842.69) -- (191.85,889.37) ;
			\draw    (445.27,843.61) -- (445.35,846.56) -- (445.37,890.78) ;
			\draw  [color={rgb, 255:red, 255; green, 255; blue, 255 }  ,draw opacity=1 ][line width=3] [line join = round][line cap = round] (287.56,835.85) .. controls (287.75,837.14) and (288.25,838.39) .. (288.78,839.62) ;
			\draw    (218.71,886.5) -- (218.73,931.13) ;
			\draw    (218.97,841.41) -- (219.05,844.36) -- (218.71,886.5) ;
			\draw    (335.17,835.1) .. controls (331,849.25) and (340.5,921.25) .. (336.15,932.61) ;
			\draw    (243.89,835.35) .. controls (242.21,857.1) and (242.6,921.18) .. (249.46,932.8) ;
			\draw  [color={rgb, 255:red, 128; green, 128; blue, 128 }  ,draw opacity=1 ] (251.53,879.8) -- (330,879.8) -- (330,927.39) -- (251.53,927.39) -- cycle ;
			\draw  [color={rgb, 255:red, 255; green, 255; blue, 255 }  ,draw opacity=1 ][line width=3] [line join = round][line cap = round] (301.75,896.12) .. controls (301.75,897.07) and (301.75,898.03) .. (301.75,898.99) ;
			\draw    (243.89,835.35) .. controls (244.27,825.1) and (335.39,825.51) .. (335.17,835.1) ;
			\draw [color={rgb, 255:red, 0; green, 0; blue, 0 }  ,draw opacity=1 ]   (243,867.25) .. controls (249.07,876.84) and (333.29,877.38) .. (333.5,866.75) ;
			\draw [color={rgb, 255:red, 155; green, 155; blue, 155 }  ,draw opacity=1 ] [dash pattern={on 0.84pt off 2.51pt}]  (244,869.75) .. controls (245.46,867.39) and (282.95,868.25) .. (293.5,868.25) .. controls (304.05,868.25) and (327.93,867.31) .. (336,868.75) ;
			\draw [color={rgb, 255:red, 155; green, 155; blue, 155 }  ,draw opacity=1 ] [dash pattern={on 0.84pt off 2.51pt}]  (243.89,835.35) .. controls (247.08,840.39) and (273.43,839.12) .. (290.57,839.12) .. controls (307.71,839.12) and (334.87,839.25) .. (334.97,834.21) ;
			\draw    (249.46,932.8) .. controls (256.2,943.18) and (335.91,943.24) .. (336.15,932.61) ;
			\draw [color={rgb, 255:red, 0; green, 0; blue, 0 }  ,draw opacity=1 ]   (262.91,840.12) -- (262.93,866.99) ;
			\draw [color={rgb, 255:red, 0; green, 0; blue, 0 }  ,draw opacity=1 ]   (258.63,840.09) -- (258.64,866.95) ;
			\draw [color={rgb, 255:red, 0; green, 0; blue, 0 }  ,draw opacity=1 ]   (323.27,839.71) -- (323.29,866.58) ;
			\draw [color={rgb, 255:red, 0; green, 0; blue, 0 }  ,draw opacity=1 ]   (319.39,839.54) -- (319.4,866.4) ;
			\draw [color={rgb, 255:red, 155; green, 155; blue, 155 }  ,draw opacity=1 ] [dash pattern={on 0.84pt off 2.51pt}]  (251.66,933.86) .. controls (252.6,933.45) and (274.69,933.03) .. (291.15,933.03) .. controls (307.6,933.03) and (326.13,932.84) .. (335.34,934.28) ;
			\draw  [color={rgb, 255:red, 0; green, 0; blue, 0 }  ,draw opacity=1 ][line width=3] [line join = round][line cap = round] (304.83,849.71) .. controls (305.07,851.39) and (305.72,853.02) .. (306.41,854.63) ;
			\draw  [color={rgb, 255:red, 0; green, 0; blue, 0 }  ,draw opacity=1 ][fill={rgb, 255:red, 255; green, 255; blue, 255 }  ,fill opacity=1 ] (275,846.6) -- (309,846.6) -- (309,863.06) -- (275,863.06) -- cycle ;
			\draw [color={rgb, 255:red, 0; green, 0; blue, 0 }  ,draw opacity=1 ]   (290.57,838.12) -- (290.59,846.74) ;
			\draw [color={rgb, 255:red, 0; green, 0; blue, 0 }  ,draw opacity=1 ]   (290.78,863.85) -- (290.79,869.11) ;
			\draw  [color={rgb, 255:red, 255; green, 255; blue, 255 }  ,draw opacity=1 ][line width=3] [line join = round][line cap = round] (553.56,842.85) .. controls (553.75,844.14) and (554.25,845.39) .. (554.78,846.62) ;
			\draw    (601.17,842.1) .. controls (597,856.25) and (606.5,928.25) .. (602.15,939.61) ;
			\draw    (509.89,842.35) .. controls (508.21,864.1) and (508.6,928.18) .. (515.46,939.8) ;
			\draw  [color={rgb, 255:red, 128; green, 128; blue, 128 }  ,draw opacity=1 ] (517.53,886.8) -- (596,886.8) -- (596,934.39) -- (517.53,934.39) -- cycle ;
			\draw  [color={rgb, 255:red, 255; green, 255; blue, 255 }  ,draw opacity=1 ][line width=3] [line join = round][line cap = round] (567.75,903.12) .. controls (567.75,904.07) and (567.75,905.03) .. (567.75,905.99) ;
			\draw    (509.89,842.35) .. controls (510.27,832.1) and (601.39,832.51) .. (601.17,842.1) ;
			\draw [color={rgb, 255:red, 0; green, 0; blue, 0 }  ,draw opacity=1 ]   (509,874.25) .. controls (515.07,883.84) and (599.29,884.38) .. (599.5,873.75) ;
			\draw [color={rgb, 255:red, 155; green, 155; blue, 155 }  ,draw opacity=1 ] [dash pattern={on 0.84pt off 2.51pt}]  (510,876.75) .. controls (511.46,874.39) and (548.95,875.25) .. (559.5,875.25) .. controls (570.05,875.25) and (593.93,874.31) .. (602,875.75) ;
			\draw [color={rgb, 255:red, 155; green, 155; blue, 155 }  ,draw opacity=1 ] [dash pattern={on 0.84pt off 2.51pt}]  (509.89,842.35) .. controls (513.08,847.39) and (539.43,846.12) .. (556.57,846.12) .. controls (573.71,846.12) and (600.87,846.25) .. (600.97,841.21) ;
			\draw    (515.46,939.8) .. controls (522.2,950.18) and (601.91,950.24) .. (602.15,939.61) ;
			\draw [color={rgb, 255:red, 0; green, 0; blue, 0 }  ,draw opacity=1 ]   (528.91,847.12) -- (528.93,873.99) ;
			\draw [color={rgb, 255:red, 0; green, 0; blue, 0 }  ,draw opacity=1 ]   (524.63,847.09) -- (524.64,873.95) ;
			\draw [color={rgb, 255:red, 0; green, 0; blue, 0 }  ,draw opacity=1 ]   (589.27,846.71) -- (589.29,873.58) ;
			\draw [color={rgb, 255:red, 0; green, 0; blue, 0 }  ,draw opacity=1 ]   (585.39,846.54) -- (585.4,873.4) ;
			\draw [color={rgb, 255:red, 155; green, 155; blue, 155 }  ,draw opacity=1 ] [dash pattern={on 0.84pt off 2.51pt}]  (517.66,940.86) .. controls (518.6,940.45) and (540.69,940.03) .. (557.15,940.03) .. controls (573.6,940.03) and (592.13,939.84) .. (601.34,941.28) ;
			\draw  [color={rgb, 255:red, 0; green, 0; blue, 0 }  ,draw opacity=1 ][line width=3] [line join = round][line cap = round] (570.83,856.71) .. controls (571.07,858.39) and (571.72,860.02) .. (572.41,861.63) ;
			\draw  [color={rgb, 255:red, 0; green, 0; blue, 0 }  ,draw opacity=1 ][fill={rgb, 255:red, 255; green, 255; blue, 255 }  ,fill opacity=1 ] (541,853.6) -- (575,853.6) -- (575,870.06) -- (541,870.06) -- cycle ;
			\draw [color={rgb, 255:red, 0; green, 0; blue, 0 }  ,draw opacity=1 ]   (556.57,845.12) -- (556.59,853.74) ;
			\draw [color={rgb, 255:red, 0; green, 0; blue, 0 }  ,draw opacity=1 ]   (556.78,870.85) -- (556.79,876.11) ;
			
			\draw (255,881.3) node [anchor=north west][inner sep=0.75pt]  [color={rgb, 255:red, 128; green, 128; blue, 128 }  ,opacity=1 ]  {$T_{\msc{A} ,m+\delta_i}$};
			\draw (280,848.23) node [anchor=north west][inner sep=0.75pt]  [font=\tiny]  {$d_{x_i}^{m_{i}}$};
			\draw (520,889) node [anchor=north west][inner sep=0.75pt]  [color={rgb, 255:red, 128; green, 128; blue, 128 }  ,opacity=1 ]  {$T_{\msc{A} ,m+\delta_j}$};
			\draw (544,854) node [anchor=north west][inner sep=0.75pt]  [font=\tiny]  {$d_{x_j}^{m_{j}}$};
			\draw (87,830) node [anchor=north west][inner sep=0.75pt]  [font=\tiny]  {$k( m_{1}) \ \ k( m_{i})  k( m_{j})  k( m_{j+1})   k( m_{t}) \ $};
			\draw (80,935) node [anchor=north west][inner sep=0.75pt]  [font=\tiny]  {$k(-1)  k( m_{1})  k( m_{i}+1) \ \ \ \ \ \ \ \ \ \ k( m_{t}) \ $};
			
			\draw (367.37,830) node [anchor=north west][inner sep=0.75pt]  [font=\tiny]  {$k( m_{1})  \ \ \ k( m_{j})  k( m_{j+1})  \ k( m_{t}) \ $};
			\draw (355,935) node [anchor=north west][inner sep=0.75pt]  [font=\tiny]  {$k(-1) k( m_{1})  \ \ \ \ \ \ k( m_{j}+1)k( m_{t}) \ $};
			
			\draw (76,875) node [anchor=north west][inner sep=0.75pt]  [font=\large]  {$\sum\limits _{i< j} \ \ \ \ \ \  ...\ \ \ ...\ \ \ \ \ \ \ ...\ \  \otimes \ \ \ \ \ \ \ \ \ \ \ \ \ \ \ \ \ \ \ \ \ \ +\ \ \ \ \ \ \ \ ...\ \ \ \ \ \ \ \ \ ...\ \ \ \otimes \ \ \ \ \ \ \ \  \ \ \ \ \ \ \ \ \ \ \ \ \ \ +$};

		\end{tikzpicture}
	}
\end{equation*}
\begin{equation}\label{eq:Lemma A.1,side 1}
	\scalebox{0.9}{
		\tikzset{every picture/.style={line width=0.5pt}} 
		
		\begin{tikzpicture}[x=0.75pt,y=0.75pt,yscale=-1,xscale=1]
			
			\draw   (159.14,995.69) -- (168.7,995.69) -- (168.7,1000.37) -- (159.14,1000.37) -- cycle ;
			\draw    (165.8,1001.07) -- (165.74,1031.95) ;
			\draw    (161.87,1000.63) .. controls (161.87,1034.67) and (97.8,1003.16) .. (97.8,1035.87) ;
			\draw    (98.05,987.2) .. controls (98.05,979.57) and (96.26,1007.71) .. (101.32,1016.13) .. controls (106.37,1024.55) and (119.24,1021.44) .. (117.78,1035.79) ;
			\draw    (163.93,986.67) -- (163.92,995.68) ;
			\draw    (187.4,984.66) -- (187.14,1035.13) ;
			\draw    (97.8,1035.87) -- (98.2,1074.67) ;
			\draw    (165.74,1031.95) .. controls (165.74,1066.25) and (143.07,1042.61) .. (143.07,1075.56) ;
			\draw    (143,1031.56) .. controls (143,1065.86) and (165.55,1042.61) .. (165.55,1075.56) ;
			\draw    (187.14,1035.13) -- (187,1074.67) ;
			\draw    (117.78,1035.79) -- (118.2,1075.87) ;
			\draw    (133,986.27) .. controls (133,978.63) and (131.6,1010.27) .. (137,1016.77) .. controls (142.4,1023.27) and (142.4,1020.87) .. (143,1031.56) ;
			\draw   (462.22,990.96) -- (470.29,990.96) -- (470.29,995.48) -- (462.22,995.48) -- cycle ;
			\draw    (467.4,995.64) -- (468.43,1035.3) ;
			\draw    (464.52,995.73) .. controls (464.52,1028.64) and (365.8,998.78) .. (365.8,1030.4) ;
			\draw    (366.1,982.24) .. controls (366.1,974.82) and (364.4,1002.17) .. (369.21,1010.35) .. controls (374.02,1018.53) and (386.28,1015.51) .. (384.89,1029.45) ;
			\draw    (394.81,983.21) .. controls (394.81,975.79) and (393.11,1003.13) .. (397.92,1011.32) .. controls (402.74,1019.5) and (415.23,1015.5) .. (413.85,1029.44) ;
			\draw    (466.25,982.56) -- (466.25,990.95) ;
			\draw    (365.8,1030.4) -- (366.2,1070.8) ;
			\draw    (468.43,1035.3) -- (469.36,1073.41) ;
			\draw    (437.56,1029.81) .. controls (437.56,1062.68) and (414.1,1041.58) .. (414.1,1073.16) ;
			\draw    (413.85,1029.44) .. controls (413.85,1062.31) and (437.53,1041.32) .. (437.53,1072.9) ;
			\draw    (384.89,1029.45) -- (385.59,1072.12) ;
			\draw    (418.52,983.58) .. controls (418.52,976.16) and (416.82,1003.5) .. (421.63,1011.69) .. controls (426.44,1019.87) and (438.94,1015.87) .. (437.56,1029.81) ;
			\draw    (493.37,979.53) -- (493.45,982.49) -- (493.11,1024.62) ;
			\draw    (493.11,1024.62) -- (493.13,1069.26) ;
			\draw  [color={rgb, 255:red, 255; green, 255; blue, 255 }  ,draw opacity=1 ][line width=3] [line join = round][line cap = round] (249.56,985.37) .. controls (249.75,986.65) and (250.25,987.91) .. (250.78,989.13) ;
			\draw    (297.17,984.62) .. controls (293,998.77) and (302.5,1070.77) .. (298.15,1082.12) ;
			\draw    (205.89,984.87) .. controls (204.21,1006.62) and (204.6,1070.7) .. (211.46,1082.32) ;
			\draw  [color={rgb, 255:red, 128; green, 128; blue, 128 }  ,draw opacity=1 ] (213.53,1029.31) -- (292,1029.31) -- (292,1076.9) -- (213.53,1076.9) -- cycle ;
			\draw  [color={rgb, 255:red, 255; green, 255; blue, 255 }  ,draw opacity=1 ][line width=3] [line join = round][line cap = round] (263.75,1045.63) .. controls (263.75,1046.59) and (263.75,1047.55) .. (263.75,1048.5) ;
			\draw    (205.89,984.87) .. controls (206.27,974.61) and (297.39,975.02) .. (297.17,984.62) ;
			\draw [color={rgb, 255:red, 0; green, 0; blue, 0 }  ,draw opacity=1 ]   (205,1016.77) .. controls (211.07,1026.35) and (295.29,1026.9) .. (295.5,1016.27) ;
			\draw [color={rgb, 255:red, 155; green, 155; blue, 155 }  ,draw opacity=1 ] [dash pattern={on 0.84pt off 2.51pt}]  (206,1019.27) .. controls (207.46,1016.91) and (244.95,1017.77) .. (255.5,1017.77) .. controls (266.05,1017.77) and (289.93,1016.83) .. (298,1018.27) ;
			\draw [color={rgb, 255:red, 155; green, 155; blue, 155 }  ,draw opacity=1 ] [dash pattern={on 0.84pt off 2.51pt}]  (205.89,984.87) .. controls (209.08,989.91) and (235.43,988.64) .. (252.57,988.64) .. controls (269.71,988.64) and (296.87,988.77) .. (296.97,983.72) ;
			\draw    (211.46,1082.32) .. controls (218.2,1092.7) and (297.91,1092.75) .. (298.15,1082.12) ;
			\draw [color={rgb, 255:red, 0; green, 0; blue, 0 }  ,draw opacity=1 ]   (224.91,989.64) -- (224.93,1016.5) ;
			\draw [color={rgb, 255:red, 0; green, 0; blue, 0 }  ,draw opacity=1 ]   (220.63,989.6) -- (220.64,1016.47) ;
			\draw [color={rgb, 255:red, 0; green, 0; blue, 0 }  ,draw opacity=1 ]   (285.27,989.23) -- (285.29,1016.09) ;
			\draw [color={rgb, 255:red, 0; green, 0; blue, 0 }  ,draw opacity=1 ]   (281.39,989.06) -- (281.4,1015.92) ;
			\draw [color={rgb, 255:red, 155; green, 155; blue, 155 }  ,draw opacity=1 ] [dash pattern={on 0.84pt off 2.51pt}]  (213.66,1083.38) .. controls (214.6,1082.96) and (236.69,1082.55) .. (253.15,1082.55) .. controls (269.6,1082.55) and (288.13,1082.35) .. (297.34,1083.79) ;
			\draw  [color={rgb, 255:red, 0; green, 0; blue, 0 }  ,draw opacity=1 ][line width=3] [line join = round][line cap = round] (266.83,999.23) .. controls (267.07,1000.91) and (267.72,1002.54) .. (268.41,1004.14) ;
			\draw  [color={rgb, 255:red, 0; green, 0; blue, 0 }  ,draw opacity=1 ][fill={rgb, 255:red, 255; green, 255; blue, 255 }  ,fill opacity=1 ] (237,996.12) -- (271,996.12) -- (271,1012.58) -- (237,1012.58) -- cycle ;
			\draw [color={rgb, 255:red, 0; green, 0; blue, 0 }  ,draw opacity=1 ]   (252.57,987.64) -- (252.59,996.26) ;
			\draw [color={rgb, 255:red, 0; green, 0; blue, 0 }  ,draw opacity=1 ]   (252.78,1013.36) -- (252.79,1018.62) ;
			\draw  [color={rgb, 255:red, 255; green, 255; blue, 255 }  ,draw opacity=1 ][line width=3] [line join = round][line cap = round] (564.56,979.37) .. controls (564.75,980.65) and (565.25,981.91) .. (565.78,983.13) ;
			\draw    (612.17,978.62) .. controls (608,992.77) and (617.5,1064.77) .. (613.15,1076.12) ;
			\draw    (520.89,978.87) .. controls (519.21,1000.62) and (519.6,1064.7) .. (526.46,1076.32) ;
			\draw  [color={rgb, 255:red, 128; green, 128; blue, 128 }  ,draw opacity=1 ] (528.53,1023.31) -- (607,1023.31) -- (607,1070.9) -- (528.53,1070.9) -- cycle ;
			\draw  [color={rgb, 255:red, 255; green, 255; blue, 255 }  ,draw opacity=1 ][line width=3] [line join = round][line cap = round] (578.75,1039.63) .. controls (578.75,1040.59) and (578.75,1041.55) .. (578.75,1042.5) ;
			\draw    (520.89,978.87) .. controls (521.27,968.61) and (612.39,969.02) .. (612.17,978.62) ;
			\draw [color={rgb, 255:red, 0; green, 0; blue, 0 }  ,draw opacity=1 ]   (520,1010.77) .. controls (526.07,1020.35) and (610.29,1020.9) .. (610.5,1010.27) ;
			\draw [color={rgb, 255:red, 155; green, 155; blue, 155 }  ,draw opacity=1 ] [dash pattern={on 0.84pt off 2.51pt}]  (521,1013.27) .. controls (522.46,1010.91) and (559.95,1011.77) .. (570.5,1011.77) .. controls (581.05,1011.77) and (604.93,1010.83) .. (613,1012.27) ;
			\draw [color={rgb, 255:red, 155; green, 155; blue, 155 }  ,draw opacity=1 ] [dash pattern={on 0.84pt off 2.51pt}]  (520.89,978.87) .. controls (524.08,983.91) and (550.43,982.64) .. (567.57,982.64) .. controls (584.71,982.64) and (611.87,982.77) .. (611.97,977.72) ;
			\draw    (526.46,1076.32) .. controls (533.2,1086.7) and (612.91,1086.75) .. (613.15,1076.12) ;
			\draw [color={rgb, 255:red, 0; green, 0; blue, 0 }  ,draw opacity=1 ]   (539.91,983.64) -- (539.93,1010.5) ;
			\draw [color={rgb, 255:red, 0; green, 0; blue, 0 }  ,draw opacity=1 ]   (535.63,983.6) -- (535.64,1010.47) ;
			\draw [color={rgb, 255:red, 0; green, 0; blue, 0 }  ,draw opacity=1 ]   (600.27,983.23) -- (600.29,1010.09) ;
			\draw [color={rgb, 255:red, 0; green, 0; blue, 0 }  ,draw opacity=1 ]   (596.39,983.06) -- (596.4,1009.92) ;
			\draw [color={rgb, 255:red, 155; green, 155; blue, 155 }  ,draw opacity=1 ] [dash pattern={on 0.84pt off 2.51pt}]  (528.66,1077.38) .. controls (529.6,1076.96) and (551.69,1076.55) .. (568.15,1076.55) .. controls (584.6,1076.55) and (603.13,1076.35) .. (612.34,1077.79) ;
			\draw  [color={rgb, 255:red, 0; green, 0; blue, 0 }  ,draw opacity=1 ][line width=3] [line join = round][line cap = round] (581.83,993.23) .. controls (582.07,994.91) and (582.72,996.54) .. (583.41,998.14) ;
			\draw  [color={rgb, 255:red, 0; green, 0; blue, 0 }  ,draw opacity=1 ][fill={rgb, 255:red, 255; green, 255; blue, 255 }  ,fill opacity=1 ] (552,990.12) -- (586,990.12) -- (586,1006.58) -- (552,1006.58) -- cycle ;
			\draw [color={rgb, 255:red, 0; green, 0; blue, 0 }  ,draw opacity=1 ]   (567.57,981.64) -- (567.59,990.26) ;
			\draw [color={rgb, 255:red, 0; green, 0; blue, 0 }  ,draw opacity=1 ]   (567.78,1007.36) -- (567.79,1012.62) ;
			
			\draw (350,970) node [anchor=north west][inner sep=0.75pt]  [font=\tiny]  {$k( m_{1}) \  k( m_{j}) k( m_{j+1}) \ \ \ k( m_{i})  \ k( m_{t}) \ $};
			\draw (345,1075) node [anchor=north west][inner sep=0.75pt]  [font=\tiny]  {$\ k(-1) \ k( m_{1}) k( m_{j+1}) \  k( m_{i}+1)   k( m_{t}) \ $};
			\draw (85,1075) node [anchor=north west][inner sep=0.75pt]  [font=\tiny]  {$k(-1)\ \  \ \   k( m_{j+1}+1)k( m_{t}) \ $};
			\draw (80,970) node [anchor=north west][inner sep=0.75pt]  [font=\tiny]  {$k( m_{1}) \  k( m_{j}) k( m_{j+1}) \ k( m_{t}) \ $};
			\draw (82,1022) node [anchor=north west][inner sep=0.75pt]  [font=\large]  {$+\ \ \ \ \ \  ...\ \ \ \ \ \ \ ...\  \otimes \ \ \ \ \ \ \ \ \ \ \ \ \ \ \ \ \ \ \ \ \ +\sum\limits _{i >j+1} \ \ \ \ \ \ \  ...\ \ \ \ \ \ \ 
				\ \   ...\ \ \  ...\ \ \ \otimes\ \ \ \ \ \ \ \ \ \ \ \ \ \ \ \ \ \ \ \ \ .$};
			\draw (215,1030) node [anchor=north west][inner sep=0.75pt]  [color={rgb, 255:red, 128; green, 128; blue, 128 }  ,opacity=1 ]  {$T_{\msc{A} ,m+\delta_{j+1}}$};
			\draw (237,996) node [anchor=north west][inner sep=0.75pt]  [font=\tiny]  {$d_{x_{ j+1}}^{m_{j+1}}$};
			\draw (530,1025) node [anchor=north west][inner sep=0.75pt]  [color={rgb, 255:red, 128; green, 128; blue, 128 }  ,opacity=1 ]  {$T_{\msc{A} ,m+\delta_i}$};
			\draw (558,991) node [anchor=north west][inner sep=0.75pt]  [font=\tiny]  {$d_{x_i}^{m_{i}}$};
			
		\end{tikzpicture}
		
	}
\end{equation}

	We look first at the factors in \eqref{eq:Lemma A.1,side 1} ocurring in $\Bord_{\msc{A}}^{\red}$ (right-hand side of each term).  We use  implicitly that $M$ has a collar on its in and out boundaries, which allows us to think of the coupon labelled by $d_{x_i}^{m_i}$ as living in the in-boundary collar of $M$.
	Picture \eqref{eq:Lemma A.1, diff} shows how we can move the coupon labelled by $d_{x_i}^{m_i}$ through $M$: we surround the $i$-th string in $T_{\msc{A},m}$ by a small enough tubular neighborhood, passing the coupon through the string, and we obtain an equivalent bordism where the $d_{x_i}^{m_i}$-labelled coupon has been ``pushed" from the in-boundary of $M$ to the out-boundary of $M$. 
	
	On the other hand, for the factors in \eqref{eq:Lemma A.1,side 1} ocurring in $\Str_{\msc{S}}^{\red}$ (left-hand side of each term), we can use naturality of the braiding on $\Str_{\msc{S}}^{\red}$ to move the coupon from top to bottom. 	
	\begin{equation}\label{eq:Lemma A.1, diff}
		\scalebox{0.85}{
			\tikzset{every picture/.style={line width=0.5pt}} 
			
			\begin{tikzpicture}[x=0.75pt,y=0.75pt,yscale=-1,xscale=1]
				
				\draw 
			 (274.33,185) -- (274.25,197.45) ;
				\draw  
				(266.61,169.41) -- (282.24,169.41) -- (282.24,184) -- (266.61,184) -- cycle ;
				\draw 
				(275.44,144.29) -- (275.1,169) ;
				\draw
				  (237.14,37.09) .. controls (235.69,52.16) and (283.6,93.24) .. (275.44,144.29) ;
				\draw [color={rgb, 255:red, 74; green, 144; blue, 226 }  ,draw opacity=1 ](248.01,37.09) .. controls (247.17,45.75) and (260.83,62.68) .. (273.75,83.16) .. controls (286.66,103.64) and (288.02,124.45) .. (285.98,145.59) ;
				\draw  [color={rgb, 255:red, 74; green, 144; blue, 226 }  ,draw opacity=1 ]  (226.4,37.09) .. controls (224.94,52.16) and (272.05,95.19) .. (264.23,144.94) ;
				\draw  [color={rgb, 255:red, 74; green, 144; blue, 226 }  ,draw opacity=1 ]
			 (264.23,144.94) -- (261.6,198.88) ;
				\draw  [color={rgb, 255:red, 74; green, 144; blue, 226 }  ,draw opacity=1 ] 
				(285.98,145.59) -- (284.42,174.51) -- (283.09,198.94) ;
				\draw  [color={rgb, 255:red, 74; green, 144; blue, 226 }  ,draw opacity=1 ] (261.6,198.88) .. controls (261.61,196.93) and (266.42,195.36) .. (272.36,195.37) .. controls (278.29,195.39) and (283.1,196.98) .. (283.09,198.94) .. controls (283.09,200.89) and (278.27,202.46) .. (272.34,202.45) .. controls (266.4,202.43) and (261.59,200.84) .. (261.6,198.88) -- cycle ;
				\draw  [color={rgb, 255:red, 74; green, 144; blue, 226 }  ,draw opacity=1 ](226.4,37.09) .. controls (226.4,35.13) and (231.21,33.55) .. (237.14,33.55) .. controls (243.08,33.55) and (247.89,35.13) .. (247.89,37.09) .. controls (247.89,39.04) and (243.08,40.63) .. (237.14,40.63) .. controls (231.21,40.63) and (226.4,39.04) .. (226.4,37.09) -- cycle ;
				\draw  [color={rgb, 255:red, 255; green, 255; blue, 255 }  ,draw opacity=1 ][line width=3] [line join = round][line cap = round] (234.49,45.44) .. controls (234.49,46.31) and (234.49,47.18) .. (234.49,48.04) ;
				\draw  [color={rgb, 255:red, 255; green, 255; blue, 255 }  ,draw opacity=1 ][line width=3] [line join = round][line cap = round] (246.04,45.12) .. controls (246.04,45.88) and (246.04,46.63) .. (246.04,47.39) ;
				\draw  [color={rgb, 255:red, 255; green, 255; blue, 255 }  ,draw opacity=1 ][line width=3] [line join = round][line cap = round] (258.96,45.44) .. controls (258.96,46.63) and (258.96,47.83) .. (258.96,49.02) ;
				\draw  [color={rgb, 255:red, 255; green, 255; blue, 255 }  ,draw opacity=1 ][line width=3] [line join = round][line cap = round] (267.8,44.14) .. controls (267.8,45.62) and (269.49,46.62) .. (270.52,47.72) ;
				\draw    (142.43,35.59) .. controls (144.82,81.42) and (139.24,168.04) .. (148.76,192.84) ;
				\draw    (32.1,36.16) .. controls (30.07,85.98) and (26.88,176.4) .. (38.67,195.47) ;
				\draw  [color={rgb, 255:red, 128; green, 128; blue, 128 }  ,draw opacity=1 ] (39.43,78.99) -- (137.4,78.99) -- (137.4,188) -- (39.43,188) -- cycle ;
				\draw  [color={rgb, 255:red, 255; green, 255; blue, 255 }  ,draw opacity=1 ][line width=3] [line join = round][line cap = round] (102.03,175.34) .. controls (102.03,177.52) and (102.03,179.71) .. (102.03,181.9) ;
				\draw  [color={rgb, 255:red, 255; green, 255; blue, 255 }  ,draw opacity=1 ][line width=3] [line join = round][line cap = round] (105.62,208.16) .. controls (105.85,211.75) and (106.49,215.24) .. (107.15,218.67) ;
				\draw    (32.1,36.16) .. controls (32.56,12.67) and (142.7,13.61) .. (142.43,35.59) ;
				\draw    (38.67,195.47) .. controls (46,217.43) and (148.51,217.19) .. (148.76,192.84) ;
				\draw [color={rgb, 255:red, 155; green, 155; blue, 155 }  ,draw opacity=1 ] [dotted={on 0.84pt off 2.51pt}]  (38.67,195.47) .. controls (40.43,190.08) and (81.07,191.6) .. (93.82,191.6) .. controls (106.57,191.6) and (139.24,191.6) .. (149,194.89) ;
				\draw [color={rgb, 255:red, 155; green, 155; blue, 155 }  ,draw opacity=1 ] [dotted={on 0.84pt off 2.51pt}]  (32.1,36.16) .. controls (35.96,47.7) and (70.71,45.7) .. (91.43,45.7) .. controls (112.15,45.7) and (142.07,45.09) .. (142.19,33.54) ;
				\draw 
				  (57.67,71.71) -- (57.11,89.5) ;
				\draw  
				(49.07,56.81) -- (66.07,56.81) -- (66.07,71.31) -- (49.07,71.31) -- cycle ;
				\draw 
				 (57.68,37.68) -- (57.46,56.09) ;
				\draw 
				 (57.11,89.5) .. controls (55.66,104.57) and (105.26,146.56) .. (95.3,195.7) ;
				\draw [color={rgb, 255:red, 74; green, 144; blue, 226 }  ,draw opacity=1 ]   (67.52,89.5) .. controls (71.51,108.23) and (82.64,112.37) .. (94.47,132.21) .. controls (106.3,152.05) and (110.28,174.8) .. (106.05,195.7) ;
				\draw [color={rgb, 255:red, 74; green, 144; blue, 226 }  ,draw opacity=1 ]  (45.91,89.5) .. controls (48.73,117.01) and (94.51,146.56) .. (84.55,195.7) ;
				\draw  [color={rgb, 255:red, 74; green, 144; blue, 226 }  ,draw opacity=1 ]
				(68.07,69.26) -- (67.99,72.14) -- (67.52,89.5) ;
				\draw  [color={rgb, 255:red, 74; green, 144; blue, 226 }  ,draw opacity=1 ] 
				(46.46,69.26) -- (45.91,89.5) ;
				\draw [color={rgb, 255:red, 74; green, 144; blue, 226 }  ,draw opacity=1 ] 
				(47.29,36.24) -- (46.46,69.26) ;
				\draw  [color={rgb, 255:red, 74; green, 144; blue, 226 }  ,draw opacity=1 ]
				(68.45,37.02) -- (67.99,72.14) ;
				\draw  [color={rgb, 255:red, 74; green, 144; blue, 226 }  ,draw opacity=1 ]   (47.29,36.24) .. controls (47.29,34.28) and (52.11,32.7) .. (58.04,32.7) .. controls (63.98,32.7) and (68.79,34.28) .. (68.79,36.24) .. controls (68.79,38.19) and (63.98,39.78) .. (58.04,39.78) .. controls (52.11,39.78) and (47.29,38.19) .. (47.29,36.24) -- cycle ;
				\draw   [color={rgb, 255:red, 74; green, 144; blue, 226 }  ,draw opacity=1 ]  (84.55,195.7) .. controls (84.55,193.75) and (89.36,192.16) .. (95.3,192.16) .. controls (101.23,192.16) and (106.05,193.75) .. (106.05,195.7) .. controls (106.05,197.65) and (101.23,199.24) .. (95.3,199.24) .. controls (89.36,199.24) and (84.55,197.65) .. (84.55,195.7) -- cycle ;
				\draw    (303.97,34.92) .. controls (306.36,80.75) and (300.78,167.37) .. (310.3,192.18) ;
				\draw    (193.64,35.5) .. controls (191.61,85.31) and (188.42,175.73) .. (200.2,194.8) ;
				\draw  [color={rgb, 255:red, 128; green, 128; blue, 128 }  ,draw opacity=1 ] (199.63,48.99) -- (297.6,48.99) -- (297.6,158) -- (199.63,158) -- cycle ;
				\draw    (193.64,35.5) .. controls (194.09,12) and (304.24,12.95) .. (303.97,34.92) ;
				\draw    (200.2,194.8) .. controls (207.54,216.76) and (310.04,216.53) .. (310.3,192.18) ;
				\draw [color={rgb, 255:red, 155; green, 155; blue, 155 }  ,draw opacity=1 ] [dotted]  (200.2,194.8) .. controls (201.97,189.41) and (242.61,190.93) .. (255.36,190.93) .. controls (268.11,190.93) and (300.78,190.93) .. (310.54,194.23) ;
				\draw [color={rgb, 255:red, 155; green, 155; blue, 155 }  ,draw opacity=1 ] [dotted]  (193.64,35.5) .. controls (197.49,47.04) and (232.25,45.04) .. (252.97,45.04) .. controls (273.69,45.04) and (303.61,44.42) .. (303.73,32.87) ;
				
				\draw (160,100) node [anchor=north west][inner sep=0.75pt]    {$=$};
				\draw (270,172) node [anchor=north west][inner sep=0.75pt]  [font=\tiny]  {$d$};
				\draw (228.9,160) node [anchor=north west][inner sep=0.75pt]  [font=\scriptsize,color={rgb, 255:red, 128; green, 128; blue, 128 }  ,opacity=1 ]  {$x_i^{m_{i}}$};
				\draw (250,30) node [anchor=north west][inner sep=0.75pt]  [font=\scriptsize,color={rgb, 255:red, 128; green, 128; blue, 128 }  ,opacity=1 ]  {$x_i^{m_{i}}$};
				\draw (212,193) node [anchor=north west][inner sep=0.75pt]  [font=\scriptsize,color={rgb, 255:red, 128; green, 128; blue, 128 }  ,opacity=1 ]  {$x_i^{m_{i} +1}$};
				\draw (88.59,81.61) node [anchor=north west][inner sep=0.75pt]  [color={rgb, 255:red, 128; green, 128; blue, 128 }  ,opacity=1 ]  {$T_{\msc{A} ,m}$};
				\draw (70,31) node [anchor=north west][inner sep=0.75pt]  [font=\scriptsize,color={rgb, 255:red, 128; green, 128; blue, 128 }  ,opacity=1 ]  {$x_i^{m_{i}}$};
				\draw (68.66,65) node [anchor=north west][inner sep=0.75pt]  [font=\scriptsize,color={rgb, 255:red, 128; green, 128; blue, 128 }  ,opacity=1 ]  {$x_i^{m_{i} +1}$};
				\draw (53,60) node [anchor=north west][inner sep=0.75pt]  [font=\tiny]  {$d$};
				\draw (95.3,196) node [anchor=north west][inner sep=0.75pt]  [font=\scriptsize,color={rgb, 255:red, 128; green, 128; blue, 128 }  ,opacity=1 ]  {$x_i^{m_{i} +1}$};
				\draw (202.13,139.61) node [anchor=north west][inner sep=0.75pt]  [color={rgb, 255:red, 128; green, 128; blue, 128 }  ,opacity=1 ]  {$T_{\msc{A} ,m}$};
			\end{tikzpicture}
		}
	\end{equation}

It follows then that the sum \eqref{eq:Lemma A.1,side 1} is equal to 
\eqref{eq:Lemma A.1,side 2} below,
	\begin{equation*}
	\scalebox{0.9}{
		\tikzset{every picture/.style={line width=0.5pt}} 
		\begin{tikzpicture}[x=0.75pt,y=0.75pt,yscale=-1,xscale=1]
			
			\draw    (423.86,551.07) .. controls (423.86,584.75) and (400.62,564.94) .. (400.62,597.3) ;
			\draw    (401.01,551.07) .. controls (401.01,584.75) and (424.25,564.61) .. (424.25,596.97) ;
			\draw   (420.38,597.84) -- (428.26,597.84) -- (428.26,602.21) -- (420.38,602.21) -- cycle ;
			\draw    (426.33,602.47) -- (426.45,641.3) ;
			\draw    (423.25,602.8) .. controls (423.25,634.68) and (370.75,609.67) .. (370.75,640.3) ;
			\draw    (370.75,552.8) .. controls (370.08,589.22) and (369.14,612.44) .. (373.84,620.37) .. controls (378.54,628.3) and (391.85,628.46) .. (390.5,641.97) ;
			\draw    (400.5,612.43) .. controls (399.67,623.47) and (414.33,627.8) .. (414.67,641.8) ;
			\draw    (400.62,597.3) -- (400.5,612.43) ;
			\draw    (195.07,553.04) .. controls (195.07,588.83) and (169.43,567.78) .. (169.43,602.16) ;
			\draw    (169.86,553.04) .. controls (169.86,588.83) and (195.5,567.43) .. (195.5,601.81) ;
			\draw   (143.11,601.58) -- (150.68,601.58) -- (150.68,606.49) -- (143.11,606.49) -- cycle ;
			\draw    (148,607.25) -- (148.08,643.58) ;
			\draw    (145.27,606.77) .. controls (145.27,642.56) and (109,619) .. (110.5,642.5) ;
			\draw  [color={rgb, 255:red, 255; green, 255; blue, 255 }  ,draw opacity=1 ][line width=3] [line join = round][line cap = round] (204.74,654.32) .. controls (204.74,652.52) and (204.54,650.64) .. (203.44,648.98) ;
			\draw    (110,553) .. controls (109.35,593.88) and (109.29,616.16) .. (113.81,625.06) .. controls (118.32,633.96) and (130,627.25) .. (130,642.75) ;
			\draw    (195.5,601.81) -- (195.46,643.25) ;
			\draw    (145.49,553.1) -- (146.45,596.23) -- (146.64,601.31) ;
			\draw    (169.43,602.16) -- (169.82,643.46) ;
			\draw    (450,551.97) -- (449.67,642.47) ;
			\draw    (216,553) -- (216,642.5) ;
			\draw    (333.97,550.92) .. controls (335.94,570.93) and (327.08,637.59) .. (334.95,648.42) ;
			\draw [color={rgb, 255:red, 0; green, 0; blue, 0 }  ,draw opacity=1 ]   (242.69,551.17) .. controls (241.01,572.92) and (241.4,637) .. (248.26,648.62) ;
			\draw  [color={rgb, 255:red, 128; green, 128; blue, 128 }  ,draw opacity=1 ] (246.67,557.78) -- (327.71,557.78) -- (327.71,605.37) -- (246.67,605.37) -- cycle ;
			\draw  [color={rgb, 255:red, 255; green, 255; blue, 255 }  ,draw opacity=1 ][line width=3] [line join = round][line cap = round] (300.55,611.94) .. controls (300.55,612.89) and (300.55,613.85) .. (300.55,614.8) ;
			\draw    (242.69,551.17) .. controls (243.07,540.91) and (334.19,541.32) .. (333.97,550.92) ;
			\draw [color={rgb, 255:red, 0; green, 0; blue, 0 }  ,draw opacity=1 ]   (242.91,610.57) .. controls (248.98,620.16) and (330.91,620.9) .. (331.11,610.27) ;
			\draw [color={rgb, 255:red, 155; green, 155; blue, 155 }  ,draw opacity=1 ] [dash pattern={on 0.84pt off 2.51pt}]  (242.91,610.57) .. controls (244.37,608.22) and (282.71,608.61) .. (293.26,608.61) .. controls (303.81,608.61) and (323.04,608.83) .. (331.11,610.27) ;
			\draw [color={rgb, 255:red, 155; green, 155; blue, 155 }  ,draw opacity=1 ] [dash pattern={on 0.84pt off 2.51pt}]  (242.69,551.17) .. controls (245.88,556.21) and (274.63,555.34) .. (291.77,555.34) .. controls (308.91,555.34) and (333.67,555.07) .. (333.77,550.02) ;
			\draw  [color={rgb, 255:red, 0; green, 0; blue, 0 }  ,draw opacity=1 ][fill={rgb, 255:red, 255; green, 255; blue, 255 }  ,fill opacity=1 ] (274,627.56) -- (304.93,627.56) -- (304.93,642.23) -- (274,642.23) -- cycle ;
			\draw    (248.26,648.62) .. controls (255,659) and (334.71,659.06) .. (334.95,648.42) ;
			\draw [color={rgb, 255:red, 0; green, 0; blue, 0 }  ,draw opacity=1 ]   (289.82,619.5) -- (289.83,628.12) ;
			\draw [color={rgb, 255:red, 0; green, 0; blue, 0 }  ,draw opacity=1 ]   (289.64,643.01) -- (289.66,648.27) ;
			\draw [color={rgb, 255:red, 0; green, 0; blue, 0 }  ,draw opacity=1 ]   (261.05,619.61) -- (261.06,646.47) ;
			\draw [color={rgb, 255:red, 0; green, 0; blue, 0 }  ,draw opacity=1 ]   (256.76,619.57) -- (256.77,646.43) ;
			\draw [color={rgb, 255:red, 0; green, 0; blue, 0 }  ,draw opacity=1 ]   (321.41,619.2) -- (321.42,646.06) ;
			\draw [color={rgb, 255:red, 0; green, 0; blue, 0 }  ,draw opacity=1 ]   (317.52,619.02) -- (317.53,645.89) ;
			\draw [color={rgb, 255:red, 155; green, 155; blue, 155 }  ,draw opacity=1 ] [dash pattern={on 0.84pt off 2.51pt}]  (250.46,649.68) .. controls (251.4,649.27) and (273.49,648.85) .. (289.95,648.85) .. controls (306.4,648.85) and (324.93,648.65) .. (334.14,650.09) ;
			\draw    (567.76,549.32) .. controls (569.73,569.33) and (560.87,635.99) .. (568.74,646.82) ;
			\draw    (476.48,549.57) .. controls (474.8,571.32) and (475.19,635.4) .. (482.05,647.02) ;
			\draw  [color={rgb, 255:red, 128; green, 128; blue, 128 }  ,draw opacity=1 ] (480.45,556.18) -- (561.5,556.18) -- (561.5,603.77) -- (480.45,603.77) -- cycle ;
			\draw  [color={rgb, 255:red, 255; green, 255; blue, 255 }  ,draw opacity=1 ][line width=3] [line join = round][line cap = round] (534.33,610.34) .. controls (534.33,611.29) and (534.33,612.25) .. (534.33,613.2) ;
			\draw    (476.48,549.57) .. controls (476.86,539.31) and (567.98,539.72) .. (567.76,549.32) ;
			\draw [color={rgb, 255:red, 0; green, 0; blue, 0 }  ,draw opacity=1 ]   (476.7,608.97) .. controls (482.77,618.56) and (564.69,619.3) .. (564.9,608.67) ;
			\draw [color={rgb, 255:red, 155; green, 155; blue, 155 }  ,draw opacity=1 ] [dash pattern={on 0.84pt off 2.51pt}]  (476.7,608.97) .. controls (478.16,606.62) and (516.5,607.01) .. (527.05,607.01) .. controls (537.6,607.01) and (556.83,607.23) .. (564.9,608.67) ;
			\draw [color={rgb, 255:red, 155; green, 155; blue, 155 }  ,draw opacity=1 ] [dash pattern={on 0.84pt off 2.51pt}]  (476.48,549.57) .. controls (479.67,554.61) and (508.42,553.74) .. (525.56,553.74) .. controls (542.7,553.74) and (567.46,553.47) .. (567.56,548.42) ;
			\draw    (482.05,647.02) .. controls (488.79,657.4) and (568.5,657.46) .. (568.74,646.82) ;
			\draw [color={rgb, 255:red, 0; green, 0; blue, 0 }  ,draw opacity=1 ]   (494.84,618.01) -- (494.85,644.87) ;
			\draw [color={rgb, 255:red, 0; green, 0; blue, 0 }  ,draw opacity=1 ]   (490.55,617.97) -- (490.56,644.83) ;
			\draw [color={rgb, 255:red, 0; green, 0; blue, 0 }  ,draw opacity=1 ]   (555.2,617.6) -- (555.21,644.46) ;
			\draw [color={rgb, 255:red, 0; green, 0; blue, 0 }  ,draw opacity=1 ]   (551.31,617.42) -- (551.32,644.29) ;
			\draw [color={rgb, 255:red, 155; green, 155; blue, 155 }  ,draw opacity=1 ] [dash pattern={on 0.84pt off 2.51pt}]  (484.24,648.08) .. controls (485.18,647.67) and (507.28,647.25) .. (523.74,647.25) .. controls (540.19,647.25) and (558.72,647.05) .. (567.93,648.49) ;
			\draw  [color={rgb, 255:red, 0; green, 0; blue, 0 }  ,draw opacity=1 ][fill={rgb, 255:red, 255; green, 255; blue, 255 }  ,fill opacity=1 ] (509,625.56) -- (539.93,625.56) -- (539.93,640.23) -- (509,640.23) -- cycle ;
			\draw [color={rgb, 255:red, 0; green, 0; blue, 0 }  ,draw opacity=1 ]   (524.82,617.5) -- (524.83,626.12) ;
			\draw [color={rgb, 255:red, 0; green, 0; blue, 0 }  ,draw opacity=1 ]   (524.64,641.01) -- (524.66,646.27) ;
			
			\draw (513,625) node [anchor=north west][inner sep=0.75pt]  [font=\tiny]  {$d_{x_j}^{m_{j}}$};
			\draw (480,558) node [anchor=north west][inner sep=0.75pt]  [color={rgb, 255:red, 128; green, 128; blue, 128 }  ,opacity=1 ]  {$T_{\msc{A},m}$};
			
			\draw (278,628) node [anchor=north west][inner sep=0.75pt]  [font=\tiny]  {$d_{x_i}^{m_{i}}$};
			\draw (248,560) node [anchor=north west][inner sep=0.75pt]  [color={rgb, 255:red, 128; green, 128; blue, 128 }  ,opacity=1 ]  {$T_{\msc{A},m}$};
			
			\draw (94.67,539.58) node [anchor=north west][inner sep=0.75pt]  [font=\tiny]  {$k( m_{1}) \  k( m_{i})  k( m_{j})  k( m_{j+1})  k( m_{t}) \ $};
			\draw (85,644) node [anchor=north west][inner sep=0.75pt]  [font=\tiny]  {$k(-1)  k( m_{1})  k( m_{i}+1) \ \ \ \ \  \ k( m_{t}) \ $};
			\draw (82,590) node [anchor=north west][inner sep=0.75pt]  [font=\large]  {$\sum\limits _{i< j} \ \ \ ...\ \ \ \ \ ...\ \ \ \ \ \ \ ...\ \  \otimes \ \ \ \ \ \ \ \ \ \ \ \ \ \ \ \ \ \ \ \ \ +\ \ \ \ \ ...\ \ \ \ \ \ \ \ \  ... \ \  \otimes \ \ \ \ \ \ \ \ \ \ \  \ \ \ \ \  \ \ \ \ \ +$};
			\draw (355,540) node [anchor=north west][inner sep=0.75pt]  [font=\tiny]  {$k( m_{1})  k( m_{j}) k( m_{j+1}) k( m_{t}) \ $};
			\draw (350,642.5) node [anchor=north west][inner sep=0.75pt]  [font=\tiny]  {$k(-1)  k( m_{1}) \ \  k( m_{j}+1)k( m_{t})$};

		\end{tikzpicture}
	}
\end{equation*}
\begin{equation}\label{eq:Lemma A.1,side 2}
	\scalebox{0.9}{
		\tikzset{every picture/.style={line width=0.5pt}} 
		\begin{tikzpicture}[x=0.75pt,y=0.75pt,yscale=-1,xscale=1]
			
			\draw    (169.55,693.5) .. controls (169.55,731.26) and (143,709.05) .. (143,745.32) ;
			\draw    (143.45,693.5) .. controls (143.45,731.26) and (170,708.68) .. (170,744.95) ;
			\draw   (139.54,744.98) -- (147.61,744.98) -- (147.61,749.76) -- (139.54,749.76) -- cycle ;
			\draw    (145,749.5) -- (144.87,783.27) ;
			\draw    (141.84,750.03) .. controls (141.84,784.82) and (102.35,754.85) .. (103.5,783.5) ;
			\draw  [color={rgb, 255:red, 255; green, 255; blue, 255 }  ,draw opacity=1 ][line width=3] [line join = round][line cap = round] (180.56,793.11) .. controls (180.56,791.36) and (180.34,789.54) .. (179.17,787.93) ;
			\draw    (104.38,693.83) .. controls (103.69,733.57) and (102.8,754.49) .. (107.61,763.14) .. controls (112.41,771.8) and (124.88,769.76) .. (123.5,784.5) ;
			\draw    (170,744.95) -- (170.09,783.37) ;
			\draw    (423.8,693.24) .. controls (423.8,728.59) and (397.44,707.8) .. (397.44,741.76) ;
			\draw    (397.89,693.24) .. controls (397.89,728.59) and (424.25,707.45) .. (424.25,741.42) ;
			\draw   (450.27,740.49) -- (457.48,740.49) -- (457.48,745.35) -- (450.27,745.35) -- cycle ;
			\draw    (455.25,745.77) -- (455.48,783.97) ;
			\draw    (452.33,745.62) .. controls (452.33,780.98) and (368.47,753.54) .. (369.5,782.65) ;
			\draw  [color={rgb, 255:red, 255; green, 255; blue, 255 }  ,draw opacity=1 ][line width=3] [line join = round][line cap = round] (466.99,792.32) .. controls (466.99,790.55) and (466.8,788.69) .. (465.76,787.06) ;
			\draw    (370.34,692.77) .. controls (369.72,733.15) and (368.92,754.4) .. (373.22,763.2) .. controls (377.51,771.99) and (388.45,768.74) .. (387.21,783.71) ;
			\draw    (453.5,694.15) -- (453.45,735.21) -- (453.63,740.23) ;
			\draw    (397.44,741.76) -- (397.44,755.4) ;
			\draw    (424.25,741.53) -- (424.25,755.17) ;
			\draw    (189.5,694.5) -- (189.5,784) ;
			\draw    (397.44,755.4) .. controls (396.61,766.44) and (411.67,769.15) .. (412,783.15) ;
			\draw    (424.25,755.17) .. controls (423.41,766.2) and (438.1,769.65) .. (438.43,783.65) ;
			\draw    (476,693.07) -- (475.67,783.57) ;
			\draw    (315.17,691.1) .. controls (317.14,711.11) and (308.28,777.78) .. (316.15,788.61) ;
			\draw    (223.89,691.35) .. controls (222.21,713.1) and (222.6,777.18) .. (229.46,788.8) ;
			\draw  [color={rgb, 255:red, 128; green, 128; blue, 128 }  ,draw opacity=1 ] (227.87,697.96) -- (308.91,697.96) -- (308.91,745.55) -- (227.87,745.55) -- cycle ;
			\draw  [color={rgb, 255:red, 255; green, 255; blue, 255 }  ,draw opacity=1 ][line width=3] [line join = round][line cap = round] (281.75,752.12) .. controls (281.75,753.07) and (281.75,754.03) .. (281.75,754.99) ;
			\draw    (223.89,691.35) .. controls (224.27,681.1) and (315.39,681.51) .. (315.17,691.1) ;
			\draw [color={rgb, 255:red, 0; green, 0; blue, 0 }  ,draw opacity=1 ]   (224.11,750.75) .. controls (230.18,760.34) and (312.11,761.08) .. (312.31,750.45) ;
			\draw [color={rgb, 255:red, 155; green, 155; blue, 155 }  ,draw opacity=1 ] [dash pattern={on 0.84pt off 2.51pt}]  (224.11,750.75) .. controls (225.57,748.4) and (263.91,748.79) .. (274.46,748.79) .. controls (285.01,748.79) and (304.24,749.01) .. (312.31,750.45) ;
			\draw [color={rgb, 255:red, 155; green, 155; blue, 155 }  ,draw opacity=1 ] [dash pattern={on 0.84pt off 2.51pt}]  (223.89,691.35) .. controls (227.08,696.39) and (255.83,695.52) .. (272.97,695.52) .. controls (290.11,695.52) and (314.87,695.25) .. (314.97,690.21) ;
			\draw    (229.46,788.8) .. controls (236.2,799.18) and (315.91,799.24) .. (316.15,788.61) ;
			\draw [color={rgb, 255:red, 0; green, 0; blue, 0 }  ,draw opacity=1 ]   (242.25,759.79) -- (242.26,786.65) ;
			\draw [color={rgb, 255:red, 0; green, 0; blue, 0 }  ,draw opacity=1 ]   (237.96,759.75) -- (237.97,786.62) ;
			\draw [color={rgb, 255:red, 0; green, 0; blue, 0 }  ,draw opacity=1 ]   (302.61,759.38) -- (302.62,786.24) ;
			\draw [color={rgb, 255:red, 0; green, 0; blue, 0 }  ,draw opacity=1 ]   (298.72,759.21) -- (298.73,786.07) ;
			\draw [color={rgb, 255:red, 155; green, 155; blue, 155 }  ,draw opacity=1 ] [dash pattern={on 0.84pt off 2.51pt}]  (231.66,789.86) .. controls (232.6,789.45) and (254.69,789.03) .. (271.15,789.03) .. controls (287.6,789.03) and (306.13,788.84) .. (315.34,790.28) ;
			\draw    (604.16,692.7) .. controls (606.13,712.71) and (597.27,779.38) .. (605.14,790.21) ;
			\draw    (512.88,692.95) .. controls (511.2,714.7) and (511.59,778.78) .. (518.45,790.4) ;
			\draw  [color={rgb, 255:red, 128; green, 128; blue, 128 }  ,draw opacity=1 ] (516.85,699.56) -- (597.9,699.56) -- (597.9,747.15) -- (516.85,747.15) -- cycle ;
			\draw  [color={rgb, 255:red, 255; green, 255; blue, 255 }  ,draw opacity=1 ][line width=3] [line join = round][line cap = round] (570.73,753.72) .. controls (570.73,754.67) and (570.73,755.63) .. (570.73,756.59) ;
			\draw    (512.88,692.95) .. controls (513.26,682.7) and (604.38,683.11) .. (604.16,692.7) ;
			\draw [color={rgb, 255:red, 0; green, 0; blue, 0 }  ,draw opacity=1 ]   (513.1,752.35) .. controls (519.17,761.94) and (601.09,762.68) .. (601.3,752.05) ;
			\draw [color={rgb, 255:red, 155; green, 155; blue, 155 }  ,draw opacity=1 ] [dash pattern={on 0.84pt off 2.51pt}]  (513.1,752.35) .. controls (514.56,750) and (552.9,750.39) .. (563.45,750.39) .. controls (574,750.39) and (593.23,750.61) .. (601.3,752.05) ;
			\draw [color={rgb, 255:red, 155; green, 155; blue, 155 }  ,draw opacity=1 ] [dash pattern={on 0.84pt off 2.51pt}]  (512.88,692.95) .. controls (516.07,697.99) and (544.82,697.12) .. (561.96,697.12) .. controls (579.1,697.12) and (603.86,696.85) .. (603.96,691.81) ;
			\draw    (518.45,790.4) .. controls (525.19,800.78) and (604.9,800.84) .. (605.14,790.21) ;
			\draw [color={rgb, 255:red, 0; green, 0; blue, 0 }  ,draw opacity=1 ]   (531.24,761.39) -- (531.25,788.25) ;
			\draw [color={rgb, 255:red, 0; green, 0; blue, 0 }  ,draw opacity=1 ]   (526.95,761.35) -- (526.96,788.22) ;
			\draw [color={rgb, 255:red, 0; green, 0; blue, 0 }  ,draw opacity=1 ]   (591.6,760.98) -- (591.61,787.84) ;
			\draw [color={rgb, 255:red, 0; green, 0; blue, 0 }  ,draw opacity=1 ]   (587.71,760.81) -- (587.72,787.67) ;
			\draw [color={rgb, 255:red, 155; green, 155; blue, 155 }  ,draw opacity=1 ] [dash pattern={on 0.84pt off 2.51pt}]  (520.64,791.46) .. controls (521.58,791.05) and (543.68,790.63) .. (560.14,790.63) .. controls (576.59,790.63) and (595.12,790.44) .. (604.33,791.88) ;
			\draw  [color={rgb, 255:red, 0; green, 0; blue, 0 }  ,draw opacity=1 ][fill={rgb, 255:red, 255; green, 255; blue, 255 }  ,fill opacity=1 ] (255,768.06) -- (285.93,768.06) -- (285.93,782.73) -- (255,782.73) -- cycle ;
			\draw [color={rgb, 255:red, 0; green, 0; blue, 0 }  ,draw opacity=1 ]   (270.82,760) -- (270.83,768.62) ;
			\draw [color={rgb, 255:red, 0; green, 0; blue, 0 }  ,draw opacity=1 ]   (270.64,783.51) -- (270.66,788.77) ;
			\draw  [color={rgb, 255:red, 0; green, 0; blue, 0 }  ,draw opacity=1 ][fill={rgb, 255:red, 255; green, 255; blue, 255 }  ,fill opacity=1 ] (545,770.06) -- (575.93,770.06) -- (575.93,784.73) -- (545,784.73) -- cycle ;
			\draw [color={rgb, 255:red, 0; green, 0; blue, 0 }  ,draw opacity=1 ]   (560.82,762) -- (560.83,770.62) ;
			\draw [color={rgb, 255:red, 0; green, 0; blue, 0 }  ,draw opacity=1 ]   (560.64,785.51) -- (560.66,790.77) ;
			
			\draw (340,782.5) node [anchor=north west][inner sep=0.75pt]  [font=\tiny]  {$\ k(-1) k( m_{1})  k( m_{j+1})  k( m_{i}+1)  k( m_{t}) \ $};
			\draw (80,785.75) node [anchor=north west][inner sep=0.75pt]  [font=\tiny]  {$k(-1) k( m_{1}) k( m_{j+1}+1) k( m_{t})$};
			\draw (85,730) node [anchor=north west][inner sep=0.75pt]  [font=\large]  {$+\ \ \ \ ...\ \ \ \ \ \ \ \ \  ...\ \ \otimes \ \ \ \ \ \ \ \ \ \ \ \ \ \ \  \ \ \ \ \ \ \ +\sum\limits _{i >j+1} \ \ ...\ \ \ \ \ \ \ \ \  ...\  \ \   ... \ \ \ \otimes \ \ \ \ \ \ \ \ \ \ \ \ \ \ \ \ \ \ \ \ \ \ ,$};	
			\draw (255,767) node [anchor=north west][inner sep=0.75pt]  [font=\tiny]  {$d_{x_{j+1}}^{m_{j+1}}$};
			\draw (228.16,700) node [anchor=north west][inner sep=0.75pt]  [color={rgb, 255:red, 128; green, 128; blue, 128 }  ,opacity=1 ]  {$T_{\msc{A},m}$};
			\draw (550,772) node [anchor=north west][inner sep=0.75pt]  [font=\tiny]  {$d_{x_i}^{m_{i}}$};
			\draw (517,700) node [anchor=north west][inner sep=0.75pt]  [color={rgb, 255:red, 128; green, 128; blue, 128 }  ,opacity=1 ]  {$T_{\msc{A},m}$};
			\draw (350,679) node [anchor=north west][inner sep=0.75pt]  [font=\tiny]  {$k( m_{1}) k( m_{j})  k( m_{j+1})  k( m_{i})   k( m_{t}) \ $};
			
			\draw (90,682.5) node [anchor=north west][inner sep=0.75pt]  [font=\tiny]  {$k( m_{1})  k( m_{j}) k( m_{j+1}) \ k( m_{t}) \ $};
		\end{tikzpicture}
	}
\end{equation}
	for all $m:\{1,\dots, t\}\to \mathbb Z$. Note that the sum over all $m$ of \eqref{eq:Lemma A.1,side 2}  is $(p\boxtimes 1)\Delta(T,M)$ followed by $\tilde d_{\Sigma'_{\vec{y}^{n}}}$.
	Hence, applying the functor  $ev_{\opn{Vect}_{\mathbb Z}}\cdot Z$ to \eqref{eq:Lemma A.1,side 1} and \eqref{eq:Lemma A.1,side 2}, respectively,  we obtain the desired equality $Z^*(M,T)[1] \ d_{\Sigma_{\vec{x}}} = d_{\Sigma'_{\vec{y}}}\  Z^* (M,T)$ for the case when $\sigma$ is a transposition.
	
	 We now prove the general statement by induction on the length of $\sigma$. When the length of $\sigma$ is 1 the statement follows by the preceding argument.  Assume then that the statement is true 
	 for permutations of length less than or equal to $l$, and let $\sigma$ be a permutation of length $l+1$. We write $\sigma$ as a (minimal) composition of adjacent transpositions $\sigma=\sigma_{l+1}\dots \sigma_1$. Let $\sigma_{l+1}=(k,k+1)$ for some $k$ and consider the surface $\Sigma'_{\vec{z}}$ which is just $\Sigma'_{\vec{y}}$ with permuted indices $\vec{z}:\{1,\dots,t\}\to \Sigma'\times \opn{obj}\opn{Ch}(\msc{A})$, $\vec{z}=\vec{y}\circ \sigma_{l+1}$.  We take $(M_l, T_l):\Sigma'_{\vec{y}}\to \Sigma'_{\vec{z}}$ the bordism which, up to reindexing, is just the identity $\Sigma_{\vec{id}}$ . Note that $(M_l, T_l)$ is a permutation bordism with underlying permutation $\sigma_{l+1}$, as is its inverse $(M_l,T_l)^{-1}$.  It follows then that the composition $(M_l,T_l)(M,T)$ has underlying permutation of length is $l$. Hence, by induction, $Z^*((M_l,T_l)(M,T))[1] \ d_{\Sigma_{\vec{x}}} = d_{\Sigma'_{\vec{z}}}\  Z^*( (M_l, T_l)(M,T))$. Now, composing on both sides with $Z^*(M_l,T_l)^{-1},$ we obtain
	 \begin{align*}
	 	Z^*((M_l,T_l)^{-1}(M_l,T_l)(M,T))[1] \ d_{\Sigma_{\vec{x}}}& = Z^*(M_l,T_l)^{-1}[1] \ d_{\Sigma'_{\vec{z}}} \  Z^*( (M_l, T_l)(M,T))\\
	 	&= d_{\Sigma'_{\vec{z}}} \  Z^*((M_l,T_l)^{-1} (M_l, T_l)(M,T))
	 \end{align*} 
where in the second equality we are using the base case for induction, that is, that $Z^*(M_l, T_l)^{-1}$ commutes with the differential. Since $(M_l,T_l)^{-1}(M_l,T_l)(M,T)=(M,T)$, we conclude 
$Z^*(M,T)[1] \ d_{\Sigma_{\vec{x}}} = d_{\Sigma'_{\vec{y}}}\  Z^* (M,T)$, as desired. 
\end{proof}

\subsection{Commuting with a specific type of coupon}
We show now that if $(M,T)$ is a bordism in $\Ch(\msc{A})$ such that $T$ features a unique coupon, with a unique incoming ribbon which is attached to the incoming boundary, then $Z^*(M,T)\in \Ch(\opn{Vect}) $.

\begin{lemma}\label{lemma 2}	Let $(M,T):\Sigma_{\vec{x}}\to\Sigma'_{\vec{y}}$ be a connected bordism in  $\opn{Bord}_{\opn{Ch}(\msc{A})}$ such
	 that:
	 \begin{itemize}
	 	\item The surfaces $\Sigma, \Sigma'$ are nonempty and have respective labels
	 	\[
	 	\vec{x}=(x_1, \dots, x_t)\ \ \text{and}\ \ \vec{y}=(x_1, \dots, x_{j-1}, y_1, \dots, y_l, x_{j+1}, \dots, x_t).
	 	\]
	 	\item  The ribbon graph $T$ has a unique coupon with exactly one incoming ribbon, attached to the incoming boundary, and all outgoing ribbons attached to the outgoing boundary.
	 	\item The coupon is labelled by a function $f:x_j\to y_1\otimes\dots \otimes y_l$ in $\opn{Ch}(\msc{A})$.
	 	\item All other ribbons are couponless and travel from $x_i$ to $x_i$, for $i\ne j$. 
	 \end{itemize}
	Then 
	$Z^* (M,T)[1] \ d_{\Sigma_{\vec{x}}} = d_{\Sigma'_{\vec{y}}}\  Z^* (M,T).$
\end{lemma}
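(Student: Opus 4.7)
Following the strategy of Lemma~\ref{lemma 1}, the plan is first to reduce to positively marked surfaces via Lemma~\ref{lem:d_pos}, then expand both compositions in the additive closure of $k\Str_{\opn{Vect}^{\mathbb Z}}^{\red}\boxtimes k\Bord_{\msc{A}}^{\red}$ via Definition~\ref{def: differential}. Decompose the coupon label $f$ into its homogeneous components $f^{m_j}|_{(n_1,\ldots,n_l)}\colon x_j^{m_j}\to y_1^{n_1}\otimes\cdots\otimes y_l^{n_l}$ with $n_1+\cdots+n_l=m_j$. Both $d_{\Sigma_{\vec{x}}}$ and $d_{\Sigma'_{\vec{y}}}$ decompose as sums indexed by the slot into which the differential is inserted, giving $t$ summands for $Z^*(M,T)[1]\circ d_{\Sigma_{\vec{x}}}$ and $t+l-1$ summands for $d_{\Sigma'_{\vec{y}}}\circ Z^*(M,T)$.

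For each $i\neq j$ the $i$-th LHS summand matches the RHS summand at slot $i$ (if $i<j$) or $i+l-1$ (if $i>j$) by the same sliding argument used at equation~\eqref{eq:Lemma A.1, diff}: the coupon labelled $d_{x_i}^{m_i}$ is pushed through the straight-line ribbon $x_i\to x_i$ using the collar on the boundary, and since the positions of all crossings in the string factor are identical on the two sides, the signs agree immediately. It then remains to match the $i=j$ LHS summand against the aggregate of the RHS summands at slots $j,\,j{+}1,\,\ldots,\,j{+}l{-}1$, which correspond to the outgoing ribbons $y_1,\ldots,y_l$ of the coupon.

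The LHS contribution, restricted to output degrees $(n_1,\ldots,n_l)$, equals $f^{m_j+1}|_{(n_1,\ldots,n_l)}\circ d_{x_j}^{m_j}$ in the $\msc{A}$-factor, carrying a string sign $(-1)^{m_1+\cdots+m_{j-1}}$ from pulling $k(-1)$ out of slot $j$. The RHS contribution at slot $j-1+k$ to the same output degrees is $(\opn{id}\otimes\cdots\otimes d_{y_k}^{n_k-1}\otimes\cdots\otimes\opn{id})\circ f^{m_j}|_{(n_1,\ldots,n_k-1,\ldots,n_l)}$, with string sign $(-1)^{m_1+\cdots+m_{j-1}+n_1+\cdots+n_{k-1}}$, because $k(-1)$ must now braid past the strings $k(n_1),\ldots,k(n_{k-1})$ in addition to $k(m_1),\ldots,k(m_{j-1})$. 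Summing the RHS contributions over $k$ and factoring out $(-1)^{m_1+\cdots+m_{j-1}}$ yields precisely
\[
(-1)^{m_1+\cdots+m_{j-1}}\,(d_{y_1\otimes\cdots\otimes y_l}\circ f^{m_j})\big|_{(n_1,\ldots,n_l)},
\]
at which point the cochain-map condition $d\circ f=f[1]\circ d_{x_j}$ identifies this with the LHS.

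The main obstacle is precisely the sign bookkeeping in the final paragraph: one must verify that the Koszul sign $(-1)^{n_1+\cdots+n_{k-1}}$ in the tensor-product differential on $y_1\otimes\cdots\otimes y_l$ arises on the string side exactly from the signed symmetry of $\opn{Vect}^{\mathbb Z}$ via relations~(V5)--(V6) of Section~\ref{sect:red_strings}, governing how $k(-1)$ passes through the intermediate strings $k(n_1),\ldots,k(n_{k-1})$. Once this matching is confirmed, the cochain condition on $f$ cleanly absorbs the sum of RHS coupon-slot terms into the single LHS term, completing the proof.
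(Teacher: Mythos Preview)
Your proposal is correct and follows essentially the same approach as the paper: reduce to positively marked surfaces, split both sides over the slot where the differential acts, handle $i\neq j$ by sliding the $d_{x_i}$-coupon along the trivial ribbon as in Lemma~\ref{lemma 1}, and for $i=j$ use the cochain identity $f\,d_{x_j}=d_{y_1\otimes\cdots\otimes y_l}\,f$ together with the observation that the Koszul signs $(-1)^{n_1+\cdots+n_{k-1}}$ arise on the string side from braiding $k(-1)$ past $k(n_1),\ldots,k(n_{k-1})$. The paper carries this out diagrammatically in $k\Str_{\opn{Vect}^{\mathbb Z}}^{\red}\boxtimes k\Bord_{\msc{A}}^{\red}$ (equations~\eqref{eq:lemma 2, side 1}--\eqref{eq:lemma 2, side 2}), but the argument is the same as yours; your remark that for $i>j$ the signs on the two sides match because $n_1+\cdots+n_l=m_j$ is implicit in the paper's string manipulations.
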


\begin{proof}
	We again assume that $\Sigma_{\vec{x}}$ and $\Sigma_{\vec{y}}$ are positively marked, via Lemma \ref{lem:d_pos}.  Let $(M,T) $ be as in the statement. We represent $(p\boxtimes 1)\Delta(M,T)$  by 
	\begin{equation}\label{fig:lemma 2, M}
		\tikzset{every picture/.style={line width=0.5pt}} 
		\begin{tikzpicture}[x=0.75pt,y=0.75pt,yscale=-1,xscale=1]
			
			\draw    (43.49,40.1) -- (43.75,109.6) ;
			\draw    (168.38,40.5) -- (169.56,114.15) ;
			\draw  [color={rgb, 255:red, 0; green, 0; blue, 0 }  ,draw opacity=1 ] (81.98,68.5) -- (131.64,68.5) -- (131.64,83) -- (81.98,83) -- cycle ;
			\draw    (105.62,39.9) -- (106.24,68.9) ;
			\draw    (64.37,110) .. controls (63.87,101.2) and (68.9,99.6) .. (76.45,96.8) .. controls (83.99,94) and (91.03,92.8) .. (89.52,83.2) ;
			\draw    (138.31,110.4) .. controls (137.8,101.6) and (130.76,98.4) .. (125.73,96.8) .. controls (120.7,95.2) and (111.65,91.6) .. (112.15,83.6) ;
			\draw    (73.93,110.8) .. controls (73.43,102) and (78.46,100.4) .. (86,97.6) .. controls (93.55,94.8) and (100.59,93.6) .. (99.08,84) ;
			\draw    (147.86,110) .. controls (147.36,101.2) and (140.32,98) .. (135.29,96.4) .. controls (130.26,94.8) and (121.21,91.2) .. (121.71,83.2) ;
			\draw    (331.6,34.87) .. controls (334,59) and (328.4,104.6) .. (337.95,117.66) ;
			\draw    (220.84,35.18) .. controls (218.8,61.4) and (215.6,109) .. (227.43,119.04) ;
			\draw  [color={rgb, 255:red, 128; green, 128; blue, 128 }  ,draw opacity=1 ] (227.61,47.97) -- (325.96,47.97) -- (325.96,105.35) -- (227.61,105.35) -- cycle ;
			\draw  [color={rgb, 255:red, 255; green, 255; blue, 255 }  ,draw opacity=1 ][line width=3] [line join = round][line cap = round] (291.05,108.44) .. controls (291.05,109.59) and (291.05,110.74) .. (291.05,111.9) ;
			\draw  [color={rgb, 255:red, 255; green, 255; blue, 255 }  ,draw opacity=1 ][line width=3] [line join = round][line cap = round] (294.65,125.72) .. controls (294.88,127.61) and (295.52,129.45) .. (296.19,131.25) ;
			\draw    (220.84,35.18) .. controls (221.3,22.81) and (331.88,23.31) .. (331.6,34.87) ;
			\draw    (227.43,119.04) .. controls (234.8,130.6) and (337.7,130.48) .. (337.95,117.66) ;
			\draw [color={rgb, 255:red, 155; green, 155; blue, 155 }  ,draw opacity=1 ] [dotted]  (227.43,119.04) .. controls (229.2,116.2) and (270,117) .. (282.8,117) .. controls (295.6,117) and (328.4,117) .. (338.2,118.74) ;
			\draw [color={rgb, 255:red, 155; green, 155; blue, 155 }  ,draw opacity=1 ] [dotted]  (220.84,35.18) .. controls (224.71,41.25) and (259.6,40.2) .. (280.4,40.2) .. controls (301.2,40.2) and (331.24,39.88) .. (331.36,33.79) ;
			
			\draw (242,60) node [anchor=north west][inner sep=0.75pt]  [color={rgb, 255:red, 128; green, 128; blue, 128 }  ,opacity=1 ]  {$T_{\msc{A}, m; n }$};
			\draw (345.17,106.65) node [anchor=north west][inner sep=0.75pt]  [color={rgb, 255:red, 0; green, 0; blue, 0 }  ,opacity=1 ]  {$\Sigma' _{\vec{y}^{m,n}}$};
			\draw (335,25) node [anchor=north west][inner sep=0.75pt]  [color={rgb, 255:red, 0; green, 0; blue, 0 }  ,opacity=1 ]  {$\Sigma _{\vec{x}^{m}}$};
			\draw (30,23.01) node [anchor=north west][inner sep=0.75pt]  [font=\tiny]  {$k( m_{1}) \ \ \ \ \ \ \ \ \ \ \ k( m_{j}) \ \ \ \ \ \ \ \ \ \ \ k( m_{t})$};
			\draw (30,115) node [anchor=north west][inner sep=0.75pt]  [font=\tiny]  {$k( m_{1}) k( n_{1}) \ \ \ \ \ \ \ \ \ \ \ \ \ \ \ \ \ k( n_{l}) \ k( m_{t}) $};
			\draw (97,95) node [anchor=north west][inner sep=0.75pt]    {$\cdots $};
			\draw (-35,64) node [anchor=north west][inner sep=0.75pt]    {$\sum\limits_{\substack{m:\{1, \dots, t\}\to \mathbb Z\\ n_1+\dots+n_l=m_j}} \ \ \ \ \cdots   \ \ \ \ \ \ \ \ \ \ \ \ \ \ \ \cdots \ \ \ \ \ \ \ \otimes $};
			
		\end{tikzpicture}
	\end{equation}
	where  $\vec{y}^{m,n}:=(x_1^{m_1}, \dots , x_{j-1}^{m_{j-1}}, y_1^{n_1},\dots, y_l^{n_l}, x_{j+1}^{m_{j+1}}, \dots, x_t^{m_t})$, and the multivalent coupon featured on the left side of the diagram in $\Str_{\opn{Vect}^{\mathbb Z}}^{\red}$ represents the identification $k(m_j)\xrightarrow{\sim} k(n_1)\otimes \dots \otimes k(n_l)$.
	
	Composing  $\tilde d_{\Sigma_{\vec{x}}}$ as in \eqref{eq: differential diagram} with the above, then projecting at fixed $m$ and $n_1+\dots +n_l=m_j$, we obtain
	\[\scalebox{.9}{
	\tikzset{every picture/.style={line width=0.5pt}} 

\begin{tikzpicture}[x=0.75pt,y=0.75pt,yscale=-1,xscale=1]

\draw    (312.35,49.52) .. controls (309.51,66.93) and (317.38,124.68) .. (315.64,135.84) ;
\draw    (231.91,50.43) .. controls (231.12,66.47) and (228.72,123.97) .. (234.7,136.1) ;
\draw  [draw opacity=0] (231.93,50.39) .. controls (235.13,45.93) and (252,42.55) .. (272.33,42.58) .. controls (291.32,42.6) and (307.29,45.59) .. (311.92,49.63) -- (272.33,51.98) -- cycle ; \draw   (231.93,50.39) .. controls (235.13,45.93) and (252,42.55) .. (272.33,42.58) .. controls (291.32,42.6) and (307.29,45.59) .. (311.92,49.63) ;  
\draw  [draw opacity=0][dash pattern={on 0.84pt off 2.51pt}] (231.96,50.48) .. controls (236.38,54.04) and (252.47,56.67) .. (271.64,56.64) .. controls (290.1,56.61) and (305.69,54.14) .. (310.77,50.77) -- (271.64,48.44) -- cycle ; \draw  [color={rgb, 255:red, 155; green, 155; blue, 155 }  ,draw opacity=1 ][dash pattern={on 0.84pt off 2.51pt}] (231.96,50.48) .. controls (236.38,54.04) and (252.47,56.67) .. (271.64,56.64) .. controls (290.1,56.61) and (305.69,54.14) .. (310.77,50.77) ;  
\draw  [draw opacity=0] (234.73,136.14) .. controls (237.93,140.58) and (254.8,143.95) .. (275.12,143.92) .. controls (295.92,143.89) and (313.09,140.32) .. (315.71,135.72) -- (275.12,134.55) -- cycle ; \draw   (234.73,136.14) .. controls (237.93,140.58) and (254.8,143.95) .. (275.12,143.92) .. controls (295.92,143.89) and (313.09,140.32) .. (315.71,135.72) ;  
\draw  [draw opacity=0][dash pattern={on 0.84pt off 2.51pt}] (234.72,135.28) .. controls (237.92,130.82) and (254.79,127.44) .. (275.12,127.47) .. controls (295.93,127.49) and (313.1,131.08) .. (315.71,135.7) -- (275.12,136.87) -- cycle ; \draw  [color={rgb, 255:red, 155; green, 155; blue, 155 }  ,draw opacity=1 ][dash pattern={on 0.84pt off 2.51pt}] (234.72,135.28) .. controls (237.92,130.82) and (254.79,127.44) .. (275.12,127.47) .. controls (295.93,127.49) and (313.1,131.08) .. (315.71,135.7) ;  
\draw  [draw opacity=0] (231.2,85.99) .. controls (231.08,86.16) and (231.02,86.33) .. (231.02,86.51) .. controls (231.03,89.56) and (249.73,92.01) .. (272.8,91.98) .. controls (291.34,91.95) and (307.05,90.33) .. (312.51,88.11) -- (272.8,86.45) -- cycle ; \draw  [color={rgb, 255:red, 155; green, 155; blue, 155 }  ,draw opacity=1 ] (231.2,85.99) .. controls (231.08,86.16) and (231.02,86.33) .. (231.02,86.51) .. controls (231.03,89.56) and (249.73,92.01) .. (272.8,91.98) .. controls (291.34,91.95) and (307.05,90.33) .. (312.51,88.11) ; 
\draw    (239.36,51.82) -- (239.36,83.56) ;
\draw    (243.36,51.82) -- (243.36,83.56) ;
\draw    (272.33,51.98) -- (272.8,86.45) ;
\draw    (304.92,51.09) -- (304.92,84.28) ;
\draw    (300.92,51.09) -- (300.92,84.28) ;
\draw  [fill={rgb, 255:red, 255; green, 255; blue, 255 }  ,fill opacity=1 ] (256.5,60.56) -- (288.62,60.56) -- (288.62,77.87) -- (256.5,77.87) -- cycle ;
\draw   [color={rgb, 255:red, 128; green, 128; blue, 128 }  ,draw opacity=1 ] (240.68,96.55) -- (308.2,96.55) -- (308.2,131.9) -- (240.68,131.9) -- cycle ;
\draw    (193.48,45.22) -- (193.48,139.63) ;
\draw    (145.62,45.08) -- (145.62,103.53) ;
\draw    (98.42,45.08) -- (98.42,139.5) ;
\draw    (57.12,45.84) .. controls (57.12,60.31) and (57.12,57.63) .. (57.12,75.61) .. controls (57.12,93.6) and (74.82,87.4) .. (74.82,110.35) .. controls (74.82,133.3) and (75,116.67) .. (75,139) ;
\draw    (94.48,61.97) .. controls (93.83,74.37) and (94.48,68.79) .. (93.17,81.82) .. controls (91.86,94.84) and (58.17,94.2) .. (57.58,114.1) .. controls (57,134) and (57,120.24) .. (57,139) ;
\draw  [fill={rgb, 255:red, 255; green, 255; blue, 255 }  ,fill opacity=1 ] (91.86,60.11) -- (101.04,60.11) -- (101.04,65.69) -- (91.86,65.69) -- cycle ;
\draw    (133.82,104.77) .. controls (133.82,126.48) and (113.5,122.13) .. (113.5,138.88) ;
\draw    (141.03,104.77) .. controls (141.03,126.48) and (120.71,122.13) .. (120.71,138.88) ;
\draw    (158.07,104.77) .. controls (158.07,126.48) and (178.4,122.13) .. (178.4,138.88) ;
\draw    (150.86,104.77) .. controls (150.86,126.48) and (171.19,122.13) .. (171.19,138.88) ;
\draw  [fill={rgb, 255:red, 255; green, 255; blue, 255 }  ,fill opacity=1 ] (129.56,97.94) -- (161.68,97.94) -- (161.68,109.11) -- (129.56,109.11) -- cycle ;
\draw    (588.17,48.29) .. controls (585.25,65.82) and (593.35,123.98) .. (591.56,135.21) ;
\draw    (505.31,49.21) .. controls (504.49,65.36) and (502.03,123.26) .. (508.19,135.47) ;
\draw  [draw opacity=0] (505.35,49.13) .. controls (508.71,44.66) and (526.06,41.27) .. (546.95,41.3) .. controls (566.43,41.33) and (582.82,44.31) .. (587.66,48.35) -- (546.94,50.77) -- cycle ; \draw   (505.35,49.13) .. controls (508.71,44.66) and (526.06,41.27) .. (546.95,41.3) .. controls (566.43,41.33) and (582.82,44.31) .. (587.66,48.35) ;  
\draw  [draw opacity=0][dash pattern={on 0.84pt off 2.51pt}] (505.42,49.3) .. controls (510.05,52.87) and (526.57,55.49) .. (546.24,55.46) .. controls (565.15,55.44) and (581.15,52.97) .. (586.47,49.6) -- (546.23,47.21) -- cycle ; \draw  [color={rgb, 255:red, 155; green, 155; blue, 155 }  ,draw opacity=1 ][dash pattern={on 0.84pt off 2.51pt}] (505.42,49.3) .. controls (510.05,52.87) and (526.57,55.49) .. (546.24,55.46) .. controls (565.15,55.44) and (581.15,52.97) .. (586.47,49.6) ;  
\draw  [draw opacity=0] (508.24,135.55) .. controls (511.6,140) and (528.94,143.38) .. (549.82,143.35) .. controls (571.2,143.32) and (588.86,139.73) .. (591.61,135.11) -- (549.82,133.91) -- cycle ; \draw   (508.24,135.55) .. controls (511.6,140) and (528.94,143.38) .. (549.82,143.35) .. controls (571.2,143.32) and (588.86,139.73) .. (591.61,135.11) ;  
\draw  [draw opacity=0][dash pattern={on 0.84pt off 2.51pt}] (508.23,134.61) .. controls (511.59,130.13) and (528.93,126.75) .. (549.82,126.78) .. controls (571.21,126.81) and (588.88,130.41) .. (591.62,135.04) -- (549.82,136.25) -- cycle ; \draw  [color={rgb, 255:red, 155; green, 155; blue, 155 }  ,draw opacity=1 ][dash pattern={on 0.84pt off 2.51pt}] (508.23,134.61) .. controls (511.59,130.13) and (528.93,126.75) .. (549.82,126.78) .. controls (571.21,126.81) and (588.88,130.41) .. (591.62,135.04) ;  
\draw  [draw opacity=0] (505.83,85.02) .. controls (505.71,85.19) and (505.65,85.36) .. (505.65,85.53) .. controls (505.66,88.59) and (524.36,91.04) .. (547.43,91.01) .. controls (565.97,90.98) and (581.68,89.36) .. (587.14,87.14) -- (547.43,85.48) -- cycle ; \draw  [color={rgb, 255:red, 155; green, 155; blue, 155 }  ,draw opacity=1 ] (505.83,85.02) .. controls (505.71,85.19) and (505.65,85.36) .. (505.65,85.53) .. controls (505.66,88.59) and (524.36,91.04) .. (547.43,91.01) .. controls (565.97,90.98) and (581.68,89.36) .. (587.14,87.14) ; 
\draw    (513.66,49.36) -- (513.66,81.32) ;
\draw    (517.66,49.36) -- (517.66,81.32) ;
\draw    (546.94,50.77) -- (547.43,85.48) ;
\draw    (579.84,48.63) -- (579.84,82.05) ;
\draw    (575.84,48.63) -- (575.84,82.05) ;
\draw  [fill={rgb, 255:red, 255; green, 255; blue, 255 }  ,fill opacity=1 ] (530.64,59.41) -- (563.73,59.41) -- (563.73,76.84) -- (530.64,76.84) -- cycle ;
\draw   [color={rgb, 255:red, 128; green, 128; blue, 128 }  ,draw opacity=1 ] (514.34,95.65) -- (583.9,95.65) -- (583.9,131.24) -- (514.34,131.24) -- cycle ;
\draw    (469.77,43.97) -- (469.77,139.03) ;
\draw    (420.47,43.82) -- (420.47,102.67) ;
\draw    (356.99,44.59) .. controls (356.99,64.58) and (356.99,53.33) .. (356.99,72.07) .. controls (356.99,90.81) and (375.22,89.56) .. (375.22,105.8) .. controls (375.22,122.03) and (375.22,115.79) .. (375.22,138.27) ;
\draw    (417.09,60.83) .. controls (417.09,78.94) and (410.34,83.94) .. (391.43,85.19) .. controls (372.52,86.44) and (358.26,91.69) .. (357.62,112.56) .. controls (356.99,133.43) and (357.62,121.02) .. (356.99,138.27) ;
\draw  [fill={rgb, 255:red, 255; green, 255; blue, 255 }  ,fill opacity=1 ] (414.39,57.08) -- (423.84,57.08) -- (423.84,62.08) -- (414.39,62.08) -- cycle ;
\draw    (408.31,103.92) .. controls (408.31,125.78) and (387.38,121.41) .. (387.38,138.27) ;
\draw    (415.74,103.92) .. controls (415.74,125.78) and (394.81,121.41) .. (394.81,138.27) ;
\draw    (433.3,103.92) .. controls (433.3,125.78) and (454.23,121.41) .. (454.23,138.27) ;
\draw    (425.87,103.92) .. controls (425.87,125.78) and (446.81,121.41) .. (446.81,138.27) ;
\draw  [fill={rgb, 255:red, 255; green, 255; blue, 255 }  ,fill opacity=1 ] (403.92,97.05) -- (437.01,97.05) -- (437.01,108.29) -- (403.92,108.29) -- cycle ;

\draw (258,62) node [anchor=north west][inner sep=0.75pt]  [font=\tiny] [align=left] {$\displaystyle d_{x_{i}}^{m_{i} -1}$};
\draw (251.94,103.7) node [anchor=north west][inner sep=0.75pt]  [color={rgb, 255:red, 155; green, 155; blue, 155 }  ,opacity=1 ] [align=left] {$\displaystyle T_{\mathscr{A} ,m;n}$};
\draw (77,95) node [anchor=north west][inner sep=0.75pt]   [align=left] {$\displaystyle \cdots $};
\draw (108,88) node [anchor=north west][inner sep=0.75pt]   [align=left] {$\displaystyle \cdots $};
\draw (165,88) node [anchor=north west][inner sep=0.75pt]   [align=left] {$\displaystyle \cdots $};
\draw (137,125) node [anchor=north west][inner sep=0.75pt]   [align=left] {$\displaystyle \cdots $};
\draw (20.52,69.6) node [anchor=north west][inner sep=0.75pt]  [font=\normalsize] [align=left] {$\displaystyle \sum _{i< j}$};
\draw (41,34.07) node [anchor=north west][inner sep=0.75pt]  [font=\tiny] [align=left] {$\displaystyle k( m_{1}) \ \ k( m_{i} -1) \ \ \ \ k( m_{j}) \ \ \ \ \ \ k( m_{t})$};
\draw (41,141.41) node [anchor=north west][inner sep=0.75pt]  [font=\tiny] [align=left] {$\displaystyle k( -1) \ \ \ \ \ k( m_{i}) \ \ \ \ \ \ \ \ k( n) \ \ \ \ \ \ \ k( m_{t})$};
\draw (201.18,86.46) node [anchor=north west][inner sep=0.75pt]   [align=left] {$\displaystyle \otimes $};
\draw (532,59) node [anchor=north west][inner sep=0.75pt]  [font=\tiny] [align=left] {$\displaystyle d_{x_{j}}^{m_{j} -1}$};
\draw (525.62,102.9) node [anchor=north west][inner sep=0.75pt]  [color={rgb, 255:red, 155; green, 155; blue, 155 }  ,opacity=1 ] [align=left] {$\displaystyle T_{\mathscr{A} ,m;n}$};
\draw (380,88) node [anchor=north west][inner sep=0.75pt]   [align=left] {$\displaystyle \cdots $};
\draw (440,88) node [anchor=north west][inner sep=0.75pt]   [align=left] {$\displaystyle \cdots $};
\draw (412,125) node [anchor=north west][inner sep=0.75pt]   [align=left] {$\displaystyle \cdots $};
\draw (329.71,88.05) node [anchor=north west][inner sep=0.75pt]   [align=left] {$\displaystyle +$};
\draw (342,33.64) node [anchor=north west][inner sep=0.75pt]  [font=\tiny] [align=left] {$\displaystyle k( m_{1}) \ \ \ \ \ \ \ \ k( m_{j} -1) \ \ \ \ \ k( m_{t})$};
\draw (342,140.46) node [anchor=north west][inner sep=0.75pt]  [font=\tiny] [align=left] {$\displaystyle k( -1) \ \ \ \ \ \ \ \ \ \ \ \ k( n) \ \ \ \ \ \ \ \ \ k( m_{t})$};
\draw (477.32,84.31) node [anchor=north west][inner sep=0.75pt]   [align=left] {$\displaystyle \otimes $};

\draw (610,88) node {$+$};
\end{tikzpicture}}
	\]
	\begin{equation}\label{eq:lemma 2, side 1}
	\scalebox{.9}{\tikzset{every picture/.style={line width=0.5pt}} 

\begin{tikzpicture}[x=0.75pt,y=0.75pt,yscale=-1,xscale=1]

\draw    (539.15,95.78) .. controls (536.21,113.28) and (544.36,171.33) .. (542.56,182.54) ;
\draw    (455.89,96.7) .. controls (455.06,112.82) and (452.59,170.61) .. (458.78,182.8) ;
\draw  [draw opacity=0] (455.94,96.62) .. controls (459.34,92.16) and (476.76,88.79) .. (497.73,88.81) .. controls (517.28,88.84) and (533.73,91.8) .. (538.62,95.82) -- (497.72,98.26) -- cycle ; \draw   (455.94,96.62) .. controls (459.34,92.16) and (476.76,88.79) .. (497.73,88.81) .. controls (517.28,88.84) and (533.73,91.8) .. (538.62,95.82) ;  
\draw  [draw opacity=0][dash pattern={on 0.84pt off 2.51pt}] (456.01,96.8) .. controls (460.69,100.36) and (477.27,102.97) .. (497.02,102.95) .. controls (516,102.92) and (532.06,100.47) .. (537.42,97.11) -- (497.01,94.7) -- cycle ; \draw  [color={rgb, 255:red, 155; green, 155; blue, 155 }  ,draw opacity=1 ][dash pattern={on 0.84pt off 2.51pt}] (456.01,96.8) .. controls (460.69,100.36) and (477.27,102.97) .. (497.02,102.95) .. controls (516,102.92) and (532.06,100.47) .. (537.42,97.11) ;  
\draw  [draw opacity=0] (458.83,182.88) .. controls (462.23,187.32) and (479.65,190.69) .. (500.62,190.66) .. controls (522.09,190.64) and (539.83,187.06) .. (542.61,182.45) -- (500.61,181.24) -- cycle ; \draw   (458.83,182.88) .. controls (462.23,187.32) and (479.65,190.69) .. (500.62,190.66) .. controls (522.09,190.64) and (539.83,187.06) .. (542.61,182.45) ;  
\draw  [draw opacity=0][dash pattern={on 0.84pt off 2.51pt}] (458.83,181.93) .. controls (462.23,177.48) and (479.64,174.1) .. (500.62,174.13) .. controls (522.1,174.15) and (539.84,177.73) .. (542.61,182.35) -- (500.61,183.57) -- cycle ; \draw  [color={rgb, 255:red, 155; green, 155; blue, 155 }  ,draw opacity=1 ][dash pattern={on 0.84pt off 2.51pt}] (458.83,181.93) .. controls (462.23,177.48) and (479.64,174.1) .. (500.62,174.13) .. controls (522.1,174.15) and (539.84,177.73) .. (542.61,182.35) ;  
\draw  [draw opacity=0] (456.61,132.44) .. controls (456.5,132.61) and (456.44,132.78) .. (456.44,132.95) .. controls (456.44,136.01) and (475.14,138.46) .. (498.22,138.43) .. controls (516.75,138.4) and (532.46,136.78) .. (537.93,134.56) -- (498.21,132.9) -- cycle ; \draw  [color={rgb, 255:red, 155; green, 155; blue, 155 }  ,draw opacity=1 ] (456.61,132.44) .. controls (456.5,132.61) and (456.44,132.78) .. (456.44,132.95) .. controls (456.44,136.01) and (475.14,138.46) .. (498.22,138.43) .. controls (516.75,138.4) and (532.46,136.78) .. (537.93,134.56) ;
\draw    (463.71,98.09) -- (463.71,130) ;
\draw    (467.71,98.09) -- (467.71,130) ;
\draw    (497.72,98.26) -- (498.21,132.9) ;
\draw    (530.71,97.37) -- (530.71,130.72) ;
\draw    (526.71,97.37) -- (526.71,130.72) ;
\draw  [fill={rgb, 255:red, 255; green, 255; blue, 255 }  ,fill opacity=1 ] (481.34,106.88) -- (514.59,106.88) -- (514.59,124.28) -- (481.34,124.28) -- cycle ;
\draw   [color={rgb, 255:red, 128; green, 128; blue, 128 }  ,draw opacity=1 ] (464.96,143.05) -- (534.86,143.05) -- (534.86,178.58) -- (464.96,178.58) -- cycle ;
\draw    (421.53,91.47) -- (421.53,186.35) ;
\draw    (388.95,91.47) -- (388.95,186.35) ;
\draw    (344,91) -- (343.48,150.06) ;
\draw    (279.69,92.09) .. controls (279.69,112.04) and (279.69,100.82) .. (279.69,119.52) .. controls (279.69,138.22) and (298.02,136.97) .. (298.02,153.18) .. controls (298.02,169.39) and (298.02,163.15) .. (298.02,185.59) ;
\draw    (386.92,110.79) .. controls (386.24,136.35) and (334.66,129.49) .. (315.66,130.74) .. controls (296.66,131.98) and (281.05,138.84) .. (281.05,158.16) .. controls (281.05,177.49) and (281,168.35) .. (281,186) ;
\draw  [fill={rgb, 255:red, 255; green, 255; blue, 255 }  ,fill opacity=1 ] (383.52,108.3) -- (392.35,108.3) -- (392.35,113.28) -- (383.52,113.28) -- cycle ;
\draw    (331.27,151.31) .. controls (331.27,173.13) and (310.23,168.76) .. (310.23,185.59) ;
\draw    (338.73,151.31) .. controls (338.73,173.13) and (317.7,168.76) .. (317.7,185.59) ;
\draw    (356.38,151.31) .. controls (356.38,173.13) and (377.42,168.76) .. (377.42,185.59) ;
\draw    (348.91,151.31) .. controls (348.91,173.13) and (369.95,168.76) .. (369.95,185.59) ;
\draw  [fill={rgb, 255:red, 255; green, 255; blue, 255 }  ,fill opacity=1 ] (326.86,144.45) -- (360.11,144.45) -- (360.11,155.67) -- (326.86,155.67) -- cycle ;

\draw (484,109) node [anchor=north west][inner sep=0.75pt]  [font=\tiny] [align=left] {$\displaystyle d_{x_{i}}^{m_{i} -1}$};
\draw (475.33,150.58) node [anchor=north west][inner sep=0.75pt]  [color={rgb, 255:red, 155; green, 155; blue, 155 }  ,opacity=1 ] [align=left] {$\displaystyle T_{\mathscr{A} ,m;n}$};
\draw (427.81,133) node [anchor=north west][inner sep=0.75pt]   [align=left] {$\displaystyle \otimes $};
\draw (310,133) node [anchor=north west][inner sep=0.75pt]   [align=left] {$\displaystyle \cdots $};
\draw (358,133) node [anchor=north west][inner sep=0.75pt]   [align=left] {$\displaystyle \cdots $};
\draw (334,172) node [anchor=north west][inner sep=0.75pt]   [align=left] {$\displaystyle \cdots $};
\draw (270.03,80.15) node [anchor=north west][inner sep=0.75pt]  [font=\tiny] [align=left] {$\displaystyle k( m_{1}) \ \ \ \ \ \ \ \ k( m_{j}) \ \ \ \ \  k( m_{i} -1)  k( m_{t})$};
\draw (265,189) node [anchor=north west][inner sep=0.75pt]  [font=\tiny] [align=left] {$\displaystyle k(-1) \ \ \ \ \ \ \ \ \ \ \ \ k( n) \ \ \ \ \ \ \ \ k( m_{i}) \ \ k( m_{t})$};
\draw (396,133) node [anchor=north west][inner sep=0.75pt]   [align=left] {$\displaystyle \cdots $};
\draw (239.21,117) node [anchor=north west][inner sep=0.75pt]  [font=\normalsize] [align=left] {$\displaystyle \sum _{i >j}$};
\draw (216.71,131.05) node [anchor=north west][inner sep=0.75pt]   [align=left] {$\displaystyle +$};

\end{tikzpicture}}
	\end{equation}
	We look first at the second term in \eqref{eq:lemma 2, side 1}. 
	For each $m_j$ let $f^{m_j}:=f|_{x_j^{m_j}} : x_j^{m_j} \to \oplus_{n_1+\dots+n_l=m_j} y_1^{n_1}\otimes\dots\otimes y_l^{n_l}$,  and for  $1\leq k \leq l$
	let $f_{n}^{m_j}$ denote the projection of $f^{m_j}$ to the summand $y_1^{n_1}\otimes \dots \otimes y_l^{n_l}$.
	Then we can picture the factor in  $\Bord_{\msc{A}}^{\red}$ appearing in the $k$-th summand as 
	\begin{equation*}
		\scalebox{0.8}{
			\tikzset{every picture/.style={line width=0.5pt}} 
			
			\begin{tikzpicture}[x=0.75pt,y=0.75pt,yscale=-1,xscale=1]
				
				\draw 
				(75.62,990.1) -- (75.77,1053.76) ;
				\draw  
				 (60,971.33) -- (91.83,971.33) -- (91.83,989.91) -- (60,989.91) -- cycle ;
				\draw 
				   (76.7,952.4) -- (76.66,970.71) ;
				\draw  
				 (54.1,1054.26) -- (97.6,1054.26) -- (97.6,1071.26) -- (54.1,1071.26) -- cycle ;
				\draw 
				  (41.65,1093.34) .. controls (41.24,1086.4) and (45.55,1084.7) .. (51.69,1082.49) .. controls (57.82,1080.28) and (63.55,1079.33) .. (62.33,1071.76) ;
				\draw 
				   (101.86,1093.73) .. controls (101.45,1086.79) and (95.88,1083.75) .. (91.79,1082.49) .. controls (87.7,1081.23) and (80.33,1078.39) .. (80.74,1072.08) ;
				\draw
				   (49.64,1093.54) .. controls (49.23,1086.59) and (53.32,1085.33) .. (59.46,1083.12) .. controls (65.6,1080.91) and (71.33,1079.97) .. (70.1,1072.39) ;
				\draw 
				   (110,1093.54) .. controls (109.59,1086.59) and (103.66,1083.44) .. (99.56,1082.17) .. controls (95.47,1080.91) and (88.11,1078.07) .. (88.52,1071.76) ;
				\draw    (132.43,953.84) .. controls (134.82,999.67) and (129.24,1086.29) .. (138.76,1111.09) ;
				\draw    (22.1,954.41) .. controls (20.07,1004.23) and (16.88,1094.65) .. (28.67,1113.72) ;
				\draw  [color={rgb, 255:red, 128; green, 128; blue, 128 }  ,draw opacity=1 ] (29.43,997.24) -- (127.4,997.24) -- (127.4,1106.25) -- (29.43,1106.25) -- cycle ;
				\draw  [color={rgb, 255:red, 255; green, 255; blue, 255 }  ,draw opacity=1 ][line width=3] [line join = round][line cap = round] (92.03,1093.59) .. controls (92.03,1095.77) and (92.03,1097.96) .. (92.03,1100.15) ;
				\draw  [color={rgb, 255:red, 255; green, 255; blue, 255 }  ,draw opacity=1 ][line width=3] [line join = round][line cap = round] (95.62,1126.41) .. controls (95.85,1130) and (96.49,1133.49) .. (97.15,1136.92) ;
				\draw    (22.1,954.41) .. controls (22.56,930.92) and (132.7,931.86) .. (132.43,953.84) ;
				\draw    (28.67,1113.72) .. controls (36,1135.68) and (138.51,1135.44) .. (138.76,1111.09) ;
				\draw [color={rgb, 255:red, 155; green, 155; blue, 155 }  ,draw opacity=1 ] [dotted]  (28.67,1113.72) .. controls (30.43,1108.33) and (71.07,1109.85) .. (83.82,1109.85) .. controls (96.57,1109.85) and (129.24,1109.85) .. (139,1113.14) ;
				\draw [color={rgb, 255:red, 155; green, 155; blue, 155 }  ,draw opacity=1 ] [dotted]  (22.1,954.41) .. controls (25.96,965.95) and (60.71,963.95) .. (81.43,963.95) .. controls (102.15,963.95) and (132.07,963.34) .. (132.19,951.79) ;
				
				\draw (61,972) node [anchor=north west][inner sep=0.75pt]  [font=\tiny]  {$d_{x_j}^{m_{j}-1}$};
				\draw (65,1054.68) node [anchor=north west][inner sep=0.75pt]  [font=\tiny]  {$f_{n}^{m_j}$};
			\end{tikzpicture}
		}
	\end{equation*}
	Same as in the previous lemma, we move the coupon labelled by the differential in the picture above from the in-boundary collar to the out-boundary collar of $M$, see \eqref{eq:multivalent node}. We surround the $j$-th ribbon in $M$ by a small tubular neighborhood; sliding down the coupon gives us the second equality in \eqref{eq:multivalent node}. Since $f$ is a cochain map, it satisfies
	\[f^{m_j}_{n}d_{x_j}^{m_j-1}= \sum\limits_{k-1}^l (-1)^{n_1+\dots+n_{k-1}}
	(id\ot d_{y_k}^{n_k-1}\ot id) f^{m_j-1}_{n-\delta_k},\]
	and so by linearity of $k\Bord_{\msc{A}}^{\red}$ we get the third equality in \eqref{eq:multivalent node}.
	\begin{equation}\label{eq:multivalent node}
		\scalebox{0.9}{
			\tikzset{every picture/.style={line width=0.5pt}} 
}
	\end{equation*}
	in $k\Str_{\opn{Vect}^{\mathbb Z}}^{\red}$. It follows that Equations \eqref{eq:lemma 2, side 1}=\eqref{eq:lemma 2, also side 1} and \eqref{eq:lemma 2, side 2} are equal. Thus the statement follows from applying the functor $ev_{\opn{Vect}^{\mathbb Z}}\cdot Z$ to the sum over all $m:\{1, \dots, t\} \to \mathbb Z$ and all $n_1+\dots +n_l=m_j$ of  \eqref{eq:lemma 2, side 1} and \eqref{eq:lemma 2, side 2}.
\end{proof}

\subsection{Commuting with cups and caps}
In this section we consider bordisms $(M,T)$ in $\Ch(\msc{A})$ such that $T$ has ``cups" and or ``caps", i.e., couponless ribbons attached on both ends to either the incoming or the outgoing boundary, and show that $Z^*(M,T)\in \Ch(\opn{Vect}) $.

\begin{lemma}\label{lemma 3}
Let $(M,T):\Sigma_{\vec{x}}\to\Sigma'_{\vec{y}}$ be a connected bordism in  $\opn{Bord}_{\opn{Ch}(\msc{A})}$ such
that $T$ consists (only) of couponless ribbons either traveling from $\Sigma$ to itself, or from $\Sigma$ to $\Sigma'$.  (So, $T$ consists of a collection of cups and a permutation.)  Then 
$Z^* (M,T)[1] \ d_{\Sigma_{\vec{x}}} = d_{\Sigma'_{\vec{y}}}\  Z^* (M,T).$ 
\end{lemma}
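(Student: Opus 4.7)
Following the template of Lemmas~\ref{lemma 1} and \ref{lemma 2}, the plan is to reduce to a tractable factorisation and then match both sides of the cochain identity by graphical calculus. First, Lemma~\ref{lem:d_pos} reduces the problem to the case in which $\Sigma_{\vec{x}}$ and $\Sigma'_{\vec{y}}$ are positively marked. Using the collar along $\Sigma$, I would then factor $(M,T)$ as a composition $(M_p,T_p)\circ(M_c,T_c)$, where $(M_c,T_c):\Sigma_{\vec{x}}\to \Sigma''_{\vec{x}''}$ is confined to a collar of $\Sigma$ and carries only the cup arcs together with trivial parallel ribbons for the markings that continue on to $\Sigma'$, and $(M_p,T_p):\Sigma''_{\vec{x}''}\to\Sigma'_{\vec{y}}$ is a permutation bordism joining the uncupped markings of $\Sigma$ to the markings of $\Sigma'$. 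By Lemma~\ref{lemma 1}, $Z^*(M_p,T_p)$ already intertwines the differentials, so functoriality of $\underline Z$ reduces the problem to the cup-only piece $(M_c,T_c)$.

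For $(M_c,T_c)$ I would compute both $d_{\Sigma''_{\vec{x}''}}\, Z^*(M_c,T_c)$ and $Z^*(M_c,T_c)[1]\, d_{\Sigma_{\vec{x}}}$ as sums in $(k\opn{Str}_{\opn{Vect}^{\mathbb Z}}^{\red}\boxtimes k\Bord^{\red}_{\msc{A}})^{\opn{add}}$, following the pattern of \eqref{eq: differential diagram}. Differential coupons placed on the uncupped markings appear on both sides and are matched by a straight-line slide along the parallel ribbons, exactly as in \eqref{eq:Lemma A.1, diff}. The new phenomenon is the contribution of the cups. For a cup with endpoints at positions $i,j$ carrying compatible labels $x$ and $x^\ast$, the right-hand side produces two residual terms: one placing the coupon $d_x^{m_i}$ on the $x$-strand, the other placing $d_{x^\ast}^{m_j}$ on the $x^\ast$-strand, each accompanied by a new $k(-1)$ in the $\opn{Vect}^{\mathbb Z}$-factor. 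Sliding the first coupon along the cup arc to the opposite strand converts it to the dual map, which differs from $d_{x^\ast}$ by the sign built into the Koszul differential on a dual complex in $\opn{Ch}(\msc{A})$. Combined with the sign picked up by braiding the accompanying $k(-1)$ past the neighbouring string factors via relations (V5) and (V6), the two terms cancel. This vanishing is precisely the bordism-level manifestation of the fact that $\opn{ev}:x^\ast\ot x\to\1$ is a cochain map in $\opn{Ch}(\msc{A})$.

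The central technical obstacle is the sign bookkeeping in this cancellation: one must verify that the $(-1)^{m}$-type sign extracted from the $\opn{Vect}^{\mathbb Z}$-factor matches exactly the sign attached to the differential on a dual complex in $\opn{Ch}(\msc{A})$. The computation parallels the sign tracking carried out at the end of the proof of Lemma~\ref{lemma 1}, and I expect it to be the most delicate part of the argument.
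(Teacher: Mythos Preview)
Your proposal is correct and follows essentially the same strategy as the paper. Both arguments reduce via Lemma~\ref{lem:d_pos} and a permutation step (you factor it out explicitly as $(M_p,T_p)$ via Lemma~\ref{lemma 1}; the paper composes with permutations to normalize the marking indices and then computes with the parallel strands still present), and both isolate the single-cup contribution as the essential case. The heart of the computation is identical: the two differential terms landing on the cup endpoints cancel because sliding one coupon around the arc dualizes it via relation (U1), the identity $d_{w^\ast}|_{(w^l)^\ast}=-(-1)^l(d_w|_{w^l})^\ast$ supplies one sign, and relation (V5) in $k\Str^{\red}_{\opn{Vect}^{\mathbb Z}}$ supplies the matching $(-1)^{m_j}$. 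One small point of bookkeeping: the paper only forces the \emph{non-cup} markings to be positive (invoking ``a variant of Lemma~\ref{lem:d_pos}''), so the cup endpoints retain their natural mixed framings and the differential at one end is written as $d_{x_j^\ast}^{-m_j}$; your choice to flip \emph{all} markings positive is equally valid but shifts where the dualization appears in the computation.
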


\begin{proof}
	By composing with a permutation if necessary, we may assume that $\Sigma_{\vec{x}}$ has marking set $I=\{1,\dots,t\}$, that $\Sigma_{\vec{y}}$ has marking set $\{1,\dots,t\}\setminus\{r_1,\dots,r_{2l}\}$, and that those ribbons which travel from $\Sigma$ to $\Sigma'$ travel from the $i$-th marking to the $i$-th marking.  Furthermore, by a variant of Lemma \ref{lem:d_pos}, we may assume that all of the markings in the subset $\{1,\dots,t\}\setminus\{r_1,\dots,r_{2l}\}\to \Sigma$ are positively framed and that all ribbons connecting points in the set $\{r_1,\dots, r_{2l}\}$ travel from a higher index to a lower index.
	\par
	
	 Now, we prove the statement for $T$ with a single cup; the proof for multiple cups instead is analogous but more cumbersome.  We represent $(p\boxtimes 1)\Delta(M,T)$ by 
	\begin{equation}
		\scalebox{0.9}{
			\tikzset{every picture/.style={line width=0.5pt}} 
			
			\begin{tikzpicture}[x=0.75pt,y=0.75pt,yscale=-1,xscale=1]
				
				\draw    (40.82,40.1) -- (41.09,109.6) ;
				\draw    (169.71,40.5) -- (170.89,114.15) ;
				\draw  [draw opacity=0] (140.39,41.04) .. controls (140.25,71.18) and (125.91,95.51) .. (107.92,95.75) .. controls (89.76,95.99) and (74.69,71.6) .. (74,41.1) -- (107.18,39.76) -- cycle ; \draw   (140.39,41.04) .. controls (140.25,71.18) and (125.91,95.51) .. (107.92,95.75) .. controls (89.76,95.99) and (74.69,71.6) .. (74,41.1) ;  
				\draw    (327.2,29.27) .. controls (329.6,53.4) and (324,99) .. (333.55,112.06) ;
				\draw    (216.44,29.58) .. controls (214.4,55.8) and (211.2,103.4) .. (223.03,113.44) ;
				\draw  [color={rgb, 255:red, 128; green, 128; blue, 128 }  ,draw opacity=1 ] (223.21,42.37) -- (321.56,42.37) -- (321.56,99.75) -- (223.21,99.75) -- cycle ;
				\draw  [color={rgb, 255:red, 255; green, 255; blue, 255 }  ,draw opacity=1 ][line width=3] [line join = round][line cap = round] (286.65,102.84) .. controls (286.65,103.99) and (286.65,105.14) .. (286.65,106.3) ;
				\draw  [color={rgb, 255:red, 255; green, 255; blue, 255 }  ,draw opacity=1 ][line width=3] [line join = round][line cap = round] (290.25,120.12) .. controls (290.48,122.01) and (291.12,123.85) .. (291.79,125.65) ;
				\draw    (216.44,29.58) .. controls (216.9,17.21) and (327.48,17.71) .. (327.2,29.27) ;
				\draw    (223.03,113.44) .. controls (230.4,125) and (333.3,124.88) .. (333.55,112.06) ;
				\draw [color={rgb, 255:red, 155; green, 155; blue, 155 }  ,draw opacity=1 ] [dotted]  (223.03,113.44) .. controls (224.8,110.6) and (265.6,111.4) .. (278.4,111.4) .. controls (291.2,111.4) and (324,111.4) .. (333.8,113.14) ;
				\draw [color={rgb, 255:red, 155; green, 155; blue, 155 }  ,draw opacity=1 ] [dotted]  (216.44,29.58) .. controls (220.31,35.65) and (255.2,34.6) .. (276,34.6) .. controls (296.8,34.6) and (326.84,34.28) .. (326.96,28.19) ;
				
				\draw (28,28) node [anchor=north west][inner sep=0.75pt]  [font=\tiny]  {$k( m_{1}) \ \ k( m_{j})   \  \ \ \ \ \ \ \ \ \ k(m_k) \ \ k( m_{t})$};
				\draw (30.83,112) node [anchor=north west][inner sep=0.75pt]  [font=\tiny]  {$k( m_{1}) \ \ \ \ \ \ \ \ \ \ \ \ \  \ \ \ \ \ \ \ \ \ \ \ \ \ \ \ \ k( m_{t})$};
				\draw (-30,60) node [anchor=north west][inner sep=0.75pt]    {$\sum\limits_{m:\{1, \dots, t\}\to \mathbb{Z}} \ \ \ \cdots \ \ \ \ \ \ \ \cdots \ \ \ \ \ \ \cdots \ \ \ \ \ \otimes $};
				\draw (225,45) node [anchor=north west][inner sep=0.75pt]  [color={rgb, 255:red, 128; green, 128; blue, 128 }  ,opacity=1 ]  {$T_{\msc{A} ,m}$};
				\draw (340,106.65) node [anchor=north west][inner sep=0.75pt]  [color={rgb, 255:red, 0; green, 0; blue, 0 }  ,opacity=1 ]  {$\Sigma' _{\vec{y}^{m}}$};
				\draw (335,25) node [anchor=north west][inner sep=0.75pt]  [color={rgb, 255:red, 0; green, 0; blue, 0 }  ,opacity=1 ]  {$\Sigma _{\vec{x}^{m}}$};
				\draw (340,40) node {,};
			\end{tikzpicture}
		}
	\end{equation}
where the sum is over all multi-indices with $m_j=m_k$.  Composing  $\tilde d^m_{\Sigma_{\vec{x}}}$  \eqref{eq: differential diagram} with the above for fixed $m$ we obtain
	\begin{equation*}
	\scalebox{0.9}{
		\tikzset{every picture/.style={line width=0.5pt}} 
		\begin{tikzpicture}[x=0.75pt,y=0.75pt,yscale=-1,xscale=1]
			
			\draw   (104.92,201.22) -- (111.58,201.22) -- (111.58,205.74) -- (104.92,205.74) -- cycle ;
			\draw    (106.82,206) .. controls (106.82,238.94) and (39.34,207.89) .. (39.34,239.54) ;
			\draw    (39.53,192.27) .. controls (39.53,184.85) and (38.13,212.23) .. (42.1,220.42) .. controls (46.08,228.62) and (56.2,225.59) .. (55.06,239.54) ;
			\draw    (108.22,190.02) -- (108.25,200.54) ;
			\draw    (109.44,205.4) -- (109.48,241.93) ;
			\draw    (39.34,239.54) -- (39.4,280) ;
			\draw    (55.06,239.54) -- (55.4,280.4) ;
			\draw    (109.48,241.93) -- (109.27,280.4) ;
			\draw  [color={rgb, 255:red, 255; green, 255; blue, 255 }  ,draw opacity=1 ][line width=3] [line join = round][line cap = round] (276.32,287.78) .. controls (276.5,289.06) and (277.01,290.32) .. (277.54,291.54) ;
			\draw    (158.73,190) -- (159.53,279.6) ;
			\draw   (341.64,202.72) -- (348.31,202.72) -- (348.31,207.24) -- (341.64,207.24) -- cycle ;
			\draw    (343.55,207.5) .. controls (343.55,240.44) and (312.73,208.72) .. (312.73,240.38) ;
			\draw    (310.92,193.11) .. controls (310.92,185.68) and (309.52,213.06) .. (313.5,221.25) .. controls (317.47,229.45) and (327.6,226.42) .. (326.45,240.38) ;
			\draw    (344.95,191.52) -- (344.98,202.04) ;
			\draw    (346.4,207.53) -- (346.5,217.55) ;
			\draw    (312.73,240.38) -- (312.79,280.83) ;
			\draw    (326.45,240.38) -- (326.79,281.23) ;
			\draw    (414.79,191.83) -- (415.59,281.43) ;
			\draw  [draw opacity=0] (139.06,219.97) .. controls (138.91,250.1) and (124.58,274.44) .. (106.58,274.68) .. controls (88.43,274.91) and (73.35,250.53) .. (72.67,220.03) -- (105.85,218.69) -- cycle ; \draw   (139.06,219.97) .. controls (138.91,250.1) and (124.58,274.44) .. (106.58,274.68) .. controls (88.43,274.91) and (73.35,250.53) .. (72.67,220.03) ;  
			\draw    (72.67,189.33) -- (72.67,220.03) ;
			\draw    (139.05,189.27) -- (139.06,219.97) ;
			\draw  [draw opacity=0] (394.39,214.73) .. controls (394.4,214.99) and (394.4,215.24) .. (394.41,215.5) .. controls (394.81,246.7) and (384.42,272.14) .. (371.18,272.31) .. controls (358.04,272.49) and (347.04,247.68) .. (346.49,216.78) -- (370.44,215.81) -- cycle ; \draw   (394.39,214.73) .. controls (394.4,214.99) and (394.4,215.24) .. (394.41,215.5) .. controls (394.81,246.7) and (384.42,272.14) .. (371.18,272.31) .. controls (358.04,272.49) and (347.04,247.68) .. (346.49,216.78) ;  
			\draw    (394.5,194.05) -- (394.39,214.73) ;
			\draw  [color={rgb, 255:red, 255; green, 255; blue, 255 }  ,draw opacity=1 ][line width=3] [line join = round][line cap = round] (224.23,185.91) .. controls (224.41,187.19) and (224.92,188.45) .. (225.45,189.67) ;
			\draw    (271.83,185.16) .. controls (267.67,199.31) and (277.17,271.31) .. (272.82,282.66) ;
			\draw    (180.56,185.41) .. controls (178.88,207.16) and (179.27,271.24) .. (186.13,282.86) ;
			\draw  [color={rgb, 255:red, 128; green, 128; blue, 128 }  ,draw opacity=1 ] (188.2,229.85) -- (266.67,229.85) -- (266.67,277.44) -- (188.2,277.44) -- cycle ;
			\draw  [color={rgb, 255:red, 255; green, 255; blue, 255 }  ,draw opacity=1 ][line width=3] [line join = round][line cap = round] (238.41,246.17) .. controls (238.41,247.13) and (238.41,248.09) .. (238.41,249.04) ;
			\draw    (180.56,185.41) .. controls (180.93,175.15) and (272.06,175.56) .. (271.83,185.16) ;
			\draw [color={rgb, 255:red, 0; green, 0; blue, 0 }  ,draw opacity=1 ]   (179.67,217.31) .. controls (185.74,226.89) and (269.96,227.44) .. (270.17,216.81) ;
			\draw [color={rgb, 255:red, 155; green, 155; blue, 155 }  ,draw opacity=1 ] [dash pattern={on 0.84pt off 2.51pt}]  (180.67,219.81) .. controls (182.12,217.45) and (219.62,218.31) .. (230.17,218.31) .. controls (240.71,218.31) and (264.59,217.36) .. (272.67,218.81) ;
			\draw [color={rgb, 255:red, 155; green, 155; blue, 155 }  ,draw opacity=1 ] [dash pattern={on 0.84pt off 2.51pt}]  (180.56,185.41) .. controls (183.75,190.45) and (210.1,189.18) .. (227.24,189.18) .. controls (244.38,189.18) and (271.53,189.31) .. (271.63,184.26) ;
			\draw    (186.13,282.86) .. controls (192.87,293.24) and (272.58,293.29) .. (272.82,282.66) ;
			\draw [color={rgb, 255:red, 0; green, 0; blue, 0 }  ,draw opacity=1 ]   (199.58,190.18) -- (199.59,217.04) ;
			\draw [color={rgb, 255:red, 0; green, 0; blue, 0 }  ,draw opacity=1 ]   (195.29,190.14) -- (195.31,217) ;
			\draw [color={rgb, 255:red, 0; green, 0; blue, 0 }  ,draw opacity=1 ]   (259.94,189.77) -- (259.95,216.63) ;
			\draw [color={rgb, 255:red, 0; green, 0; blue, 0 }  ,draw opacity=1 ]   (256.05,189.6) -- (256.07,216.46) ;
			\draw [color={rgb, 255:red, 155; green, 155; blue, 155 }  ,draw opacity=1 ] [dash pattern={on 0.84pt off 2.51pt}]  (188.32,283.92) .. controls (189.26,283.5) and (211.36,283.09) .. (227.82,283.09) .. controls (244.27,283.09) and (262.8,282.89) .. (272.01,284.33) ;
			\draw  [color={rgb, 255:red, 0; green, 0; blue, 0 }  ,draw opacity=1 ][line width=3] [line join = round][line cap = round] (241.49,199.77) .. controls (241.74,201.45) and (242.39,203.08) .. (243.08,204.68) ;
			\draw  [color={rgb, 255:red, 0; green, 0; blue, 0 }  ,draw opacity=1 ][fill={rgb, 255:red, 255; green, 255; blue, 255 }  ,fill opacity=1 ] (211.67,196.66) -- (245.67,196.66) -- (245.67,213.12) -- (211.67,213.12) -- cycle ;
			\draw [color={rgb, 255:red, 0; green, 0; blue, 0 }  ,draw opacity=1 ]   (227.24,188.18) -- (227.25,196.8) ;
			\draw [color={rgb, 255:red, 0; green, 0; blue, 0 }  ,draw opacity=1 ]   (227.44,213.9) -- (227.46,219.16) ;
			\draw  [color={rgb, 255:red, 255; green, 255; blue, 255 }  ,draw opacity=1 ][line width=3] [line join = round][line cap = round] (485.23,188.91) .. controls (485.41,190.19) and (485.92,191.45) .. (486.45,192.67) ;
			\draw    (532.83,188.16) .. controls (528.67,202.31) and (538.17,274.31) .. (533.82,285.66) ;
			\draw    (441.56,188.41) .. controls (439.88,210.16) and (440.27,274.24) .. (447.13,285.86) ;
			\draw  [color={rgb, 255:red, 128; green, 128; blue, 128 }  ,draw opacity=1 ] (449.2,232.85) -- (527.67,232.85) -- (527.67,280.44) -- (449.2,280.44) -- cycle ;
			\draw  [color={rgb, 255:red, 255; green, 255; blue, 255 }  ,draw opacity=1 ][line width=3] [line join = round][line cap = round] (499.41,249.17) .. controls (499.41,250.13) and (499.41,251.09) .. (499.41,252.04) ;
			\draw    (441.56,188.41) .. controls (441.93,178.15) and (533.06,178.56) .. (532.83,188.16) ;
			\draw [color={rgb, 255:red, 0; green, 0; blue, 0 }  ,draw opacity=1 ]   (440.67,220.31) .. controls (446.74,229.89) and (530.96,230.44) .. (531.17,219.81) ;
			\draw [color={rgb, 255:red, 155; green, 155; blue, 155 }  ,draw opacity=1 ] [dash pattern={on 0.84pt off 2.51pt}]  (441.67,222.81) .. controls (443.12,220.45) and (480.62,221.31) .. (491.17,221.31) .. controls (501.71,221.31) and (525.59,220.36) .. (533.67,221.81) ;
			\draw [color={rgb, 255:red, 155; green, 155; blue, 155 }  ,draw opacity=1 ] [dash pattern={on 0.84pt off 2.51pt}]  (441.56,188.41) .. controls (444.75,193.45) and (471.1,192.18) .. (488.24,192.18) .. controls (505.38,192.18) and (532.53,192.31) .. (532.63,187.26) ;
			\draw    (447.13,285.86) .. controls (453.87,296.24) and (533.58,296.29) .. (533.82,285.66) ;
			\draw [color={rgb, 255:red, 0; green, 0; blue, 0 }  ,draw opacity=1 ]   (460.58,193.18) -- (460.59,220.04) ;
			\draw [color={rgb, 255:red, 0; green, 0; blue, 0 }  ,draw opacity=1 ]   (456.29,193.14) -- (456.31,220) ;
			\draw [color={rgb, 255:red, 0; green, 0; blue, 0 }  ,draw opacity=1 ]   (520.94,192.77) -- (520.95,219.63) ;
			\draw [color={rgb, 255:red, 0; green, 0; blue, 0 }  ,draw opacity=1 ]   (517.05,192.6) -- (517.07,219.46) ;
			\draw [color={rgb, 255:red, 155; green, 155; blue, 155 }  ,draw opacity=1 ] [dash pattern={on 0.84pt off 2.51pt}]  (449.32,286.92) .. controls (450.26,286.5) and (472.36,286.09) .. (488.82,286.09) .. controls (505.27,286.09) and (523.8,285.89) .. (533.01,287.33) ;
			\draw  [color={rgb, 255:red, 0; green, 0; blue, 0 }  ,draw opacity=1 ][line width=3] [line join = round][line cap = round] (502.49,202.77) .. controls (502.74,204.45) and (503.39,206.08) .. (504.08,207.68) ;
			\draw  [color={rgb, 255:red, 0; green, 0; blue, 0 }  ,draw opacity=1 ][fill={rgb, 255:red, 255; green, 255; blue, 255 }  ,fill opacity=1 ] (472.67,196) -- (506.67,196) -- (506.67,216.12) -- (472.67,216.12) -- cycle ;
			\draw [color={rgb, 255:red, 0; green, 0; blue, 0 }  ,draw opacity=1 ]   (488.24,189) -- (488.25,196) ;
			\draw [color={rgb, 255:red, 0; green, 0; blue, 0 }  ,draw opacity=1 ]   (488.44,216.9) -- (488.46,222.16) ;
			
			\draw (30,177.93) node [anchor=north west][inner sep=0.75pt]  [font=\tiny]  {$k( m_{1}) \ \ k( m_{j}) \ \ \  k( m_{i}) \ \ \ \ \ \  k( m_{t}) \ $};
			\draw (20,281.43) node [anchor=north west][inner sep=0.75pt]  [font=\tiny]  {$k(-1) \ k( m_{1})  \ \ \ \ \ k( m_{i} +1) \ \ \ \ k( m_{t}) \ $};
			\draw (305.02,178.77) node [anchor=north west][inner sep=0.75pt]  [font=\tiny]  {$k( m_{1}) \ \ k(m_j)    \ \ \ \ \ \ \ \ \ \  \ k( m_{t})$};
			\draw (295,282.27) node [anchor=north west][inner sep=0.75pt]  [font=\tiny]  {$k(-1) \ k( m_{1}) \ \ \ \  \ \ \ \ \ \ \ \ \ \ \ k( m_{t}) \ $};
			\draw (10,225) node [anchor=north west][inner sep=0.75pt]  [font=\normalsize]  {$\sum\limits_{i\ne j, k} \ \ \ \ \cdots  \ \ \cdots \ \ \ \ \cdots \   \cdots\  \otimes \ \ \ \ \ \ \ \ \ \ \ \ \ \ \ \ \ \ \ \ \ \ \ \ \  +\ \ \ \ \ \ \cdots \ \ \ \   \cdots \ \  \ \cdots \ \ \otimes \ \ \ $};
			
			\draw (189,233) node [anchor=north west][inner sep=0.75pt]  [color={rgb, 255:red, 128; green, 128; blue, 128 }  ,opacity=1 ]  {$T_{\msc{A} ,m+\delta_i}$};
			\draw (215,197.29) node [anchor=north west][inner sep=0.75pt]  [font=\tiny]  {$d_{x_i}^{m_{i}}$};
			\draw (450,237) node [anchor=north west][inner sep=0.75pt]  [color={rgb, 255:red, 128; green, 128; blue, 128 }  ,opacity=1 ]  {$T_{\msc{A} ,m-\delta_j}$};
			\draw (477,195.5) node [anchor=north west][inner sep=0.75pt]  [font=\tiny]  {$d_{x_j^{\ast}}^{-m_{j}}$};
			
		\end{tikzpicture}
	}
\end{equation*}
\begin{equation}\label{eq:lemma 3, side 1}
	\scalebox{0.9}{
		\tikzset{every picture/.style={line width=0.5pt}} 
		
		\begin{tikzpicture}[x=0.75pt,y=0.75pt,yscale=-1,xscale=1]
			
			\draw   (261.04,339.38) -- (267.71,339.38) -- (267.71,343.9) -- (261.04,343.9) -- cycle ;
			\draw    (262.95,344.16) .. controls (262.95,377.11) and (184.13,347.39) .. (184.13,379.04) ;
			\draw    (182.32,331.77) .. controls (182.32,324.35) and (180.92,351.73) .. (184.9,359.92) .. controls (188.87,368.12) and (199,365.09) .. (197.85,379.04) ;
			\draw    (264.35,328.18) -- (264.38,338.7) ;
			\draw    (265.8,344.2) -- (265.9,354.21) ;
			\draw    (184.13,379.04) -- (184.19,419.5) ;
			\draw    (197.85,379.04) -- (198.19,419.9) ;
			\draw    (286.19,330.5) -- (286.99,420.1) ;
			\draw  [draw opacity=0] (265.79,353.4) .. controls (265.8,353.66) and (265.8,353.91) .. (265.81,354.17) .. controls (266.21,385.37) and (255.82,410.81) .. (242.58,410.98) .. controls (229.44,411.15) and (218.44,386.35) .. (217.89,355.45) -- (241.84,354.48) -- cycle ; \draw   (265.79,353.4) .. controls (265.8,353.66) and (265.8,353.91) .. (265.81,354.17) .. controls (266.21,385.37) and (255.82,410.81) .. (242.58,410.98) .. controls (229.44,411.15) and (218.44,386.35) .. (217.89,355.45) ;  
			\draw    (218.33,329.33) -- (217.89,355.45) ;
			\draw  [color={rgb, 255:red, 255; green, 255; blue, 255 }  ,draw opacity=1 ][line width=3] [line join = round][line cap = round] (358.56,326.24) .. controls (358.75,327.53) and (359.25,328.78) .. (359.78,330.01) ;
			\draw [color={rgb, 255:red, 0; green, 0; blue, 0 }  ,draw opacity=1 ]   (406.17,325.49) .. controls (402,339.64) and (411.5,411.64) .. (407.15,423) ;
			\draw [color={rgb, 255:red, 0; green, 0; blue, 0 }  ,draw opacity=1 ]   (314.89,325.74) .. controls (313.21,347.49) and (313.6,411.57) .. (320.46,423.19) ;
			\draw  [color={rgb, 255:red, 128; green, 128; blue, 128 }  ,draw opacity=1 ] (322.53,370.18) -- (401,370.18) -- (401,417.78) -- (322.53,417.78) -- cycle ;
			\draw  [color={rgb, 255:red, 255; green, 255; blue, 255 }  ,draw opacity=1 ][line width=3] [line join = round][line cap = round] (372.75,386.51) .. controls (372.75,387.46) and (372.75,388.42) .. (372.75,389.37) ;
			\draw    (314.89,325.74) .. controls (315.27,315.48) and (406.39,315.9) .. (406.17,325.49) ;
			\draw [color={rgb, 255:red, 0; green, 0; blue, 0 }  ,draw opacity=1 ]   (314,357.64) .. controls (320.07,367.23) and (404.29,367.77) .. (404.5,357.14) ;
			\draw [color={rgb, 255:red, 155; green, 155; blue, 155 }  ,draw opacity=1 ] [dash pattern={on 0.84pt off 2.51pt}]  (315,360.14) .. controls (316.46,357.78) and (353.95,358.64) .. (364.5,358.64) .. controls (375.05,358.64) and (398.93,357.7) .. (407,359.14) ;
			\draw [color={rgb, 255:red, 155; green, 155; blue, 155 }  ,draw opacity=1 ] [dash pattern={on 0.84pt off 2.51pt}]  (314.89,325.74) .. controls (318.08,330.78) and (344.43,329.51) .. (361.57,329.51) .. controls (378.71,329.51) and (405.87,329.64) .. (405.97,324.6) ;
			\draw    (320.46,423.19) .. controls (327.2,433.57) and (406.91,433.63) .. (407.15,423) ;
			\draw [color={rgb, 255:red, 0; green, 0; blue, 0 }  ,draw opacity=1 ]   (323.25,330.51) -- (323.26,357.37) ;
			\draw [color={rgb, 255:red, 0; green, 0; blue, 0 }  ,draw opacity=1 ]   (318.96,330.48) -- (318.97,357.34) ;
			\draw [color={rgb, 255:red, 0; green, 0; blue, 0 }  ,draw opacity=1 ]   (400.27,330.1) -- (400.29,356.97) ;
			\draw [color={rgb, 255:red, 0; green, 0; blue, 0 }  ,draw opacity=1 ]   (394.27,330.1) -- (394.29,356.97) ;
			\draw [color={rgb, 255:red, 155; green, 155; blue, 155 }  ,draw opacity=1 ] [dash pattern={on 0.84pt off 2.51pt}]  (322.66,424.25) .. controls (323.6,423.84) and (345.69,423.42) .. (362.15,423.42) .. controls (378.6,423.42) and (397.13,423.23) .. (406.34,424.67) ;
			\draw  [color={rgb, 255:red, 0; green, 0; blue, 0 }  ,draw opacity=1 ][line width=3] [line join = round][line cap = round] (370.83,340.1) .. controls (371.07,341.78) and (371.72,343.41) .. (372.41,345.01) ;
			\draw  [color={rgb, 255:red, 0; green, 0; blue, 0 }  ,draw opacity=1 ][fill={rgb, 255:red, 255; green, 255; blue, 255 }  ,fill opacity=1 ] (345.33,335.08) -- (377.33,335.08) -- (377.33,354) -- (345.33,354) -- cycle ;
			\draw [color={rgb, 255:red, 0; green, 0; blue, 0 }  ,draw opacity=1 ]   (361.57,329.51) -- (361.42,334.71) ;
			\draw [color={rgb, 255:red, 0; green, 0; blue, 0 }  ,draw opacity=1 ]   (360.98,353.43) -- (360.83,358.64) ;
			
			\draw (324,374) node [anchor=north west][inner sep=0.75pt]  [color={rgb, 255:red, 128; green, 128; blue, 128 }  ,opacity=1 ]  {$T_{\msc{A} ,m+\delta_k}$};
			\draw (350,337) node [anchor=north west][inner sep=0.75pt]  [font=\tiny]  {$d_{x_k}^{m_{k}}$};
			\draw (170,370) node [anchor=north west][inner sep=0.75pt]    {$+ \ \ \ \ \cdots \ \ \ \  \cdots \ \ \ \cdots \ \ \ \otimes$};
			\draw (173,317.43) node [anchor=north west][inner sep=0.75pt]  [font=\tiny]  {$k( m_{1}) \ \ \ \ \ \ \ \ \ \ \ \ \ k(m_k) \ \  k( m_{t})$};
			\draw (167,420.93) node [anchor=north west][inner sep=0.75pt]  [font=\tiny]  {$k(-1) \ k( m_{1}) \ \ \ \ \ \ \ \ \ \ \ \ \ \ \ \ \ k( m_{t}) \ $};
			
			\draw (420,385) node {,}; 
		\end{tikzpicture}
	}
\end{equation}
	where now $m_j=m_k+1$ necessarily.  We look at the last term in \eqref{eq:lemma 3, side 1}. Using the relations in $k\Bord_{\msc{A}}^{\red}$, 
	we slide the coupon labeled by $d$ through $M$ as follows (pictured in \eqref{eq: lemma 3/1}).
	At first the ribbon is labelled by $d_{x_k}^{m_k}$.  We slide this coupon to the left and reverse the orientation, while dualyzing the label, see relation (U1) in Section \ref{sect:sk_rel}. Then, using that $d_{w^*}|_{(w^l)^*} = - (-1)^l (d_w|_{w^l})^*$ for any cochain $w$ and index $l$, we obtain the last equality in \eqref{eq: lemma 3/1}. 
	\begin{equation}\label{eq: lemma 3/1}
		\scalebox{0.9}{
	\tikzset{every picture/.style={line width=0.5pt}} 
	

		}
	\end{equation}
	The statement then follows from applying the functor $ev_{\opn{Vect}^{\mathbb Z}}\cdot Z$ to  the sum over all $m:\{1,\dots, t\} \to \mathbb Z$ of the equality $\eqref{eq:lemma 3, side 1}=\eqref{eq:lemma 3 side 2}$. 
\end{proof}

\begin{lemma}\label{lemma 4}
	Let $(M,T):\Sigma_{\vec{x}}\to\Sigma'_{\vec{y}}$ be a connected bordism in  $\opn{Bord}_{\opn{Ch}(\msc{A})}$ such
that $T$ consists (only) of couponless ribbons either traveling from $\Sigma$ to $\Sigma'$, or from $\Sigma'$ to itself.  Then 
$Z^* (M,T)[1] \ d_{\Sigma_{\vec{x}}} = d_{\Sigma'_{\vec{y}}}\  Z^* (M,T).$ 
\end{lemma}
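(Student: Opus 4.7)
The plan is to mirror the argument of Lemma \ref{lemma 3}, with the roles of the incoming and outgoing boundaries interchanged. By Lemma \ref{lem:d_pos}, the internal composition relation (U2), and composition with a suitable permutation (to which Lemma \ref{lemma 1} applies), I would reduce to the case in which both $\Sigma_{\vec{x}}$ and $\Sigma'_{\vec{y}}$ are positively marked, every ribbon traveling from $\Sigma$ to $\Sigma'$ goes from the $i$-th marking to the $i$-th marking, and the caps at the outgoing boundary connect higher index to lower index. By induction on the number of caps, it then suffices to handle the case of a single cap pairing the $j$-th and $k$-th outgoing markings with $j<k$.

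Next I would compute both sides of the proposed identity at a fixed multi-index $m$ in $(k\Str_{\opn{Vect}^{\mathbb{Z}}}^{\red}\boxtimes k\Bord_{\msc{A}}^{\red})^{\opn{add}}$. Since every incoming marking in $\Sigma$ is the foot of an identity ribbon traveling up to $\Sigma'$, the sliding argument of \eqref{eq:Lemma A.1, diff} identifies $(p\boxtimes 1)\Delta(M,T)[1]\circ\tilde{d}^m_{\Sigma_{\vec{x}}}$ with the sum, over $1\le i\le t$, of terms whose differential coupon is pushed from the incoming boundary up to the corresponding outgoing marking. The analogous computation of $\tilde{d}^{m}_{\Sigma'_{\vec{y}}}\circ (p\boxtimes 1)\Delta(M,T)$ yields the same sum plus two additional terms, at $i=j$ and $i=k$, corresponding to the two ribbons making up the cap (subject to the cap constraint on $m$).

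The key step is to show that these two extra terms cancel. For the $i=k$ contribution, I would slide the differential coupon around the cap onto the $j$-strand. Relation (U1) reverses the orientation of the strand and dualizes the label, replacing $d_{x_k}^{m_k}$ by $d_{x_k^\ast}^{-m_k}$; the identity $d_{w^\ast}|_{(w^l)^\ast}=-(-1)^l(d_w|_{w^l})^\ast$, combined with the cap constraint $m_j=m_k$, then identifies the $i=k$ term with the $i=j$ term up to a sign $-(-1)^{m_j}$, exactly as in \eqref{eq: lemma 3/1}. Simultaneously, on the $\opn{Vect}^{\mathbb{Z}}$ side, the $k(-1)$-strand introduced by the differential must be slid across the cap via relation (V5), contributing the compensating sign $(-1)^{m_j}$ just as in \eqref{eq: lemma 3/2}. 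The two signs combine to $-1$, so the contributions cancel. Applying $ev_{\opn{Vect}^{\mathbb{Z}}}\cdot Z$ and summing over $m$ then delivers the desired equality $Z^\ast(M,T)[1]\,d_{\Sigma_{\vec{x}}}=d_{\Sigma'_{\vec{y}}}\,Z^\ast(M,T)$.

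The main obstacle is the sign bookkeeping, as in Lemma \ref{lemma 3}. The local moves at the cap are essentially the ones already used in \eqref{eq: lemma 3/1}--\eqref{eq: lemma 3/2}, but because the coupon now sits on the outgoing rather than the incoming boundary the direction of the sliding (coupon descending past a cap vs.\ ascending past a cup) is reversed; one must verify carefully that the reversal does not alter the balance between the signs produced by (U1), (V5), and the signed symmetry built into the differential on $\opn{Ch}(\msc{A})$.
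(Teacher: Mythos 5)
Your proposal is correct and takes the same approach as the paper; in fact the paper's proof of Lemma \ref{lemma 4} consists of the single sentence ``The proof is analogous to that of Lemma \ref{lemma 3},'' so you have done more work than the authors by actually spelling out the analogy. The mechanism you identify is exactly right: after reducing to a single cap at the outgoing boundary, the two extra terms in $d_{\Sigma'_{\vec{y}}}\,Z^*(M,T)$ at the cap indices $j,k$ cancel via the same combination of relation (U1) (flipping the coupon and dualizing, introducing $-(-1)^{m_j}$ through $d_{w^\ast}|_{(w^l)^\ast}=-(-1)^l(d_w|_{w^l})^\ast$) and relation (V5) (the $k(-1)$-strand sliding across the cap, introducing $(-1)^{m_j}$), mirroring \eqref{eq: lemma 3/1}--\eqref{eq: lemma 3/2}. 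One small imprecision: in the reduction you claim both surfaces can be taken positively marked, whereas, as in Lemma \ref{lemma 3}, only the markings that travel between $\Sigma$ and $\Sigma'$ can be normalized to positive -- the framings at the two feet of the cap are constrained by the cap itself and are necessarily of opposite sign; this does not affect the argument but should be stated as in the Lemma \ref{lemma 3} proof. Your flagged concern about the reversed sliding direction is legitimate but dissipates once one notes that the sign conventions (higher index feeding to lower index, and the fixed orientation of the cap's evaluation/coevaluation) are arranged symmetrically between the cup and cap cases.
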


\begin{proof}
	The proof is analogous to that of Lemma \ref{lemma 3}.
\end{proof}

\begin{corollary}\label{cor:cups,caps and crossings}
		Let $(M,T):\Sigma_{\vec{x}}\to\Sigma'_{\vec{y}}$ be a connected bordism in  $\opn{Bord}_{\opn{Ch}(\msc{A})}$ such that all ribbons in $T$ are couponless. That is, each ribbon travels from an incoming to another incoming label, an incoming to an outgoing label, or an outgoing to another outgoing label.   
	Then 
	$Z^* (M,T)[1] \ d_{\Sigma_{\vec{x}}} = d_{\Sigma'_{\vec{y}}}\  Z^* (M,T).$ 
\end{corollary}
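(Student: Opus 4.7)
The plan is to decompose $(M,T)$ as a composition of three simpler bordisms, each falling within the scope of one of Lemmas \ref{lemma 1}, \ref{lemma 3}, and \ref{lemma 4}, and then to conclude via functoriality of $\underline{Z}$.

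Since $\underline{Z}$ is symmetric monoidal and the differential on a tensor product of cochain complexes is determined by the differentials on the factors, we may reduce to the case where $(M,T)$ is connected. Classify the ribbons of $T$ into three mutually disjoint families: \emph{cups}, with both endpoints on $\Sigma$; \emph{through ribbons}, with exactly one endpoint on each of $\Sigma$ and $\Sigma'$; and \emph{caps}, with both endpoints on $\Sigma'$. I will exhibit $(M,T)$ as a composition
\[
(M,T) = (M_3,T_3) \circ (M_2,T_2) \circ (M_1,T_1),
\]
in which the intermediate marked surfaces $\Sigma^{(1)}$ and $\Sigma^{(2)}$ have underlying topological surfaces identified with $\Sigma$ and $\Sigma'$ respectively, each equipped only with those markings corresponding to endpoints of through ribbons.

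To construct this factorization, I fix thin collar neighborhoods $\Sigma \times [0,\epsilon]$ near $\Sigma$ and $\Sigma' \times [1-\epsilon,1]$ near $\Sigma'$ inside $M$. Since $T$ is couponless, a general-position isotopy supported away from the middle of $M$ pushes every cup into the first collar and every cap into the third, leaving the through ribbons to traverse the middle region as straight arcs. The first factor $(M_1,T_1): \Sigma_{\vec{x}} \to \Sigma^{(1)}$ then contains all cups together with straight segments for the through ribbons, so Lemma \ref{lemma 3} applies; the middle bordism $(M_2,T_2): \Sigma^{(1)} \to \Sigma^{(2)}$ contains only through ribbons and is a permutation bordism in the sense of Lemma \ref{lemma 1}; and $(M_3,T_3): \Sigma^{(2)} \to \Sigma'_{\vec{y}}$ contains all caps plus straight through segments, so Lemma \ref{lemma 4} applies. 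By functoriality of $\underline{Z}$,
\[
Z^*(M,T) = \underline{Z}(M_3,T_3) \circ \underline{Z}(M_2,T_2) \circ \underline{Z}(M_1,T_1),
\]
and each factor commutes with the differentials on its source and target. Composing these three commutation identities yields the required equality $Z^*(M,T)[1] \circ d_{\Sigma_{\vec{x}}} = d_{\Sigma'_{\vec{y}}} \circ Z^*(M,T)$.

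The main technical point is the geometric isotopy, which must produce a bona fide composition in $\Bord_{\opn{Ch}(\msc{A})}$. Because the ribbon graph has no coupons, any entanglement between cups or caps and through ribbons can be straightened by transversality, and the resulting factorization is well-defined up to the homotopies and diffeomorphisms absorbed into the definition of morphisms in the bordism category. Once this geometric step is in place, the rest of the argument is purely formal.
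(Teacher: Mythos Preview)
Your three-part decomposition has a genuine geometric gap. The claim that ``a general-position isotopy \ldots\ pushes every cup into the first collar and every cap into the third'' is false in general, and transversality is not the relevant tool. A cup is an embedded arc in $M$ with both endpoints on $\Sigma$; whether it can be isotoped rel boundary into a collar $\Sigma\times[0,\epsilon]$ depends on its relative isotopy class, which sees the topology of $M$ and linking with other strands. Concretely, take $M=D^2\times[0,1]$ with a single cup $u$ attached to $D^2\times\{0\}$ and a single cap $c$ attached to $D^2\times\{1\}$ so that $u$ and $c$ form a Hopf link. If you could push $u$ into the incoming collar and $c$ into the outgoing collar they would sit in disjoint balls and hence be unlinked, a contradiction. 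Similarly, if $\pi_1(M)\neq 0$ a cup that wraps a handle represents a nontrivial class in $\pi_1(M,\Sigma)$ and cannot be homotoped, let alone isotoped, into a product collar of $\Sigma$. So the factorization $(M_3,T_3)\circ(M_2,T_2)\circ(M_1,T_1)$ as you describe it simply does not exist in these cases.

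The paper avoids this obstruction by a different geometric move and an induction on the total number of cups and caps. Rather than compressing a cup toward its own boundary component, one stretches a single cup through a thin tubular neighborhood of itself, extended by a finger to the \emph{opposite} boundary $\Sigma'$, so that only the turnaround of that cup sits in a small outgoing collar $\Sigma'\times[1-\epsilon,1]$. This is always possible: one needs only a small tube around the chosen arc together with an embedded path to $\Sigma'$ missing the rest of $T$, which exists by general position when $\Sigma'\neq\emptyset$. Cutting off that collar produces $(M,T)=(M_c,T_c)\circ(M',T')$ where $(M_c,T_c)$ is a cylinder over $\Sigma'$ carrying a single cup (so Lemma~\ref{lemma 3} applies) and $(M',T')$ has one fewer cup. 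The cases where one or both boundary components are empty are handled separately (and trivially). If you want to rescue a non-inductive argument, you could stretch \emph{all} cups simultaneously to the outgoing side through disjoint fingers and cut once, yielding a two-piece factorization to which Lemmas~\ref{lemma 3} and~\ref{lemma 4} apply directly; but that is still the opposite of the move you proposed.
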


\begin{proof}
	 If $M$ has vanishing boundary then $Z^\ast(M,T)$ is an endomorphism of $k$, considered as a complex concentrated in degree $0$, and is trivially a cochain map.  If $M$ has only one nonvanishing boundary, either incoming or outgoing, then $(M,T)$ is necessarily of the form covered in Lemma \ref{lemma 3} or \ref{lemma 4}.  So again $Z^\ast(M,T)$ is a cochain map.  We consider finally the case where both $\Sigma$ and $\Sigma'$ are nonempty.
	
	The ribbon diagram $T$ is given by a composition of cups, caps and permutations of strings. 
We will prove the statement by induction on the number of cups and caps in $T$. If there are none or at most one cup or cap, then $(M,T)$ is as in Lemma \ref{lemma 1}, Lemma \ref{lemma 3} or \ref{lemma 4}, respectively, and the statement follows. Suppose then the statement is true for ribbon diagrams with a total of at most $l$ cups and caps, and assume $T$ has $l+1$ cups and caps.  If $T$ contains only caps, or only cups, then we are again in the situation of Lemma \ref{lemma 1} or Lemma \ref{lemma 2}.  So we consider the case where $T$ contains both cups and caps.

We look at one of the cups in $(M,T)$ and pull it to the opposite boundary so that the cup is now happening inside one of the collars of $M$; we may do this inside a small enough tubular neighborhood not interfering with the topology of $M$, see Picture \eqref{eq:cup/cap}. We can then ``cut" the cobordism into two, so that $(M,T)=(M_c,T_c)(M',T')$, where $(M_c,T_c)$ has underlying manifold $\Sigma'\times [0,1]$ and ribbon diagram $T_c$ featuring a unique cup, and $(M',T')$ is obtained from $(M,T)$ by removing the cup/cap via this procedure.  Hence, by induction, $Z^*(M',T')[1] \ d_{\Sigma_{\vec{x}}} = d_{\Sigma'_{\vec{y}}}\  Z^*( M', T')$. Now, composing on both sides with $Z^*(M_c,T_c),$ we obtain
\begin{align*}
	Z^*((M_c,T_c)(M',T'))[1] \ d_{\Sigma_{\vec{x}}} &= Z^*(M_c,T_c)[1] \ d_{\Sigma'_{\vec{y}}} \  Z^*(M', T')\\
	&= d_{\Sigma'_{\vec{y}}} \  Z^*((M_c,T_c)(M',T'))
\end{align*} 
where in the second equality we are using that $Z^*(M_c, T_c)$ commutes with the differential by Lemmas \ref{lemma 3} and \ref{lemma 4}. 	Thus since $(M_c,T_c)(M',T)=(M,T)$, we conclude 
$Z^*(M,T)[1] \ d_{\Sigma_{\vec{x}}} = d_{\Sigma'_{\vec{y}}}\  Z^* (M,T)$, as desired. 

	\begin{equation}\label{eq:cup/cap}
		\scalebox{0.8}{
\tikzset{every picture/.style={line width=0.5pt}} 

\begin{tikzpicture}[x=0.75pt,y=0.75pt,yscale=-1,xscale=1]
	
	\draw  [color={rgb, 255:red, 255; green, 255; blue, 255 }  ,draw opacity=1 ][line width=3] [line join = round][line cap = round] (208.82,2440.76) .. controls (209,2442.05) and (209.51,2443.3) .. (210.04,2444.53) ;
	\draw  [color={rgb, 255:red, 255; green, 255; blue, 255 }  ,draw opacity=1 ][line width=3] [line join = round][line cap = round] (156.73,2338.89) .. controls (156.91,2340.18) and (157.42,2341.43) .. (157.95,2342.66) ;
	\draw    (204.33,2338.14) .. controls (200.17,2352.29) and (209.67,2424.29) .. (205.32,2435.65) ;
	\draw    (113.06,2338.39) .. controls (111.38,2360.14) and (111.77,2424.22) .. (118.63,2435.84) ;
	\draw    (113.06,2338.39) .. controls (113.43,2328.13) and (204.56,2328.55) .. (204.33,2338.14) ;
	\draw [color={rgb, 255:red, 155; green, 155; blue, 155 }  ,draw opacity=1 ] [dash pattern={on 0.84pt off 2.51pt}]  (113.06,2338.39) .. controls (116.25,2343.43) and (142.6,2342.16) .. (159.74,2342.16) .. controls (176.88,2342.16) and (204.03,2342.29) .. (204.13,2337.25) ;
	\draw    (118.63,2435.84) .. controls (125.37,2446.22) and (205.08,2446.28) .. (205.32,2435.65) ;
	\draw [color={rgb, 255:red, 155; green, 155; blue, 155 }  ,draw opacity=1 ] [dash pattern={on 0.84pt off 2.51pt}]  (121.63,2435.23) .. controls (122.57,2434.82) and (144.67,2434.4) .. (161.12,2434.4) .. controls (177.58,2434.4) and (196.11,2434.21) .. (205.32,2435.65) ;
	\draw    (145.17,2337.17) .. controls (144,2371.75) and (174.5,2372.75) .. (173.83,2337.17) ;
	\draw [color={rgb, 255:red, 208; green, 2; blue, 27 }  ,draw opacity=0.32 ]   (218.17,2388.11) -- (256.42,2389.06) ;
	\draw [shift={(258.42,2389.11)}, rotate = 181.42] [color={rgb, 255:red, 208; green, 2; blue, 27 }  ,draw opacity=0.32 ][line width=0.75]    (10.93,-3.29) .. controls (6.95,-1.4) and (3.31,-0.3) .. (0,0) .. controls (3.31,0.3) and (6.95,1.4) .. (10.93,3.29)   ;
	\draw  [color={rgb, 255:red, 255; green, 255; blue, 255 }  ,draw opacity=1 ][line width=3] [line join = round][line cap = round] (365.49,2439.43) .. controls (365.67,2440.71) and (366.18,2441.96) .. (366.71,2443.19) ;
	\draw  [color={rgb, 255:red, 255; green, 255; blue, 255 }  ,draw opacity=1 ][line width=3] [line join = round][line cap = round] (313.39,2337.56) .. controls (313.58,2338.84) and (314.08,2340.1) .. (314.61,2341.32) ;
	\draw    (361,2336.81) .. controls (356.83,2350.95) and (366.33,2422.95) .. (361.99,2434.31) ;
	\draw    (269.72,2337.06) .. controls (268.04,2358.81) and (268.43,2422.89) .. (275.29,2434.51) ;
	\draw    (269.72,2337.06) .. controls (270.1,2326.8) and (361.23,2327.21) .. (361,2336.81) ;
	\draw [color={rgb, 255:red, 155; green, 155; blue, 155 }  ,draw opacity=1 ] [dash pattern={on 0.84pt off 2.51pt}]  (269.72,2337.06) .. controls (272.91,2342.1) and (299.26,2340.83) .. (316.4,2340.83) .. controls (333.55,2340.83) and (360.7,2340.96) .. (360.8,2335.91) ;
	\draw    (275.29,2434.51) .. controls (282.03,2444.89) and (361.75,2444.94) .. (361.99,2434.31) ;
	\draw [color={rgb, 255:red, 155; green, 155; blue, 155 }  ,draw opacity=1 ] [dash pattern={on 0.84pt off 2.51pt}]  (278.3,2433.9) .. controls (279.24,2433.48) and (301.33,2433.07) .. (317.79,2433.07) .. controls (334.25,2433.07) and (352.77,2432.87) .. (361.99,2434.31) ;
	\draw [dashed, color={rgb, 255:red, 208; green, 2; blue, 27 }  ,draw opacity=0.22 ][line width=1.5]    (275,2420.25) -- (363,2419.75) ;
	\draw [dashed, color={rgb, 255:red, 208; green, 2; blue, 27 }  ,draw opacity=0.22 ][line width=1.5]    (273.79,2433.32) -- (361.79,2432.82) ;
	\draw [dashed, color={rgb, 255:red, 208; green, 2; blue, 27 }  ,draw opacity=0.22 ][line width=1.5]    (116,2421.25) -- (204,2420.75) ;
	\draw [dashed, color={rgb, 255:red, 208; green, 2; blue, 27 }  ,draw opacity=0.22 ][line width=1.5]    (117.12,2434.65) -- (205.12,2434.15) ;
	\draw [color={rgb, 255:red, 74; green, 144; blue, 226 }  ,draw opacity=1 ][fill={rgb, 255:red, 74; green, 144; blue, 226 }  ,fill opacity=1 ]   (152.6,2354.66) -- (152.6,2434.66) ;
	\draw  [color={rgb, 255:red, 74; green, 144; blue, 226 }  ,draw opacity=1 ] (152.6,2355.32) .. controls (152.6,2354.13) and (155.75,2353.17) .. (159.64,2353.17) .. controls (163.52,2353.17) and (166.67,2354.13) .. (166.67,2355.32) .. controls (166.67,2356.51) and (163.52,2357.48) .. (159.64,2357.48) .. controls (155.75,2357.48) and (152.6,2356.51) .. (152.6,2355.32) -- cycle ;
	\draw [color={rgb, 255:red, 74; green, 144; blue, 226 }  ,draw opacity=1 ][fill={rgb, 255:red, 74; green, 144; blue, 226 }  ,fill opacity=1 ]   (166.67,2355.32) -- (166.67,2434.66) ;
	\draw  [color={rgb, 255:red, 74; green, 144; blue, 226 }  ,draw opacity=1 ] (152.6,2434.66) .. controls (152.6,2433.47) and (155.75,2432.5) .. (159.64,2432.5) .. controls (163.52,2432.5) and (166.67,2433.47) .. (166.67,2434.66) .. controls (166.67,2435.86) and (163.52,2436.82) .. (159.64,2436.82) .. controls (155.75,2436.82) and (152.6,2435.86) .. (152.6,2434.66) -- cycle ;
	\draw    (302.83,2335.5) .. controls (301.67,2370.08) and (332.17,2371.08) .. (331.5,2335.5) ;
	\draw [color={rgb, 255:red, 74; green, 144; blue, 226 }  ,draw opacity=1 ][fill={rgb, 255:red, 74; green, 144; blue, 226 }  ,fill opacity=1 ]   (310.27,2352.99) -- (310.27,2433) ;
	\draw  [color={rgb, 255:red, 74; green, 144; blue, 226 }  ,draw opacity=1 ] (310.27,2353.66) .. controls (310.27,2352.47) and (313.42,2351.5) .. (317.3,2351.5) .. controls (321.19,2351.5) and (324.33,2352.47) .. (324.33,2353.66) .. controls (324.33,2354.85) and (321.19,2355.81) .. (317.3,2355.81) .. controls (313.42,2355.81) and (310.27,2354.85) .. (310.27,2353.66) -- cycle ;
	\draw [color={rgb, 255:red, 74; green, 144; blue, 226 }  ,draw opacity=1 ][fill={rgb, 255:red, 74; green, 144; blue, 226 }  ,fill opacity=1 ]   (324.33,2353.66) -- (324.33,2433) ;
	\draw  [color={rgb, 255:red, 74; green, 144; blue, 226 }  ,draw opacity=1 ] (310.27,2433) .. controls (310.27,2431.8) and (313.42,2430.83) .. (317.3,2430.83) .. controls (321.19,2430.83) and (324.33,2431.8) .. (324.33,2433) .. controls (324.33,2434.19) and (321.19,2435.16) .. (317.3,2435.16) .. controls (313.42,2435.16) and (310.27,2434.19) .. (310.27,2433) -- cycle ;
	\draw  [color={rgb, 255:red, 255; green, 255; blue, 255 }  ,draw opacity=1 ][line width=6] [line join = round][line cap = round] (316.25,2362) .. controls (315.86,2361.36) and (317.8,2361.74) .. (318.5,2362) ;
	\draw    (312.25,2360.5) .. controls (320.25,2362.25) and (311.25,2429.25) .. (317.25,2429.5) .. controls (323.25,2429.75) and (312.25,2363) .. (322.75,2360) ;
	
	\draw (50,2420) node [anchor=north west][inner sep=0.75pt]  [font=\normalsize,color={rgb, 255:red, 155; green, 155; blue, 155 }  ,opacity=1 ]  {$\Sigma '\times [ 0,1]$};
	\draw (369,2420) node [anchor=north west][inner sep=0.75pt]  [font=\normalsize,color={rgb, 255:red, 155; green, 155; blue, 155 }  ,opacity=1 ]  {$\Sigma '\times [ 0,1]$};

\end{tikzpicture}
		}
	\end{equation}
\end{proof}

\subsection{Commuting with any coupon}
Here we show that for bordisms $(M,T)$ in $\Ch(\msc{A})$ such that $T$ has a unique coupon satisfy $Z^*(M,T)\in \Ch(\opn{Vect}) $.

\begin{corollary}\label{corollary from lemma 2}Let $(M,T):\Sigma_{\vec{x}}\to\Sigma'_{\vec{y}}$ be a connected bordism in  $\opn{Bord}_{\opn{Ch}(\msc{A})}$ such
	that $M$ has nonvanishing boundary, and that the ribbon diagram
	$T$ has a unique coupon. Then 
	$Z^* (M,T)[1] \ d_{\Sigma_{\vec{x}}} = d_{\Sigma'_{\vec{y}}}\  Z^* (M,T).$
\end{corollary}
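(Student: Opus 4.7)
The strategy parallels that of Corollary \ref{cor:cups,caps and crossings}: I decompose $(M,T)$ along level surfaces so as to isolate the sole coupon inside a bordism of the form covered by Lemma \ref{lemma 2}, and leave only couponless data elsewhere.

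Concretely, let $v$ be the unique coupon in $T$. Choose a small open ball $B\subseteq \opn{int}(M)$ about $v$ in which $T\cap B$ consists of $v$ together with short straight segments of each attached edge. Since $M$ has nonempty boundary, I may isotope $B$ into a collar of one of the boundary components. Now pick two disjoint compact surfaces $S_0, S_1 \subseteq \opn{int}(M)$, transverse to $T$, that sandwich $B$; they determine a factorization
\[
(M,T)\ =\ (M_2,T_2)\ \circ\ (M_{\opn{cpn}},T_{\opn{cpn}})\ \circ\ (M_1,T_1),
\]
in which $(M_{\opn{cpn}},T_{\opn{cpn}}):S_0\to S_1$ is a cylindrical slab containing only $v$ joined by straight lines to the two bounding disks, while $(M_1,T_1):\Sigma_{\vec x}\to S_0$ and $(M_2,T_2):S_1\to \Sigma'_{\vec y}$ are couponless. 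By absorbing any additional cups, caps, and permutations of strands (and, if needed, one application of the vertex-flipping skein relation (U1)) into the outer two factors, I may arrange the middle factor so that its coupon has the precise configuration required by Lemma \ref{lemma 2}: the attached edges sit on $S_0$ and $S_1$ in the appropriate spatial orientation.

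Functoriality of $Z^\ast$ then gives
\[
Z^\ast(M,T)\ =\ Z^\ast(M_2,T_2)\,\circ\,Z^\ast(M_{\opn{cpn}},T_{\opn{cpn}})\,\circ\,Z^\ast(M_1,T_1).
\]
Corollary \ref{cor:cups,caps and crossings} applies to the two outer factors and Lemma \ref{lemma 2} to the middle factor (extended, if necessary, to allow a coupon with several incoming ribbons on $S_0$, since the proof of Lemma \ref{lemma 2} uses only the cochain-map identity $f\,d = d\,f$ with the usual Koszul signs, which holds verbatim for any morphism in $\opn{Ch}(\msc{A})$). Composing the three resulting intertwining identities yields the desired compatibility $Z^\ast(M,T)[1]\,d_{\Sigma_{\vec x}} = d_{\Sigma'_{\vec y}}\,Z^\ast(M,T)$. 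I expect the main delicate point to be the routing step: one must verify that the cups, caps, and crossings introduced to put $(M_{\opn{cpn}},T_{\opn{cpn}})$ into Lemma \ref{lemma 2} form really do compose to recover $(M,T)$ up to relations that $Z^\ast$ respects, that admissibility of the three pieces is preserved, and that the orientation bookkeeping around $v$ matches the sign conventions established in Definition \ref{def: differential}.
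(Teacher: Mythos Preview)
Your strategy is the paper's strategy: isolate the coupon in a collar slice, factor, and feed the couponless pieces to Corollary \ref{cor:cups,caps and crossings} and the coupon piece to Lemma \ref{lemma 2}. The place where your write-up is looser than the paper is exactly the step you flag as delicate, and your hedge (``extended, if necessary, to allow a coupon with several incoming ribbons'') is not how the paper proceeds.

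The paper does \emph{not} extend Lemma \ref{lemma 2}; it reduces to its exact hypotheses. After dragging the coupon into a boundary collar and cutting (your $M_1,M_2$), one is left with a coupon in $\Sigma\times[0,1]$ whose incoming ribbons need not be a single strand. Two cases are handled explicitly. If there are several incoming strands, one slides the coupon to the incoming boundary and then applies a skein relation that replaces the fan of incoming strands $x_{j_1},\dots,x_{j_s}$ by a single strand labeled $x_{j_1}\otimes\cdots\otimes x_{j_s}$ (relabeling the coupon accordingly); the bent-back original strands become cups in the now-couponless outer piece. If there are \emph{no} incoming strands, one first drags one of the outgoing ribbons around via a cap and uses a skein relation to make it the (unique) incoming strand. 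Neither manipulation is just ``cups, caps, permutations, and (U1)'': the essential move is the merge-into-tensor-product skein relation (an instance of (U2)/evaluation invariance), and you should name it. Once that reduction is made, no extension of Lemma \ref{lemma 2} is needed and your composition argument goes through verbatim.
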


\begin{proof}
By dragging the unique coupon in $T$ into a collar around the boundary, then cutting,
\[
\scalebox{.8}{\tikzset{every picture/.style={line width=0.75pt}} 
\begin{tikzpicture}[x=0.75pt,y=0.75pt,yscale=-1,xscale=1]
\draw    (274.54,109.58) .. controls (261.53,134.96) and (328.93,137.08) .. (346.67,129.67) .. controls (364.41,122.27) and (403.04,112.85) .. (419.99,114.87) .. controls (436.94,116.89) and (464.44,126.3) .. (463.74,109.58) ;
\draw    (274.54,109.58) .. controls (278.58,100.2) and (304.25,88.94) .. (336.03,85.09) .. controls (367.81,81.25) and (398.7,88.72) .. (411.71,89.63) .. controls (424.72,90.53) and (463.07,90.76) .. (463.74,109.58) ;
\draw    (325.81,104.44) .. controls (337.16,111) and (358.9,105.47) .. (371.12,97.89) ;
\draw    (332.89,106.45) .. controls (339.25,101.08) and (349.49,99.71) .. (358.16,103.81) ;
\draw    (245,170.76) .. controls (254.91,169.09) and (276.93,134.05) .. (273.07,116.13) ;
\draw    (463.74,109.58) .. controls (462.36,125.89) and (467,154.75) .. (482,166.08) ;
\draw [color={rgb, 255:red, 74; green, 74; blue, 74 }  ,draw opacity=0.6 ] [dash pattern={on 4.5pt off 4.5pt}]  (270.71,135.92) .. controls (271.8,152.51) and (328.23,153.71) .. (345.96,146.31) .. controls (363.7,138.9) and (402.33,129.48) .. (419.28,131.5) .. controls (436.23,133.52) and (463.73,142.93) .. (463.04,126.22) ;
\draw [color={rgb, 255:red, 74; green, 74; blue, 74 }  ,draw opacity=0.6 ] [dash pattern={on 4.5pt off 4.5pt}]  (258.1,159.82) .. controls (259.68,173.49) and (320.9,178.33) .. (339.65,172.14) .. controls (358.39,165.95) and (397.14,153.7) .. (414.09,155.73) .. controls (431.04,157.75) and (457.43,167.34) .. (467.67,152.99) ;
\draw   (366.14,145.85) -- (379.58,145.85) -- (379.58,150.84) -- (366.14,150.84) -- cycle ;
\draw    (357.84,134.05) .. controls (362.8,136.67) and (369.4,141.47) .. (369.13,146.4) ;
\draw    (362.8,157.65) .. controls (367.2,153.71) and (367.2,157.21) .. (369.28,150.84) ;
\draw    (376.01,145.85) .. controls (376.56,141.91) and (382.61,141.47) .. (386.47,130.99) ;
\draw    (382.61,156.33) .. controls (379.86,155.02) and (376.56,157.21) .. (376.56,151.09) ;
\draw    (323.72,142.79) -- (322.06,166.39) ;
\draw    (298.94,140.16) -- (292.89,163.33) ;
\draw    (413.99,125.74) -- (418.94,150.65) ;
\draw    (435.46,129.24) -- (440.41,154.15) ;
\draw    (367.2,134.92) .. controls (372.71,138.85) and (371.05,141.04) .. (372.16,145.41) ;
\draw    (368.85,158.52) .. controls (371.61,153.71) and (370.5,157.65) .. (372.16,150.65) ;
\draw  [dash pattern={on 0.84pt off 2.51pt}]  (322.06,166.39) -- (318.76,179.06) ;
\draw  [dash pattern={on 0.84pt off 2.51pt}]  (323.49,127.04) -- (323.72,142.79) ;
\draw  [dash pattern={on 0.84pt off 2.51pt}]  (418.94,150.65) -- (423.55,167.34) ;
\draw  [dash pattern={on 0.84pt off 2.51pt}]  (440.41,154.15) -- (444.27,166.82) ;
\draw  [dash pattern={on 0.84pt off 2.51pt}]  (412.52,109.27) -- (413.99,125.74) ;
\draw  [dash pattern={on 0.84pt off 2.51pt}]  (435.37,114.06) -- (435.46,129.24) ;
\draw  [dash pattern={on 0.84pt off 2.51pt}]  (362.8,157.65) -- (355.01,175.54) ;
\draw  [dash pattern={on 0.84pt off 2.51pt}]  (368.85,158.52) -- (367.61,170.07) ;
\draw  [dash pattern={on 0.84pt off 2.51pt}]  (382.61,156.33) -- (396.76,170.75) ;
\draw    (338.03,166.82) .. controls (339.13,160.27) and (345.18,149.78) .. (345.18,137.54) ;
\draw  [dash pattern={on 0.84pt off 2.51pt}]  (345.18,137.54) .. controls (346.83,131.42) and (334.72,128.8) .. (339.68,126.18) .. controls (344.63,123.56) and (352.34,130.55) .. (357.84,134.05) ;
\draw  [dash pattern={on 0.84pt off 2.51pt}]  (338.03,166.82) -- (334.52,181) ;
\draw  [dash pattern={on 0.84pt off 2.51pt}]  (389.22,122.68) -- (386.47,130.99) ;
\draw  [dash pattern={on 0.84pt off 2.51pt}]  (355.09,111.76) .. controls (353.99,120.93) and (362.25,132.3) .. (367.2,134.92) ;
\draw  [dash pattern={on 0.84pt off 2.51pt}]  (281.33,178.19) .. controls (287.38,175.13) and (291.24,170.32) .. (292.89,163.33) ;
\draw  [dash pattern={on 0.84pt off 2.51pt}]  (295.92,112.69) .. controls (297.49,120.89) and (300.52,132.65) .. (298.94,140.16) ;

\draw (294.46,149.35) node [anchor=north west][inner sep=0.75pt]  [color={rgb, 255:red, 155; green, 155; blue, 155 }  ,opacity=1 ,rotate=-358.94] [align=left] {$\displaystyle \cdots $};
\draw (414.35,135.75) node [anchor=north west][inner sep=0.75pt]  [color={rgb, 255:red, 155; green, 155; blue, 155 }  ,opacity=1 ] [align=left] {$\displaystyle \cdots $};
\draw (366.33,128.33) node [anchor=north west][inner sep=0.75pt]  [font=\scriptsize,color={rgb, 255:red, 155; green, 155; blue, 155 }  ,opacity=1 ] [align=left] {$\displaystyle \cdots $};
\draw (364.57,152.16) node [anchor=north west][inner sep=0.75pt]  [font=\scriptsize,color={rgb, 255:red, 155; green, 155; blue, 155 }  ,opacity=1 ] [align=left] {$\displaystyle \cdots $};
\draw (226.15,94.53) node [anchor=north west][inner sep=0.75pt]   [align=left] {$\displaystyle N_{0}$};
\draw (212.48,151.98) node [anchor=north west][inner sep=0.75pt]   [align=left] {$\displaystyle N_{1}$};
\draw (228.51,120.26) node [anchor=north west][inner sep=0.75pt]   [align=left] {$\displaystyle \tilde{M}$};

\end{tikzpicture}
}
\]
we are free to assume that $M=\Sigma\times [0,1]$ for nonempty $\Sigma$, that the unique coupon in $T$ has incoming ribbons connected to the incoming boundary and outgoing ribbons connected to the outgoing boundary, and that all other ribbons are couponless.  (Here we refer to Corollary \ref{cor:cups,caps and crossings} to deal with the excess bordisms $N_0$ and $N_1$ generated in this process.)  We now argue under these assumptions on $(M,T)$.

Suppose first that the coupon in $T$ has multiple incoming strings. As pictured in \eqref{eq: mult strings}, we can first slide the coupon upwards (first equality) through a small enough tubular neighborhood around an incoming string. Then we can use skein relations, see Section \ref{sect:sk_rel} to replace the label so the coupon has a unique incoming string instead (second equality). We can thus ``cut" the bordism $(M,T)=(\tilde M, \tilde T)(M_1, T_1)$, where $(M_1, T_1)$ is as in Lemma \ref{lemma 2} and $(\tilde M, \tilde T)$ is couponless. It follows from Lemma \ref{lemma 2} and Corollary \ref{cor:cups,caps and crossings} that $Z^*(M_1, T_1)$ and $Z^*(\tilde M,\tilde T)$ are in $\Ch(\opn{Vect})$, respectively, and hence so is $Z^*(M,T)$.
	\begin{equation}\label{eq: mult strings}
		\scalebox{0.9}{
	\tikzset{every picture/.style={line width=0.5pt}} 
	
	\begin{tikzpicture}[x=0.75pt,y=0.75pt,yscale=-1,xscale=1]
		
		\draw  [color={rgb, 255:red, 255; green, 255; blue, 255 }  ,draw opacity=1 ][line width=3] [line join = round][line cap = round] (57.46,1642.62) .. controls (57.65,1643.9) and (58.15,1645.16) .. (58.68,1646.38) ;
		\draw    (105.07,1641.87) .. controls (100.9,1654.26) and (110.4,1717.32) .. (106.05,1727.26) ;
		\draw    (13.79,1642.09) .. controls (12.11,1661.14) and (12.5,1717.26) .. (19.36,1727.43) ;
		\draw    (13.79,1642.12) .. controls (14.17,1631.86) and (105.29,1632.27) .. (105.07,1641.87) ;
		\draw [color={rgb, 255:red, 155; green, 155; blue, 155 }  ,draw opacity=1 ] [dash pattern={on 0.84pt off 2.51pt}]  (13.79,1642.12) .. controls (16.98,1647.16) and (43.33,1645.89) .. (60.47,1645.89) .. controls (77.61,1645.89) and (104.77,1646.02) .. (104.87,1640.97) ;
		\draw    (19.36,1727.43) .. controls (26.1,1736.53) and (105.81,1736.57) .. (106.05,1727.26) ;
		\draw [color={rgb, 255:red, 155; green, 155; blue, 155 }  ,draw opacity=1 ] [dash pattern={on 0.84pt off 2.51pt}]  (21.56,1728.36) .. controls (22.5,1728) and (44.59,1727.64) .. (61.05,1727.64) .. controls (77.5,1727.64) and (96.03,1727.47) .. (105.24,1728.73) ;
		\draw [color={rgb, 255:red, 0; green, 0; blue, 0 }  ,draw opacity=1 ]   (76.17,1654.96) .. controls (67.92,1663.96) and (60.92,1676.46) .. (60.66,1688.42) ;
		\draw  [color={rgb, 255:red, 0; green, 0; blue, 0 }  ,draw opacity=1 ][fill={rgb, 255:red, 255; green, 255; blue, 255 }  ,fill opacity=1 ] (53.06,1688.3) -- (62.56,1688.3) -- (62.56,1691.28) -- (53.06,1691.28) -- cycle ;
		\draw [color={rgb, 255:red, 0; green, 0; blue, 0 }  ,draw opacity=1 ]   (37.72,1656.79) .. controls (45.97,1663.29) and (53.16,1672.52) .. (55.22,1687.79) ;
		\draw [color={rgb, 255:red, 0; green, 0; blue, 0 }  ,draw opacity=1 ][fill={rgb, 255:red, 255; green, 255; blue, 255 }  ,fill opacity=1 ]   (57.79,1687.66) -- (57.66,1671.23) -- (57.53,1656.42) ;
		\draw [color={rgb, 255:red, 0; green, 0; blue, 0 }  ,draw opacity=1 ]   (43.67,1722.32) .. controls (48.57,1719.27) and (55.25,1706.03) .. (55.38,1691.61) ;
		\draw [color={rgb, 255:red, 0; green, 0; blue, 0 }  ,draw opacity=1 ]   (47.94,1722.65) .. controls (53.97,1719.6) and (57.5,1705.28) .. (57.11,1691.41) ;
		\draw [color={rgb, 255:red, 0; green, 0; blue, 0 }  ,draw opacity=1 ]   (69.75,1722.53) .. controls (64.1,1717.29) and (61.25,1703.53) .. (60.75,1691.3) ;
		\draw [color={rgb, 255:red, 0; green, 0; blue, 0 }  ,draw opacity=1 ]   (66.25,1723.28) .. controls (60.22,1720.22) and (59.25,1705.78) .. (59.14,1691.12) ;
		\draw  [color={rgb, 255:red, 255; green, 255; blue, 255 }  ,draw opacity=1 ][line width=3] [line join = round][line cap = round] (176.46,1643.39) .. controls (176.65,1644.61) and (177.15,1645.79) .. (177.68,1646.95) ;
		\draw    (224.07,1642.68) .. controls (219.9,1655.04) and (229.4,1717.97) .. (225.05,1727.89) ;
		\draw    (132.79,1642.9) .. controls (131.11,1661.91) and (131.5,1717.91) .. (138.36,1728.06) ;
		\draw    (132.79,1642.92) .. controls (133.17,1633.21) and (224.29,1633.6) .. (224.07,1642.68) ;
		\draw [color={rgb, 255:red, 155; green, 155; blue, 155 }  ,draw opacity=1 ] [dash pattern={on 0.84pt off 2.51pt}]  (132.79,1642.92) .. controls (135.98,1647.69) and (162.33,1646.48) .. (179.47,1646.48) .. controls (196.61,1646.48) and (223.77,1646.61) .. (223.87,1641.83) ;
		\draw    (138.36,1728.06) .. controls (145.1,1737.65) and (224.81,1737.7) .. (225.05,1727.89) ;
		\draw [color={rgb, 255:red, 155; green, 155; blue, 155 }  ,draw opacity=1 ] [dash pattern={on 0.84pt off 2.51pt}]  (140.56,1729.04) .. controls (141.5,1728.66) and (163.59,1728.28) .. (180.05,1728.28) .. controls (196.5,1728.28) and (215.03,1728.1) .. (224.24,1729.43) ;
		\draw  [color={rgb, 255:red, 255; green, 255; blue, 255 }  ,draw opacity=1 ][line width=3] [line join = round][line cap = round] (339.36,1644.41) .. controls (339.55,1645.63) and (340.05,1646.81) .. (340.58,1647.98) ;
		\draw    (386.97,1643.69) .. controls (382.8,1655.99) and (392.3,1718.57) .. (387.95,1728.44) ;
		\draw    (295.69,1643.93) .. controls (294.01,1663.86) and (294.4,1717.98) .. (301.26,1728.62) ;
		\draw    (295.69,1643.93) .. controls (296.07,1634.2) and (387.19,1634.59) .. (386.97,1643.69) ;
		\draw [color={rgb, 255:red, 155; green, 155; blue, 155 }  ,draw opacity=1 ] [dash pattern={on 0.84pt off 2.51pt}]  (295.69,1643.93) .. controls (298.88,1648.71) and (325.23,1647.51) .. (342.37,1647.51) .. controls (359.51,1647.51) and (386.67,1647.63) .. (386.77,1642.85) ;
		\draw    (301.26,1728.62) .. controls (308,1738.13) and (387.71,1738.18) .. (387.95,1728.44) ;
		\draw [color={rgb, 255:red, 155; green, 155; blue, 155 }  ,draw opacity=1 ] [dash pattern={on 0.84pt off 2.51pt}]  (303.46,1729.59) .. controls (304.4,1729.21) and (326.49,1728.83) .. (342.95,1728.83) .. controls (359.4,1728.83) and (377.93,1728.65) .. (387.14,1729.97) ;
		\draw [color={rgb, 255:red, 0; green, 0; blue, 0 }  ,draw opacity=1 ][fill={rgb, 255:red, 255; green, 255; blue, 255 }  ,fill opacity=1 ]   (204.5,1712.75) .. controls (196.5,1710.5) and (195.4,1621.5) .. (182.2,1663.9) ;
		\draw  [color={rgb, 255:red, 0; green, 0; blue, 0 }  ,draw opacity=1 ] (175,1664.4) -- (184.5,1664.4) -- (184.5,1667.38) -- (175,1667.38) -- cycle ;
		\draw [color={rgb, 255:red, 0; green, 0; blue, 0 }  ,draw opacity=1 ]   (153.5,1712.25) .. controls (161.75,1708.75) and (165,1625.1) .. (177,1663.9) ;
		\draw [color={rgb, 255:red, 0; green, 0; blue, 0 }  ,draw opacity=1 ][fill={rgb, 255:red, 255; green, 255; blue, 255 }  ,fill opacity=1 ]   (179.26,1664.36) -- (179.25,1647) ;
		\draw [color={rgb, 255:red, 0; green, 0; blue, 0 }  ,draw opacity=1 ]   (158.5,1716.25) .. controls (163.4,1713.2) and (177.25,1682.13) .. (177.38,1667.71) ;
		\draw [color={rgb, 255:red, 0; green, 0; blue, 0 }  ,draw opacity=1 ]   (167,1716.75) .. controls (173.03,1713.7) and (179.5,1681.38) .. (179.11,1667.51) ;
		\draw [color={rgb, 255:red, 0; green, 0; blue, 0 }  ,draw opacity=1 ]   (195.25,1717) .. controls (189.6,1711.77) and (183.25,1679.63) .. (182.75,1667.4) ;
		\draw [color={rgb, 255:red, 0; green, 0; blue, 0 }  ,draw opacity=1 ]   (188.25,1715.75) .. controls (182.22,1712.7) and (181.25,1681.88) .. (181.14,1667.22) ;
		\draw [color={rgb, 255:red, 0; green, 0; blue, 0 }  ,draw opacity=1 ][fill={rgb, 255:red, 255; green, 255; blue, 255 }  ,fill opacity=1 ]   (368.07,1719.17) .. controls (360.07,1716.92) and (349.9,1689.13) .. (346.15,1672.9) ;
		\draw  [color={rgb, 255:red, 0; green, 0; blue, 0 }  ,draw opacity=1 ] (337.9,1669.9) -- (347.4,1669.9) -- (347.4,1672.88) -- (337.9,1672.88) -- cycle ;
		\draw [color={rgb, 255:red, 0; green, 0; blue, 0 }  ,draw opacity=1 ]   (315.73,1718.5) .. controls (323.98,1715) and (339.4,1680.13) .. (339.15,1672.63) ;
		\draw [color={rgb, 255:red, 0; green, 0; blue, 0 }  ,draw opacity=1 ][fill={rgb, 255:red, 255; green, 255; blue, 255 }  ,fill opacity=1 ]   (342.82,1669.53) -- (342.82,1652.17) ;
		\draw [color={rgb, 255:red, 0; green, 0; blue, 0 }  ,draw opacity=1 ]   (321.65,1724.63) .. controls (326.55,1721.57) and (340.65,1687.88) .. (340.78,1673.46) ;
		\draw [color={rgb, 255:red, 0; green, 0; blue, 0 }  ,draw opacity=1 ]   (330.04,1722.63) .. controls (334.76,1720.25) and (339.74,1699.97) .. (341.5,1684.61) .. controls (341.99,1680.32) and (342.24,1676.41) .. (342.15,1673.39) ;
		\draw [color={rgb, 255:red, 0; green, 0; blue, 0 }  ,draw opacity=1 ]   (355.9,1722.75) .. controls (350.25,1717.52) and (343.9,1685.38) .. (343.4,1673.15) ;
		\draw [color={rgb, 255:red, 0; green, 0; blue, 0 }  ,draw opacity=1 ]   (361.9,1724.13) .. controls (355.87,1721.07) and (345.9,1689.13) .. (344.79,1673.22) ;
		\draw [dashed, color={rgb, 255:red, 208; green, 2; blue, 27 }  ,draw opacity=0.22 ][line width=1.5]    (136.08,1646.83) -- (222.42,1647.17) ;
		\draw [dashed, color={rgb, 255:red, 208; green, 2; blue, 27 }  ,draw opacity=0.22 ][line width=1.5]    (133.83,1679.17) -- (223,1679.75) ;
		\draw    (143.17,1653.92) .. controls (144.17,1668.42) and (140.5,1717.58) .. (153.5,1712.25) ;
		\draw    (209.17,1654.75) .. controls (210.17,1669.25) and (215.67,1718) .. (204.5,1712.75) ;
		\draw    (305.07,1657.17) .. controls (306.07,1671.67) and (302.73,1723.83) .. (315.73,1718.5) ;
		\draw    (377.07,1658.83) .. controls (378.07,1673.33) and (379.23,1724.42) .. (368.07,1719.17) ;
		\draw [dashed, color={rgb, 255:red, 208; green, 2; blue, 27 }  ,draw opacity=0.22 ][line width=1.5]    (300.2,1647.51) -- (384.54,1647.51) ;
		\draw [color={rgb, 255:red, 74; green, 144; blue, 226 }  ,draw opacity=1 ][fill={rgb, 255:red, 74; green, 144; blue, 226 }  ,fill opacity=1 ]   (50.6,1647.5) -- (50.6,1727.5) ;
		\draw [color={rgb, 255:red, 74; green, 144; blue, 226 }  ,draw opacity=1 ][fill={rgb, 255:red, 74; green, 144; blue, 226 }  ,fill opacity=1 ]   (64.67,1648.16) -- (64.67,1727.5) ;
		\draw  [color={rgb, 255:red, 74; green, 144; blue, 226 }  ,draw opacity=1 ] (50.6,1727.5) .. controls (50.6,1726.31) and (53.75,1725.34) .. (57.64,1725.34) .. controls (61.52,1725.34) and (64.67,1726.31) .. (64.67,1727.5) .. controls (64.67,1728.69) and (61.52,1729.66) .. (57.64,1729.66) .. controls (53.75,1729.66) and (50.6,1728.69) .. (50.6,1727.5) -- cycle ;
		\draw  [color={rgb, 255:red, 74; green, 144; blue, 226 }  ,draw opacity=1 ] (50.6,1648.16) .. controls (50.6,1646.97) and (53.75,1646) .. (57.64,1646) .. controls (61.52,1646) and (64.67,1646.97) .. (64.67,1648.16) .. controls (64.67,1649.35) and (61.52,1650.32) .. (57.64,1650.32) .. controls (53.75,1650.32) and (50.6,1649.35) .. (50.6,1648.16) -- cycle ;
		\draw [color={rgb, 255:red, 74; green, 144; blue, 226 }  ,draw opacity=1 ][fill={rgb, 255:red, 74; green, 144; blue, 226 }  ,fill opacity=1 ]   (173.1,1647.5) -- (173.1,1727.5) ;
		\draw  [color={rgb, 255:red, 74; green, 144; blue, 226 }  ,draw opacity=1 ] (173.1,1648.16) .. controls (173.1,1646.97) and (176.25,1646) .. (180.14,1646) .. controls (184.02,1646) and (187.17,1646.97) .. (187.17,1648.16) .. controls (187.17,1649.35) and (184.02,1650.32) .. (180.14,1650.32) .. controls (176.25,1650.32) and (173.1,1649.35) .. (173.1,1648.16) -- cycle ;
		\draw [color={rgb, 255:red, 74; green, 144; blue, 226 }  ,draw opacity=1 ][fill={rgb, 255:red, 74; green, 144; blue, 226 }  ,fill opacity=1 ]   (187.17,1648.16) -- (187.17,1727.5) ;
		\draw  [color={rgb, 255:red, 74; green, 144; blue, 226 }  ,draw opacity=1 ] (173.1,1727.5) .. controls (173.1,1726.31) and (176.25,1725.34) .. (180.14,1725.34) .. controls (184.02,1725.34) and (187.17,1726.31) .. (187.17,1727.5) .. controls (187.17,1728.69) and (184.02,1729.66) .. (180.14,1729.66) .. controls (176.25,1729.66) and (173.1,1728.69) .. (173.1,1727.5) -- cycle ;
		\draw [dashed, color={rgb, 255:red, 208; green, 2; blue, 27 }  ,draw opacity=0.22 ][line width=1.5]    (297.23,1679.17) -- (386.4,1679.75) ;
		
		\draw (110,1680) node [anchor=north west][inner sep=0.75pt]    {$=$};
		\draw (250,1680) node [anchor=north west][inner sep=0.75pt]    {$=$};
		\draw (354.83,1655) node [anchor=north west][inner sep=0.75pt]  [color={rgb, 255:red, 128; green, 128; blue, 128 }  ,opacity=1 ]  {$T_{1}$};
		\draw (358.9,1690) node [anchor=north west][inner sep=0.75pt]  [color={rgb, 255:red, 128; green, 128; blue, 128 }  ,opacity=1 ]  {$\tilde{T}$};
		\draw (223.9,1651.6) node [anchor=north west][inner sep=0.75pt]  [color={rgb, 255:red, 128; green, 128; blue, 128 }  ,opacity=1 ]  {$\Sigma \times [ 0,1] \ \ \ \ \ \ \ \ \ \ \ \ \ \ \ \ \ \ \ \ \ \ \ \ \ \Sigma \times [ 0,1]$};

	\end{tikzpicture}
		}
	\end{equation}
	
	Suppose now that the coupon has no incoming strings.  We can start by sliding one of the ribbons  as in the first equality of Picture \eqref{eq: no strings},   and then use skein relations to relabel the coupon so that this ribbon is its unique incoming string as pictured in the second equality.  
	We can slide the resulting cap up through a small tubular neighborhood then ``cut" the bordism into  $(M,T)=(\tilde M, \tilde T)(M_1, T_1)$,  where $(M_1, T_1)$ is couponless, and $(\tilde M, \tilde T)$ is as in Lemma \ref{lemma 2}. It follows from Corollary \ref{cor:cups,caps and crossings} and Lemma \ref{lemma 2} that $Z^*(M_1, T_1)$ and $Z^*(\tilde M,\tilde T)$ are in $\Ch(\opn{Vect})$, respectively, and hence so is $Z^*(M,T)$.
	\begin{equation}\label{eq: no strings}
		\scalebox{0.9}{
\tikzset{every picture/.style={line width=0.5pt}} 

\begin{tikzpicture}[x=0.75pt,y=0.75pt,yscale=-1,xscale=1]
	
	\draw  [color={rgb, 255:red, 255; green, 255; blue, 255 }  ,draw opacity=1 ][line width=3] [line join = round][line cap = round] (58.96,1770.12) .. controls (59.15,1771.4) and (59.65,1772.66) .. (60.18,1773.88) ;
	\draw    (106.57,1769.37) .. controls (102.4,1781.76) and (111.9,1844.82) .. (107.55,1854.76) ;
	\draw    (15.29,1769.59) .. controls (13.61,1788.64) and (14,1844.76) .. (20.86,1854.93) ;
	\draw    (15.29,1769.62) .. controls (15.67,1759.36) and (106.79,1759.77) .. (106.57,1769.37) ;
	\draw [color={rgb, 255:red, 155; green, 155; blue, 155 }  ,draw opacity=1 ] [dash pattern={on 0.84pt off 2.51pt}]  (15.29,1769.62) .. controls (18.48,1774.66) and (44.83,1773.39) .. (61.97,1773.39) .. controls (79.11,1773.39) and (106.27,1773.52) .. (106.37,1768.47) ;
	\draw    (20.86,1854.93) .. controls (27.6,1864.03) and (107.31,1864.07) .. (107.55,1854.76) ;
	\draw [color={rgb, 255:red, 155; green, 155; blue, 155 }  ,draw opacity=1 ] [dash pattern={on 0.84pt off 2.51pt}]  (23.06,1855.86) .. controls (24,1855.5) and (46.09,1855.14) .. (62.55,1855.14) .. controls (79,1855.14) and (97.53,1854.97) .. (106.74,1856.23) ;
	\draw  [color={rgb, 255:red, 0; green, 0; blue, 0 }  ,draw opacity=1 ][fill={rgb, 255:red, 255; green, 255; blue, 255 }  ,fill opacity=1 ] (55.31,1796.8) -- (64.81,1796.8) -- (64.81,1799.78) -- (55.31,1799.78) -- cycle ;
	\draw [color={rgb, 255:red, 0; green, 0; blue, 0 }  ,draw opacity=1 ]   (35.25,1841.88) .. controls (42.5,1822.13) and (56.25,1815.88) .. (56.63,1800.11) ;
	\draw [color={rgb, 255:red, 0; green, 0; blue, 0 }  ,draw opacity=1 ]   (48.5,1843.63) .. controls (47.25,1826.63) and (61.75,1820.63) .. (58.36,1799.91) ;
	\draw [color={rgb, 255:red, 0; green, 0; blue, 0 }  ,draw opacity=1 ]   (83.14,1843.95) .. controls (77.49,1838.72) and (73.14,1811.45) .. (62.89,1799.87) ;
	\draw [color={rgb, 255:red, 0; green, 0; blue, 0 }  ,draw opacity=1 ]   (79.5,1848.13) .. controls (73.47,1845.07) and (70.75,1817.88) .. (60.5,1800.05) ;
	\draw  [color={rgb, 255:red, 255; green, 255; blue, 255 }  ,draw opacity=1 ][line width=3] [line join = round][line cap = round] (177.96,1770.89) .. controls (178.15,1772.11) and (178.65,1773.29) .. (179.18,1774.45) ;
	\draw    (225.57,1770.18) .. controls (221.4,1782.54) and (230.9,1845.47) .. (226.55,1855.39) ;
	\draw    (134.29,1770.4) .. controls (132.61,1789.41) and (133,1845.41) .. (139.86,1855.56) ;
	\draw    (134.29,1770.42) .. controls (134.67,1760.71) and (225.79,1761.1) .. (225.57,1770.18) ;
	\draw [color={rgb, 255:red, 155; green, 155; blue, 155 }  ,draw opacity=1 ] [dash pattern={on 0.84pt off 2.51pt}]  (134.29,1770.42) .. controls (137.48,1775.19) and (163.83,1773.98) .. (180.97,1773.98) .. controls (198.11,1773.98) and (225.27,1774.11) .. (225.37,1769.33) ;
	\draw    (139.86,1855.56) .. controls (146.6,1865.15) and (226.31,1865.2) .. (226.55,1855.39) ;
	\draw [color={rgb, 255:red, 155; green, 155; blue, 155 }  ,draw opacity=1 ] [dash pattern={on 0.84pt off 2.51pt}]  (142.06,1856.54) .. controls (143,1856.16) and (165.09,1855.78) .. (181.55,1855.78) .. controls (198,1855.78) and (216.53,1855.6) .. (225.74,1856.93) ;
	\draw  [color={rgb, 255:red, 255; green, 255; blue, 255 }  ,draw opacity=1 ][line width=3] [line join = round][line cap = round] (294.46,1770.91) .. controls (294.65,1772.13) and (295.15,1773.31) .. (295.68,1774.48) ;
	\draw    (342.07,1770.19) .. controls (337.9,1782.49) and (347.4,1845.07) .. (343.05,1854.94) ;
	\draw    (250.79,1770.43) .. controls (249.11,1790.36) and (249.5,1844.48) .. (256.36,1855.12) ;
	\draw    (250.79,1770.43) .. controls (251.17,1760.7) and (342.29,1761.09) .. (342.07,1770.19) ;
	\draw [color={rgb, 255:red, 155; green, 155; blue, 155 }  ,draw opacity=1 ] [dash pattern={on 0.84pt off 2.51pt}]  (250.79,1770.43) .. controls (253.98,1775.21) and (280.33,1774.01) .. (297.47,1774.01) .. controls (314.61,1774.01) and (341.77,1774.13) .. (341.87,1769.35) ;
	\draw    (256.36,1855.12) .. controls (263.1,1864.63) and (342.81,1864.68) .. (343.05,1854.94) ;
	\draw [color={rgb, 255:red, 155; green, 155; blue, 155 }  ,draw opacity=1 ] [dash pattern={on 0.84pt off 2.51pt}]  (258.56,1856.09) .. controls (259.5,1855.71) and (281.59,1855.33) .. (298.05,1855.33) .. controls (314.5,1855.33) and (333.03,1855.15) .. (342.24,1856.47) ;
	\draw [color={rgb, 255:red, 0; green, 0; blue, 0 }  ,draw opacity=1 ]   (151.25,1838.38) .. controls (158.5,1822.38) and (155.5,1779.38) .. (164,1791.88) ;
	\draw  [color={rgb, 255:red, 0; green, 0; blue, 0 }  ,draw opacity=1 ][fill={rgb, 255:red, 255; green, 255; blue, 255 }  ,fill opacity=1 ] (178.31,1798.8) -- (187.81,1798.8) -- (187.81,1801.78) -- (178.31,1801.78) -- cycle ;
	\draw [color={rgb, 255:red, 0; green, 0; blue, 0 }  ,draw opacity=1 ]   (164,1791.88) .. controls (167.25,1795.63) and (171.75,1813.63) .. (179.63,1801.86) ;
	\draw [color={rgb, 255:red, 0; green, 0; blue, 0 }  ,draw opacity=1 ]   (171.5,1845.63) .. controls (170.25,1828.63) and (184.75,1822.63) .. (181.36,1801.91) ;
	\draw [color={rgb, 255:red, 0; green, 0; blue, 0 }  ,draw opacity=1 ]   (206.14,1845.95) .. controls (200.49,1840.72) and (196.14,1813.45) .. (185.89,1801.87) ;
	\draw [color={rgb, 255:red, 0; green, 0; blue, 0 }  ,draw opacity=1 ]   (202.5,1850.13) .. controls (196.47,1847.07) and (193.75,1819.88) .. (183.5,1802.05) ;
	\draw [color={rgb, 255:red, 0; green, 0; blue, 0 }  ,draw opacity=1 ]   (269,1839.13) .. controls (291.52,1819.96) and (279.83,1772.4) .. (291.46,1787.03) .. controls (293.26,1789.29) and (295.62,1793.05) .. (298.75,1798.63) ;
	\draw  [color={rgb, 255:red, 0; green, 0; blue, 0 }  ,draw opacity=1 ][fill={rgb, 255:red, 255; green, 255; blue, 255 }  ,fill opacity=1 ] (294.56,1798.55) -- (304.06,1798.55) -- (304.06,1801.53) -- (294.56,1801.53) -- cycle ;
	\draw [color={rgb, 255:red, 0; green, 0; blue, 0 }  ,draw opacity=1 ]   (287.75,1845.38) .. controls (286.5,1828.38) and (301,1822.38) .. (297.61,1801.66) ;
	\draw [color={rgb, 255:red, 0; green, 0; blue, 0 }  ,draw opacity=1 ]   (322.39,1845.7) .. controls (316.74,1840.47) and (312.39,1813.2) .. (302.14,1801.62) ;
	\draw [color={rgb, 255:red, 0; green, 0; blue, 0 }  ,draw opacity=1 ]   (318.75,1849.88) .. controls (312.72,1846.82) and (310,1819.63) .. (299.75,1801.8) ;
	\draw [color={rgb, 255:red, 74; green, 144; blue, 226 }  ,draw opacity=1 ][fill={rgb, 255:red, 74; green, 144; blue, 226 }  ,fill opacity=1 ]   (275.77,1774.52) -- (303.16,1816.31) ;
	\draw [color={rgb, 255:red, 74; green, 144; blue, 226 }  ,draw opacity=1 ][fill={rgb, 255:red, 74; green, 144; blue, 226 }  ,fill opacity=1 ]   (284.57,1770.75) -- (311.74,1812.19) ;
	\draw  [color={rgb, 255:red, 74; green, 144; blue, 226 }  ,draw opacity=1 ] (303.16,1816.31) .. controls (302.75,1815.69) and (304.34,1814.26) .. (306.71,1813.12) .. controls (309.08,1811.99) and (311.33,1811.57) .. (311.74,1812.19) .. controls (312.14,1812.81) and (310.55,1814.24) .. (308.18,1815.38) .. controls (305.82,1816.52) and (303.56,1816.93) .. (303.16,1816.31) -- cycle ;
	\draw  [color={rgb, 255:red, 74; green, 144; blue, 226 }  ,draw opacity=1 ] (275.99,1774.87) .. controls (275.59,1774.25) and (277.18,1772.82) .. (279.54,1771.68) .. controls (281.91,1770.54) and (284.16,1770.12) .. (284.57,1770.75) .. controls (284.98,1771.37) and (283.39,1772.79) .. (281.02,1773.93) .. controls (278.65,1775.07) and (276.4,1775.49) .. (275.99,1774.87) -- cycle ;
	\draw  [color={rgb, 255:red, 255; green, 255; blue, 255 }  ,draw opacity=1 ][line width=3] [line join = round][line cap = round] (450.56,1769.31) .. controls (450.75,1770.53) and (451.25,1771.71) .. (451.78,1772.88) ;
	\draw    (498.17,1768.59) .. controls (494,1780.89) and (503.5,1843.47) .. (499.15,1853.34) ;
	\draw    (406.89,1768.83) .. controls (405.21,1788.76) and (405.6,1842.88) .. (412.46,1853.52) ;
	\draw    (406.89,1768.83) .. controls (407.27,1759.1) and (498.39,1759.49) .. (498.17,1768.59) ;
	\draw [color={rgb, 255:red, 155; green, 155; blue, 155 }  ,draw opacity=1 ] [dash pattern={on 0.84pt off 2.51pt}]  (406.89,1768.83) .. controls (410.08,1773.61) and (436.43,1772.41) .. (453.57,1772.41) .. controls (470.71,1772.41) and (497.87,1772.53) .. (497.97,1767.75) ;
	\draw    (412.46,1853.52) .. controls (419.2,1863.03) and (498.91,1863.08) .. (499.15,1853.34) ;
	\draw [color={rgb, 255:red, 155; green, 155; blue, 155 }  ,draw opacity=1 ] [dash pattern={on 0.84pt off 2.51pt}]  (414.66,1854.49) .. controls (415.6,1854.11) and (437.69,1853.73) .. (454.15,1853.73) .. controls (470.6,1853.73) and (489.13,1853.55) .. (498.34,1854.87) ;
	\draw [color={rgb, 255:red, 0; green, 0; blue, 0 }  ,draw opacity=1 ]   (425.1,1837.53) .. controls (447.62,1818.36) and (424.2,1753) .. (444.2,1781.4) .. controls (464.2,1809.8) and (451.72,1791.45) .. (454.85,1797.03) ;
	\draw  [color={rgb, 255:red, 0; green, 0; blue, 0 }  ,draw opacity=1 ][fill={rgb, 255:red, 255; green, 255; blue, 255 }  ,fill opacity=1 ] (450.66,1796.95) -- (460.16,1796.95) -- (460.16,1799.93) -- (450.66,1799.93) -- cycle ;
	\draw [color={rgb, 255:red, 0; green, 0; blue, 0 }  ,draw opacity=1 ]   (443.85,1843.78) .. controls (442.6,1826.78) and (457.1,1820.78) .. (453.71,1800.06) ;
	\draw [color={rgb, 255:red, 0; green, 0; blue, 0 }  ,draw opacity=1 ]   (478.49,1844.1) .. controls (472.84,1838.87) and (468.49,1811.6) .. (458.24,1800.02) ;
	\draw [color={rgb, 255:red, 0; green, 0; blue, 0 }  ,draw opacity=1 ]   (474.85,1848.28) .. controls (468.82,1845.22) and (466.1,1818.03) .. (455.85,1800.2) ;
	\draw [dashed, color={rgb, 255:red, 208; green, 2; blue, 27 }  ,draw opacity=0.22 ][line width=1.5]    (408.5,1771.7) -- (492.83,1771.7) ;
	\draw [dashed, color={rgb, 255:red, 208; green, 2; blue, 27 }  ,draw opacity=0.22 ][line width=1.5]    (408.9,1782.9) -- (452.7,1782.9) -- (493.23,1782.9) ;
	\draw [dashed, color={rgb, 255:red, 208; green, 2; blue, 27 }  ,draw opacity=0.22 ][line width=1.5]    (253.3,1783.7) -- (297.1,1783.7) -- (337.63,1783.7) ;
	\draw [dashed, color={rgb, 255:red, 208; green, 2; blue, 27 }  ,draw opacity=0.22 ][line width=1.5]    (253.67,1774.01) -- (297.47,1774.01) -- (338,1774.01) ;
	\draw  [color={rgb, 255:red, 255; green, 255; blue, 255 }  ,draw opacity=1 ][line width=3] [line join = round][line cap = round] (450.47,1769.71) .. controls (450.66,1770.93) and (451.16,1772.11) .. (451.69,1773.28) ;
	\draw [color={rgb, 255:red, 74; green, 144; blue, 226 }  ,draw opacity=1 ][fill={rgb, 255:red, 74; green, 144; blue, 226 }  ,fill opacity=1 ]   (431.77,1773.32) -- (459.17,1815.11) ;
	\draw [color={rgb, 255:red, 74; green, 144; blue, 226 }  ,draw opacity=1 ][fill={rgb, 255:red, 74; green, 144; blue, 226 }  ,fill opacity=1 ]   (440.58,1769.55) -- (467.75,1810.99) ;
	\draw  [color={rgb, 255:red, 74; green, 144; blue, 226 }  ,draw opacity=1 ] (459.17,1815.11) .. controls (458.76,1814.49) and (460.35,1813.06) .. (462.72,1811.92) .. controls (465.09,1810.79) and (467.34,1810.37) .. (467.75,1810.99) .. controls (468.15,1811.61) and (466.56,1813.04) .. (464.19,1814.18) .. controls (461.82,1815.32) and (459.57,1815.73) .. (459.17,1815.11) -- cycle ;
	\draw  [color={rgb, 255:red, 74; green, 144; blue, 226 }  ,draw opacity=1 ] (432,1773.67) .. controls (431.59,1773.05) and (433.18,1771.62) .. (435.55,1770.48) .. controls (437.92,1769.34) and (440.17,1768.92) .. (440.58,1769.55) .. controls (440.99,1770.17) and (439.4,1771.59) .. (437.03,1772.73) .. controls (434.66,1773.87) and (432.41,1774.29) .. (432,1773.67) -- cycle ;
	
	\draw (113,1805) node [anchor=north west][inner sep=0.75pt]    {$=$};
	\draw (230,1805) node [anchor=north west][inner sep=0.75pt]    {$=$};
	\draw (370,1805) node [anchor=north west][inner sep=0.75pt]    {$=$};
	\draw (345,1770) node [anchor=north west][inner sep=0.75pt]  [color={rgb, 255:red, 128; green, 128; blue, 128 }  ,opacity=1 ]  {$\Sigma \times [ 0,1]$};
	\draw (500,1770) node [anchor=north west][inner sep=0.75pt]  [color={rgb, 255:red, 128; green, 128; blue, 128 }  ,opacity=1 ]  {$\Sigma \times [ 0,1]$};
	\draw (478,1807.17) node [anchor=north west][inner sep=0.75pt]  [color={rgb, 255:red, 128; green, 128; blue, 128 }  ,opacity=1 ]  {$\tilde{T}$};
	\draw (478,1770) node [anchor=north west][inner sep=0.75pt]  [color={rgb, 255:red, 128; green, 128; blue, 128 }  ,opacity=1 ]  {$T_{1}$};

\end{tikzpicture}		}
	\end{equation}
\end{proof}

\subsection{Proof of Proposition \ref{comm of differential}}

\begin{proof}[Proof of Proposition \ref{comm of differential}]
	By applying the same argument in each connected component, it is enough to prove the result for $M$ connected. (See alternatively Proposition \ref{prop:zast_sym} below.)
	
	Assume first that $M$ has non-empty boundary. We prove that $Z^*(M,T)$ is in $\Ch(\opn{Vect})$ by induction on the number of coupons in $T$. If $T$ has one coupon, this follows from Corollary \ref{corollary from lemma 2}. Suppose then the statement is true whenever the ribbon graph has at most $l$ coupons, and let $(M,T)$ such that $T$ has exactly $l+1$ coupons. We can choose one of the coupons in $T$ and slide it (thorugh a small enough tubular neighborhood) into the boundary collar of $M$, see Picture \eqref{eq:coupon}. Then we can ``cut" the bordism into $(M,T)=(M_2,T_2)(M_1,T_1)$, where  $(M_1, T_1)$ has underlying manifold $\Sigma \times [0,1]$ and a unique coupon, and $(M_2, T_2)$ is obtained from $(M,T)$ by removing a coupon via this procedure. By the inductive step, $Z^*(M_2,T_2)$ is in $\Ch(\opn{Vect})$, and so is $Z^*(M_1,T_1)$ by Corollary \ref{corollary from lemma 2}. Hence the statement also holds true for $(M,T)$, as desired. 
	\begin{equation}\label{eq:coupon}
		\scalebox{0.9}{
		\tikzset{every picture/.style={line width=0.5pt}} 
		
		\begin{tikzpicture}[x=0.75pt,y=0.75pt,yscale=-1,xscale=1]
			
			\draw  [color={rgb, 255:red, 255; green, 255; blue, 255 }  ,draw opacity=1 ][line width=3] [line join = round][line cap = round] (215.15,1986.02) .. controls (215.34,1987.31) and (215.84,1988.56) .. (216.37,1989.79) ;
			\draw  [color={rgb, 255:red, 255; green, 255; blue, 255 }  ,draw opacity=1 ][line width=3] [line join = round][line cap = round] (163.06,1884.15) .. controls (163.25,1885.44) and (163.75,1886.69) .. (164.28,1887.92) ;
			\draw    (210.67,1883.4) .. controls (206.5,1897.55) and (216,1969.55) .. (211.65,1980.91) ;
			\draw    (119.39,1883.65) .. controls (117.71,1905.4) and (118.1,1969.48) .. (124.96,1981.1) ;
			\draw    (119.39,1883.65) .. controls (119.77,1873.4) and (210.89,1873.81) .. (210.67,1883.4) ;
			\draw [color={rgb, 255:red, 155; green, 155; blue, 155 }  ,draw opacity=1 ] [dash pattern={on 0.84pt off 2.51pt}]  (119.39,1883.65) .. controls (122.58,1888.69) and (148.93,1887.42) .. (166.07,1887.42) .. controls (183.21,1887.42) and (210.37,1887.55) .. (210.47,1882.51) ;
			\draw    (124.96,1981.1) .. controls (131.7,1991.48) and (211.41,1991.54) .. (211.65,1980.91) ;
			\draw [color={rgb, 255:red, 155; green, 155; blue, 155 }  ,draw opacity=1 ] [dash pattern={on 0.84pt off 2.51pt}]  (127.16,1982.16) .. controls (128.1,1981.75) and (150.19,1981.33) .. (166.65,1981.33) .. controls (183.1,1981.33) and (201.63,1981.14) .. (210.84,1982.58) ;
			\draw  [color={rgb, 255:red, 255; green, 255; blue, 255 }  ,draw opacity=1 ][line width=3] [line join = round][line cap = round] (373.65,1986.02) .. controls (373.84,1987.31) and (374.34,1988.56) .. (374.87,1989.79) ;
			\draw  [color={rgb, 255:red, 255; green, 255; blue, 255 }  ,draw opacity=1 ][line width=3] [line join = round][line cap = round] (321.56,1884.15) .. controls (321.75,1885.44) and (322.25,1886.69) .. (322.78,1887.92) ;
			\draw    (369.17,1883.4) .. controls (365,1897.55) and (374.5,1969.55) .. (370.15,1980.91) ;
			\draw    (277.89,1883.65) .. controls (276.21,1905.4) and (276.6,1969.48) .. (283.46,1981.1) ;
			\draw    (277.89,1883.65) .. controls (278.27,1873.4) and (369.39,1873.81) .. (369.17,1883.4) ;
			\draw [color={rgb, 255:red, 155; green, 155; blue, 155 }  ,draw opacity=1 ] [dash pattern={on 0.84pt off 2.51pt}]  (277.89,1883.65) .. controls (281.08,1888.69) and (307.43,1887.42) .. (324.57,1887.42) .. controls (341.71,1887.42) and (368.87,1887.55) .. (368.97,1882.51) ;
			\draw    (283.46,1981.1) .. controls (290.2,1991.48) and (369.91,1991.54) .. (370.15,1980.91) ;
			\draw [color={rgb, 255:red, 155; green, 155; blue, 155 }  ,draw opacity=1 ] [dash pattern={on 0.84pt off 2.51pt}]  (285.66,1982.16) .. controls (286.6,1981.75) and (308.69,1981.33) .. (325.15,1981.33) .. controls (341.6,1981.33) and (360.13,1981.14) .. (369.34,1982.58) ;
			\draw [color={rgb, 255:red, 208; green, 2; blue, 27 }  ,draw opacity=0.32 ]   (224.5,1934.03) -- (262.75,1934.99) ;
			\draw [shift={(264.75,1935.03)}, rotate = 181.42] [color={rgb, 255:red, 208; green, 2; blue, 27 }  ,draw opacity=0.32 ][line width=0.75]    (10.93,-3.29) .. controls (6.95,-1.4) and (3.31,-0.3) .. (0,0) .. controls (3.31,0.3) and (6.95,1.4) .. (10.93,3.29)   ;
			\draw [color={rgb, 255:red, 0; green, 0; blue, 0 }  ,draw opacity=1 ]   (148.71,1966.81) .. controls (151.09,1950.41) and (162.4,1942.22) .. (159.94,1929.13) ;
			\draw [color={rgb, 255:red, 0; green, 0; blue, 0 }  ,draw opacity=1 ]   (161.02,1965.26) .. controls (157.03,1951.14) and (167.66,1946.16) .. (161.3,1928.96) ;
			\draw [color={rgb, 255:red, 0; green, 0; blue, 0 }  ,draw opacity=1 ]   (189,1965.53) .. controls (183.52,1961.19) and (175.24,1938.54) .. (164.95,1928.93) ;
			\draw [color={rgb, 255:red, 0; green, 0; blue, 0 }  ,draw opacity=1 ]   (186.8,1969) .. controls (181.4,1966.47) and (174.44,1943.88) .. (163.05,1929.08) ;
			\draw [color={rgb, 255:red, 0; green, 0; blue, 0 }  ,draw opacity=1 ]   (142.07,1889.45) .. controls (144.03,1896.52) and (156.5,1911.49) .. (158.45,1926.33) ;
			\draw [color={rgb, 255:red, 0; green, 0; blue, 0 }  ,draw opacity=1 ]   (178.28,1893.25) .. controls (170.47,1902) and (164.26,1914.23) .. (164.01,1925.84) ;
			\draw [color={rgb, 255:red, 74; green, 144; blue, 226 }  ,draw opacity=1 ][fill={rgb, 255:red, 74; green, 144; blue, 226 }  ,fill opacity=1 ]   (135.18,1883.41) -- (162.81,1941.9) ;
			\draw [color={rgb, 255:red, 74; green, 144; blue, 226 }  ,draw opacity=1 ][fill={rgb, 255:red, 74; green, 144; blue, 226 }  ,fill opacity=1 ]   (145.16,1883.9) -- (172.56,1941.9) ;
			\draw  [color={rgb, 255:red, 74; green, 144; blue, 226 }  ,draw opacity=1 ] (162.81,1941.9) .. controls (162.4,1941.03) and (164.25,1940.32) .. (166.94,1940.32) .. controls (169.63,1940.32) and (172.15,1941.03) .. (172.56,1941.9) .. controls (172.97,1942.77) and (171.12,1943.47) .. (168.43,1943.47) .. controls (165.74,1943.47) and (163.22,1942.77) .. (162.81,1941.9) -- cycle ;
			\draw  [color={rgb, 255:red, 74; green, 144; blue, 226 }  ,draw opacity=1 ] (135.93,1885.04) .. controls (135.26,1883.58) and (136.77,1882.4) .. (139.31,1882.4) .. controls (141.85,1882.4) and (144.45,1883.58) .. (145.12,1885.04) .. controls (145.79,1886.49) and (144.28,1887.67) .. (141.74,1887.67) .. controls (139.2,1887.67) and (136.6,1886.49) .. (135.93,1885.04) -- cycle ;
			\draw [color={rgb, 255:red, 0; green, 0; blue, 0 }  ,draw opacity=1 ]   (174.62,1888.15) .. controls (166.8,1896.9) and (160.91,1914.27) .. (160.66,1925.89) ;
			\draw  [color={rgb, 255:red, 0; green, 0; blue, 0 }  ,draw opacity=1 ][fill={rgb, 255:red, 255; green, 255; blue, 255 }  ,fill opacity=1 ] (155.95,1926.04) -- (164.95,1926.04) -- (164.95,1928.93) -- (155.95,1928.93) -- cycle ;
			\draw [color={rgb, 255:red, 0; green, 0; blue, 0 }  ,draw opacity=1 ]   (295,1955.67) .. controls (297.15,1940.49) and (308.99,1912.48) .. (306.77,1900.36) ;
			\draw [color={rgb, 255:red, 0; green, 0; blue, 0 }  ,draw opacity=1 ]   (306.33,1957) .. controls (302.73,1943.93) and (312.53,1916.29) .. (306.77,1900.36) ;
			\draw [color={rgb, 255:red, 0; green, 0; blue, 0 }  ,draw opacity=1 ]   (348.33,1953.67) .. controls (343.39,1949.65) and (319.37,1909.24) .. (310.07,1900.34) ;
			\draw [color={rgb, 255:red, 0; green, 0; blue, 0 }  ,draw opacity=1 ]   (343,1954.33) .. controls (338.12,1951.99) and (318.64,1914.17) .. (308.36,1900.47) ;
			\draw [color={rgb, 255:red, 0; green, 0; blue, 0 }  ,draw opacity=1 ]   (300.6,1887.8) .. controls (301.8,1896.2) and (303.8,1891) .. (305.54,1897.26) ;
			\draw [color={rgb, 255:red, 0; green, 0; blue, 0 }  ,draw opacity=1 ]   (341.72,1887.72) .. controls (334.66,1895.81) and (311.62,1886.65) .. (311.4,1897.4) ;
			\draw [color={rgb, 255:red, 0; green, 0; blue, 0 }  ,draw opacity=1 ]   (324.57,1887.42) .. controls (317.4,1891.4) and (309.22,1885.85) .. (309,1896.6) ;
			\draw  [color={rgb, 255:red, 0; green, 0; blue, 0 }  ,draw opacity=1 ][fill={rgb, 255:red, 255; green, 255; blue, 255 }  ,fill opacity=1 ] (303.83,1897.4) -- (311.96,1897.4) -- (311.96,1900.07) -- (303.83,1900.07) -- cycle ;
			\draw [color={rgb, 255:red, 74; green, 144; blue, 226 }  ,draw opacity=1 ][fill={rgb, 255:red, 74; green, 144; blue, 226 }  ,fill opacity=1 ]   (296.59,1883.81) -- (321.55,1937.94) ;
			\draw [color={rgb, 255:red, 74; green, 144; blue, 226 }  ,draw opacity=1 ][fill={rgb, 255:red, 74; green, 144; blue, 226 }  ,fill opacity=1 ]   (305.6,1884.26) -- (330.36,1937.94) ;
			\draw  [color={rgb, 255:red, 74; green, 144; blue, 226 }  ,draw opacity=1 ] (321.55,1937.94) .. controls (321.18,1937.14) and (322.85,1936.48) .. (325.28,1936.48) .. controls (327.71,1936.48) and (329.99,1937.14) .. (330.36,1937.94) .. controls (330.73,1938.75) and (329.06,1939.4) .. (326.63,1939.4) .. controls (324.19,1939.4) and (321.92,1938.75) .. (321.55,1937.94) -- cycle ;
			\draw  [color={rgb, 255:red, 74; green, 144; blue, 226 }  ,draw opacity=1 ] (296.8,1884.26) .. controls (296.43,1883.46) and (298.1,1882.8) .. (300.53,1882.8) .. controls (302.96,1882.8) and (305.23,1883.46) .. (305.6,1884.26) .. controls (305.98,1885.07) and (304.31,1885.72) .. (301.87,1885.72) .. controls (299.44,1885.72) and (297.17,1885.07) .. (296.8,1884.26) -- cycle ;
			\draw [dashed, color={rgb, 255:red, 208; green, 2; blue, 27 }  ,draw opacity=0.22 ][line width=1.5]    (120.9,1903.7) -- (164.7,1903.7) -- (209,1903.4) ;
			\draw [dashed, color={rgb, 255:red, 208; green, 2; blue, 27 }  ,draw opacity=0.22 ][line width=1.5]    (119.87,1887.02) -- (163.67,1887.02) -- (209.8,1887) ;
			\draw [dashed, color={rgb, 255:red, 208; green, 2; blue, 27 }  ,draw opacity=0.22 ][line width=1.5]    (280.77,1887.42) -- (324.57,1887.42) -- (367.4,1887.4) ;
			\draw [dashed, color={rgb, 255:red, 208; green, 2; blue, 27 }  ,draw opacity=0.22 ][line width=1.5]    (277.7,1904.9) -- (321.5,1904.9) -- (367,1904.6) ;
			
			\draw (213,1890) node [anchor=north west][inner sep=0.75pt]  [color={rgb, 255:red, 128; green, 128; blue, 128 }  ,opacity=1 ]  {$\Sigma \times [ 0,1]$};
			\draw (370,1890) node [anchor=north west][inner sep=0.75pt]  [color={rgb, 255:red, 128; green, 128; blue, 128 }  ,opacity=1 ]  {$\Sigma \times [ 0,1]$};

\draw (350,1890) node [anchor=north west][inner sep=0.75pt]  [color={rgb, 255:red, 128; green, 128; blue, 128 }  ,opacity=1 ]  {$T_1$};			
\draw (350,1910) node [anchor=north west][inner sep=0.75pt]  [color={rgb, 255:red, 128; green, 128; blue, 128 }  ,opacity=1 ]  {$T_2$};
			
		\end{tikzpicture}
		}
	\end{equation}
	
It remains to look at the case where $(M,T)$ has empty boundary.  As discussed earlier, $Z^\ast(M,T)$ is an endomorphisms of the complex $k$ in this case.  Since $k$ is concentrated in degree $0$, and has vanishing differential, any such linear endomorphism is a cochain map.
\end{proof}

\subsection{$Z^\ast$ is a cochain valued functor}

As an immediate corollary to Proposition \ref{comm of differential} we find that the cochain complexes $Z^\ast(\Sigma_{\vec{x}})$ and homogeneous maps $Z^\ast(M,T)$ associated to surfaces and bordisms in $\Bord_{\opn{Ch}(\msc{A})}$ assemble into a well-defined cochain valued functor.

\begin{corollary}\label{cor:zast_fun}
Given a ribbon tensor category $\msc{A}$ and a reasonable theory $Z:\Bord_{\msc{A}}\to \opn{Vect}$, the constructions from Definition \ref{def:zast} provide a well-defined functor
\[
Z^\ast:\Bord_{\opn{Ch}(\msc{A})}\to \opn{Ch}(\opn{Vect}).
\]
\end{corollary}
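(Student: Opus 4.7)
The proof is almost entirely a matter of assembling results already established. The state space $Z^\ast(\Sigma_{\vec{x}})$ is by construction a well-defined linear cochain complex: the underlying graded vector space is $\underline{Z}(\Sigma_{\vec{x}})$, the differential $d_{\Sigma_{\vec{x}}}$ is the degree $1$ map defined in Section \ref{sect:diff_def} via Definition \ref{def: differential}, and the relation $d_{\Sigma_{\vec{x}}}[1]\,d_{\Sigma_{\vec{x}}} = 0$ was verified at the level of the reduced category $(k\Str_{\opn{Vect}^{\mathbb Z}}^{\red}\boxtimes k\Bord^{\red}_{\msc{A}})^{\opn{add}}$ and then pushed forward along $ev_{\opn{Vect}^{\mathbb Z}}\cdot Z$. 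On morphisms, $Z^\ast(M,T) := \underline{Z}(M,T)$ is a homogeneous linear map of degree $0$, and Proposition \ref{comm of differential} establishes that it intertwines the differentials on source and target. Thus $Z^\ast(M,T)$ is genuinely a morphism in $\opn{Ch}(\opn{Vect})$.

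What remains is to verify well-definedness on equivalence classes of bordisms and compatibility with composition and identities, i.e.\ the bare functoriality of the assignment. The plan is to deduce all of this from the corresponding properties of $\underline{Z}$. Concretely, the forgetful functor $\opn{Ch}(\opn{Vect}) \to \opn{Vect}^{\mathbb Z}$ is faithful, and the composite
\[
\Bord_{\opn{Ch}(\msc{A})} \hookrightarrow \Bord_{\msc{A}^{\mathbb{Z}}} \xrightarrow{\underline{Z}} \opn{Vect}^{\mathbb{Z}}
\]
is a well-defined (symmetric monoidal) functor by Theorem \ref{thm:Z_AS} applied to the semisimple symmetric category $\msc{S} = \opn{Vect}^{\mathbb Z}$. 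Since the underlying graded maps $\underline{Z}(M,T)$ respect equivalence of bordisms, preserve identity bordisms, and satisfy $\underline{Z}((M',T')\circ (M,T)) = \underline{Z}(M',T')\circ \underline{Z}(M,T)$, and since by Proposition \ref{comm of differential} each $Z^\ast(M,T)$ is already a cochain map, faithfulness of the forgetful functor lifts these identities to the required functoriality of $Z^\ast$ in $\opn{Ch}(\opn{Vect})$.

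There is essentially no obstacle here beyond bookkeeping; the substantive content has been absorbed into Proposition \ref{comm of differential} and the verification that $d^2=0$. The only subtlety worth flagging is that while composition of bordisms may, a priori, introduce additional identity-labeled vertices where ribbons are glued (and, after smoothing, produce diagrams that differ from the naive concatenation), these ambiguities are exactly the ones already quotiented out in the passage to $k\Bord^{\red}_{\msc{A}}$ through relations U1--U4 (and their analogues in the strings factor), so the induced composition law in $\opn{Ch}(\opn{Vect})$ inherited from $\underline{Z}$ is unambiguous. Hence the corollary follows.
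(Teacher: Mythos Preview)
Your proof is correct and follows the same approach as the paper, which simply states that the result is an immediate corollary of Proposition \ref{comm of differential}. You have supplied the natural details the paper leaves implicit: that $Z^\ast$ lands in $\opn{Ch}(\opn{Vect})$ object-wise by the $d^2=0$ computation, morphism-wise by Proposition \ref{comm of differential}, and that functoriality is inherited from $\underline{Z}$ via faithfulness of the forgetful functor $\opn{Ch}(\opn{Vect})\to\opn{Vect}^{\mathbb Z}$.
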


\section{Symmetric monoidality, state spaces, and partition functions}
\label{sect:states_parts}

We have already seen that the theory $Z^\ast$ is well-defined as a functor.  We verify symmetric monoidality below, thus completing the proof of Theorem \ref{thm: Zast}.  We then provide an explicit description of the state spaces and partition functions in our cochain variant of the universal Lyubashenko theory.

\subsection{Symmetric monoidality}

We now have that any reasonable theory $Z:\Bord_{\msc{A}}\to \opn{Vect}$ defines a cochain valued functor $Z^\ast:\Bord_{\opn{Ch}(\msc{A})}\to \opn{Ch}(\msc{A})$ which fits into a diagram over the graded TQFT
\[
\xymatrix{
\Bord_{\opn{Ch}(\msc{A})}\ar[r]^{Z^\ast}\ar[d]_{forget} & \opn{Ch}(\msc{A})\ar[d]^{forget}\\
\Bord_{\msc{A}^{\mathbb{Z}}}\ar[r]_{\underline{Z}} &\opn{Vect}^{\mathbb{Z}}.
}
\]
Since the forgetful functors are faithful, we find that the functor $Z^\ast$ inherits a symmetric monoidal structure from that of $\underline{Z}$ provided the monoidal structure maps for $\underline{Z}$ are in fact maps of cochains, whenever one evaluates at disjoint unions of $\opn{Ch}(\msc{A})$-marked surfaces.

\begin{proposition}\label{prop:zast_sym}
For any reasonable theory $Z$, and surface $\Sigma_{\vec{x}}$ with a decomposition $\Sigma^0_{\vec{v}}\amalg\Sigma^1_{\vec{w}}\overset{\sim}\to \Sigma_{\vec{x}}$ in $\Bord_{\opn{Ch}(\msc{A})}$, the homogenous isomorphism
\[
F:Z^{\ast}(\Sigma^0_{\vec{v}})\ot Z^{\ast}(\Sigma^1_{\vec{w}})\overset{\sim}\to Z^{\ast}(\Sigma_{\vec{x}})
\]
defined via the symmetric monoidal structure on $\underline{Z}$ is an isomorphism of cochains.
\end{proposition}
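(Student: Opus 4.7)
First, by Lemma \ref{lem:d_pos} we may reduce to the case of positively framed marked surfaces. Since $\underline{Z}$ is symmetric monoidal, $F$ is automatically an isomorphism of graded vector spaces; the content of the proposition is that $F$ intertwines the differentials. Explicitly, we must verify
\[
d_{\Sigma_{\vec{x}}} \circ F \;=\; F \circ d_{\otimes},
\]
where $d_\otimes = d_{\Sigma^0_{\vec{v}}}\otimes 1 + (\tau_{-,k(-1)}\otimes 1)(1\otimes d_{\Sigma^1_{\vec{w}}})$ is the standard differential on the tensor product of the cochains $Z^\ast(\Sigma^0_{\vec{v}})$ and $Z^\ast(\Sigma^1_{\vec{w}})$.

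The plan is to check this equality at the level of the auxiliary differential $\tilde{d}$ in $(k\Str^{\red}_{\opn{Vect}^{\mathbb{Z}}}\boxtimes k\Bord^{\red}_{\msc{A}})^{\opn{add}}$ and then apply $ev_{\opn{Vect}^{\mathbb{Z}}}\cdot Z$. The marking set of $\Sigma_{\vec{x}}$ splits as a disjoint union of the marking sets of $\Sigma^0_{\vec{v}}$ and $\Sigma^1_{\vec{w}}$, so the sum defining $\tilde{d}_{\Sigma_{\vec{x}}}$ decomposes into contributions coming from $\Sigma^0$-markings and $\Sigma^1$-markings. Using Lemma \ref{lemma 1} to absorb the relevant permutation bordism, we may arrange the ordering so that the $\Sigma^0$-markings precede the $\Sigma^1$-markings.

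The $\Sigma^0$-contributions will match $F\circ(d_{\Sigma^0_{\vec{v}}}\otimes 1)$ by direct inspection: the coupon $d_{v_i}^{m_i}$ sits on a $\Sigma^0$-ribbon, disjoint from all $\Sigma^1$-data, and in the strings factor the new $k(-1)$ string braids only past the $\Sigma^0$-strings $k(m^0_j)$ with $j<i$, reproducing exactly a term of $\tilde{d}_{\Sigma^0_{\vec{v}}}$. The $\Sigma^1$-contributions are where the Koszul sign appears: the coupon sits on a $\Sigma^1$-ribbon, but the associated $k(-1)$ string, which is placed at the leftmost position in the target, must braid past every $\Sigma^0$-string $k(m^0_j)$ before entering the $\Sigma^1$-strings. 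By the skein relation (V5) each such crossing contributes a sign $(-1)^{m^0_j}$, so the accumulated sign across $\Sigma^0$ is $(-1)^{\sum_j m^0_j}$; this is precisely the sign produced by $\tau_{-,k(-1)}$ acting on the homogeneous piece of $Z^\ast(\Sigma^0_{\vec{v}})$ of total degree $\sum_j m^0_j$. Hence these contributions match $F\circ((\tau\otimes 1)(1\otimes d_{\Sigma^1_{\vec{w}}}))$.

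The main technical care will be the precise tracking of the Koszul sign in the preceding paragraph, though the match is essentially forced by the observation that both the signed symmetry on $\opn{Vect}^{\mathbb{Z}}$ and the relation (V5) encode the same sign convention for crossings $k(-1)\otimes k(m)\to k(m)\otimes k(-1)$. Once the auxiliary equality is checked term-by-term in $(k\Str^{\red}_{\opn{Vect}^{\mathbb{Z}}}\boxtimes k\Bord^{\red}_{\msc{A}})^{\opn{add}}$, applying $ev_{\opn{Vect}^{\mathbb{Z}}}\cdot Z$ yields the desired identity $d_{\Sigma_{\vec{x}}}\circ F = F\circ d_\otimes$.
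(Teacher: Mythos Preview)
Your approach is essentially the same as the paper's: reduce to the positively marked case, split the differential on $\Sigma_{\vec{x}}$ into contributions from $\Sigma^0$-markings and $\Sigma^1$-markings, and match these against the two summands of the tensor-product differential, with the Koszul sign for the $\Sigma^1$-part coming from moving $k(-1)$ past the $\Sigma^0$-strings. The paper packages this slightly differently---it writes $F=\oplus_m id_{k(m)}\otimes F_Z$ explicitly and expresses both differentials uniformly via the string morphisms $D_l$, so that the sign bookkeeping is absorbed into $ev(D_l)$ rather than invoked as (V5)---but the content is the same.

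Two small citation issues. First, Lemma~\ref{lem:d_pos} is stated for maps of the form $\underline{Z}(M,T)$, and the monoidal structure map $F$ is not of this form; the correct reduction (which the paper carries out) uses that $\Sigma^{flip}_{\vec{x}}=(\Sigma^0_{\vec{v}})^{flip}\amalg(\Sigma^1_{\vec{w}})^{flip}$ together with naturality of $F$ with respect to the flip bordisms, giving a square in which three sides are already known to be cochain maps. Second, invoking Lemma~\ref{lemma 1} to reorder markings is unnecessary: the decomposition $\Sigma^0\amalg\Sigma^1\overset{\sim}\to\Sigma$ already gives an ordered splitting $I^0\amalg I^1=I$, and one may simply take $I^0=\{1,\dots,t\}$, $I^1=\{t+1,\dots,n\}$ as the paper does. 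Neither point affects the substance of your argument.
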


To clarify what's going on here, we have a marked surface $\Sigma_{\vec{x}}$ with a decomposition $\Sigma^0\amalg\Sigma^1\overset{\sim}\to\Sigma$.  Pulling back the markings $I\to \Sigma$ along the maps from the $\Sigma^{\varepsilon}$ provides, by assumption, an ordered splitting the set $I^0\amalg I^1=I$ and respective markings $I^{\varepsilon}\to \Sigma^{\varepsilon}$.  The marking objects $\vec{v}$ and $\vec{w}$ for $\Sigma^0$ and $\Sigma^1$ are similarly obtained via pullback, or restriction if one prefers.

\begin{proof}
We may take $I=\{1,\dots, n\}$ and, for the given splitting $I=I^0\amalg I^1$ as above, take $t=\opn{max}(I^0)$.  Hence $I^0=\{1,\dots,t\}$ and $I^1=\{t+1,\dots,n\}$.  Suppose for the moment that $\Sigma_{\vec{x}}$ is positively marked as well.  For an ordered collection of integers $m:A\to \mathbb{Z}$, and $r$ in $A$, we let $D_l=D_l(m)$ denote the morphisms
\[
\scalebox{.9}{
\tikzset{every picture/.style={line width=0.5pt}} 
\begin{tikzpicture}[x=0.75pt,y=0.75pt,yscale=-1,xscale=1]
\draw    (302.92,26.83) -- (302.92,97.8) ;
\draw   (242.6,56.82) -- (253.81,56.82) -- (253.81,60.09) -- (242.6,60.09) -- cycle ;
\draw    (248.47,27.13) -- (248.47,56.52) ;
\draw    (251.14,59.79) -- (251.14,98.1) ;
\draw    (189.22,27.13) -- (189.22,72.56) ;
\draw    (189.22,72.56) .. controls (189.76,90.67) and (206.84,79.98) .. (206.3,98.1) ;
\draw    (186.55,98.39) .. controls (187.09,71.67) and (244.74,93.94) .. (245.27,59.79) ;

\draw (207,49.01) node [anchor=north west][inner sep=0.75pt] [align=left] {$\displaystyle \dotsc $};
\draw (271.24,49.01) node [anchor=north west][inner sep=0.75pt] [align=left] {$\displaystyle \dotsc $};
\draw (178.52,16) node [anchor=north west][inner sep=0.75pt]  [font=\tiny] [align=left] {$\displaystyle k(m_{a_{1}})$};
\draw (240.01,16) node [anchor=north west][inner sep=0.75pt]  [font=\tiny] [align=left] {$\displaystyle k(m_{l})$};
\draw (292.22,16) node [anchor=north west][inner sep=0.75pt]  [font=\tiny] [align=left] {$\displaystyle k(m_{a_{r}})$};
\draw (235.91,100.) node [anchor=north west][inner sep=0.75pt]  [font=\tiny] [align=left] {$\displaystyle k(m_{l} +1)$};
\draw (168,100.) node [anchor=north west][inner sep=0.75pt]  [font=\tiny] [align=left] {$\displaystyle k(-1)$};
\draw (195.07,100.) node [anchor=north west][inner sep=0.75pt]  [font=\tiny] [align=left] {$\displaystyle k(m_{a_{1}})$};
\draw (291.69,100) node [anchor=north west][inner sep=0.75pt]  [font=\tiny] [align=left] {$\displaystyle k(m_{a_{r}})$};

\draw (125,50) node [anchor=north west][inner sep=0.75pt]  [font=\normalsize] [align=left] {$D_l=$};

\end{tikzpicture}}
\]
in $k\opn{Str}^{\red}_{\opn{Vect}^{\mathbb{Z}}}$.  Then we have identifications
\[
Z^\ast(\Sigma_{\vec{x}})=\oplus_{m:I\to \mathbb{Z}} k(m)\ot Z(\Sigma_{\vec{x}^m})
\]
and
\[
\begin{array}{rl}
Z^\ast(\Sigma_{\vec{v}}^0)\ot Z^{\ast}(\Sigma_{\vec{w}}^1) & = \left(\oplus_{m^0:I^0\to \mathbb{Z}} k(m^0)\ot Z(\Sigma_{\vec{v}^{m^0}}^0)\right)\ot \left(\oplus_{m^1:I^1\to\mathbb{Z}}k(m^1)\ot Z(\Sigma_{\vec{w}^{m^1}}^1)\right)\vspace{2mm}\\
& \cong \oplus_{m:I\to \mathbb{Z}}k(m)\ot Z(\Sigma^0_{\vec{v}^{m|I^0}})\ot Z(\Sigma^1_{\vec{w}^{m|I^1}}),
\end{array}
\]
where $k(m)$ etc.\ are the apparent products in $\opn{Vect}^{\mathbb{Z}}$.  Under these identifications the proposed structure map $F:Z^\ast\ot Z^\ast\to Z^\ast$ is simply induced by that of $Z$,
\begin{equation}\label{eq:7444}
F=\oplus_m id_{k(m)}\ot F_Z,
\end{equation}
and the differentials appear as
\[
d_{Z^\ast(\Sigma_{\vec{x}})}=\sum_m\sum_i ev(D_i)\ot Z(\Sigma_{d_{x_i}^{m_i}})
\]
and
\[
 d_{Z^\ast(\Sigma^0_{\vec{v}})\ot Z^\ast(\Sigma^1_{\vec{w}})}=\sum_m\left(\sum_{i\leq t} ev(D_i)\ot Z(\Sigma^0_{d_{x_i}^{m_i}})\ot Z(id)+\sum_{j>t}ev(D_j)\ot Z(id)\ot Z(\Sigma^0_{d_{x_j}^{m_j}})\right).
\]
\par

Via monoidality of $Z$ we have
\[
F_Z\left(Z(\Sigma^0_{d_{x_i}^{m_i}})\ot Z(id)\right)=Z(\Sigma_{d_{x_i}^{m_i}})F_Z\ \ \text{and}\ \ F_Z\left(Z(id)\ot Z(\Sigma^1_{d_{x_j}^{m_j}})\right)=Z(\Sigma_{d_{x_j}^{m_j}})F_Z.
\]
From these equalities, the above expressions for the differentials, and the formula \eqref{eq:7444} we obtain the desired compatibility with the differential
\[
F[1]\ d_{Z^\ast(\Sigma^0_{\vec{v}})\ot Z^\ast(\Sigma^1_{\vec{w}})}=d_{Z^\ast(\Sigma_{\vec{x}})}\ F.
\]
\par

For general $\Sigma_{\vec{x}}$ we note that the flip map from Section \ref{sect:diff_def} splits over disjoint unions
\[
\Sigma^{flip}_{\vec{x}}=(\Sigma^0_{\vec{v}})^{flip}\amalg (\Sigma^1_{\vec{w}})^{flip}.
\]
We therefore have a diagram of homogeneous morphisms
\[
\xymatrix{
Z^\ast(\Sigma^0_{\vec{v}})\ot Z^\ast(\Sigma^1_{\vec{w}})\ar[rr]^{F}\ar[d]_{Z^{\ast}(flip)\ot Z^{\ast}(flip)} & & Z^\ast(\Sigma_{\vec{x}})\ar[d]^{Z^\ast(flip)}\\
Z^\ast(\Sigma^{0+}_{\vec{v}})\ot Z^\ast(\Sigma^{1+}_{\vec{w}})\ar[rr]^{F} & & Z^\ast(\Sigma^+_{\vec{x}})
}
\]
in which all maps, save for the top one, are known to be cochain isomorphisms.  It follows that the top map is a cochain isomorphism as well.
\end{proof}

\subsection{Proof of Theorem \ref{thm: Zast}}
\label{sect:zast_proof}

\begin{proof}[Proof of Theorem \ref{thm: Zast}]
The theory $Z^\ast$ is well-defined as a functor by Corollary \ref{cor:zast_fun}, fits into the proposed diagram over $\underline{Z}$ by construction, and inherits its symmetric monoidal structure from that of $\underline{Z}$ by Proposition \ref{prop:zast_sym}.  The fact that the restriction $Z^\ast|_{\Bord_{\msc{A}}}$ recovers the original theory follows from the corresponding property for $\underline{Z}$ (see Theorem \ref{thm:Z_AS}).  
\end{proof}

\subsection{State spaces for $Z^*_{\msc{A}}$}\label{sect:zch_states}

As stated earlier, the entire analysis above applies to reasonable theories from the admissible category of $\msc{A}$-labeled bordisms as well.  We therefore have the cochain valued symmetric monoidal functor
\[
Z_{\msc{A}}^{\ast}:\Bord_{\opn{Ch}(\msc{A})}^{\opn{adm}}\to \opn{Ch}(\opn{Vect})
\]
associated to the universal Lyubashenko theory $Z_{\msc{A}}:\Bord^{\opn{adm}}_{\msc{A}}\to \opn{Vect}$, at any given modular tensor category $\msc{A}$.

Let $\Sigma_{\vec{x}}$ be a genus $g$ surface in $\Bord_{\Ch(\msc{A})}^{\opn{adm}}$ with markings from an
ordered set $I$. As stated in Theorem \ref{thm:ind_states}, there is a natural isomorphism \[\underline{Z}_{\msc{A}}(\Sigma_{\vec{x}})\overset{\sim}\to \underline{\Hom}(\1,E^{\ot g}\ot x_I).\]
Recall that $Z^*_{\msc{A}}(\Sigma_{\vec{x}})$ is the graded vector space $\underline{Z}_{\msc{A}}(\Sigma_{\vec{x}})$ together with the differential $d_{\Sigma_{\vec{x}}}$ as in Definition \ref{def: differential}. We show that the natural isomorphism above preserves the differential and thus extends to $Z^*_{\msc{A}}$.

Suppose that $\Sigma_{\vec{x}}$ is positively marked and consider the bordism $\Sigma_{\vec{d_{x_i}}}:\Sigma_{\vec{x}}\to \Sigma_{\vec{x}[\delta_i]}$ as defined in Section \ref{sect:a_sigma}, where $\vec{d_{x_i}}:= (1,\dots,d_{x_i},\dots,1)$ and $[\delta_i]$ denotes the shift $[1]$ applied at the $i$-th marking object.  By naturality, we have the following commutative diagram 
\[\begin{tikzcd}
	{\underline{Z}_{\msc{A}}(\Sigma_{\vec{x}})} && {\underline{\Hom}(\1,E^{\otimes g}\otimes x_I)} \\
	{\underline{Z}_{\msc{A}}( \Sigma_{\vec{x}[\delta_i]}} )&& {\underline{\Hom}(\1,E^{\otimes g}\otimes x_I[\delta_i])}.
	\arrow["\sim", from=1-1, to=1-3]
	\arrow["{\Sigma_{\vec{d_{x_i}}}}"', from=1-1, to=2-1]
	\arrow["(1\ot d_{x_i}\ot 1)_\ast", from=1-3, to=2-3]
	\arrow["\sim"', from=2-1, to=2-3]
\end{tikzcd}\]
We compose now degree by degree $\Sigma_{d_{x_i}^{m_i}}:\underline{Z}_{\msc{A}}(\Sigma_{\vec{x}^{m}}) \to
	\underline{Z}_{\msc{A}}(\Sigma_{\vec{x}[\delta_i]^{m}})$ with the appropriate sign map  $\underline{Z}_{\msc{A}}(\Sigma_{\vec{x}[\delta_i]^m}) \to \underline{Z}_{\msc{A}}(\Sigma_{\vec{x}^{m+\delta_i}})[1]$, namely, composition of the apparent linear identification with multiplication by $(-1)^{\sum_{j<i}m_j}$. Since the signs agree with those coming from the identification ${\underline{\Hom}(\1,E^{\otimes g}\otimes x_I[\delta_i]^m)}\to {\underline{\Hom}(\1,E^{\otimes g}\otimes x^{m+\delta_i}_I)}[1]$ for all $i$, we obtain the commutative diagram
\[\begin{tikzcd}
	{\underline{Z}_{\msc{A}}(\Sigma_{\vec{x}})} && {\underline{\Hom}(\1,E^{\otimes g}\otimes x_I)} \\
	{\underline{Z}_{\msc{A}}(\Sigma_{\vec{x}[\delta_i]})} && {\underline{\Hom}(\1,E^{\otimes g}\otimes x_I[\delta_i])} \\
	{\underline{Z}_{\msc{A}}(\Sigma_{\vec{x}})}[1] && {\underline{\Hom}(\1,E^{\otimes g}\otimes x_I)[1]}.
	\arrow["\sim", from=1-1, to=1-3]
	\arrow["\Sigma_{\vec{d_{x_i}}}"', from=1-1, to=2-1]
	\arrow["{(1\ot d_{x_i}\ot 1)_\ast}", from=1-3, to=2-3]
	\arrow["\sim", from=2-1, to=2-3]
	\arrow["{\text{sign}}"', from=2-1, to=3-1]
	\arrow["{\text{sign}}", from=2-3, to=3-3]
	\arrow["\sim", from=3-1, to=3-3]
\end{tikzcd}\]
Doing the sum over all $i\in I$ of the vertical compositions in the left (resp.\ right) results in the differential $d_{\Sigma_{\vec{x}}}$ (resp.\ $d_{\Hom^{\ast}_{\msc{A}}}$). It follows that we have a natural isomorphism of cochains
\begin{equation}\label{eq:1382}
	Z^*_{\msc{A}}(\Sigma_{\vec{x}})\overset{\sim}\to \Hom^\ast_{\msc{A}}(\1,E^{\ot g}\otimes(x_I)),
\end{equation}
as desired. A slightly more subtle analysis verifies the existence of such an isomorphism \eqref{eq:1382} in the case of a surface with general markings.

\subsection{Partition functions for $Z^\ast_{\msc{A}}$}

For $(M,T)$ a closed $3$-manifold in $\Bord_{\opn{Ch}(\msc{A})}$ we claim that the invariant $Z^\ast(M,T)$ is a type of alternating sum of $3$-manifold invariants $Z^{\ast}(M,T_\mu)$ for associated $\msc{A}$-labeled diagrams $T_\mu$.  This observation is a consequence of the following lemma.

\begin{lemma}
Consider any endomorphism $L:\1\to \1$ of the unit (unlabeled disk) in $\opn{Str}_{\opn{Vect}^{\mathbb{Z}}}$ in which all coupons are labeled by the tautological identification $k(m_1)\ot\dots k(m_t)\overset{\sim}\to k(n_1)\ot\dots k(n_r)$.  We have $ev(L)=\pm 1$.
\end{lemma}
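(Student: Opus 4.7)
The plan is to exploit the one-dimensionality of each marking object $k(m)$ in $\opn{Vect}^{\mathbb{Z}}$, together with the fact that $\opn{Vect}^{\mathbb{Z}}$ is symmetric with trivial twist, so that the string evaluation functor $ev_{\opn{Vect}^{\mathbb{Z}}}$ is well-defined and may be computed by decomposing $L$ into elementary pieces.

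First I would express $L$ as a composition of elementary string morphisms: identities, symmetries $c_{k(m),k(n)}$, evaluations $k(m)\ot k(-m)\to \1$, coevaluations $\1\to k(m)\ot k(-m)$, and the given tautological coupons. Because $L:\1\to\1$ is an endomorphism of the unit, its image $ev_{\opn{Vect}^{\mathbb{Z}}}(L)$ lies in $\opn{End}_{\opn{Vect}^{\mathbb{Z}}}(\1)=k$ and is therefore a single scalar.

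Next I would observe that each elementary piece, viewed as a linear map between tensor products of the $1$-dimensional spaces $k(m_i)$, acts on the distinguished generator $1\ot\cdots\ot 1$ as multiplication by $\pm 1$. Indeed, the tautological coupons preserve $1\ot\cdots\ot 1$ by definition; the evaluations and coevaluations, once we identify $k(m)^\ast=k(-m)$, act as the identity on $k$ on the canonical generators; and the signed symmetry $c_{k(m),k(n)}$ acts as the ordinary swap scaled by $(-1)^{mn}$. Since $ev(L)$ is the product of these scalars along the chosen decomposition, it lies in $\{\pm 1\}$.

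The only mild subtlety I anticipate is confirming that such a decomposition of $L$ into elementary pieces always exists and that the resulting scalar is independent of the choice; this however is packaged into the well-definedness of $ev_{\opn{Vect}^{\mathbb{Z}}}$ as a symmetric monoidal functor on strings (Theorem \ref{thm:eval_A} and the discussion following it). Thus the essential content of the argument is just that every generator of $\opn{Str}_{\opn{Vect}^{\mathbb{Z}}}$ acts by $\pm 1$ on the canonical generator of the appropriate tensor power of $k$, and products of $\pm 1$'s are $\pm 1$.
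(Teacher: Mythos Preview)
Your proposal is correct and is essentially the same argument as the paper's, just phrased computationally rather than categorically. The paper packages the observation that ``every elementary piece acts by $\pm 1$ on the canonical generator'' by introducing the non-linear symmetric subcategory $\msc{V}\subseteq\opn{Vect}^{\mathbb{Z}}$ whose objects are the products $k(m_1)\ot\cdots\ot k(m_t)$ and whose morphisms are the $\pm 1$ scalings of the tautological identifications; since $\msc{V}$ is closed under the braiding and (co)evaluations (this is exactly your generator check), $L$ lifts to $\opn{Str}_{\msc{V}}$, and naturality of $ev$ in the labeling category forces $ev(L)\in\End_{\msc{V}}(\1)=\{\pm 1\}$. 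This sidesteps any explicit appeal to a decomposition of $L$ into elementary morphisms, but the content is identical to what you wrote.
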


\begin{proof}
We have the non-linear symmetric subcategory $\msc{V}\subseteq \opn{Vect}^{\mbb{Z}}$ whose objects are products $k(m_1)\ot\dots k(m_t)$ and whose morphisms are either empty or $\pm 1$ scalings of the tautological identification.  Note that any $L$ as in the statement is in the image of the functor $\opn{Str}_{\msc{V}}\to \opn{Str}_{\opn{Vect}^{\mathbb{Z}}}$.  The result now follows by commutativity of the diagram
\[
\xymatrix{
\opn{Str}_{\msc{V}}\ar[r]^{ev}\ar[d] & \msc{V}\ar[d]\\
\opn{Str}_{\opn{Vect}^{\mathbb{Z}}}\ar[r]_{ev} & \opn{Vect}^{\mathbb{Z}}.
}
\]
\end{proof}

Practically speaking, for $\msc{V}$ as in the above proof, the image of each bordism in $\Bord_{\opn{Ch}(\msc{A})}$ under our diagonal map $(p\boxtimes 1)\Delta(M,T)$ can be written as a sum of objects $(L_\mu,M,T_\mu)$ in the image of the functor
\[
\opn{Str}_{\msc{V}}\times \Bord_{\msc{A}}\to k\opn{Str}_{\opn{Vect}^{\mathbb{Z}}}^{red}\boxtimes k\Bord_{\msc{A}}^{\red}.
\]
This gives $Z^\ast(M,T)$ as the sum
\[
Z^\ast(M,T)=\sum_{\mu}ev_{\msc{V}}(L_\mu)Z(M,T_\mu)=\sum_{\mu}\pm Z(M,T_\mu).
\]

For concreteness, let us consider the special case of a knot in a closed $3$-manifold.  Take $S$ the graph with a single vertex $v$ and a single edge traveling from $v$ to itself, and consider such a knot $T:S\to M$ labeled by a complex $x$ in $\opn{Ch}(\msc{A})$ and $id_x$.  We have that the $k(m)$-labeled loop $L_m$ in $\opn{Str}_{\opn{Vect}^{\mathbb{Z}}}$ evaluates to the super-dimension $ev(L_m)=(-1)^m$.  Hence for $T_m:S\to M$ the corresponding $x^m$-labeling of the given knot we obtain
\[
Z^\ast(M,T)=\sum_m (-1)^mZ(M,T_m).
\]
We apply this general formula in the case of the universal Lyubashenko theory.

\begin{proposition}
Let $\msc{A}$ be a modular tensor category and $T:S\to M$ be a knot in a closed $3$-manifold, which we label by a bounded complex of projectives $x$ in $\opn{Ch}(\msc{A})$, as above.  Let $T_m:S\to M$ be the corresponding $x^m$-labeling of this knot.  (Note that the both $(M,T)$ and $(M,T_m)$ are admissible bordisms.)  Then we have
\[
Z^\ast_{\msc{A}}(M,T)=\sum_{m\in \mathbb{Z}} (-1)^mZ_{\msc{A}}(M,T_m),
\]
where $Z(M,T_m)$ is specifically the renormalized Lyubashenko invariant from \cite[Theorem 3.8]{derenzietal23}.
\end{proposition}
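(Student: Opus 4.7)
The plan is to invoke the general alternating-sum formula for closed bordisms, established in the paragraph preceding the proposition, and specialize it to the case of a single knot. Concretely, given any closed $(M,T)$ in $\opn{Bord}_{\opn{Ch}(\msc{A})}$, applying the separation/diagonal functor $(p\boxtimes 1)\Delta$ writes its image in $k\opn{Str}^{\opn{red}}_{\opn{Vect}^{\mathbb{Z}}}\boxtimes k\opn{Bord}^{\opn{red}}_{\msc{A}}$ as a finite sum of pairs $(L_\mu,(M,T_\mu))$, where each $L_\mu$ is an endomorphism of the unit in $\opn{Str}_{\opn{Vect}^{\mathbb{Z}}}$ whose coupons are all tautological identifications. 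By the preceding lemma $ev(L_\mu)=\pm 1$, and so $Z^\ast_{\msc{A}}(M,T) = \sum_\mu ev(L_\mu)\, Z_{\msc{A}}(M,T_\mu)$.

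Next, I would specialize to the knot setting. The graph $S$ has one vertex $v$ and one edge from $v$ to itself. Labeling the unique edge by $x=\oplus_m x^m$ and the unique vertex by $id_x$, the separation functor $\Delta$ produces exactly one summand for each integer $m\in\mathbb{Z}$: the $\opn{Vect}^{\mathbb{Z}}$-factor is the embedded loop $L_m$ labeled by $k(m)$ (with identity coupon), and the $\msc{A}$-factor is $(M,T_m)$ with the edge labeled by $x^m$. Thus the general formula reduces to $Z^\ast_{\msc{A}}(M,T)=\sum_m ev(L_m)\, Z_{\msc{A}}(M,T_m)$. Computing $ev(L_m)$ is direct: the evaluation functor sends a circle labeled by $k(m)$ to the categorical trace of $id_{k(m)}$ in the signed symmetric category $\opn{Vect}^{\mathbb{Z}}$, which is the super-dimension $(-1)^m$ of $k(m)$ (the sign arises from the Koszul sign rule built into the symmetry of $\opn{Vect}^{\mathbb{Z}}$).

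Two small checks must be discharged. First, admissibility: since $x$ is a bounded complex of projectives in $\msc{A}$, each homogeneous component $x^m$ is projective, hence each $(M,T_m)$ has a projectively labeled edge and therefore lies in $\opn{Bord}^{\opn{adm}}_{\msc{A}}$; likewise $(M,T)$ is admissible in $\opn{Bord}^{\opn{adm}}_{\opn{Ch}(\msc{A})}$, so the admissible separation functor $\Delta^{\opn{adm}}$ actually applies and the sum is a finite sum of admissible terms. Second, identification with the renormalized Lyubashenko invariant: by Theorem \ref{thm:dggpr} and the construction of $Z_{\msc{A}}$ via duality in Section 8, the value of $Z_{\msc{A}}$ on a closed bordism $(M,T_m)$ is precisely the normalized Lyubashenko invariant of \cite[Theorem 3.8]{derenzietal23} (up to the trivial dualization on the empty surface).

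The main obstacle I anticipate is not conceptual but bookkeeping: one must verify that $\Delta^{\opn{adm}}(M,T)$ has no extraneous contributions beyond the family $\{(L_m,(M,T_m)):m\in\mathbb{Z}\}$, and that the tautological-identity decomposition used in the preceding lemma accurately captures the crossing signs around the loop. This amounts to checking that at the single internal vertex $v$ of $S$, the factorization of the identity $id_{x}=\oplus_m id_{k(m)\otimes x^m}$ across the separation functor contributes only the identity coupon in the $\opn{Vect}^{\mathbb{Z}}$-factor, with no further sign discrepancies beyond the super-dimension already recorded by $ev(L_m)$. Once this is verified, combining the three observations yields the desired formula.
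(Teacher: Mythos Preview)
Your proposal is correct and follows essentially the same approach as the paper: the proposition is stated as an immediate application of the general alternating-sum formula $Z^\ast(M,T)=\sum_\mu \pm Z(M,T_\mu)$ established just before it, specialized to the knot case where the separation produces the family $\{(L_m,(M,T_m))\}$ with $ev(L_m)=(-1)^m$ the super-dimension of $k(m)$. The paper gives no separate proof beyond this observation, so your additional verification of admissibility and the bookkeeping at the vertex $v$ are simply more explicit than the paper's treatment.
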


One can provide a similar formula for the invariant associated to an admissible link in a $3$-manifold.

\section{Homotopy equivalence and localization}
\label{sect:htop}

We prove that the cochain valued theory $Z^\ast:\Bord_{\opn{Ch}(\msc{A})}\to \opn{Ch}(\opn{Vect})$ preserves natural classes of equivalences.  If follows, as we explain below, that one can \emph{localize} $Z^\ast$ to obtain a TQFT which takes values in the $\infty$-category of homotopical vector spaces.  This localized theory takes the form of a symmetric monoidal functor from an $\infty$-category of, more-or-less, ribbon bordisms with labels in the homotopy $\infty$-category for $\msc{A}$.

\subsection{Homotopy equivalences on cochains}
Let $\msc{A}$ be a locally finite abelian category and $\Ch(\msc{A})$ the category of chain complexes in $\msc{A}$. Given two maps $f,g:x\to y$ in $\Ch(\msc{A})$, a \emph{cochain homotopy} $H:f\Rightarrow g$ is a map $H:x[1]\to y$ in $\msc{A}^{\mathbb Z}$ satisfying
\begin{equation}\label{eq:htop_formula}
	f-g = H d - (dH)[-1],
\end{equation}
where we are using implicitly the isomorphism 
\[
\Hom(x[1][-1],y[1][-1])\cong \Hom(x,y)
\]
provided by applications of the counit and unit for $k(-1)$ in order to identify $(dH)[-1]$ as a map from $x$ to $y$.

Note that the counit map $\1\to k(1)\ot k(-1)=k(-1)^\ast\ot k(-1)$ involves a negative sign, since we employ the pivotal structure to identify the left dual with a right dual.  Hence the specification of a map $H$ satisfying \eqref{eq:htop_formula} is the same information as a collection of morphisms 
\[
(H^n : x^n \to y^{n-1} : n\in \mathbb N )
\]
in $\msc{A}$ such that
\[
f^n - g^n = H^{n+1} d_x^n + d_y^{n-1} H^n
\]
for all $n\in \mathbb N$. In this case, we say $f$ and $g$ are \emph{chain homotopic}.  

\begin{definition}
We say a map $\mu:x\to y$ in $\opn{Ch}(\msc{A})$ is a homotopy equivalence if there exists another map $\eta:y \to x$ in $\Ch(\msc{A})$ such that $\eta \circ \mu$ and $\mu \circ \eta$ are homotopic to the identity of $x$ and $y$, respectively. 
\end{definition}

\subsection{Homotopy equivalences on bordisms}
We now define a class of equivalences in $\Bord_{\opn{Ch}(\msc{A})}$, and show that the symmetric monoidal functor $Z^*:\Bord_{\opn{Ch}(\msc{A})}\to \opn{Ch}(\opn{Vect})$ sends these to homotopy equivalences in $\Ch(\opn{Vect})$.

\begin{definition} 
	Let $W_{\opn{GT};\msc{A}}$  be the collection of all bordisms $(M,T)$ in  $\Bord_{\opn{Ch}(\msc{A})}$ of the form $\Sigma_{\vec{\mu}}:\Sigma_{\vec{x}}\to \Sigma_{\vec{y}}$ as described in Section \ref{sect:a_sigma}, where $\vec{\mu}=(\mu_i \in \Hom_{\msc{A}^{\pm}}(x_i,y_i) : i \in I)$ and $\mu_i:x_i\to y_i$ is a homotopy equivalence for all $i\in I$.

	We call $W_{\opn{GT};\msc{A}}$ the set of \emph{algebraic homotopy equivalences} in $\Bord_{\opn{Ch}(\mathcal{A})}$.
\end{definition}

We denote by  $W_{\opn{Vect}}$  the usual collection  of homotopy equivalences in $\opn{Ch(Vect)}$.

\begin{theorem}\label{thm:zhtop}
	The field theory $Z^*: \Bord_{\Ch(\msc{A}) }\to \Ch(\opn{Vect})$ from Theorem \ref{thm: Zast} preserves homotopy equivalences,
	\[
	Z^*(W_{\opn{GT};\msc{A}})\ \subseteq\ W_{\opn{Vect}}.
	\]
\end{theorem}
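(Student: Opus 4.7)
The plan is to establish a stronger ``2-functorial'' refinement: for any two tuples $\vec{f}, \vec{g}$ of cochain maps $f_i, g_i \in \Hom_{\opn{Ch}(\msc{A})}(x_i, y_i)$ with $f_i$ cochain homotopic to $g_i$ for every $i \in I$, the induced maps $Z^\ast(\Sigma_{\vec{f}})$ and $Z^\ast(\Sigma_{\vec{g}})$ are cochain homotopic in $\opn{Ch}(\opn{Vect})$. Granted this refinement, the theorem is immediate: for $\vec{\mu}$ in $W_{\opn{GT};\msc{A}}$ with entrywise homotopy inverses $\vec{\eta}$, internal composition (U2) identifies $\Sigma_{\vec{\eta}} \circ \Sigma_{\vec{\mu}}$ with $\Sigma_{\vec{\eta\mu}}$, and since $\eta_i \mu_i \simeq \mathrm{id}_{x_i}$, the refinement supplies $Z^\ast(\Sigma_{\vec{\eta}}) \circ Z^\ast(\Sigma_{\vec{\mu}}) \simeq Z^\ast(\Sigma_{\vec{\mathrm{id}}}) = \mathrm{id}$, with the symmetric statement for the opposite composite, so $Z^\ast(\Sigma_{\vec{\eta}})$ is a cochain homotopy inverse to $Z^\ast(\Sigma_{\vec{\mu}})$.

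To build the homotopy witnessing the refinement, I would imitate the construction of the differential $\tilde{d}_{\Sigma_{\vec{x}}}$ in \eqref{eq: differential diagram}. Restricting first to positively marked surfaces (the general case reduces to this by conjugation along the flip bordism $\Sigma^{\mathrm{flip}}$ of Definition \ref{def: differential}), choose cochain homotopies $H_i$ realizing $f_i \simeq g_i$. For each $m: I \to \mathbb{Z}$ and each $i \in I$, define
\begin{equation*}
\tilde{K}^m_i: k(m) \otimes \Sigma_{\vec{x}^m} \longrightarrow k(1) \otimes k(m - \delta_i) \otimes \Sigma_{\vec{y}^{m - \delta_i}}
\end{equation*}
in $(k\opn{Str}_{\opn{Vect}^{\mathbb Z}}^{\red} \boxtimes k\Bord^{\red}_{\msc{A}})^{\opn{add}}$ by replacing, in the $i$-th summand of \eqref{eq: differential diagram}, the bordism $\Sigma_{d_{x_i}^{m_i}}$ by the straight-line bordism labeled $(g_1, \ldots, g_{i-1}, H_i^{m_i}, f_{i+1}, \ldots, f_t)$, and substituting $k(1)$ in place of $k(-1)$ in the strings factor. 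Setting $K := ev_{\opn{Vect}^{\mathbb Z}} \cdot Z\bigl(\sum_{m,i} \tilde{K}^m_i\bigr)$ produces a degree $-1$ map $Z^\ast(\Sigma_{\vec{x}}) \to Z^\ast(\Sigma_{\vec{y}})$.

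The core of the argument is verifying the relation
\begin{equation*}
Z^\ast(\Sigma_{\vec{f}}) - Z^\ast(\Sigma_{\vec{g}}) = d_{\Sigma_{\vec{y}}}\, K + K\, d_{\Sigma_{\vec{x}}},
\end{equation*}
which I would carry out by graphical calculus in the style of Section \ref{sec: diff comm}. The ``diagonal'' contributions to $dK + Kd$ at each index $i$ amount to inserting ``$d_{y_i} H_i^{m_i} + H_i^{m_i+1} d_{x_i}^{m_i}$'' at position $i$, which by the cochain homotopy relation collapses to $f_i^{m_i} - g_i^{m_i}$; invoking linear expansion (U4) at the resulting vertex and summing over $i$ telescopes to $\Sigma_{\vec{f}} - \Sigma_{\vec{g}}$ at level $m$. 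The ``off-diagonal'' contributions, where the $H$- and $d$-insertions occur at distinct positions, come in pairs from $dK$ and $Kd$ that must be shown to cancel after applying the crossing-sign relation (V5) from Section \ref{sect:red_strings}, in direct analogy with the cross-term cancellation in the proof of $d^2 = 0$ in Section \ref{sec: diff def}.

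The main obstacle is the sign bookkeeping in the cross-term cancellation, which is subtler than for $d^2 = 0$ because the strands adjacent to position $i$ are labeled asymmetrically by $g$'s (below $i$) and $f$'s (above $i$), and this asymmetry shifts as $i$ varies. However, the discrepancy between the orderings ``$K$ then $d$'' and ``$d$ then $K$'' is dictated entirely by the braiding of $k(1)$ with $k(\pm 1)$ in the strings factor, so relation (V5) yields precisely the sign needed for the cancellation. Once the positively marked case is established, the extension to general markings via the flip $\Sigma^{\mathrm{flip}}$ is routine and completes the proof.
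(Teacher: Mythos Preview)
Your proposal is correct and establishes a genuinely stronger statement than the paper proves directly. The paper takes a different, more economical route: rather than proving the full 2-functorial refinement, it first reduces to the case where $\vec\mu$ has a single non-identity entry $\mu_j$ (using that any $\Sigma_{\vec\mu}$ factors as a composite of such bordisms in $\Bord^{\opn{red}}_{\opn{Ch}(\msc{A})}$), and then builds the homotopy $H$ from $H_j$ alone, with identities at all other positions. This simplification means the off-diagonal cancellation only involves pairs $(i,j)$ with one index fixed at $j$, so the sign analysis is lighter and closer in form to the $d^2=0$ computation. Your staircase construction $(g_1,\dots,g_{i-1},H_i,f_{i+1},\dots,f_t)$ is the standard homological-algebra device for showing that tensor products of homotopic maps are homotopic, and it buys you the 2-functoriality statement directly; the cost is that the off-diagonal pairs now involve two freely varying indices and the asymmetric $g$/$f$ labeling you flag, though as you note the cancellation is still governed by the same braiding sign and goes through. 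One small point: relation (V5) as stated in the paper is only for crossings of $k(m)$ with itself, whereas your cancellation needs the braiding of $k(1)$ past $k(-1)$; this sign $(-1)^{-1}=-1$ is of course available via the symmetric structure on $\opn{Vect}^{\mathbb Z}$ and is what $ev_{\opn{Vect}^{\mathbb Z}}$ produces, so there is no actual gap, but you should invoke the braiding directly rather than (V5).
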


\begin{proof}
	We prove the statement by graphical calculus in $(k\Str_{\msc{S}}^{\red}\boxtimes k\Bord_{\msc{C}}^{\red})^{\opn{add}}$.	
Consider a bordism $\Sigma_{\vec{\mu}}:\Sigma_{\vec{x}}\to \Sigma_{\vec{y}}$ in $W_{\opn{GT};\msc{A}}$, where   
we assume without loss of generality that $\mu_i=\text{id}_{x_i}$ for all $i\ne j$, and that $\mu_j:x_j\to y_j$ is a homotopy equivalence. In fact, it is enough to consider this case as any bordism in $W_{\opn{GT};\msc{A}}$ can be obtained as a composition of bordisms of this form. 
We represent $(p\boxtimes 1)\Delta(\Sigma_{\vec{\mu}})$ in $(k\Str_{\msc{S}}^{\red}\boxtimes k\Bord_{\msc{C}}^{\red})^{\opn{add}}$ by
	\begin{equation}\label{def:htpy in bord}
		\tikzset{every picture/.style={line width=0.5pt}} 
		\begin{tikzpicture}[x=0.75pt,y=0.75pt,yscale=-1,xscale=1]
			
			\draw    (372.39,35.97) -- (372.3,62.95) -- (372.26,78.49) ;
			\draw  
			[fill={rgb, 255:red, 255; green, 255; blue, 255 }  ,fill opacity=1 ] (313.94,50.29) -- (340.33,50.29) -- (340.33,66.8) -- (313.94,66.8) -- cycle ;
			\draw    (280.2,36.2) .. controls (280.66,23.83) and (372.67,24.4) .. (372.39,35.97) ;
			\draw    (280.2,36.2) -- (280.1,63.19) -- (280.07,78.72) ;
			\draw    (235.85,32.31) -- (235.8,82.8) ;
			\draw    (280.07,78.72) .. controls (280,91.87) and (372.01,91.3) .. (372.26,78.49) ;
			\draw 
			(327.43,41.21) -- (327.45,50.92) ;
			\draw 
			(328.14,67.68) -- (328.15,73.6) ;
			\draw 
			(298.34,40.84) -- (298.36,73.22) ;
			\draw 
			(293.79,41.59) -- (293.8,73.98) ;
			\draw 
			(358.28,40.84) -- (358.29,73.22) ;
			\draw 
			(353.72,41.59) -- (353.74,73.98) ;
			\draw [color={rgb, 255:red, 155; green, 155; blue, 155 }  ,draw opacity=1 ] [dotted]  (282.4,80.4) .. controls (283.4,79.9) and (306.9,79.4) .. (324.4,79.4) .. controls (341.9,79.4) and (361.6,79.16) .. (371.4,80.9) ;
			\draw [color={rgb, 255:red, 155; green, 155; blue, 155 }  ,draw opacity=1 ] [dotted]  (282.4,34.15) .. controls (284.9,36.65) and (302.9,36.4) .. (324.9,36.4) .. controls (346.9,36.4) and (366.4,35.4) .. (368.9,34.65) ;
			\draw    (176.65,31.91) -- (176.6,82.4) ;
			
			\draw (319.18,51) node [anchor=north west][inner sep=0.75pt]  [font=\tiny]  {$\mu_{j}^{m_{j}}$};
			\draw (165,20) node [anchor=north west][inner sep=0.75pt]  [font=\tiny]  {$k( m_{1}) \ \ \ \ \ \ \ \ \ \  k( m_{t}) \ $};
			\draw (88,46) node [anchor=north west][inner sep=0.75pt]  [font=\large]  {$\sum\limits_{m:\{1,\ \dotsc ,t\}\rightarrow \mathbb{Z} \ } \ \ \ \ \ \ ...\ \ \ \ \ \ \ \ \otimes \ \ \ \ \ \ \ \ \ \ \ \ \ \ \ \ \ \ \ \ \ \ \ \ \ \ \  .$};
			\draw (374,30) node [anchor=north west][inner sep=0.75pt]  [color={rgb, 255:red, 0; green, 0; blue, 0 }  ,opacity=1 ]  {$\Sigma _{\vec{x}^{m}}$};
			\draw (374.72,80) node [anchor=north west][inner sep=0.75pt]  [color={rgb, 255:red, 0; green, 0; blue, 0 }  ,opacity=1 ]  {$\Sigma _{\vec{y}^{m}}$};
			\draw (165,83.2) node [anchor=north west][inner sep=0.75pt]  [font=\tiny]  {$k( m_{1}) \ \ \ \ \ \ \ \ \ \  k( m_{t}) \ $};
			
		\end{tikzpicture}
	\end{equation}
We also assume, via Lemma \ref{lem:d_pos}, that $\Sigma_{\vec{x}}$ and $\Sigma_{\vec{y}}$ are positively marked.	
	
	Since $\mu_j:x_j\to y_j$ is a homotopy equivalence, there exists a map $\eta_j:y_j\to x_j$ in $\opn{Ch}(\msc A)$ and a graded morphism $H_j:x_j[1]\to x_j$ such that 
	\[ \eta_j \mu_j - \opn{id}_{x_j} = H_id_{x_j} - (d_{x_j} H_j)[-1].
	\]
Let $\Sigma_{\vec{\eta}}:\Sigma_{\vec{y}}\to \Sigma_{\vec{x}}$ be defined by the tuple $(1,\dots,\eta_j,\dots, 1)$.  We have the corresponding map $(p\boxtimes 1)\Delta(\Sigma_{\vec{\eta}})$ in $(k\Str_{\opn{Vect}^{\mathbb Z}}^{\red}\boxtimes k\Bord_{\msc{C}}^{\red})^{\opn{add}}$ given by  
	\begin{equation*}
		\tikzset{every picture/.style={line width=0.5pt}} 
}
\]
\[
=id_{k(m)}\ot \Sigma_{\vec{\eta}^m}\Sigma_{\vec{\mu}^m}-id_{k(m)}\ot id_{\Sigma_{\vec{x}^m}},
\]
	where in the second equality we are using relation (V6). 
	The desired identity \eqref{eq: htpy} now follows by applying the functor $ev_{\opn{Vect}^{\mathbb Z}}\cdot Z$ to the equation above.
	
	One similarly constructs from a degree $-1$ map $\check{H}_j$ which satisfies $\mu_j \eta_j - \opn{id}_{y_j} = \check{H}_j d_{y_j}-(d_{y_j} \check{H}_j)[-1]$ a corresponding map $\check{H}$ which satisfies
	\[
	(ev_{\opn{Vect}^{\mathbb Z}}\cdot Z)(\check{H})d_{\Sigma_{\vec{y}}} - d_{\Sigma_{\vec{y}}}[-1] (ev_{\opn{Vect}^{\mathbb Z}}\cdot Z)(\check{H})[-1] = Z^*(\Sigma_{\vec{\mu}})Z^*(\Sigma_{\vec{\eta}}) - \opn{id}_{Z^{\ast}(\Sigma_{\vec{y}})}.
	\]
	This equation, along with \eqref{eq: htpy}, verifies that $Z^{\ast}(\Sigma_{\vec{\mu}})$ is in fact a homotopy equivalence.
\end{proof}

\subsection{Localization}\label{sect:zloc}

From the $\infty$-categorical perspective, our symmetric monoidal categories produce cocartesian fibrations
\begin{equation}\label{eq:bord_fib}
\Bord_{\opn{Ch}(\msc{A})}^{\red\ot}\to \opn{Comm}\ \ \text{and}\ \ \opn{Ch}(\opn{Vect})^{\ot}\to \opn{Comm}
\end{equation}
to the commutative operad, which is explicitly the discrete category of finite pointed sets $\opn{Comm}=\opn{Fin}_{\ast}$, and our functor $Z^{\ast}:\Bord_{\opn{Ch}(\msc{A})}^{\red}\to \opn{Ch}(\opn{Vect})$ lifts to a map of such fibrations \cite[Construction 2.0.0.1]{lurieha}.  Indeed, this is what it means for such a map to be symmetric monoidal.
\par

The classes of homotopy equivalences furthermore produce corresponding classes of morphisms (edges) $W^{\ot}_{\opn{GT}:\msc{A}}$ and $W^{\ot}_{\opn{Vect}}$ in $\Bord_{\opn{Ch}(\msc{A})}^{\red\ot}$ and $\opn{Ch}(\opn{Vect})^{\ot}$ which exist only in the fibers over objects $I$ in $\opn{Comm}$.  Said another way, all of these morphisms map to identity morphism in $\opn{Comm}$ under the structure maps.  (See the discussion preceding \cite[Definition A.4]{nikolausscholze18}, for example.)

By \cite[Proposition 2.1.4]{hinich16} and Theorem \ref{thm:zhtop} we can localize the fibrations \eqref{eq:bord_fib} relative to their respective classes of homotopy equivalences to obtain new symmetric monoidal $\infty$-categories and a map of symmetric monoidal $\infty$-categories
\[
\xymatrix{
\Bord_{\opn{Ch}(\msc{A})}^{\red\ot}[(W^{\ot})^{-1}]\ar[rr]^{\mcl{Z}^{\ot}}\ar[dr] & & \opn{Ch}(\opn{Vect})^{\ot}[(W^{\ot})^{-1}]\ar[dl]\\
 & \opn{Comm} & ,
}
\]
where $W^{\ot}$ are the appropriate classes of homotopy equivalences and $\mcl{Z}^{\ot}$ is obtained by localizing the map $(Z^{\ast})^{\ot}$.

Now, the fibration $\opn{Ch}(\opn{Vect})^{\ot}[(W^{\ot})^{-1}]\to \opn{Comm}$ is the symmetric monoidal $\infty$-category $\mcl{V}ect^{\ot}\to \opn{Comm}$ of dg, or homotopical vector spaces \cite[Proposition 1.3.4.5]{lurieha}.  As for the category $\Bord_{\opn{Ch}(\msc{A})}^{\red\ot}[(W^{\ot})^{-1}]$, the forgetful functor $\Bord_{\opn{Ch}(\msc{A})}^{\red\ot}\to \Bord_{\ast}^{\red\ot}$ localizes to produce a symmetric monoidal functor from the localization to the discrete symmetric monoidal category $\Bord_{\ast}^{\red\ot}$, so that $\Bord_{\opn{Ch}(\msc{A})}^{\red\ot}[(W^{\ot})^{-1}]$ can be seen as a symmetric monoidal $\infty$-category of decorated ribbon bordisms.
\par

In addition to this global perspective, we find that the fibers of the underlying functor to the category of unlabeled ribbon bordisms $\opn{Ch}(\msc{A})_{\mbf{\Sigma}}\to \Bord^{\red}_{\opn{Ch}(\msc{A})}$ (see Section \ref{sect:a_sigma}) localize to produce a functor
\[
\mcl{K}(\msc{A})_{\mbf{\Sigma}}:=\prod_{i\in I}\mcl{K}(\msc{A})^{\opn{sgn}(i)}=\opn{Ch}(\msc{A})_{\mbf{\Sigma}}[\opn{HtopEq}^{-1}]\to  \Bord_{\opn{Ch}(\msc{A})}^{\red\ot}[(W^{\ot})^{-1}],
\]
where $\mcl{K}(\msc{A})$ is the bounded homomotopy $\infty$-category for $\msc{A}$ \cite[Proposition 1.3.4.5]{lurieha}, $I$ is the marking set for $\mbf{\Sigma}$, and the sign exponents are as in Section \ref{sect:a_sigma}.  Hence we obtain a diagram
\[
\xymatrix{
\mcl{K}(\msc{A})_{\mbf{\Sigma}}\ar[rr]\ar[d] & & \Bord_{\opn{Ch}(\msc{A})}^{\red}[W^{-1}]\ar[d]\\
\ast\ar[rr]_{\mbf{\Sigma}} & & \Bord^{\red}_\ast
}
\]
from which we view objects in the localized bordism category as surfaces marked by objects in the homotopy $\infty$-category $\mcl{K}(\msc{A})$.  So we can think of the localization 
$\Bord_{\opn{Ch}(\msc{A})}^{\red\ot}[(W^{\ot})^{-1}]$ vaguely (and only vaguely!) as a symmetric monoidal $\infty$-category of ribbon bordisms with labels from the ribbon monoidal $\infty$-category $\mcl{K}(\msc{A})$.

\begin{remark}
In considering the possible markings on multivalent vertices one realizes how complicated, and maybe even preposterous, such a notion of ``ribbon diagrams with labels in a monoidal $\infty$-category" must be. Hence our warning about the vagueness of this language.
\end{remark}

As remarked in the introduction, we propose that the localized theory
\begin{equation}\label{eq:loc_thry}
\mcl{Z}:\Bord_{\opn{Ch}(\msc{A})}^{\red}[W^{-1}]\to \mcl{V}ect
\end{equation}
can be used to develop a kind of derived TQFT from the original theory $Z$.  This line of inquiry is developed further in work in progress \cite{negron}.

\subsection{Continued remarks}
Of course the above discussion is as valid in the admissible setting as it is in the general non-admissible setting.  In assessing the actual ``$\infty$-ness" of the field theory \eqref{eq:loc_thry} we can consider the state spaces for the localized Lyubashenko TQFT, for example.

If we consider the twice marked sphere $\mbf{S}$, with positively marked north pole and negatively marked south pole, Theorem \ref{thm: Z_Ch} calculates the state spaces in the cochain valued theory
\[
Z_{\msc{A}}^{\ast}|_{\opn{Ch}(\msc{A})_{\mbf{S}}}:\opn{Ch}(\msc{A})^{\opn{op}}\times \opn{Ch}(\msc{A})\to \opn{Ch}(\opn{Vect})
\]
as cochain homs $Z_{\msc{A}}^{\ast}|_{\opn{Ch}(\msc{A})_{\mbf{S}}}\cong \Hom^{\ast}_{\msc{A}}$.  It follows that in the localization $\mcl{Z}_{\msc{A}}$ the sate spaces
\[
\mcl{Z}_{\msc{A}}|_{\mcl{K}(\msc{A})_{\mbf{S}}}:\mcl{K}(\msc{A})^{\opn{op}}\times \mcl{K}(\msc{A})\to \mcl{V}ect
\]
are necessarily obtained by localizing the cochain hom functor.  We would propose that these are the linear mapping spaces
\[
\mcl{Z}_{\msc{A}}|_{\mcl{K}(\msc{A})_{\mbf{S}}}\cong \opn{Maps}_{\mcl{K}(\msc{A})}
\]
(cf.\ \cite[Lemma I.3.4]{nikolausscholze18}).  By a linear version of Yoneda, this one functor knows everything about the $\infty$-category $\mcl{K}(\msc{A})$.

\appendix

\section{Proof of Proposition \ref{prop:conc}}
\label{sect:proofs}

We first establish some basic facts about the ribbon bordism category then provide the proof of Proposition \ref{prop:conc}.

\subsection{Exchange of the labeling category}
\label{sect:exc}

Let $F,G:\msc{C}\to \msc{D}$ be ribbon monoidal functors and $\alpha:F\to G$ be a natural isomorphism.  We have the associated functors on the opposite categories $F^{\opn{op}}$ and $G^{\opn{op}}$, and via duality $\alpha$ produces a unique natural isomorphism $F^{\opn{op}}\overset{\sim}\to G^{\opn{op}}$ which fits into a diagram
\begin{equation}\label{eq:2270}
\xymatrix{
F(x^{\ast})\ar[rr]^{\alpha_{x^{\ast}}}\ar[d]_{\cong} & & G(x^{\ast})\ar[d]^{\cong}\\
F(x)^{\ast}\ar@{-->}[rr]_{\exists !}^{\sim} & & G(x)^{\ast}
}
\end{equation}
at each $x$ in $\msc{C}$. Via monoidality of $\alpha$, this unique natural isomorphism completes the expected diagrams with respect to evaluation and coevaluation for $F(x)$ and $G(x)$, as in \eqref{eq:2282} below.

\begin{lemma}\label{lem:nat_dual}
Let $\msc{C}$ be a rigid monoidal category and $\alpha:x\to y$ be an isomorphism in $\msc{C}$.  Then, for arbitrarily chosen duals $x^\ast$ and $y^\ast$, $(\alpha^{-1})^{\ast}:x^{\ast}\to y^{\ast}$ is the unique morphism which fits into diagrams
\begin{equation}\label{eq:2282}
\xymatrix{
x^{\ast}\ot x\ar[rr]^{(\alpha^{-1})^{\ast}\ot\alpha}\ar[dr] & & y^{\ast}\ot y\ar[dl] & \1 & \\
	& \1 & x\ot x^{\ast}\ar[rr]_{\alpha\ot (\alpha^{-1})^{\ast}}\ar[ur] & & y^{\ast}\ot y\ar[ul]
}
\end{equation}
over the evaluation and coevaluation maps for $x$ and $y$. 
\end{lemma}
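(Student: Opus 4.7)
The plan is to reduce this to the standard categorical characterization of the dual of a morphism, and then extract uniqueness from the adjunction provided by rigidity.

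First I would recall that for any morphism $f:x\to y$ in a rigid monoidal category, the dual $f^{\ast}:y^{\ast}\to x^{\ast}$ is defined so that it satisfies the compatibility
\[
\opn{ev}_y\circ(\opn{id}_{y^{\ast}}\ot f)\ =\ \opn{ev}_x\circ(f^{\ast}\ot \opn{id}_x):y^{\ast}\ot x\to \1,
\]
and dually for coevaluation. Applying this with $f=\alpha^{-1}:y\to x$ gives $(\alpha^{-1})^\ast:x^\ast\to y^\ast$ satisfying
\[
\opn{ev}_x\circ(\opn{id}_{x^{\ast}}\ot \alpha^{-1})\ =\ \opn{ev}_y\circ((\alpha^{-1})^{\ast}\ot \opn{id}_y).
\]
Post-composing with $\opn{id}_{x^{\ast}}\ot \alpha$ on both sides and using $\alpha^{-1}\alpha=\opn{id}_x$ then yields exactly the evaluation triangle in \eqref{eq:2282}. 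The coevaluation triangle will follow by the symmetric argument applied to the other characterizing relation, or equivalently by conjugating the evaluation identity through the zig-zag (snake) identities for $x$ and $y$.

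For uniqueness, suppose $\beta:x^{\ast}\to y^{\ast}$ also completes the evaluation triangle, so that $\opn{ev}_y\circ(\beta\ot\alpha)=\opn{ev}_x=\opn{ev}_y\circ((\alpha^{-1})^\ast\ot\alpha)$. Since $\alpha$ is invertible, post-composing with $\opn{id}_{x^\ast}\ot\alpha^{-1}$ gives
\[
\opn{ev}_y\circ(\beta\ot\opn{id}_y)\ =\ \opn{ev}_y\circ((\alpha^{-1})^{\ast}\ot\opn{id}_y).
\]
Under the rigidity adjunction between $-\ot y$ and $-\ot y^{\ast}$, the map $\gamma\mapsto \opn{ev}_y\circ(\gamma\ot\opn{id}_y)$ from $\Hom_{\msc{C}}(x^{\ast},y^{\ast})$ to $\Hom_{\msc{C}}(x^{\ast}\ot y,\1)$ is a bijection, so $\beta=(\alpha^{-1})^{\ast}$. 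The same adjunction argument, applied with coevaluation in place of evaluation, handles uniqueness relative to the second diagram.

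The main (minor) obstacle is purely bookkeeping: keeping the two sides of the duality straight in a possibly non-strict monoidal category, since the identifications silently involve the associator and unit constraints. I would suppress these via Mac Lane strictification at the outset, which is harmless since the claim is a statement about commuting diagrams that are stable under monoidal equivalence.
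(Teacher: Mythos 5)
Your proof is correct, and it takes a somewhat different path than the paper's. The paper invokes uniqueness of duals (the cited EGNO Proposition~2.10.5) to produce, in one stroke, a unique morphism $\check{\alpha}$ completing both triangles, and then verifies by a diagram chase that the explicit composite
\[
x^{\ast}\to x^{\ast}\ot y\ot y^{\ast}\overset{1\ot \alpha^{-1}\ot 1}\longrightarrow x^{\ast}\ot x\ot y^{\ast}\to y^{\ast}
\]
agrees with $\check{\alpha}$. You instead obtain existence from the defining compatibility of $f^{\ast}$ with evaluation, and obtain uniqueness from the rigidity adjunction $\Hom(x^{\ast},y^{\ast})\cong\Hom(x^{\ast}\ot y,\1)$ directly. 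Your route is essentially the content of the cited proposition, so the substance is the same, but your argument is more self-contained: it doesn't ask the reader to unpack a black-box citation, and it makes visible that the evaluation triangle alone already pins down the morphism. Two cosmetic points: in both places you write ``post-composing with $\opn{id}_{x^\ast}\ot\alpha^{\pm 1}$'' you mean \emph{pre}-composing (the maps in question land in $\1$, so there is nothing to post-compose with); and the deferred treatment of the coevaluation triangle is fine but would benefit from one sentence noting that it is not needed for uniqueness, since the evaluation triangle already determines the morphism via the adjunction.
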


\begin{proof}
By uniqueness of duals \cite[Proposition 2.10.5]{egno15} there is a unique morphism $\check{\alpha}:y^{\ast}\to x^{\ast}$ which completes the given diagrams.  One verifies via a diagram chase that map $(\alpha^{-1})^{\ast}$, i.e.\ the composite
\[
x^{\ast}\to x^{\ast}\ot y\ot y^{\ast}\overset{1\ot \alpha^{-1}\ot 1}\to x^{\ast}\ot x\ot y^{\ast}\to y^{\ast},
\]
completes the above diagrams, thus giving $\check{\alpha}=(\alpha^{-1})^{\ast}$.
\end{proof}

We apply Lemma \ref{lem:nat_dual} to the case of a natural isomorphism between monoidal functors, discussed above, to see that the natural isomorphism
\[
(\alpha_x^{-1})^{\ast}:F(x)^{\ast}\to G(x)^{\ast}
\]
is the unique isomorphism which completes the diagram \eqref{eq:2270}.  Hence, after identifying $F(x^{\ast})$ and $G(x^{\ast})$ as duals of the objects $F(x)$ and $G(x)$ via the evaluation and coevaluation maps induced by those on $x$, we have $\alpha_{x^{\ast}}=(\alpha_x^{-1})^{\ast}$.

Now, having clarified this issue, we note that $\alpha:F\to G$ and $\alpha^{-1}:F^{\opn{op}}\to G^{\opn{op}}$ define natural isomorphisms
\begin{equation}\label{eq:2266}
\Sigma_{\alpha}=\Sigma_{\alpha_{\vec{x}}}:\Sigma_{F(\vec{x})}\to \Sigma_{G(\vec{x})}
\end{equation}
in the fiber $\msc{D}_{\mbf{\Sigma}}$ over any $\ast$-marked surface.  (See Section \ref{sect:a_sigma}.)  We claim that the $\Sigma_{\alpha}$ together provide a natural isomorphism between the induced functors $F_{\ast}$ and $G_{\ast}$ on non-linearly reduced bordisms
\[
\xymatrix{
\Bord^{\opn{red}}_{\msc{C}}\ar@<-.5ex>[rr]_{G_\ast}\ar@<+.5ex>[rr]^{F_\ast}\ar[dr] & & \Bord^{\opn{red}}_{\msc{D}}\ar[dl]\\
 & \Bord_{\ast}^{\opn{red}} & .
}
\]

\begin{lemma}\label{lem:622}
Let $F,G:\msc{C}\to \msc{D}$ be ribbon monoidal functors and $\alpha:F\to G$ be a natural isomorphism between such functors.  Then the isomorphisms $\Sigma_{\alpha}$ from \eqref{eq:2266} provide a natural isomorphism between the induced symmetric monoidal functors
\[
F_{\ast},G_{\ast}:\Bord^{\opn{red}}_{\msc{C}}\to \Bord^{\opn{red}}_{\msc{D}}.
\]
\end{lemma}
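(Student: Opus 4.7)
The plan is to verify naturality morphism by morphism, using the coupon-sliding idea that the isomorphisms $\Sigma_{\alpha}$ admit at the level of embedded ribbon graphs. Concretely, given a bordism $[M,T]:\Sigma_{\vec{x}}\to\Sigma'_{\vec{y}}$ in $\Bord^{\opn{red}}_{\msc{C}}$, we must show
\[
[\Sigma_{\alpha_{\vec{y}}}]\circ F_{\ast}[M,T]=G_{\ast}[M,T]\circ[\Sigma_{\alpha_{\vec{x}}}]
\]
in $\Bord^{\opn{red}}_{\msc{D}}$. After composing and reducing the resulting composites to bordisms $(M,T_F)$ and $(M,T_G)$ with the same underlying 3-manifold, both ribbon graphs share the underlying ``geometry'' of $T$; only their labels differ. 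In $T_F$ each outgoing boundary ribbon carries an $\alpha_{y_j}^{\pm}$ coupon near $\Sigma'$, and all internal structure is $F$-labeled. In $T_G$ each incoming boundary ribbon carries an $\alpha_{x_i}^{\pm}$ coupon near $\Sigma$, and all internal structure is $G$-labeled. The task is to exhibit a sequence of (U1)--(U3) skein relations taking one to the other.

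First I would isotope the $\alpha$-coupons in each graph so that they can be moved freely along their ribbons, then perform induction on the number of internal vertices of $T$. For the base case, when $T$ has no internal vertices, every edge is an isolated ribbon running between boundary points (or, after (U3), a braided/twisted strand); the required equality then follows from the identity $\alpha_y\circ F(f)=G(f)\circ\alpha_x$ on generating morphisms together with the internal-composition relation (U2) allowing us to cancel an $\alpha$ against an $\alpha^{-1}$ along a single ribbon. For the inductive step, I would pick an internal vertex $v$ of $T$ labeled by $f_v:F(x_1)^{\pm}\ot\cdots\to F(y_1)^{\pm}\ot\cdots$ in $T_F$ and by $G(f_v)$ (with the corresponding signs) in $T_G$, and use (U2) to absorb the $\alpha$-coupons arriving on the outgoing edges of $v$ into $f_v$ and push $\alpha^{-1}$-coupons out on the incoming edges. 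Naturality of $\alpha$ as a monoidal natural isomorphism--together with Lemma \ref{lem:nat_dual} and diagram \eqref{eq:2270} to handle the negatively-oriented incoming/outgoing edges by the identification $\alpha_{x^{\ast}}=(\alpha_x^{-1})^{\ast}$--then yields $G(f_v)$ at $v$ after transporting the $\alpha$-coupons one step further from the boundary. The monoidal structure isomorphisms for $F$ and $G$ enter as coherence data for the labelings of the $\pm$-products, but they cancel because $\alpha$ is monoidal. Iterating past every internal vertex consumes all boundary $\alpha$-coupons in one diagram and produces them in the other, completing the induction.

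Two ancillary points. One is that the above manipulations preserve disjoint unions, since $\Sigma_{\alpha}$ on a disjoint union is the disjoint union of $\Sigma_{\alpha}$'s; hence the naturality data assembles into a symmetric monoidal natural isomorphism, not merely a natural isomorphism of underlying functors. The second is that the construction is a priori well-defined on equivalence classes of bordisms, because every skein-relation equivalence in $\Bord^{\opn{red}}_{\msc{C}}$ is mapped by $F_{\ast}$ and $G_{\ast}$ to a skein-relation equivalence in $\Bord^{\opn{red}}_{\msc{D}}$ (the relations (U1)--(U3) are functorial in the labeling category), so the equality we verify is an equality in the reduced category.

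The main obstacle is bookkeeping rather than conceptual depth: tracking the spatial ordering of incoming/outgoing edges at each internal vertex, the signs determined by which edges are combinatorially versus spatially oriented, and the resulting applications of Lemma \ref{lem:nat_dual} so that the diagrams \eqref{eq:2282} produce the correct duals of $\alpha$ and $\alpha^{-1}$ as the coupons pass across a vertex. Once this bookkeeping is carried out carefully at a single vertex, the induction runs without further incident.
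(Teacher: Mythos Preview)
Your proposal shares the paper's core idea---use naturality of $\alpha$ to rewrite each $F$-labeled internal vertex as a $G$-labeled vertex conjugated by $\alpha$-coupons, then cancel the coupons pairwise via internal composition (U2)---and your invocation of Lemma~\ref{lem:nat_dual} and \eqref{eq:2270} to handle negatively oriented edges matches the paper's setup. However, your inductive organization has a gap. The move you describe at a vertex $v$---absorb the $\alpha$-coupons on the outgoing edges of $v$, convert $F(f_v)$ to $G(f_v)$, emit coupons on the incoming edges---presupposes that \emph{every} outgoing edge of $v$ already carries an $\alpha$-coupon. This is only guaranteed when those edges terminate at the outgoing boundary or at already-processed vertices. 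If the ribbon graph has a directed cycle among internal vertices (for example a self-loop at $v$, or a longer cycle), no processing order makes this hypothesis hold, and the induction stalls. The base case description is also slightly off: with no internal vertices there is no morphism $f$ to which naturality applies; one simply slides the $\alpha$-coupon along each through-ribbon, which relabels that ribbon from $F(x)$ to $G(x)$.

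The paper sidesteps the ordering issue by treating all vertices at once rather than inductively. The skein identity $F(\xi)=(\otimes\,\alpha^{-1})\circ G(\xi)\circ(\otimes\,\alpha)$, applied simultaneously at every internal vertex, gives $[M,FT]=[M,T']$ where $T'$ is $G$-labeled at all vertices and each edge now carries an $\alpha$-coupon near one endpoint and an $\alpha^{-1}$-coupon near the other (one from each adjacent vertex). Composing with $\Sigma_\alpha$ and $\Sigma_{\alpha^{-1}}$ at the two boundaries supplies the matching coupons on boundary edges. Every edge then carries a cancellable $\alpha^{-1}\cdot\alpha$ segment, which collapses to the identity via (U2), leaving $[M,GT]$. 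This handles cycles and branching uniformly in one step, without choosing a vertex order.
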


\begin{proof}
Clearly each $\Sigma_{\alpha_{\vec{x}}}$ is invertible in $\Bord^{\opn{red}}_{\msc{D}}$, with inverse $\Sigma_{\alpha^{-1}_{\vec{x}}}$. So we need only verify naturality.  That is, given a map $[M,T]:\Sigma_{\vec{x}}\to \Sigma_{\vec{y}}$ in $\Bord^{\opn{red}}_{\msc{D}}$, we need to very the equality
\[
\Sigma_{\alpha_{\vec{y}}}\circ [M,FT]=[M,GT]\circ \Sigma_{\alpha_{\vec{x}}}.
\]
Equivalently, taking the object subscripts for granted, we need to verify the equality
\begin{equation}\label{eq:639}
\Sigma_{\alpha}\circ [M,FT]\circ \Sigma_{\alpha^{-1}}=[M,GT].
\end{equation}
In $\opn{Bord}_{\msc{D}}^{\opn{red}}$ we have an equality $[M,FT]=[M,T']$ where $T'$ is obtained from $FT$ by replacing each label $F(\xi)$ on an internal vertex with the conjugation of $G(\xi)$ by $\alpha$,
\[
\scalebox{.7}{
\tikzset{every picture/.style={line width=0.75pt}} 
\begin{tikzpicture}[x=0.75pt,y=0.75pt,yscale=-1,xscale=1]
\draw  [fill={rgb, 255:red, 0; green, 0; blue, 0 }  ,fill opacity=1 ] (116,134.13) .. controls (114.14,134.13) and (112.62,132.62) .. (112.62,130.76) .. controls (112.62,128.89) and (114.13,127.38) .. (116,127.38) .. controls (117.86,127.38) and (119.37,128.89) .. (119.37,130.75) .. controls (119.37,132.62) and (117.86,134.13) .. (116,134.13) -- cycle ;
\draw    (116,130.75) -- (215.5,219) ;
\draw [shift={(169.49,178.19)}, rotate = 221.57] [fill={rgb, 255:red, 0; green, 0; blue, 0 }  ][line width=0.08]  [draw opacity=0] (10.72,-5.15) -- (0,0) -- (10.72,5.15) -- (7.12,0) -- cycle    ;
\draw    (116,130.75) -- (213.5,182) ;
\draw [shift={(169.17,158.7)}, rotate = 207.73] [fill={rgb, 255:red, 0; green, 0; blue, 0 }  ][line width=0.08]  [draw opacity=0] (10.72,-5.15) -- (0,0) -- (10.72,5.15) -- (7.12,0) -- cycle    ;
\draw    (116,130.75) -- (210.5,57) ;
\draw [shift={(158.12,97.88)}, rotate = 322.03] [fill={rgb, 255:red, 0; green, 0; blue, 0 }  ][line width=0.08]  [draw opacity=0] (10.72,-5.15) -- (0,0) -- (10.72,5.15) -- (7.12,0) -- cycle    ;
\draw    (14.42,215.56) -- (116,130.75) ;
\draw [shift={(69.05,169.95)}, rotate = 140.14] [fill={rgb, 255:red, 0; green, 0; blue, 0 }  ][line width=0.08]  [draw opacity=0] (10.72,-5.15) -- (0,0) -- (10.72,5.15) -- (7.12,0) -- cycle    ;
\draw    (116,130.75) -- (14.96,60.56) ;
\draw [shift={(61.37,92.8)}, rotate = 34.79] [fill={rgb, 255:red, 0; green, 0; blue, 0 }  ][line width=0.08]  [draw opacity=0] (10.72,-5.15) -- (0,0) -- (10.72,5.15) -- (7.12,0) -- cycle    ;
\draw    (15.02,179.56) -- (116,130.75) ;
\draw [shift={(59.66,157.98)}, rotate = 334.2] [fill={rgb, 255:red, 0; green, 0; blue, 0 }  ][line width=0.08]  [draw opacity=0] (10.72,-5.15) -- (0,0) -- (10.72,5.15) -- (7.12,0) -- cycle    ;
\draw  [dash pattern={on 0.84pt off 2.51pt}]  (116,40.13) -- (116,228.13) ;
\draw  [fill={rgb, 255:red, 0; green, 0; blue, 0 }  ,fill opacity=1 ] (393,135.13) .. controls (391.14,135.13) and (389.62,133.62) .. (389.62,131.76) .. controls (389.62,129.89) and (391.13,128.38) .. (393,128.38) .. controls (394.86,128.38) and (396.37,129.89) .. (396.37,131.75) .. controls (396.37,133.62) and (394.86,135.13) .. (393,135.13) -- cycle ;
\draw    (393,131.75) -- (492.5,220) ;
\draw [shift={(446.49,179.19)}, rotate = 221.57] [fill={rgb, 255:red, 0; green, 0; blue, 0 }  ][line width=0.08]  [draw opacity=0] (10.72,-5.15) -- (0,0) -- (10.72,5.15) -- (7.12,0) -- cycle    ;
\draw    (393,131.75) -- (490.5,183) ;
\draw [shift={(446.17,159.7)}, rotate = 207.73] [fill={rgb, 255:red, 0; green, 0; blue, 0 }  ][line width=0.08]  [draw opacity=0] (10.72,-5.15) -- (0,0) -- (10.72,5.15) -- (7.12,0) -- cycle    ;
\draw    (393,131.75) -- (487.5,58) ;
\draw [shift={(435.12,98.88)}, rotate = 322.03] [fill={rgb, 255:red, 0; green, 0; blue, 0 }  ][line width=0.08]  [draw opacity=0] (10.72,-5.15) -- (0,0) -- (10.72,5.15) -- (7.12,0) -- cycle    ;
\draw    (291.42,216.56) -- (393,131.75) ;
\draw [shift={(346.05,170.95)}, rotate = 140.14] [fill={rgb, 255:red, 0; green, 0; blue, 0 }  ][line width=0.08]  [draw opacity=0] (10.72,-5.15) -- (0,0) -- (10.72,5.15) -- (7.12,0) -- cycle    ;
\draw    (393,131.75) -- (291.96,61.56) ;
\draw [shift={(338.37,93.8)}, rotate = 34.79] [fill={rgb, 255:red, 0; green, 0; blue, 0 }  ][line width=0.08]  [draw opacity=0] (10.72,-5.15) -- (0,0) -- (10.72,5.15) -- (7.12,0) -- cycle    ;
\draw    (292.02,180.56) -- (393,131.75) ;
\draw [shift={(336.66,158.98)}, rotate = 334.2] [fill={rgb, 255:red, 0; green, 0; blue, 0 }  ][line width=0.08]  [draw opacity=0] (10.72,-5.15) -- (0,0) -- (10.72,5.15) -- (7.12,0) -- cycle    ;
\draw  [dash pattern={on 0.84pt off 2.51pt}]  (393,41.13) -- (393,229.13) ;
\draw  [fill={rgb, 255:red, 0; green, 0; blue, 0 }  ,fill opacity=1 ] (320.5,83.75) .. controls (320.5,81.96) and (321.96,80.5) .. (323.75,80.5) .. controls (325.54,80.5) and (327,81.96) .. (327,83.75) .. controls (327,85.54) and (325.54,87) .. (323.75,87) .. controls (321.96,87) and (320.5,85.54) .. (320.5,83.75) -- cycle ;
\draw  [fill={rgb, 255:red, 0; green, 0; blue, 0 }  ,fill opacity=1 ] (459.5,78.75) .. controls (459.5,76.96) and (460.96,75.5) .. (462.75,75.5) .. controls (464.54,75.5) and (466,76.96) .. (466,78.75) .. controls (466,80.54) and (464.54,82) .. (462.75,82) .. controls (460.96,82) and (459.5,80.54) .. (459.5,78.75) -- cycle ;
\draw  [fill={rgb, 255:red, 0; green, 0; blue, 0 }  ,fill opacity=1 ] (316.5,193.75) .. controls (316.5,191.96) and (317.96,190.5) .. (319.75,190.5) .. controls (321.54,190.5) and (323,191.96) .. (323,193.75) .. controls (323,195.54) and (321.54,197) .. (319.75,197) .. controls (317.96,197) and (316.5,195.54) .. (316.5,193.75) -- cycle ;
\draw  [fill={rgb, 255:red, 0; green, 0; blue, 0 }  ,fill opacity=1 ] (314.5,167.75) .. controls (314.5,165.96) and (315.96,164.5) .. (317.75,164.5) .. controls (319.54,164.5) and (321,165.96) .. (321,167.75) .. controls (321,169.54) and (319.54,171) .. (317.75,171) .. controls (315.96,171) and (314.5,169.54) .. (314.5,167.75) -- cycle ;
\draw  [fill={rgb, 255:red, 0; green, 0; blue, 0 }  ,fill opacity=1 ] (457.5,191.75) .. controls (457.5,189.96) and (458.96,188.5) .. (460.75,188.5) .. controls (462.54,188.5) and (464,189.96) .. (464,191.75) .. controls (464,193.54) and (462.54,195) .. (460.75,195) .. controls (458.96,195) and (457.5,193.54) .. (457.5,191.75) -- cycle ;
\draw  [fill={rgb, 255:red, 0; green, 0; blue, 0 }  ,fill opacity=1 ] (465.5,171.75) .. controls (465.5,169.96) and (466.96,168.5) .. (468.75,168.5) .. controls (470.54,168.5) and (472,169.96) .. (472,171.75) .. controls (472,173.54) and (470.54,175) .. (468.75,175) .. controls (466.96,175) and (465.5,173.54) .. (465.5,171.75) -- cycle ;

\draw (99,88) node [anchor=north west][inner sep=0.75pt]   [align=left] {$F(\xi)$};
\draw (376,89) node [anchor=north west][inner sep=0.75pt]   [align=left] {$G(\xi)$};
\draw (247,120) node [anchor=north west][inner sep=0.75pt]   [align=left] {=};
\draw (320,60) node [anchor=north west][inner sep=0.75pt]   [align=left] {$\alpha^{-1}_{x_1}$};
\draw (322,195) node [anchor=north west][inner sep=0.75pt]   [align=left] {$\alpha_{x_n}$};
\draw (295,144) node [anchor=north west][inner sep=0.75pt]   [align=left] {$\alpha^{-1}_{x_{n-1}}$};
\draw (435,195) node [anchor=north west][inner sep=0.75pt]   [align=left] {$\alpha^{-1}_{y_m}$};
\draw (440,60) node [anchor=north west][inner sep=0.75pt]   [align=left] {$\alpha_{y_1}$};
\draw (458,145) node [anchor=north west][inner sep=0.75pt]   [align=left] {$\alpha^{-1}_{y_{m-1}}$};
\draw (458,110) node [anchor=north west][inner sep=0.75pt]   [align=left] {$\vdots$};
\draw (323,110) node [anchor=north west][inner sep=0.75pt]   [align=left] {$\vdots$};
\draw (170,110) node [anchor=north west][inner sep=0.75pt]   [align=left] {$\vdots$};
\draw (50,110) node [anchor=north west][inner sep=0.75pt]   [align=left] {$\vdots$};

\end{tikzpicture}}\ .
\]
Here each vertex we introduce inherits its framing from the ambient framing on the corresponding edge.
\par

We now have
\[
\Sigma_{\alpha}\circ [M,FT]\circ \Sigma_{\alpha^{-1}}=\Sigma_{\alpha}\circ [M,T']\circ \Sigma_{\alpha^{-1}}.
\]
Let $[M,T'']$ be the composite
\[
[M,T'']=\Sigma_{\alpha}\circ [M,T']\circ \Sigma_{\alpha^{-1}}.
\]
The bordism $[M,T'']$ is obtained directly from $[M,FT]$ by replacing all $F$'s with $G$'s then inserting a segment of the form
\[
\scalebox{.8}{\tikzset{every picture/.style={line width=0.75pt}} 
\begin{tikzpicture}[x=0.75pt,y=0.75pt,yscale=-1,xscale=1]
\draw [line width=1.5]    (0,132) -- (135.5,132) ;
\draw [shift={(75,132)}, rotate = 180] [fill={rgb, 255:red, 0; green, 0; blue, 0 }  ][line width=0.08]  [draw opacity=0] (10.72,-5.15) -- (0,0) -- (10.72,5.15) -- (7.12,0) -- cycle    ;
\draw  [fill={rgb, 255:red, 0; green, 0; blue, 0 }  ,fill opacity=1 ] (37,132.5) .. controls (37,131.12) and (38.12,130) .. (39.5,130) .. controls (40.88,130) and (42,131.12) .. (42,132.5) .. controls (42,133.88) and (40.88,135) .. (39.5,135) .. controls (38.12,135) and (37,133.88) .. (37,132.5) -- cycle ;
\draw  [fill={rgb, 255:red, 0; green, 0; blue, 0 }  ,fill opacity=1 ] (93,132.5) .. controls (93,131.12) and (94.12,130) .. (95.5,130) .. controls (96.88,130) and (98,131.12) .. (98,132.5) .. controls (98,133.88) and (96.88,135) .. (95.5,135) .. controls (94.12,135) and (93,133.88) .. (93,132.5) -- cycle ;

\draw (33,106) node [anchor=north west][inner sep=0.75pt]   [align=left] {$\alpha_x^{-1}$};
\draw (88,112) node [anchor=north west][inner sep=0.75pt]   [align=left] {$\alpha_x$};

\end{tikzpicture}}
\]
in the middle of each edge.  In the reduced bordism category, we can close each of these segments via internal composition to obtain an equality $[M,T'']=[M,GT]$, which gives the desired equality \eqref{eq:639}.
\end{proof}

\begin{corollary}\label{cor:bord_repl}
If $F:\msc{C}\to \msc{D}$ is an equivalence of ribbon tensor categories, then the induced map
\[
F_{\ast}:\Bord_{\msc{C}}^{\opn{red}}\to \Bord_{\msc{D}}^{\opn{red}}
\]
is an equivalence of symmetric monoidal categories.
\end{corollary}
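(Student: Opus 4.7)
The plan is to leverage Lemma \ref{lem:622} to transport the adjoint equivalence data from the level of ribbon tensor categories to the level of bordism categories. Since $F:\msc{C}\to \msc{D}$ is an equivalence of ribbon tensor categories, there exists a quasi-inverse $G:\msc{D}\to \msc{C}$, which inherits the structure of a ribbon tensor functor uniquely up to unique ribbon-monoidal isomorphism, together with ribbon-monoidal natural isomorphisms $\alpha:GF\overset{\sim}\to \mrm{id}_{\msc{C}}$ and $\beta:FG\overset{\sim}\to \mrm{id}_{\msc{D}}$. (Existence of the ribbon-monoidal structure on $G$ and the ribbon-monoidal nature of $\alpha,\beta$ is a standard fact about rigid braided monoidal equivalences.)

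The assignment $F\mapsto F_\ast$ is strictly functorial in $F$: from the direct construction of $F_\ast$ on objects and morphisms one has equalities $(F'F)_\ast = F'_\ast F_\ast$ and $(\mrm{id}_{\msc{C}})_\ast = \mrm{id}_{\Bord_{\msc{C}}^{\opn{red}}}$. Hence it suffices to produce natural isomorphisms $G_\ast F_\ast \cong \mrm{id}_{\Bord_{\msc{C}}^{\opn{red}}}$ and $F_\ast G_\ast \cong \mrm{id}_{\Bord_{\msc{D}}^{\opn{red}}}$. Applying Lemma \ref{lem:622} to the ribbon-monoidal natural isomorphisms $\alpha:GF\to \mrm{id}_{\msc{C}}$ and $\beta:FG\to \mrm{id}_{\msc{D}}$ produces exactly such natural isomorphisms of symmetric monoidal functors
\[
\Sigma_{\alpha}:(GF)_\ast = G_\ast F_\ast \overset{\sim}\to (\mrm{id}_{\msc{C}})_\ast = \mrm{id}_{\Bord_{\msc{C}}^{\opn{red}}},
\]
and similarly $\Sigma_{\beta}:F_\ast G_\ast \overset{\sim}\to \mrm{id}_{\Bord_{\msc{D}}^{\opn{red}}}$.

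From these one concludes that $F_\ast$ admits a quasi-inverse $G_\ast$, and thus is an equivalence of categories. To upgrade this to a symmetric monoidal equivalence, note that $F_\ast$ and $G_\ast$ are both symmetric monoidal functors by construction, and the natural isomorphisms $\Sigma_{\alpha}$ and $\Sigma_{\beta}$ are themselves symmetric monoidal (as each $\Sigma_{\alpha_{\vec{x}}}$ splits over disjoint unions of marked surfaces $\Sigma_{\vec{x}}$ in the evident manner, via the corresponding monoidality of $\alpha$ itself). This is the desired conclusion.

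There is no substantial obstacle in this argument; the only mild subtlety is verifying that the construction $(-)_\ast$ is strictly functorial on the nose (rather than only up to natural isomorphism) and that Lemma \ref{lem:622} genuinely applies in the identity case, both of which are transparent from the explicit construction of $F_\ast$ on ribbon bordisms given in Section \ref{sect:bords}.
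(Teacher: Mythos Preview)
Your proposal is correct and follows essentially the same approach as the paper: the paper's proof simply chooses a quasi-inverse $F^{-1}$ and invokes Lemma \ref{lem:622} to conclude that $F_\ast\circ F^{-1}_\ast$ and $F^{-1}_\ast\circ F_\ast$ are naturally isomorphic to the respective identities. Your version is more explicit about the strict functoriality of $(-)_\ast$ and the symmetric monoidal nature of the resulting equivalence, but the underlying argument is the same.
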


\begin{proof}
For a choice of inverse $F^{-1}:\msc{D}\to \msc{C}$, Lemma \ref{lem:622} tells us that the two composites $F_{\ast}\circ F^{-1}_{\ast}$ and $F^{-1}_{\ast}\circ F_{\ast}$ are naturally isomorphic to the identity.
\end{proof}

\begin{remark}
Supposing $\msc{C}$ and $\msc{D}$ are linear, the above results also hold when the non-linear reduction $\Bord_{\square}^{\opn{red}}$ is replaced with the linear reduction $k\Bord_{\square}^{\opn{red}}$, and the proofs in the linear setting are exactly the same.  Indeed, the linear version of Corollary \ref{cor:bord_repl} is implied already by its non-linear counterpart.
\end{remark}

\subsection{A particular construction of $\msc{S}\otimes \msc{A}$}

We explain how, in a semisimple setting, one can construct the Kelly product $\msc{S}\ot\msc{A}$ via an additive completion of the internal product $\msc{S}\boxtimes\msc{A}$.  We begin with an explicit description of morphisms in the Kelly product.

\begin{lemma}\label{lem:708}
Let $\msc{S}$ and $\msc{A}$ be locally finite linear abelian categories, and suppose that $\msc{S}$ is semisimple.  The following hold:
\begin{enumerate}
\item[(i)] The category $\msc{S}\otimes \msc{A}$ is spanned, under finite sums, by the monomials $s\ot a$ with $s$ simple in $\msc{S}$.
\item[(ii)] For each pair of objects $s$ and $s'$ in $\msc{S}$, $a$ and $a'$ in $\msc{A}$, the map
\[
\Hom_{\msc{S}}(s,s')\otimes_k\Hom_{\msc{A}}(a,a')\to \Hom_{\msc{S}\otimes \msc{A}}(s\otimes a,s'\otimes a'),\ \ f\otimes_k f'\mapsto f\otimes f',
\]
is an isomorphism.
\end{enumerate}
\end{lemma}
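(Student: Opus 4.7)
The plan is to use the coalgebra realization $\msc{S}\cong\opn{corep}(S)$, $\msc{A}\cong\opn{corep}(A)$, and $\msc{S}\otimes\msc{A}\cong\opn{corep}(S\otimes_k A)$, then to exploit the isotypic decomposition of $S$ afforded by semisimplicity of $\msc{S}$. Since $\msc{S}$ is semisimple and locally finite, $S$ is cosemisimple with a canonical decomposition $S=\bigoplus_{\lambda\in\opn{Irr}(\msc{S})}S_\lambda$ into finite-dimensional matrix coalgebras, where $S_\lambda$ is the isotypic component corresponding to the simple $s_\lambda$. Correspondingly $S\otimes_k A=\bigoplus_\lambda S_\lambda\otimes_k A$ splits as a sum of subcoalgebras in $S\otimes_k A$, which gives rise to an orthogonal decomposition at the level of comodules.

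First I would construct, for each $\lambda$, an exact functor $(-)_\lambda:\msc{S}\otimes\msc{A}\to\msc{A}$ which extracts the $S_\lambda$-isotypic part of an $(S\otimes A)$-comodule and records it as an $A$-comodule. A direct check on the coalgebra side shows that this functor is right adjoint to the $\opn{Vect}$-action map $s_\lambda\otimes -:\msc{A}\to\msc{S}\otimes\msc{A}$, and that the canonical maps $s_\lambda\otimes z_\lambda\to z$ assemble into a natural isomorphism
\[
\bigoplus_{\lambda\in\opn{Irr}(\msc{S})}s_\lambda\otimes z_\lambda\;\overset{\sim}{\to}\;z
\]
for every $z$ in $\msc{S}\otimes\msc{A}$. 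Since the corepresentation category is locally finite, only finitely many summands on the left are nonzero, proving part (i).

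For (ii), the adjunction $(s_\lambda\otimes -)\dashv (-)_\lambda$ computed above yields natural isomorphisms
\[
\Hom_{\msc{S}\otimes\msc{A}}(s_\lambda\otimes a,\,s_\mu\otimes a')\;\cong\;\Hom_{\msc{A}}\bigl(a,(s_\mu\otimes a')_\lambda\bigr)\;\cong\;\delta_{\lambda\mu}\Hom_{\msc{A}}(a,a'),
\]
which matches $\Hom_{\msc{S}}(s_\lambda,s_\mu)\otimes_k\Hom_{\msc{A}}(a,a')$ because simples in a locally finite category satisfy $\Hom(s_\lambda,s_\mu)=k\cdot\delta_{\lambda\mu}$. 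A direct unwinding of the unit and counit maps shows that the identification agrees with the structure map $f\otimes_k f'\mapsto f\otimes f'$ from the statement, so (ii) holds for simple $s,s'$. The general case follows because any $s,s'$ in $\msc{S}$ decompose as finite direct sums of simples (semisimplicity plus local finiteness), and both sides of the proposed isomorphism are biadditive in $(s,s')$.

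The main obstacle is the construction and verification of the adjunction $(s_\lambda\otimes -)\dashv(-)_\lambda$ in the Kelly/Deligne framework; once this is in hand, both statements follow formally. The subtlety is that the Kelly product is defined by a universal property on right exact bilinear functors, so to produce the right adjoint one must work with the specific coalgebra model and then transport the adjunction back. The cosemisimplicity of $S$ is exactly what makes projection onto the $\lambda$-isotypic subcoalgebra exact, and hence makes $(-)_\lambda$ a well-defined functor between locally finite abelian categories rather than something that only lives on an ind-completion.
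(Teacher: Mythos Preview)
Your proposal is correct and takes essentially the same approach as the paper: both use the coalgebra model $\msc{S}\otimes\msc{A}\cong\opn{corep}(S\otimes_k A)$ and the isotypic decomposition afforded by cosemisimplicity of $S$. The paper phrases the isotypic functor via the evaluation map $\Hom_{\msc{S}}(s_\lambda,m)\otimes_k s_\lambda\to m$ (with its induced $A$-module structure, after dualizing $\mfk{A}$) and computes (ii) by taking centers of an $A$-bimodule, while you package the same content as the adjunction $(s_\lambda\otimes -)\dashv(-)_\lambda$; these are equivalent descriptions of the same decomposition.
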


\begin{proof}
(i) Under our hypotheses we have $\msc{S}\cong\opn{corep}(S)$ and $\msc{A}\cong \opn{corep}(\mfk{A})$ for a coalgebras $S$ and $\mfk{A}$ \cite[Theorem 5.1]{takeuchi77}.  Hence $\msc{S}\otimes \msc{A}$ can be realized as the category of $S\otimes \mfk{A}$-comodules \cite[Lemma 3.2.3]{coulembierflake}.  For this proof we prefer to use modules however, and so identify the product $\msc{S}\ot\msc{A}$ with the category of  finite-dimensional (discrete topological) $A=\mfk{A}^\ast$-modules with a commuting coaction of $S$, i.e. simultaneous modules and comodules on which $A$ acts by $S$-comodule endomorphsims.
\par

Consider any set of representatives $\{s_\lambda:\lambda\in \Lambda\}$ for the simples in $\msc{S}$.  For any object $m$ in $\msc{S}\otimes \msc{A}$ and simple $s_\lambda$ in $\msc{S}$ define $m_\lambda$ as the image of the map
\[
\opn{eval}:\Hom_{\msc{S}}(s_\lambda,m)\otimes_k s_\lambda\to m.
\]
We note that each $\Hom_{\msc{S}}(s_\lambda,m)$ is an $A$-module and that the above map is a morphism of $A$-modules in $\msc{S}$.  Furthermore, the above map is an injective map of $S$-comodules, via semisimplicity, and hence an isomorphism onto its image $m_\lambda\subseteq m$.  In particular, we have
\[
\Hom_{\msc{S}}(s_\lambda,m)\otimes_k s_\lambda\overset{\sim}\to m_\lambda,
\]
and we subsequently obtain a decomposition
\[
\oplus_\lambda \Hom_{\msc{S}}(s_\lambda,m)\otimes_k s_\lambda\overset{\sim}\to \oplus_{\lambda} m_\lambda =m.
\]
So we see that $\msc{S}\otimes \msc{A}$ is spanned by the monomidal objects $s\ot a$ with $s$ simple.
\par

(ii) Via the proof of (i), it suffices to prove the result in the case where $s$ and $s'$ are simple.  When $s$ and $s'$ are non-isomorphic both hom spaces are $0$, and there is nothing to check.  In the case where $s$ is isomorphic to $s'$ we can assume $s=s'$, at which point we have the isomorphism of $A$-bimodules
\[
\Hom_k(a,a')\cong \Hom_\msc{S}(s,s)\otimes_k \Hom_k(a,a')\to \Hom_\msc{S}(s\ot a,s\ot a').
\]
Taking central elements for the $A$-bimodule structures now provides the claimed isomorphism
\[
\Hom_{\msc{S}}(s,s)\otimes_k \Hom_{\msc{A}}(a,a')\overset{\sim}\to \Hom_{\msc{S}\otimes\msc{A}}(s\ot a,s\ot a').
\]
\end{proof}

At this point one should recall our construction of the additive completion from Section \ref{sect:add}.

\begin{lemma}\label{lem:753}
For $\msc{S}$ and $\msc{A}$ as in Lemma \ref{lem:708}, the structure map $\msc{S}\times\msc{A}\to \msc{S}\otimes \msc{A}$ induces an equivalence
\[
(\msc{S}\boxtimes\msc{A})^{\opn{add}}\overset{\sim}\to \msc{S}\otimes \msc{A}.
\]
Furthermore, when $\msc{S}$ and $\msc{A}$ are (braided) monoidal this equivalence is an equivalence of (braided) monoidal categories.
\end{lemma}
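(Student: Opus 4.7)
The plan is to construct the candidate equivalence via universal properties, then verify essential surjectivity and full faithfulness separately using the two parts of Lemma \ref{lem:708}. The monoidal statement will follow from essentially uniqueness considerations once the underlying equivalence is in place.

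First I would produce the functor. The structure map $\msc{S}\times\msc{A}\to \msc{S}\ot\msc{A}$ is bilinear on morphisms, so by the universal property of $\msc{S}\boxtimes\msc{A}$ it factors through a unique linear functor $F:\msc{S}\boxtimes\msc{A}\to \msc{S}\ot\msc{A}$ sending $(s,a)\mapsto s\ot a$. Since $\msc{S}\ot\msc{A}$ is additive, $F$ extends uniquely to a linear functor $F^{\opn{add}}:(\msc{S}\boxtimes\msc{A})^{\opn{add}}\to \msc{S}\ot\msc{A}$ which sends a formal sum $\sum_i (s_i,a_i)$ to the internal direct sum $\oplus_i s_i\ot a_i$.

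For essential surjectivity, I would invoke Lemma \ref{lem:708}(i): every object of $\msc{S}\ot\msc{A}$ decomposes as a finite sum $\oplus_\lambda s_\lambda\ot a_\lambda$ with $s_\lambda$ simple, and such an object is manifestly in the image of $F^{\opn{add}}$. For full faithfulness, fix $v=\sum_i(s_i,a_i)$ and $w=\sum_j(s'_j,a'_j)$. By definition of morphisms in the additive closure and in the internal product,
\[
\Hom_{(\msc{S}\boxtimes\msc{A})^{\opn{add}}}(v,w)=\bigoplus_{i,j}\Hom_{\msc{S}}(s_i,s'_j)\ot_k\Hom_{\msc{A}}(a_i,a'_j),
\]
while from biproduct properties in $\msc{S}\ot\msc{A}$ we have
\[
\Hom_{\msc{S}\ot\msc{A}}\bigl(\oplus_i s_i\ot a_i,\oplus_j s'_j\ot a'_j\bigr)\cong \bigoplus_{i,j}\Hom_{\msc{S}\ot\msc{A}}(s_i\ot a_i,s'_j\ot a'_j).
\]
The map induced by $F^{\opn{add}}$ between these two sides is the direct sum of the individual maps $\Hom_{\msc{S}}(s_i,s'_j)\ot_k\Hom_{\msc{A}}(a_i,a'_j)\to \Hom_{\msc{S}\ot\msc{A}}(s_i\ot a_i,s'_j\ot a'_j)$. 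The main obstacle is showing each of these component maps is an isomorphism without any simplicity hypothesis on $s_i$ and $s'_j$. However, this is exactly the content of Lemma \ref{lem:708}(ii), which was proved without any simplicity hypothesis in the $\msc{S}$-arguments (the proof reduced to simples on one side via decomposition, but the final statement holds for arbitrary objects). Hence $F^{\opn{add}}$ is fully faithful, and combined with essential surjectivity it is an equivalence.

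For the monoidal enhancement, suppose $\msc{S}$ and $\msc{A}$ are (braided) monoidal. The product category $\msc{S}\times\msc{A}$ inherits a (braided) monoidal structure coordinatewise, and by the universal properties recalled in Section \ref{sect:int_prod}, both $\msc{S}\boxtimes\msc{A}$ and $\msc{S}\ot\msc{A}$ receive unique (braided) monoidal structures making the respective structure maps (braided) monoidal. Hence the comparison $F$ is (braided) monoidal. The additive closure $(\msc{S}\boxtimes\msc{A})^{\opn{add}}$ inherits a unique (braided) monoidal structure extending that of $\msc{S}\boxtimes\msc{A}$ and distributing over formal direct sums on both factors; on morphisms this is just the product of matrices. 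Since $F^{\opn{add}}$ is defined on objects by sending formal sums to internal biproducts, and the internal tensor product in $\msc{S}\ot\msc{A}$ likewise distributes over biproducts, the monoidal coherences for $F$ extend uniquely to monoidal coherences for $F^{\opn{add}}$. This promotes $F^{\opn{add}}$ to a (braided) monoidal equivalence, completing the proof.
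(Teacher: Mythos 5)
Your proof is correct and takes essentially the same route as the paper: both use Lemma \ref{lem:708}(i) for essential surjectivity, Lemma \ref{lem:708}(ii) for full faithfulness, and the uniqueness of (braided) monoidal structures compatible with the structure maps for the monoidal enhancement. The paper's proof is simply a terse version of yours.
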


\begin{proof}
By Lemma \ref{lem:708} (a) the given functor is essentially surjective and by (b) it is fully faithful.  In the event that both $\msc{S}$ and $\msc{A}$ are (braided) monoidal the given functor admits a unique (braided) monoidal structure so that we have a diagram of (braided) monoidal functors
\[
\xymatrix{
	& \msc{S}\times \msc{A}\ar[dr]\ar[dl]\\
\msc{S}\boxtimes \msc{A}\subseteq (\msc{S}\boxtimes\msc{A})^{\opn{add}}\ar[rr] & & \msc{S}\otimes\msc{A}.
}
\]
\end{proof}

\subsection{Fullness of reduced concatenation}

By a \emph{monomial surface} in $\Bord_{\msc{S}\ot\msc{A}}$ we mean a surface which is in the image of the concatenation functor $\Bord_{\msc{S}\times\msc{A}}\to \Bord_{\msc{S}\ot\msc{A}}$, and by a \emph{monomial bordism} between such surfaces we mean a ribbon bordism which is in the image of concatenation.  So, a monomial surface $\Sigma_{\vec{z}}$ is one in which all the marking objects are of the form $z_i=s_i\ot x_i$, and a monoidal bordism $(M,T)$ is one in which all edges in the diagram $T$ are labeled by objects of the form $z_e=s_e\ot x_e$.
\par

We note that, for monomial surfaces $\Sigma_{\vec{z}}$ and $\Sigma_{\vec{w}}$, the space
\[
\Hom_{k\Bord_{\msc{S}\ot\msc{A}}^{\red}}(\Sigma_{\vec{z}},\Sigma_{\vec{w}})
\]
is spanned by monomial surfaces.  This follows by the redundant relation (U5) from Section \ref{sect:reduced} in conjunction with Lemma \ref{lem:708} (i).  Hence we observe the following.

\begin{lemma}\label{lem:837}
For $\msc{S}$ a semisimple symmetric tensor category, and $\msc{A}$ any ribbon tensor category, the reduced concatenation functor $\opn{ccat}^{\opn{red}}:k\Bord_{\msc{S}\times \msc{A}}^{\opn{red}}\to k\Bord_{\msc{S}\otimes\msc{A}}^{\red}$ is full.
\end{lemma}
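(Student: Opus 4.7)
The plan is to take an arbitrary morphism $[M,T]\in\Hom_{k\Bord_{\msc{S}\ot\msc{A}}^{\red}}(\Sigma_{\vec{z}},\Sigma_{\vec{w}})$ between monomial surfaces and, through a sequence of applications of the universal relations, express it as a $k$-linear combination of monomial bordisms, each of which lifts manifestly to $k\Bord_{\msc{S}\times\msc{A}}^{\red}$ along concatenation. The guiding principle is that, once every edge label is a pure tensor $s_e\ot x_e$ with $s_e$ simple in $\msc{S}$, the vertex-label spaces collapse to tensor products of hom spaces by Lemma~\ref{lem:708}(ii), so the only remaining work is an application of linear expansion on the vertex labels.

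First I would reduce along edges. Given any bordism $(M,T)$ with monomial boundary surfaces, I want to apply the redundant relation (U5) at every internal edge to break the edge label into simple $\msc{S}$-summands, using the decomposition coming from Lemma~\ref{lem:708}(i). Since $\msc{S}$ is semisimple, each object $z_e$ of $\msc{S}\ot\msc{A}$ in the label of an edge is isomorphic to a direct sum $\bigoplus_\lambda s_\lambda\ot x_{e,\lambda}$ with $s_\lambda$ simple. Applying (U5) repeatedly turns $[M,T]$ into a finite sum of bordisms whose edge labels are all monomials with simple $\msc{S}$-factor; here one has to propagate the resulting projection/inclusion morphisms across the attached vertices, which is handled by the standard bookkeeping in the definition of (U5). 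Because the boundary surfaces are already monomial, no relabelling of boundary vertices is needed.

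Next I would reduce along vertices. With all edges now labelled by monomials $s_e\ot x_e$ with $s_e$ simple, each internal-vertex label is a morphism
\[
f:\big(s_{e_1}\ot x_{e_1}\big)^{\pm}\ot\cdots\to \big(s'_{e'_1}\ot y_{e'_1}\big)^{\pm}\ot\cdots
\]
in $\msc{S}\ot\msc{A}$. Using compatibility of duality with the structure map $\msc{S}\times\msc{A}\to\msc{S}\ot\msc{A}$ (as in Section~\ref{sect:exc}) I identify each $(s\ot x)^{\ast}$ with $s^{\ast}\ot x^{\ast}$, so that the source and target of $f$ are monomials with simple $\msc{S}$-factor. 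By Lemma~\ref{lem:708}(ii) the hom space between such monomials factors as a $k$-tensor product
\[
\Hom_{\msc{S}}\!\big(\otimes_i s_{e_i}^{\pm},\otimes_j {s'_{e'_j}}^{\pm}\big)\ot_k\Hom_{\msc{A}}\!\big(\otimes_i x_{e_i}^{\pm},\otimes_j y_{e'_j}^{\pm}\big).
\]
Hence $f$ decomposes as $f=\sum_\alpha f_\alpha^{\msc{S}}\ot f_\alpha^{\msc{A}}$, and applying the linear expansion relation (U4) at this vertex writes the bordism as a $k$-linear combination of bordisms each of whose labels at this vertex is of the form $f_\alpha^{\msc{S}}\ot f_\alpha^{\msc{A}}$. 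Iterating through all internal vertices eventually produces a $k$-linear combination of bordisms each of which is monomial in the strong sense: every edge is labelled by a pure tensor $s\ot x$, and every vertex by a pure tensor of morphisms.

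The final step is the observation that any such strongly-monomial bordism lies in the essential image of $\opn{ccat}^{\opn{red}}$: it is precisely the image of the corresponding pair $(T_{\msc{S}},T_{\msc{A}})$ in $\Bord_{\msc{S}\times\msc{A}}$, with the two underlying geometries identified. Linearly reducing in the source then shows it lies in the image of $\opn{ccat}^{\opn{red}}$ on the nose, and summing the reductions yields the desired preimage of $[M,T]$. The main obstacle I anticipate is careful bookkeeping for signs and orderings when negatively framed markings force duals into play; the fix is the identification $(s\ot x)^{\ast}\cong s^{\ast}\ot x^{\ast}$ derived from monoidality of concatenation, together with Lemma~\ref{lem:nat_dual} to ensure compatibility of duals across the product $\msc{S}\times\msc{A}\to\msc{S}\ot\msc{A}$.
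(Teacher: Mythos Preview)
Your proposal is correct and follows essentially the same approach as the paper: reduce edge labels to monomials via the redundant relation (U5) together with Lemma~\ref{lem:708}(i), then lift the resulting bordisms along concatenation. Your treatment is more explicit than the paper's one-line argument, in particular spelling out the vertex-level decomposition via (U4) and Lemma~\ref{lem:708}(ii), which the paper leaves implicit in its concrete description of monomial bordisms.
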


\subsection{Proof of Proposition \ref{prop:conc}}
\label{sect:conc_proof}

We now prove that the reduced concatenation functor \eqref{eq:conc} is fully faithful.

\begin{proof}[Proof of Proposition \ref{prop:conc}]
We are free to assume $\msc{S}$ and $\msc{A}$ are small, and to take the explicit expression $\msc{S}\ot\msc{A}=(\msc{S}\boxtimes\msc{A})^{\opn{add}}$, by Corollary \ref{cor:bord_repl} and Lemma \ref{lem:753}.  Recall that objects in $(\msc{S}\boxtimes\msc{A})^{\opn{add}}$ are formal sums $\mbb{Z}_{\geq 0}\to \msc{S}\boxtimes\msc{A}$ which take only finitely many nonzero values, and recall also that $\msc{S}\boxtimes \msc{A}$ is equal to $\msc{S}\times \msc{A}$ at the level of objects.
\par

Take $k\Bord'_{\msc{S}\ot \msc{A}}$ the quotient of $k\Bord_{\msc{S}\ot \msc{A}}$ by the universal linear relations (U4) and the restrictive relation (R5) which expands over formal sums (and only formal sums!) along each edge, as in (U5).  We also institute a relation (R0) which takes any bordism $[M,T]$ in which an edge in $T$ is labeled by a zero object to the zero morphism.  (This final relation holds in $k\Bord_{\msc{S}\ot\msc{A}}^{\red}$ via internal composition and linear expansion.)
\par

Similarly, we let $k\Bord'_{\msc{S}\times\msc{A}}$ denote the quotient of $k\Bord_{\msc{S}\times \msc{A}}$ by the relations (U$'$4) and (R$'$0), where the latter relation takes any bi-labeled bordism $[M,T_{\msc{S}},T_{\msc{A}}]$ in which an edge in $T_{\msc{S}}$ or $T_{\msc{A}}$ is labeled by a zero object to the zero morphism.  We note that the concatenation functor reduces to a map $\opn{ccat}':k\Bord'_{\msc{S}\times \msc{A}}\to k\Bord'_{\msc{S}\otimes \msc{A}}$ which fits into a diagram
\[
\xymatrix{
k\Bord'_{\msc{S}\times \msc{A}}\ar[d]_{\opn{surj}}\ar[rr]^{\opn{ccat}'} & & k\Bord'_{\msc{S}\otimes \msc{A}}\ar[d]^{\opn{surj}}\\
k\Bord_{\msc{S}\times \msc{A}}^{\red}\ar[rr]_{\opn{ccat}^{\opn{red}}} & &  k\Bord_{\msc{S}\otimes \msc{A}}^{\red}.
}
\]
We claim that the map $\opn{ccat}'$ is fully faithful.
\par

By the same arguments employed in the reduced setting, using the relation (R5), one sees that morphisms between monomial surfaces in $k\Bord_{\msc{S}\ot\msc{A}}'$ are spanned by monomial bordisms.  Rather, we see that the functor $\opn{ccat}'$ is full, as in Lemma \ref{lem:837}.

Consider monomial surfaces $\Sigma_{\vec{z}}$ and $\Sigma_{\vec{w}}$, which we consider as objects in both $k\Bord_{\msc{S}\times\msc{A}}'$ and $k\Bord_{\msc{S}\otimes\msc{A}}'$ by an abuse of notation.  The linear space of bordisms
\[
\Hom_{k\Bord_{\msc{S}\otimes\msc{A}}'}(\Sigma_{\vec{z}},\Sigma_{\vec{w}})
\]
decomposes naturally
\[
\Hom_{k\Bord_{\msc{S}\otimes\msc{A}}'}(\Sigma_{\vec{z}},\Sigma_{\vec{w}})=\oplus_{d\in \mfk{D}}H_{\msc{S}\otimes \msc{A}}(\Sigma_{\vec{z}},\Sigma_{\vec{w}})_d
\]
over the collection $\mfk{D}$ of partially labeled monomial bordisms $(M,\bar{T})$, where $\bar{T}$ consists of the underlying geometry $\gamma:\Gamma\to M$ along with the edge labels $u:\opn{edge}(\Gamma)\to \msc{S}\times \msc{A}\subseteq \msc{S}\otimes\msc{A}$, which we require to be nonzero.  We similarly decompose morphisms in the $\msc{S}\times \msc{A}$-labeled category
\[
\Hom_{k\Bord_{\msc{S}\times\msc{A}}'}(\Sigma_{\vec{v}},\Sigma_{\vec{w}})=\oplus_{d\in \mfk{D}}H_{\msc{S}\times\msc{A}}(\Sigma_{\vec{v}},\Sigma_{\vec{w}})_d
\]
and note that the concatenation map
\begin{equation}\label{eq:886}
\opn{ccat}':\Hom_{k\Bord_{\msc{S}\times\msc{A}}'}(\Sigma_{\vec{v}},\Sigma_{\vec{w}})\to \Hom_{k\Bord_{\msc{S}\otimes\msc{A}}'}(\Sigma_{\vec{z}},\Sigma_{\vec{w}})
\end{equation}
preserves the given decompositions over $\mfk{D}$.
\par

Now, by fullness of the functor $\opn{ccat}'$, to establish full faithfulness it suffices to prove that on each component the map
\[
\opn{ccat}'_d:H_{\msc{S}\times\msc{A}}(\Sigma_{\vec{v}},\Sigma_{\vec{w}})_d\to H_{\msc{S}\otimes \msc{A}}(\Sigma_{\vec{z}},\Sigma_{\vec{w}})_d
\]
is injective.  Noting that each such component is now of finite-dimension, it then suffices to prove that the $d$-th components for $\msc{S}\otimes\msc{A}$ and $\msc{S}\times \msc{A}$ have the same dimension.
\par

Writing out $d=(M,\gamma,u:\opn{edge}(\Gamma)\to \msc{S}\times \msc{A})$, and taking $u_{\msc{S}}$ and $u_{\msc{A}}$ the projections onto the relevant components, we have
\[
\begin{array}{l}
\dim H_{\msc{S}\otimes\msc{A}}(\Sigma_{\vec{v}},\Sigma_{\vec{w}})_d=\sum_{v\in \opn{vert}(\Gamma)^{\opn{int}}}\dim\Hom_{\msc{S}\otimes \msc{A}}(u_{\opn{in}(v)},u_{\opn{out}(v)})\vspace{2mm}\\
=\sum_{v}\dim\Hom_{\msc{S}}(u_{\msc{S},\opn{in}(v)},u_{\msc{S},\opn{out}(v)})\cdot\dim\Hom_{\msc{A}}(u_{\msc{A},\opn{in}(v)},u_{\msc{A},\opn{out}(v)})\ \ \ (\text{Lemma \ref{lem:708} ii})\vspace{2mm}\\
=\sum_{v}\dim\opn{Bilin}_k\left(\Hom_{\msc{S}}(u_{\msc{S},\opn{in}(v)},u_{\msc{S},\opn{out}(v)})\times \Hom_{ \msc{A}}(u_{\msc{A},\opn{in}(v)},u_{\msc{A},\opn{out}(v)}),k\right)\vspace{2mm}\\
=\dim H_{\msc{S}\times \msc{A}}(\Sigma_{\vec{v}},\Sigma_{\vec{w}})_d
\end{array}
\]
via the linearity relations (U4) and (U$'$4).  From this identification of dimensions we see that each $\opn{ccat}'_d$ is a linear isomorphism, that each map \eqref{eq:886} is a linear isomorphism, and hence that the functor $\opn{ccat}'$ is fully faithful.
\par

We now have a fully faithful functor
\[
\opn{ccat}':k\Bord_{\msc{S}\times \msc{A}}'\to k\Bord_{\msc{S}\otimes\msc{A}}'
\]
which is an equivalence onto the full subcategory spanned by monomial surfaces in $k\Bord_{\msc{S}\otimes \msc{A}}'$.  In $k\Bord_{\msc{S}\otimes\msc{A}}'$ the internal composition relation can be instituted via the internal composition relation $[M,T]-[M,T']=0$ occurring exclusively at monomial bordisms $[M,T]$.  Via linear expansion at the vertices, we can furthermore restrict and apply this relation only to those bordism whose vertices are labeled by product morphisms $f_{\msc{S}}\otimes f_{\msc{A}}$.  It follows that the functor $\opn{ccat}'$ reduces to another fully faithful functor
\[
\opn{ccat}^{\opn{red}}:k\Bord_{\msc{S}\times\msc{A}}^{\opn{red}}\to k\Bord_{\msc{S}\otimes\msc{A}}^{\opn{red}}.
\]
\end{proof}

\bibliographystyle{abbrv}

\end{document}